\newtheorem{thm}{Theorem}[section]
\newtheorem{lem}{Lemma}[section]
\newtheorem{hyp}{Hypothesis}[section]
\newtheorem{cor}[lem]{Corollary}
\newtheorem{prop}[thm]{Proposition}
\newtheorem{rmk}{Remark}[section]
\theoremstyle{remark}
\newcommand{\supp}{\operatorname{supp}}
\newcommand{\Z}{\mathbb{Z}}
\newcommand{\Rm}{\mathbb{R}}
\newcommand{\R}{\mathbb{R}}
\newcommand{\mC}{\ensuremath{\mathcal{C}}}
\newcommand{\mT}{\ensuremath{\mathcal{T}}}
\newcommand{\mD}{\ensuremath{\mathcal{D}}}
\newcommand{\Nm}{\ensuremath{\mathbb{N}}}
\newcommand{\Zm}{\ensuremath{\mathbb{Z}}}
\newcommand{\mA}{\ensuremath{\mathcal{A}}}
\newcommand{\mI}{\ensuremath{\mathcal{I}}}
\newcommand{\mW}{\ensuremath{\mathcal{W}}}
\newcommand{\mN}{\ensuremath{\mathcal{N}}}
\newcommand{\Tm}{\ensuremath{\mathbb{T}}}
\newcommand{\T}{\ensuremath{\mathbb{T}}}
\newcommand{\e}{\ensuremath{\epsilon}}
\def\lto{\longrightarrow}
\def\to{\longrightarrow}
\def\lmto{\longmapsto}
\def\leq{\leqslant}
\def\geq{\geqslant}
\def\le{\leqslant}
\def\ge{\geqslant}
\def\bdef{\begin{definition}}
\def\endef{\end{definition}}
\def\bthm{\begin{thm}}
\def\ethm{\end{thm}}
\def\blm{\begin{lemma}}
\def\elm{\end{lemma}}
\def\brm{\begin{rmk}}
\def\erm{\end{rmk}}
\def\bprop{\begin{proposition}}
\def\eprop{\end{proposition}}
\def\bcor{\begin{cor}}
\def\ecor{\end{cor}}
\def\be{\begin{eqnarray}}
\def\ee{\end{eqnarray}}
\def\beal{\begin{aligned}}
\def\enal{\end{aligned}}
\def\eps{\varepsilon}
\def\phi{\varphi}
\def\A{\mathcal A}
\def\R{\mathbb R}
\def\T{\mathbb T}
\def\Q{\mathbb Q}
\def\Z{\mathbb Z}
\def\cC{\mathcal C}
\def\Gm{\Gamma}
\def\Lb{\Lambda}
\def\th{\theta}
\def\lb{\lambda}
\def\be {\begin{equation}}
\def\ee {\end{equation}}
\def\bdef{\begin{definition}}
\def\endef{\end{definition}}
\def\blm{\begin{lem}}
\def\elm{\end{lem}}
\def\beal{\begin{aligned}}
\def\enal{\end{aligned}}
\newtheorem{definition}{Definition}
\author{P. Bernard\footnote{Universit{\'e} Paris-Dauphine (\texttt{patrick.bernard\@ ceremade.dauphine.fr})},
 V. Kaloshin\footnote{University of Maryland at College Park (\texttt{vadim.kaloshin\@ gmail.com})},
 K. Zhang\footnote{University of Toronto (\texttt{kzhang\@ math.utoronto.edu})}}
\title{Arnold diffusion in arbitrary degrees of freedom \\
and crumpled $3$-dimensional normally hyperbolic invariant cylinders}
\begin{document}
\maketitle

\begin{abstract}
In the present paper we prove a form of Arnold diffusion.
The main result says that for
a ''generic'' perturbation of a nearly integrable system of arbitrary
degrees of freedom $n\ge 2$
\[
H_0(p)+\eps H_1(\th,p,t),\quad \th\in \T^n,\ p\in B^n,\ t\in \T=\R/\T,
\]
with strictly convex $H_0$ there exists  an orbit $(\th_{\e},p_{e})(t)$
exhibiting Arnold diffusion in the sens that
\[
\sup_{t>0}\|p(t)-p(0) \| >l(H_1)>0
\]
where $l(H_1)$ is a positive constant independant of  $\e$.

Our proof is a combination of geometric and variational methods.
We first build  $3$-dimensional
normally hyperbolic invariant cylinders of limited regularity, but of large size,
extrapolating on \cite{Be3} and \cite{KZZ}.
Once these cylinders are constructed we use versions of
Mather variational method developed in Bernard \cite{Be1},
Cheng-Yan \cite{CY1, CY2}.
\end{abstract}

\markboth{P. Bernard, V. Kaloshin, K. Zhang}{Arnold diffusion along crumpled normally hyperbolic invariant cylinders}

\section{Introduction}

Let $(\th,p)\in \T^n \times U$ be the phase space
of an integrable Hamiltonian system $H_0(p)$ with
$\T^n$ being the $n$-dimensional torus $\T^n=\R^n / \Z^n
\ni \th=(\th_1,\cdots,\th_n)$ and $U$ being an open
set in $\R^n$, $p=(p_1,\cdots,p_n)\in B^n$. Assume
that $H_0$ is strictly convex, i.e. Hessian
$\partial^2_{p_ip_j} H_0$ is strictly positive definite.

Consider a smooth time periodic perturbation
\[
H_\eps(\th,p,t)=H_0(p)+\eps H_1(\th,p,t),\quad t\in \T=\R/\T.
\]
We study Arnold diffusion for this system, namely,
existence of orbits $\{(\th,p)(t)\}_t$ such that
\[
|p(t)-p(0)|>O(1) \quad \textup{independently of }\eps.
\]

Integer relations $\vec k_1 \cdot \partial_p H_0  +k_0=0$ with
$\vec k=(\vec k_1,k_0) \in (\Z^n \setminus 0) \times \Z$ and $\cdot$
being the inner product define {\it one-dimensional resonances}.
Under the condition that Hessian of $H_0$ is non-degenerate,
these resonances define smooth hyper-surfaces embedded into
action space
$$
\Gm_{\vec k}=\{p\in B^n:\ \vec k_1 \cdot \partial_p H_0 +k_0=0\}.
$$
If one intersects $k$ linearly independent resonances
$\{\vec k_j\}_{j=1}^n$, we get a {\it $k$-dimensional resonance}
$\cap \Gm_{\vec k^j}$, which is defined by an $(n-k)$-dimensional
surface in $B^n$.

\subsection{Apriori unstable systems}

In the case $n=2$ Arnold proposed the following example
\[
H(I\,\phi,q,p,t)=\dfrac{I^2}{2}+\dfrac{p^2}{2}+
\eps(1-\cos q)(1+\mu (\sin \th+\sin t )).
\]
The feature of this example is that it has a $3$-dimensional
normally hyperbolic invariant  cylinder (NHIC). This allows
in a sense to reduce $5$-dimensional dynamics to
a $3$-dimensional one. There is a rich literature on Arnold
example and we do not intend to give extensive list of references;
we mention \cite{AKN, BB, Be4, Bs1, Zha}, and references therein.

This example gave rise to a family of
examples of systems of $n+1/2$ degrees of freedom of the form
\[
H_\eps(I,\phi,p,q,t)=H_0(I)+K_0(p,q)+\eps H_1(I,\phi,p,q,t),
\]
where $(q,p) \in \T^{n-1}\times \R^{n-1},\ I\in \R,\ \phi, t \in \T$.
Moreover, the Hamiltonian $K_0(p,q)$ has a saddle fixed point
at the origin and $K_0(0,q)$ attains its strict maximum at $q=0$.
For small $\eps$ this system has a $3$-dimensional NHIC $\Lb$.

For $n=2$ systems of this type were successfully studied by
different groups. Two groups were using deep geometric methods.

-- In \cite{DH, DGLS, DLS} the authors carefully analyze two types
of dynamics induced on the cylinder $\Lb$. These two dynamics
are given by so-called inner and outer maps.

--- In \cite{T1,T2} a return ({\it separatrix}) map along invariant of
$\Lb$ is constructed. A detailed analysis of this separatrix map
gives diffusing orbits.

The other two groups \cite{Be1,CY1} are inspired and influenced by
Mather variation method \cite{Ma1,Ma2} and build diffusing orbits
variationally. We essentially rely on their technique in this paper.

In the case $n>2$ Treshev \cite{T1,T2} and Cheng-Yan \cite{CY2}
also managed to prove diffusion. In Treschev's paper he even
showed existence of orbits with an optimal diffusion speed
$\sim |\ln \eps|/\eps$. Other examples of Arnold diffusion can be found in
\cite{Bs2,Bo,BK,KL1,KL2,KLS,KS,KZZ,LM,MS,Mo,Zhe}.

\subsection{Dynamics along a single resonance for $n=2$
and crumpled invariant cylinders}

Pick a single resonant line $\Gm_{\vec k} \subset B^2$.
Notice that on a dense set in $\Gm_{\vec k}$ there is
an additional resonant relation. If one deletes a finite number
of such additional resonant relations with relatively small $k$'s
usually called {\it double resonances},
then on each segment between consecutive deleted points one can
hope to find a ``nice'' smooth NHIC. This turns out to be {\it wrong}\,!
However, one can indeed find a NHIC whose regularity blows up
as $\eps \to 0$. Moreover, one can use this cylinder for
Arnold diffusion. This is {\it a new phenomenon discovered in this paper.}

\subsection{Dynamics along a codimension one resonance for $n>2$
and crumpled invariant cylinders}

Pick a codimension one (or dimension $n-1$) resonant line
$\Gm=\cap_{j=1}^{n-1} \Gm_{\vec k_j} \subset B^n$ with
$\{\vec k_j\}_{j=1}^{n-1}$ being linearly independent. As before
on a dense set in $\Gm$ there is an additional resonant relation.
However, qualitatively the picture as in the case $n=2$.
Namely, if one deletes a finite number of such additional resonant
relations with relatively small $k$'s, then on each segment of $\Gm$
between consecutive deleted points there is a {\it crumpled
$3$-dimensional NHIC} $\Lb$. It is a crumpled in the sense that
{\it its regularity blows up as} $\eps \to 0$.  With some efforts
this allows to reduce dynamics to $3$-dimensional one and
essentially reduce the proof to the twist maps case.

\subsection{Main result}
We study dynamics near a resonance of
codimension one, i.e. near a segment in $B^n$. For any
resonance of codimension one there is an integer linear
symplectic transformation which brings integer vectors
$k_1,\dots,k_{n-1}\in \Z^n$, defining the resonance, to
the form $k_j=(0,\cdots,1_j,0,\cdots,0)$. Since we are
interested in a local property assume that a resonance,
denoted $\Gm$, of codimension one is of the following form:
\[
(\partial_{p_1} H_0(p),\cdots, \partial_{p_{n-1}} H_0(p))
=(\dot \th_1, \cdots ,\dot \th_{n-1})=0 \quad
\textup{ for }\eps=0.
\]
In the case $H_0(p)=\frac 12 \sum_{j=1}^n p_j^2$ we have
$\Gm=\{(p_1,\cdots,p_{n-1})=0\}$. Thus, it is naturally
parametrized by $p_n$.

Consider the space of $C^r$ perturbations
$C^r(\T^n \times B^n \times \T,\R)$ with a natural $C^r$ norm
given maximum of all partial derivatives of order up to $r$.
 Denote by $S^r$ the unit sphere in this space.

\begin{thm}\label{main} For $r\ge 4$, there is an open and
dense set ${\mathcal U} \subset S^r$, a nonnegative function
$l:S^r \to \R_+$ with $l|_{\mathcal U}>0$ and a positive function
$\eps_0=\eps_0(H_1)$, we write ${\mathcal V}=\{\epsilon H_1:
H_1\in {\mathcal U} ,\ \ 0<\eps<\eps_0\}$. We have that, for an open
and dense set of $\epsilon H_1\in {\mathcal V}$ the Hamiltonian
system $H_\epsilon=H_0 + \epsilon H_1$  has an orbit
$\{(\th,p)(t)\}_t$ whose action component
\[
\|p(T)-p(0)\|>l(H_1).
\]
Moreover, for all $0<t<T$ the action component $p(t)$
stays close to the codimension one resonance $\Gm$.
\end{thm}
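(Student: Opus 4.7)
The plan is to localize the analysis in a neighborhood of the codimension-one resonance segment $\Gm$, use an averaging normal form to expose a slow subsystem of pendulum type in the $(n-1)$ fast transverse directions, construct a $3$-dimensional normally hyperbolic invariant cylinder $\Lb$ that sits over $\Gm$ and carries essentially all the interesting dynamics, and finally run a Mather-type variational argument on $\Lb$ to produce a trajectory whose action coordinate drifts along $\Gm$ by an $\eps$-independent distance $l(H_1)$. The global orbit will be obtained by concatenating diffusion along the open segments of $\Gm$ (where the cylinder exists) with separate local arguments at the finitely many deleted double-resonance points.

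\textbf{Normal form and crumpled NHIC.} First I would perform a resonant normal form that eliminates the fast angles $\th_1,\dots,\th_{n-1}$ to some order in $\eps$, producing a Hamiltonian of the shape $H_0(p)+\eps Z(\th_1,\dots,\th_{n-1},p_n,t)+O(\eps^{1+\dt})$ on each open subsegment of $\Gm$ bounded by the excluded strong double resonances. For the unperturbed averaged system the hyperbolic critical set of $Z$ in the transverse angles sweeps out a smooth $3$-dimensional manifold along $\Gm$, and for $\eps>0$ this manifold persists as a normally hyperbolic invariant cylinder $\Lb$. The catch is that as one approaches a deleted double-resonance point the hyperbolicity of $Z$ in the transverse directions degenerates, so the ratio of normal to tangential rates blows up and the $C^k$-norm of $\Lb$ cannot be bounded uniformly in $\eps$; this is the crumpling phenomenon announced in the introduction. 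I would construct $\Lb$ with regularity precisely matched to the local hyperbolic rates, extending the methods of \cite{Be3} and \cite{KZZ} from the two-degree-of-freedom setting to the present higher-dimensional setting, and showing that $\Lb$ nevertheless exists, is invariant, and has large (uniform in $\eps$) tangential extent along $\Gm$.

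\textbf{Variational diffusion along $\Lb$.} Once $\Lb$ is in hand, the restricted dynamics look, modulo lower-order terms, like a twist map on a $3$-dimensional cylinder, so I would apply the variational methods of Bernard \cite{Be1} and Cheng--Yan \cite{CY1,CY2}. Concretely: parametrize a curve of first cohomology classes $c(s)$ along $\Gm$, and at each $s$ study the Mather/Aubry/Ma\~n\'e sets $\sM(c(s))$, $\A(c(s))$, $\N(c(s))$ of the restricted Tonelli-type problem. The open-and-dense set $\mU\subset S^r$ will be precisely the set of perturbations for which (i) the reduction to the cylinder goes through, (ii) along $c(s)$ the Aubry sets are generic (either a single hyperbolic minimizer, or a finite union with prescribed transverse intersections of stable/unstable manifolds), and (iii) the forcing relation of \cite{Be1} holds between consecutive classes. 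Shadowing of pseudo-orbits in the Ma\~n\'e set then produces a genuine orbit whose action component crosses the full segment of $\Gm$ modulo the double-resonance gaps, and separate, by-now-standard local arguments at the finitely many double-resonance points close the remaining gaps to give the global diffusing orbit.

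\textbf{Main obstacles.} The principal difficulty is step two: the cylinder $\Lb$ is crumpled, so the classical normally hyperbolic persistence theorem does not give a cylinder of uniform smoothness, and one must carry out the entire subsequent analysis with a $\Lb$ whose $C^r$-norm degenerates as $\eps\to 0$. This feeds back into step three, because the variational method (semi-concavity estimates, barrier functions, construction of pseudographs) a priori requires a smooth ambient cylinder; verifying that the forcing relation, the genericity of Aubry sets, and the shadowing lemmas all survive the reduced regularity is where most of the work lies. The second delicate point is the uniformity in $\eps$ of the quantitative constant $l(H_1)$, which forces one to control the length of the reachable segment of $\Gm$ independently of $\eps$, and hence to track the crumpling rate of $\Lb$ quantitatively as one approaches the excluded double resonances. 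Genericity of $\mU\subset S^r$ is then standard transversality, applied class-by-class along the finitely many open subsegments of $\Gm$.
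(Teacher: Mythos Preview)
Your three-step outline (normal form $\to$ NHIC $\to$ variational diffusion) matches the paper's architecture, but there is a substantive misreading of scope. You propose to ``close the remaining gaps'' at the deleted double-resonance punctures by ``by-now-standard local arguments.'' The paper does no such thing, and indeed crossing double resonances is a genuinely harder problem not addressed here. The theorem only asserts $\|p(T)-p(0)\|>l(H_1)$ for \emph{some} positive $l(H_1)$; the paper takes $l(H_1)$ comparable to the length of a \emph{single} passage segment between consecutive punctures (see Remark~\ref{length}), and the entire diffusion orbit lives over that one segment. Your gap-closing step is therefore both unnecessary for the stated result and not available by standard methods.

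Two smaller corrections. First, you locate the crumpling as a degeneration ``as one approaches a deleted double-resonance point,'' but in the paper the crumpling is a global feature along the whole cylinder: the estimate on $\partial\Theta^s/\partial p^f$ in Theorem~\ref{nhic-mult} carries a factor of order $\sqrt{\delta/\eps}$, arising from the scale mismatch between the slow $(\theta^s,p^s)$ variables (hyperbolic at rate $\sqrt{\eps}$) and the center variable $p^f$. This has nothing to do with proximity to additional resonances; the punctures are excluded so that the \emph{normal form} is valid, not because hyperbolicity of $Z$ degrades there. Second, and relatedly, the low regularity of $\Lb$ does not obstruct the variational step in the way you anticipate: once the Aubry and Ma\~{n}\'e sets are localized on the cylinder, what the forcing machinery of \cite{Be1} actually needs is that they project one-to-one onto $\theta^f$ with Lipschitz inverse (Theorem~\ref{graph}), and this projected-graph property survives the crumpling. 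Finally, a minor slip: the averaged term is $Z(\theta^s,p)$, a function of all of $p$ and independent of $t$, since one averages over both $\theta^f$ and $t$.
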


\begin{rmk} This Theorem provides a form of Arnold diffusion for
generic Hamiltonian systems. The type of generic condition in
Theorem~\ref{main} is a version of Mather's cusp residue condition
introduced in \cite{Ma3}.
\end{rmk}

The present work is in large part inspired by the work of Mather \cite{Ma3,Ma4,Ma5}.
In \cite{Ma3}, Mather announced a much stronger version of Arnold diffusion for $n=2$
(the system is time-periodic hence the degree of freedom is $2\frac12$).
The proof of Mather's result is partially written (see \cite{Ma4}),
and he has given lectures about some parts of the proof \cite{Ma5}.
One of the ideas underlying his proof is to construct diffusion along a segment
of a resonance and away from other low order resonances. Conceptually, the proof
of our result has similar features to parts of Mather's proof \cite{Ma4}
for single resonances. The novelty of our approach is the use of normal form
theory and construction of normally hyperbolic cylinders in
an \emph{a priori stable} setting.
Application of normal forms to construct normally $3$-dimensional
hyperbolic invariant cylinders in apriori stable situation in $3$ degrees
of freedom is proposed in \cite{KZZ}.  Independently in the case of
arbitrary degrees of freedom it is proposed in \cite{Be3}. In the latter
it is shown that such cylinders have length independent of $\epsilon$.

\subsection{Plan of the proof}
The proof of this Theorem proceeds in three steps.

{\it Step 1.} Build a normal form for $H_\eps$
for for $p$ near $\Gm$. In section~\ref{sec:normal-form}
 we prove the existence of a normal form, which takes
a particular nice form along subsegments of $\Gamma$,
which we will call passage segments, defined in the
next section. The length and choice of the passage
segments depends on $H_0$ and $H_1$ only.

{\it Step 2.} For $H_1\in {\mathcal U}$, we establish existence
 of finitely many  $3$-dimensional normally hyperbolic cylinder
along  $\Gm$. This is discussed in Section~\ref{sec:NHIC}.

{\it Step 3.} For a generic perturbation, we show that there
exists diffusion orbit along a passage segment, using the
normally hyperbolic cylinders. This steps uses variational methods
of Bernard \cite{Be1} and of Cheng-Yan \cite{CY1, CY2} which
are based on ideas of Mather (see \cite{Ma4}).
These constructions are discussed in
Section~\ref{sec:localization} and Section~\ref{sec:variational}.

\section{Notations and terminology}
\label{notations}

We denote $\theta^s=(\theta_1, \cdots, \theta_{n-1})$, $p^s=(p_1, \cdots, p_{n-1})$ and $\theta^f=\theta_n$, $p^f=p_n$. These are the slow-fast variables associated to the resonance $\Gamma=\{\partial_{p^s}H_0(p)=0\}$. It is natural to use $p^f$ as a parameter for $\Gamma$, i.e.  we may write $\Gamma\cap B=\{p_*(p^f)=(p^s_*(p^f), p^f), p^f\in [a_{min}, a_{max}]\}$.
\[
Z(\theta^s, p):= \iint H_1(\th^s,p^s,\th^f,p^f,t)\, d\th^f\, dt.
\]
If the perturbation $H_1(\theta,p,t)$
is expanded as
$$
H_1(\theta,p,t)=H_1(\theta_s,\theta_f,p,t)=
\sum_{k^s\in \Zm^{n-1},k^f\in \Zm,l\in \Zm} h_{[k^s,k^f,l]}(p)
e^{2i\pi(k^s\cdot \theta^s+k^f\cdot \theta^f+l\cdot t)},
$$
 then
$$
Z(\theta^s,p)=
\sum_{k^s} h_{[k^s,0,0]}(p)
e^{2i\pi(k^s\cdot \theta^s)}.
$$

We would like to impose the following set of non-degeneracies and
notations. Consider the function $Z(\theta^s, p_*(p^f))$ as a family of functions on $\T^{n-1}$ parametrized by $p^f$.

Call a value $p^f$ on $\Gm$ \emph{regular} if $Z(\theta^s, p_*(p^f))$ has a unique global maximum on $\T^s \ni \th^s$ at some $\th^s_*=\th^s(p^f)$. We say the maximum is non-degenerate if the Hessian of $Z$ with respect to $\theta^s$ is strictly negative definite.

Call a value $p^f$ on $\Gm$ \emph{bifurcation} if $Z(\theta^s, p_*(p^f))$ has exactly two global maxima on $\T^s \ni \th^s$ at some
$\th^s_1=\th^s_1(p^f)$ and $\th^s_2=\th^s_2(p^f)$.

Call a regular $p^f$ on $\Gm$ \emph{non-degenerate} if the unique maximum is non-degenerate. If $p^f$ is a bifurcation, it is called non-degenerate if both maxima are non-degenerate, furthermore, the values at these maxima moves with different speed with respect to the parameter $p^f$ Otherwise, it is called {\it degenerate}.

The generic condition that defines ${\mathcal U}\subset S_r$ is a higher dimensional version of the conditions (C1)-(C3) given by Mather \cite{Ma3}. These conditions may be described as follows: Each value $p^f\in [a_{min}, a_{max}]$ is a non-degenerate regular or bifurcation point. Note that the non-degeneracy condition implies that there are at most finitely many bifurcation points. Let $a_1<\cdots<a_{s-1}$ be the set of bifurcation points in the interval $(a_{min}, a_{max})$, and consider the partition of the interval $[a_{min}, a_{max}]$ by  $\{[a_j, a_{j+1}]\}_{j=0}^{s-1}$.  Here we give an explicit quantitative version of the above condition: There exists $\lambda>0$ such that
\begin{enumerate}
\item[{[G0]}] There are smooth functions $\theta^s_j(p^f):[a_j-\lambda, a_{j+1}+\lambda]\to \T^n$, $j=0, \cdots, s-1$, such that
for each $p^f\in [a_j-\lambda, a_{j+1}+\lambda]$,  $\theta^s_j(p^f)$ is a local maximum of $Z(\theta^s, p_*(p^f))$ satisfying
$$
\lambda I \le -\partial^2_{\theta^s \theta^s}Z(\theta^s_j, p)\le  I,
$$
where $I$ is the identity matrix.
\item[{[G1]}]
For $p^f\in (a_j, a_{j+1})$, $\theta^s_j$ is the unique maximum for $Z$. For $p^f=a_{j+1}$, $\theta^s_j$ and $\theta^s_{j+1}$ are the only maxima.
\item[{[G2]}] At  $p^f=a_{j+1}$ the maximum value of $Z$ has different derivatives with respect to $p^f$, i.e.
$$\frac{d}{d p^f}Z(\theta^s_j(a_{j+1}),p_*(p^f)) \ne  \frac{d}{d p^f}Z(\theta^s_{j+1}(a_{j+1}),p_*(p^f)). $$
\end{enumerate}

\begin{thm}\label{open-dense}
  The set  ${\mathcal U}$ of functions $H_1\in S_r$ such that the corresponding $Z(\theta^s, p)$ satisfies  conditions [G0]-[G2] is open and dense.
\end{thm}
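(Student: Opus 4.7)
The plan is to reduce conditions [G0]--[G2] on $H_1$ to a genericity statement about the one-parameter family of smooth functions $F_{p^f}(\th^s):=Z(\th^s, p_*(p^f))$ on $\T^{n-1}$ parametrized by $p^f\in[a_{min},a_{max}]$. The map $H_1\mapsto Z$ is linear, and any perturbation of $H_1$ by a smooth function $g(\th^s,p^f)$ independent of $\th^f,t$ and of $p^s$ shifts $F_{p^f}$ by $g(\th^s,p^f)$ pointwise; the image of $H_1\mapsto F$ is therefore $C^r$-dense in $C^r(\T^{n-1}\times[a_{min},a_{max}])$. Hence the theorem reduces to showing that the families $F$ satisfying the analogues of [G0]--[G2] form an open and dense subset, after which the conclusion for $H_1$ is pulled back.

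For openness, I start from $H_1$ satisfying [G0]--[G2] with finite bifurcation set $\{a_1,\dots,a_{s-1}\}$, smooth branches $\th^s_j(p^f)$, and constant $\lb>0$. The implicit function theorem applied to $\partial_{\th^s}Z(\th^s,p_*(p^f))=0$ together with the Hessian bound $\lb I\le -\partial^2_{\th^s\th^s}Z\le I$ shows that each branch persists smoothly under a $C^2$-small perturbation of $H_1$, with the bound preserved after replacing $\lb$ by $\lb/2$. Condition [G1] on compact subintervals of $(a_j,a_{j+1})$ reduces to a strict inequality $Z(\th^s_j(p^f),p_*(p^f))>\sup_{|\th^s-\th^s_j|\ge \delta} Z(\th^s,p_*(p^f))$ and is preserved by continuity and compactness. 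Near each $a_j$, the two smooth scalar functions $p^f\mapsto Z(\th^s_i(p^f),p_*(p^f))$, $i=j-1,j$, cross transversally by [G2]; this crossing persists and yields a unique nearby bifurcation point for any sufficiently small perturbation, while [G2] itself is $C^1$-open.

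For density, add a perturbation $g(\th^s,p^f)$ as above. Applying Thom multijet transversality to the two-jet of $F_{p^f}$ in the $\th^s$ direction shows that, for a residual set of $g$, the family $F_{p^f}$ enjoys the following properties: (i) $F_{p^f}$ is Morse on $\T^{n-1}$ for all but finitely many $p^f\in[a_{min},a_{max}]$, with an $A_2$ (birth--death) singularity at each exceptional value; (ii) for all but finitely many $p^f$ the global maximum is attained at a unique non-degenerate point, and at each exceptional value exactly two non-degenerate local maxima tie for the global maximum; (iii) at each such tie, the two common values have distinct $p^f$-derivatives; (iv) the exceptional parameter values in (i) and (ii) are disjoint and lie in $(a_{min},a_{max})$. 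Combined with openness, these residual conditions become open and dense. Defining each $\th^s_j$ as the smooth continuation of the global-maximum graph across its two adjacent bifurcation points then produces [G0]--[G2] with a uniform $\lb>0$.

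The main obstacle is the control of the \emph{global} maximum, which is a continuous but non-smooth function of $p^f$. Preventing triple coincidences of critical values at the global maximum, preventing an $A_2$ birth--death parameter from coinciding with a global-max tie, and arranging [G2] transversality at each unavoidable double coincidence all require a simultaneous multijet transversality argument on $\T^{n-1}\times\T^{n-1}\times[a_{min},a_{max}]$ together with a triple-point variant, and one must also check that the perturbation space $\{g(\th^s,p^f)\}$ hits enough of the relevant jet bundle to apply transversality. The dimension count is favorable---a single parameter $p^f$, with each forbidden stratum of codimension at least two---so once the setup is arranged the arguments become standard.
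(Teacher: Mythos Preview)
Your approach is correct and follows essentially the same outline as the paper: openness is argued directly from the implicit function theorem and the stability of transversal crossings, while density is reduced to a short list of transversality conditions on the one-parameter family $p^f\mapsto Z(\cdot,p_*(p^f))$. The paper packages these as auxiliary conditions [T0]--[T2] (nondegenerate local maxima, at most two critical points sharing a critical value, distinct $p^f$-derivatives at such pairs) and simply states their genericity as a separate proposition without proof; your multijet-transversality sketch supplies precisely the argument the paper omits.
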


The proof of Theorem~\ref{open-dense} will be given in Appendix~\ref{sec:generic}.

Write $\omega(p)=\partial_pH_0(p)=(\partial_{p^s}H_0, \partial_{p^f}H_0)$, clearly for any $p\in \Gamma$ we have that $\omega(p)=(0, \partial_{p^f}H_0))$. We say that $p^f$ has an additional resonance if there exists integers $k_n, l$ such that $k_n\partial_{p^f}H_0(p)+l=0$. Given a large integer $K$, let
\begin{equation}\label{eq:sigma} \Sigma_K=\{ p\in \Gamma\cap B; \quad \exists k_n, l \in \Z, \, |k_n|, |l|\le K, \, k_n\cdot \partial_{p^f}H_0(p)+l=0\}.
\end{equation}
 Given $H_1\in {\mathcal U}$, we will define a small $\delta=\delta(H_1, n, r)>0$ and integer $K=K(\delta, n,r)$ and call the elements of  $\Sigma_K$  \emph{punctures}. We need to exclude a neighborhood of the punctures from $\Gamma\cap B$. Let $U_{3\epsilon^{\frac16}}(\Sigma_K)$ stand for $3\epsilon^{\frac16}$ neighborhood of $\Sigma_K$,  then $\Gamma\cap B \setminus U_{3\epsilon^{\frac16}}(\Sigma_K)$ is a collection of disjoint segments. Each of these segments is called a \emph{passage segment}. On a neighborhood of each passage segment there exists a convenient normal form for the Hamiltonian $H_\epsilon$.

\section{Normal forms}\label{sec:normal-form}
Let $\Gamma=\{(p^s=p_*(p^f))\}$ be
the resonant line of equation  $\partial_{p^s}H_0=0$.
For  $p\in \Gamma$ we have  $\omega(p)=(0, \partial_{p^f}H_0)$.
 We say that
 $p$ has an additional resonance if the remaining frequency
$\partial_{p^f}H_0(p)$ is rational.
In order to reduce the system to an appropriate normal form,
we must remove some additional resonances.
More precisely, let $\mD(K,s)\subset B$ be the set of momenta $p$ such that
\begin{itemize}
 \item $\|\partial_{p^s} H_0(p)\|\leq s$, and
\item $|k^f\partial_{p^f} H_0(p) +k^t|\geq 3Ks\quad$ for each $(k^f,k^t)\in \Zm^2$ satisfying
$\max(k^f,k^t)\in ]0,K]$.
\end{itemize}

\begin{thm}\label{normal-form}[Normal Form]
Let $H_0(p)$ be a $C^4$ Hamiltonian.
For each $\delta \in ]0,1[$, there exists positive parameters $K_0, \e_0, \beta$
such that, for each $C^4$ Hamiltonian $H_1$ with $\|H_1\|_{C^4}\leq 1$ and each
$
K\geq K_0,\epsilon \leq \epsilon_0,
$
there exists
a $C^2$ change of coordinates
$$
\Phi :\Tm^n\times B \times \Tm
\lto \Tm^n\times \Rm^n\times \Tm
$$
satisfying $\|\Phi-id\|_{C^0}\leq \sqrt{\e} $ and  $\|\Phi-id\|_{C^2}\leq \delta$
and such that, in the new coordinates, the Hamiltonian $H_0+\e H_1$
takes the form
\begin{equation}\label{eq:normal-form}
 N_\epsilon =  H_0(p) + \epsilon Z(\theta^s, p) + \epsilon R(\theta, p, t),
\end{equation}
with   $\|R\|_{C^2}\le \delta$
on $\Tm^n\times \mathcal{D}(K,\beta \e^{1/4})\times \Tm$.
We can take $K_0=c\delta^{-2}, \beta=c\delta^{-1-n}, \e_0=\delta^{6n+5}/c$,
where $c>0$ is some constant depending only on $n$ and $\|H_0\|_{C^4}$.
\end{thm}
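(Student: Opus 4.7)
The plan is a single-step averaging procedure implemented by a Hamiltonian flow, with the small-divisor estimates carried out on the region $\mathcal{D}(K,\beta\epsilon^{1/4})$ where the undesired resonant combinations are bounded away from zero.

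Step 1 (cohomological equation). Expand $H_1$ in Fourier series in $(\theta,t)$ and split
\[
H_1 \;=\; Z(\theta^s,p)\;+\;H_1^{nr,K}\;+\;H_1^{>K},
\]
where $H_1^{nr,K}$ collects the modes with $\max(|k^s|,|k^f|,|l|)\le K$ and $(k^f,l)\ne 0$, and $H_1^{>K}$ is the remaining tail. Define the generating function $\chi$ by prescribing its Fourier coefficients
\[
\hat\chi_{[k^s,k^f,l]}(p)\;=\;\frac{h_{[k^s,k^f,l]}(p)}{2\pi i\bigl(k^s\cdot\partial_{p^s}H_0+k^f\partial_{p^f}H_0+l\bigr)}
\]
on the same range, and set the other modes to zero, so that $\{H_0,\chi\}+\partial_t\chi=H_1^{nr,K}$. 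Let $\Phi$ be the time-$\epsilon$ Hamiltonian flow of $\chi$ viewed as a symplectic change of coordinates on the extended phase space $\T^n\times\R^n\times\T$; this absorbs the explicit $t$-dependence in the usual way.

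Step 2 (small divisors). On $\mathcal{D}(K,s)$ with $s=\beta\epsilon^{1/4}$ one has $|k^s\cdot\partial_{p^s}H_0|\le Ks$ for $|k^s|\le K$, and by the very definition of $\mathcal{D}(K,s)$, $|k^f\partial_{p^f}H_0+l|\ge 3Ks$ for the modes entering $\chi$, so the denominator is bounded below by $2Ks$. Combined with the Jackson-type approximation $\|H_1^{>K}\|_{C^2}\lesssim K^{-2}\|H_1\|_{C^4}$ and polynomial growth $O(K^{n+1})$ of the number of retained modes, this yields a $C^2$ estimate $\|\chi\|_{C^2}\lesssim K^{\alpha}/s$ for some explicit exponent $\alpha=\alpha(n)$. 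The corresponding bounds $\|\Phi-\mathrm{id}\|_{C^0}\lesssim\epsilon\|\chi\|_{C^1}$ and $\|\Phi-\mathrm{id}\|_{C^2}\lesssim\epsilon\|\chi\|_{C^3}$ then force the choices $K_0\sim\delta^{-2}$ and $\beta\sim\delta^{-1-n}$ in order to get $\|\Phi-\mathrm{id}\|_{C^0}\le\sqrt\epsilon$ and $\|\Phi-\mathrm{id}\|_{C^2}\le\delta$.

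Step 3 (transformed Hamiltonian). The Lie-series identity on the extended phase space gives
\[
H_\epsilon\circ\Phi \;=\; H_0 \;+\; \epsilon Z \;+\; \epsilon H_1^{>K} \;+\; \epsilon^2 R_0,
\]
the non-resonant modes of order $\le K$ having been cancelled exactly by $\{H_0,\chi\}+\partial_t\chi$; the remainder $R_0$ is the second-order term of the Lie expansion and is controlled in $C^2$ by $\|\chi\|_{C^3}^2$ and similar quantities. Setting $R:=H_1^{>K}+\epsilon R_0$, the tail is bounded by $\|H_1\|_{C^4}/K^2\le \delta/2$ once $K\ge c\delta^{-2}$, while the second-order contribution is of the order $\epsilon\cdot K^{2\alpha}/s^2=\epsilon^{1/2}\cdot K^{2\alpha}/\beta^2$, which is $\le\delta/2$ for $\epsilon\le\epsilon_0=\delta^{6n+5}/c$.

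The main difficulty is the simultaneous calibration of the four small parameters $(\delta,K,\beta,\epsilon)$. The derivative loss from one application of the cohomological equation is precisely why the hypothesis $r\ge 4$ appears; the polynomial blow-up of the small-divisor bound in $K$ is what dictates the scaling $s\sim\epsilon^{1/4}$ (equivalently, $Ks\gg\sqrt\epsilon$ after raising everything to the right powers); and the choice $\beta\sim\delta^{-1-n}$ is exactly what is needed so that $\mathcal{D}(K,\beta\epsilon^{1/4})$ still contains a fixed neighborhood of the passage segments of $\Gamma$, where the next step of the program (construction of the crumpled $3$-dimensional NHIC) will be carried out.
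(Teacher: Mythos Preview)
Your outline has the right skeleton (one averaging step plus a Lie-series remainder), but there are two genuine gaps, and both are precisely what the paper's proof is engineered to handle.

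\textbf{The change of coordinates is not globally defined.} Your $\chi$ has denominators $k^s\cdot\partial_{p^s}H_0+k^f\partial_{p^f}H_0+l$ that are only bounded away from zero on $\mathcal{D}(K,\beta\epsilon^{1/4})$; elsewhere in $B$ they vanish on dense families of hypersurfaces, so $\chi$ is singular and the time-$\epsilon$ flow $\Phi$ is not a diffeomorphism of $\T^n\times B\times\T$ as the theorem demands. The paper cures this by inserting smooth cutoffs $\rho_k(J)=\rho\bigl((k\cdot\partial_JH_0)/(\beta\epsilon^{1/4}[k])\bigr)$ into each mode of the generating function, so that the factor $1-\rho_k$ kills the term before the denominator can vanish; the resulting $G$ is globally $C^2$ and so is $\Phi$. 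One then checks \emph{a posteriori} that on $\mathcal{D}(K,\beta\epsilon^{1/4})$ the cutoffs equal $1$ on the modes with $(k^f,l)=(0,0)$ and $0$ on the others, so that the ``resonant'' remainder $R_1$ reduces to $Z$ there.

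\textbf{The tail estimate fails at regularity $C^4$.} Your bound $\|H_1^{>K}\|_{C^2}\lesssim K^{-2}\|H_1\|_{C^4}$ is not correct: the Fourier expansion is over $\Z^{n+1}$ (angles and time), and the honest estimate is $\|\Pi_K^+g\|_{C^l}\lesssim K^{(n+1)+l+1-r}\|g\|_{C^r}$, which for $r=4$, $l=2$ has exponent $n\ge 1$ and does not decay at all. This is exactly why the paper's direct averaging (Corollary~3.3) needs $r\ge n+5$. To reach the stated result under the sole hypothesis $H_1\in C^4$, the paper uses a Moser smoothing step: approximate $H_1$ by an analytic $H_1^*$ with $\|H_1-H_1^*\|_{C^3}$ small and $\|H_1^*\|_{C^{2n+5}}$ large but controlled, run the averaging on $H_0+\epsilon H_1^*$ at regularity $r_2=2n+5$, and then absorb both $Z-Z^*$ and $(H_1^*-H_1)\circ\Phi$ into $R$. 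The exponents $K_0\sim\delta^{-2}$, $\beta\sim\delta^{-1-n}$, $\epsilon_0\sim\delta^{6n+5}$ arise from balancing these three error sources with the choice $r_2=2n+5$, not from the one-line calibration you sketch (indeed, even your own tail bound $K^{-2}\le\delta$ would give $K_0\sim\delta^{-1/2}$, not $\delta^{-2}$).
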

The proof actually builds   a symplectic diffeomorphism
$\tilde \Phi$ of $\Tm^{n+1}\times \Rm^{n+1}$
of the form
$$
\tilde \Phi (\theta,p,t,e)=\big(\Phi(\theta, p,t),e+f(\theta,p,t)\big)
$$
and such that
$$
N_{\e}+e=(H_{\e}+e)\circ \tilde \Phi.
$$
We have the estimates $\|\tilde \Phi-id\|_{C^0}\leq \sqrt{\e} $ and
 $\|\tilde \Phi-id\|_{C^2}\leq \delta$.

\begin{rmk}\label{length}[Length of passage segment]
  On the interval, the distance between 2 adjacent rationals with denominator at most $K$ is $\frac{1}{K^2}$. It follows that the distance between $p^f_1, p^f_2\in \Sigma_K$ (see \eqref{eq:sigma}) is at least $\|\partial^2H_0^{-1}\|\frac{1}{K^2}\ge c (c(n,r)^{-1}\delta)^{\frac{4}{r-3}}$, assuming that $\|\partial^2 H_0^{-1}\|$ is bounded by some universal constant.
\end{rmk}

To prove Theorem~\ref{normal-form} we proceed in $3$ steps.
We first obtain a global normal form $N_{\e}$
adapted to all resonances.
 We then show that this normal form takes the desired form on the domain
$
\mathcal{D}_{K,\e}.
$
 However, the averaging procedure lowers smoothness,
in particular, the technique requires the smoothness $r\ge n+5$.
 To obtain a result that does not require this relation between $r$ and $n$,
we use a smooth approximation trick that goes back to Moser.

\subsection{A global normal form adapted to all resonances.}

We first state a result for autonomous systems.
The time periodic version will come as a corollary.
Consider the Hamiltonian $H_\epsilon(\phi, J)=H_0(J)+\epsilon H_1(\phi, J)$, where $(\phi,J)\in \T^m \times \R^m$
(later, we will take $m=n+1$). Let $B=\{|J|\le 1\}$ be the unit ball in $\R^m$. Given any integer vector $k\in \Z^m\setminus\{0\}$, let $[k]=\max\{k_i\}$. To avoid zero denominators in some calculations, we make the unusual convention that $[(0, \cdots, 0)]=1$.
We fix once and for all a bump function  $\rho:\R\to \R$ be a $C^\infty$
such that
$$\rho(x)=
\begin{cases}
  1, & |x|\le 1 \\
  0, & |x|\ge 2
\end{cases}
$$
and $0<\rho(x) < 1$ in between. For each  $\beta>0$ and $k\in \Z^m$,
we define the function
$\rho_k(J)=\rho(\frac{k\cdot \partial_J H_0}{\beta \epsilon^{1/4}[k]})$,
where $\beta>0$ is a parameter.
%
\begin{thm}\label{autonomous}
There exists a constant $c_m>0$, which depends only on $m$, such that the
following holds.
Given :
\begin{itemize}
 \item A $C^4$ Hamiltonian $H_0(J)$,
\item A $C^r$ Hamiltonian
$H_1(\varphi,J)$ with $\|H_1\|_{C^r}=1$,
\item Parameters $r\geq m+4$, $\delta \in ]0,1[$, $\epsilon\in ]0,1[$,
$\beta>0$, $K>0$,
\end{itemize}
satisfying
\begin{itemize}
 \item $K \geq c_m \delta^{\frac{-1}{r-m-3}}$,
\item $\beta \geq c_m (1+\|H_0\|_{C^4})\delta^{-1/2},$
\item $\beta\e^{1/4}\leq \|H_0\|_{C^r}$,
\end{itemize}
there exists a $C^2$ symplectic diffeomorphism $\Phi$ such that,
in the new coordinates, the Hamiltonian $H_{\e}=H_0+\e H_1$ takes the form
$$
H_{\e}\circ \Phi=H_0+\e R_1+\e R_2
$$
with
\begin{itemize}
\item $R_1=\sum_{k\in \Z^m, |k|\le K}\rho_k(J)h_k(J)e^{2\pi i (k\cdot \phi)}$, here $h_k(J)$ is the $k^{th}$ coefficient for the Fourier expansion of $H_1$,
\item $\|R_2\|_{C^r}\le \delta$,
\item  $\|\Phi-id\|_{C^0}\le \delta\sqrt{\e}$ and $\|\Phi-id\|_{C^2}\le \delta.$
\end{itemize}
\end{thm}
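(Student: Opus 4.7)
The plan is to build $\Phi$ as the time-one map $\chi^{\epsilon S}_1$ of the Hamiltonian flow generated by $\epsilon S$, where $S$ solves an appropriate cohomological equation. By the Lie series identity
\[
H_\epsilon\circ\chi^{\epsilon S}_1 \;=\; H_\epsilon+\{H_\epsilon,\epsilon S\}+\tfrac{1}{2}\{\{H_\epsilon,\epsilon S\},\epsilon S\}+\cdots,
\]
the terms of order $\epsilon$ are $\epsilon H_1+\epsilon\{H_0,S\}$, and I would force these to equal $\epsilon R_1$ modulo the high-frequency Fourier tail. This dictates the cohomological equation
\[
\{H_0,S\}\;=\;-\sum_{0<|k|\le K}\bigl(1-\rho_k(J)\bigr)\,h_k(J)\,e^{2\pi i\,k\cdot\phi}.
\]
Expanding $S=\sum_{0<|k|\le K} s_k(J)\,e^{2\pi i\,k\cdot\phi}$ and using $\{H_0,\cdot\}=-\omega(J)\cdot\partial_\phi$, this is solved mode-wise by
\[
s_k(J)\;=\;\frac{(1-\rho_k(J))\,h_k(J)}{2\pi i\,k\cdot\omega(J)},
\]
which is well-defined and smooth because the cutoff $1-\rho_k$ vanishes precisely where $|k\cdot\omega(J)|\le \beta\epsilon^{1/4}[k]$, i.e.\ exactly on the region where the denominator is dangerously small.

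Next I would prove quantitative bounds on $S$ and on $\Phi-\mathrm{id}$. From $\|H_1\|_{C^r}=1$ the Fourier coefficients satisfy $|\partial_J^\alpha h_k(J)|\le C_m[k]^{-(r-|\alpha|)}$ for $|\alpha|\le r$; on $\supp(1-\rho_k)$ one has the Diophantine-type bound $|k\cdot\omega(J)|\ge\beta\epsilon^{1/4}[k]$; and each $J$-derivative of $\rho_k$ contributes a factor of size $(\beta\epsilon^{1/4}[k])^{-1}\|H_0\|_{C^4}$. Multiplying these and summing over $0<|k|\le K$ yields Cauchy-style estimates on $\|\epsilon S\|_{C^s}$ for $s\le 2$, and the hypothesis $\beta\ge c_m(1+\|H_0\|_{C^4})\delta^{-1/2}$ is calibrated precisely so that $\|\epsilon S\|_{C^2}$ is small. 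A Gronwall argument applied to the Hamiltonian flow then shows $\Phi=\chi^{\epsilon S}_1$ is a well-defined $C^2$ symplectic diffeomorphism with $\|\Phi-\mathrm{id}\|_{C^0}\le \delta\sqrt\epsilon$ and $\|\Phi-\mathrm{id}\|_{C^2}\le \delta$.

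Finally, I would identify and estimate
\[
\epsilon R_2\;=\;\epsilon\!\!\sum_{|k|>K}\!h_k(J)\,e^{2\pi i\,k\cdot\phi}\;+\;\int_0^1(1-t)\{\{H_\epsilon\circ\chi^{\epsilon S}_t,\epsilon S\},\epsilon S\}\,dt,
\]
the first summand being the high-frequency Fourier tail of $H_1$ and the second the integral remainder of the Lie series. The tail is controlled via $\sum_{|k|>K}[k]^{s-r}\lesssim K^{s+m-r}$, and requiring smallness in a suitable $C^s$-norm (with $s$ counting the derivatives that still need to be spent on subsequent Poisson brackets) forces exactly $K\ge c_m\delta^{-1/(r-m-3)}$. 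The double-bracket term carries an extra $\epsilon$ factor in amplitude; Cauchy estimates for Poisson brackets against $S$, combined with the smallness of $\|\epsilon S\|_{C^2}$ and the hypothesis $\beta\epsilon^{1/4}\le\|H_0\|_{C^r}$, render it subleading to the tail.

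The main obstacle is the delicate derivative bookkeeping. Each small-divisor cutoff $\rho_k$ costs a factor $1/(\beta\epsilon^{1/4})$ per $J$-derivative, each Poisson bracket costs one derivative, and the Fourier decay $[k]^{-r}$ must beat both the lattice volume $\sim K^m$ of $\{|k|\le K\}$ and the polynomial growth from differentiating $e^{2\pi i k\cdot\phi}$. The explicit hypotheses $\beta\gtrsim\delta^{-1/2}$, $K\gtrsim\delta^{-1/(r-m-3)}$, and the smoothness requirement $r\ge m+4$ are precisely the parameters that close this three-way balance; getting them sharp — as opposed to merely finite — is the technical crux of the proof.
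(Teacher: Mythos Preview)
Your approach is essentially the paper's: define $S$ (the paper calls it $G$) by the same cohomological equation with the same cutoffs, take $\Phi$ to be the time-one map of $\epsilon S$, and split $R_2$ into the high-frequency tail plus a flow remainder. The Fourier/small-divisor/cutoff bookkeeping you sketch is exactly what the paper carries out.

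The one substantive difference is how the flow remainder is organized. The paper does \emph{not} use the second-order Lie remainder; instead it introduces the homotopy $F_t=R_1+R_++t(H_1-R_1-R_+)$, observes that $\partial_tF_t+\{H_0,G\}=0$, and obtains
\[
H_\epsilon\circ\Phi^1=H_0+\epsilon R_1+\epsilon R_++\epsilon^2\int_0^1\{F_t,G\}\circ\Phi^t\,dt,
\]
a \emph{single}-bracket remainder. This matters for the derivative count: bounding $\|\{F_t,G\}\|_{C^2}$ requires only $\|G\|_{C^3}$, hence $\|\rho_k\|_{C^3}$, hence $H_0\in C^4$---precisely the stated hypothesis. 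Your double-bracket remainder (which, incidentally, should read $\{\{H_\epsilon,\epsilon S\},\epsilon S\}\circ\chi^{\epsilon S}_t$, with the composition outside the brackets) at $C^2$ level naively demands $\|S\|_{C^4}$, hence $\|\rho_k\|_{C^4}$, hence $H_0\in C^5$, overshooting by one derivative. You can recover the paper's efficiency by using the cohomological equation to rewrite the inner bracket as $\{H_0,S\}=R_1+R_+-H_1$, collapsing the dominant part back to a single bracket; but this step needs to be made explicit, and your assertion that ``smallness of $\|\epsilon S\|_{C^2}$'' handles the double-bracket term is not sufficient as written.
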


We  now  prove Theorem~\ref{autonomous}.
To avoid cumbersome notations, we will denote by $c_m$ various different constants
depending only on the dimension $m$.
We have the following basic estimates about  the Fourier series of a function $g(\phi, J)$.
Given a multi-index
$\alpha=(\alpha_1, \cdots, \alpha_m)$, we denote  $|\alpha|=\alpha_1+\cdots+ \alpha_m$.

\begin{lem}\label{fourier-est}
  For $g(\phi,J)\in C^r(\T^m\times B)$, we have
  \begin{enumerate}
  \item
If $l\leq r$, we have
$\|g_k(J)e^{2\pi i (k\cdot \varphi)}\|_{C^l}\le [k]^{l-r}\|g\|_{C^r}$.

  \item Let $g_k(J)$ be a series of functions such that
the inequality
$\|\partial_{J^{\alpha}}g_k\|_{C^0}\leq M[k]^{-|\alpha|-m-1}$  holds for each mult-index $\alpha$
with $|\alpha|\le l$, for some $M>0$.  Then, we have \newline 
$\|\sum_{k\in \Z^m}g_k(J)e^{2\pi i(k\cdot\varphi)}\|_{C^l}\le c \kappa_m M$.
  \item Let $\Pi^+_Kg=\sum_{|k|>K}g_k(J)e^{2\pi i (k\cdot \phi)}$. Then for $l\leq r-m-1$,
we have   $\|\Pi^+_kg\|_{C^l}\le \kappa_m K^{m-r+l+1}\|g\|_{C^r}$.
  \end{enumerate}
\end{lem}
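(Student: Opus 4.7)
All three items are standard Fourier estimates on $\T^m$, and the plan is the same in each case: use the defining integral $g_k(J)=\int_{\T^m} g(\phi,J)e^{-2\pi i k\cdot \phi}\,d\phi$, integrate by parts in $\phi$ to convert smoothness of $g$ into decay in $[k]$, and then sum over $\Z^m$ by grouping $k$'s into shells $\{[k]=j\}$, each of cardinality $\asymp m\,j^{m-1}$.

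For item (1), I would integrate by parts in the coordinate $\phi_{j_0}$ with $|k_{j_0}|=[k]$, repeated as many times as needed, and commute with $\partial_J^\beta$. This yields the pointwise bound $\|\partial_J^\beta g_k\|_{C^0}\leq \|g\|_{C^r}/[k]^{r-|\beta|}$ for every $|\beta|\leq r$. Applying Leibniz to $g_k(J)e^{2\pi i k\cdot\phi}$, a mixed derivative of total order $l$ with $a$ components in $\phi$ and $b=l-a$ in $J$ is bounded by $C_m [k]^a\cdot[k]^{b-r}\|g\|_{C^r}=C_m[k]^{l-r}\|g\|_{C^r}$, which is the claimed estimate. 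Item (3) then follows immediately by summing (1) over $\{k:[k]>K\}$ shell by shell: the tail becomes $C_m\|g\|_{C^r}\sum_{j>K} j^{m-1+l-r}$, a convergent $p$-series under the assumption $l\leq r-m-1$, whose tail I bound by an integral comparison to obtain the factor $K^{m-r+l+1}$ and the combinatorial constant $\kappa_m$.

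For item (2) I would repeat the Leibniz step of (1), except that the required decay of $\|\partial_J^\beta g_k\|_{C^0}$ is now supplied directly by hypothesis rather than by integration by parts: a mixed derivative of order $\leq l$ is bounded termwise by $C_m [k]^a\cdot M[k]^{-b-m-1}$, and once summed over $k$ via the shell decomposition the factor $[k]^{-m-1}$ is absorbed by the shell count $[k]^{m-1}$ together with a summability margin, leaving only $c\,\kappa_m M$. The only point that needs care is bookkeeping — tracking the dimensional constants $c_m, \kappa_m$ from shell cardinalities and verifying that the exponents in each hypothesis are strong enough to make the shell sums converge. This is where the parameters $m+1$ in the decay rate of (2) and $l\leq r-m-1$ in (3) enter; beyond that there is no analytic obstacle, as the whole argument is elementary integration by parts plus a geometric-type sum.
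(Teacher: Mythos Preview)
Your proposal is correct and follows essentially the same approach as the paper: integration by parts in the direction $j_0$ with $|k_{j_0}|=[k]$ for item (1), then termwise summation over shells for (2) and (3). The only cosmetic difference is that for (1) the paper writes $g_k(J)e^{2\pi i k\cdot\varphi}=\int_{\T^m}g(\theta+\varphi,J)e^{-2\pi i k\cdot\theta}\,d\theta$ and applies $\partial_\varphi^\alpha\partial_J^\eta$ directly under the integral before integrating by parts $r-l$ times, which avoids differentiating the exponential and hence the bookkeeping of the $(2\pi)^a$ factors you would pick up (though in your approach these cancel against the $(2\pi)^{-(r-b)}$ from integration by parts anyway).
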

\begin{proof}
  1. Let us assume that $k\neq 0$ and take $j$
  such that $k_j=[k]$.
  Let $\alpha$ and $\eta$ be two multi-indices
  such that $|\alpha+\eta|\leq l$.
  Finally, let $b=r-l$, and let $\beta$ be the multi-index
   $\beta=(0,\ldots,0,b,0,\ldots,0)$, where $\beta_j=b$.
We have
$$
g_k(J)e^{2i\pi(k,\varphi)}=
\int_{\Tm^m} g(\theta, J)e^{2i\pi(k,\varphi-\theta)}d\theta
=
\int_{\Tm^m} g(\theta+\varphi, J)e^{-2i\pi(k,\theta)}d\theta,
$$
  hence
\begin{align*}
\partial_{\varphi^{\alpha}J^{\eta}}\big(
g_k(J)e^{2i\pi(k,\varphi)}\big)
&=
\int_{\Tm^m} \partial_{\varphi^{\alpha}J^{\eta}}g(\theta+\varphi, J)e^{-2i\pi(k,\theta)}d\theta,\\
&=
\int_{\Tm^m} \frac{\partial_{\varphi^{\alpha+\beta}J^{\eta}}
g(\theta+\varphi, J)}
{(2i\pi k_j)^b}
e^{-2i\pi(k,\theta)}d\theta.
\end{align*}
  Since $|\alpha+\beta+\eta|\leq r$, we conclude that
   $$
\|g_k(J)   e^{2i\pi(k,\varphi)}\|_{C^l}\leq
\|g\|_{C^r}/(2\pi[k])^b\leq \|g\|_{C^r}[k]^{l-r}.
   $$

2. We have
$\|g_k(J)e^{2i\pi(k\cdot \varphi)}\|_{C^l}\leq
$
$$\|\sum_{k\in \Z^m}h_k(J)e^{2\pi i(k\cdot\varphi)}\|_{C^l} \le \sum_{k\in \Z^m}c_l|k|^{-r+l}M\le c_l \kappa_m M, $$
recall that $\kappa_m=\sum_{k\in \Z^m}|k|^{-m-1}$.

3.
\begin{multline*} \|\Pi^+_Kg\|_{C^2} \le c \sum_{|k|>K} |k|^{-r+2}\|g\|_{C^r} \le c K^{-r+m+3}\sum_{|k|>K}|k|^{-m-1}\|g\|_{C^r}\\
\le c K^{-r+m+3}\kappa_m \|g\|_{C^r}= c \kappa_m K^{-r+m+3} \|g\|_{C^r}.
\end{multline*}
3. Using 1., we get
\begin{align*} \|\Pi^+_Kg\|_{C^l}
&\le
 \sum_{|k|>K} [k]^{l-r}\|g\|_{C^r}
 \le
 \|g\|_{C^r} K^{m-r+l+1}\sum_{|k|>K}[k]^{-m-1}\\
 &\le
 \|g\|_{C^r} K^{m-r+l+1}\sum_{k\in \Zm^m}[k]^{-m-1}.
\end{align*}
\end{proof}

\begin{proof}[Proof of Theorem~\ref{autonomous}]

Let $G(\phi,J)$ be the function that solves the cohomological equation
$$ \{H_0,G\}+H_1=R_1 +R_+,$$
where $R_+=\Pi^+_K H_1$.
Observing that $\rho_k(J)=1$
when $k\cdot \partial_JH_0=0$,
we have the following explicit formula for $G$:
$$ G(\varphi,J)=\sum_{|k|\le K}\frac{(1-\rho_k(J))h_k(J)}{k\cdot \partial_JH_0}e^{2\pi i (k\cdot \phi)} $$
where each of the functions
$(1-\rho_k(J))h_k(J)/(k\cdot \partial_J H_0)$
is extended by continuity at the points where
the denominator vanishes. This function hence
takes the value zero at these points.
$G$ is well defined thanks to the smoothing terms $1-\rho_k$ we introduced, as whenever $k \cdot \partial_J H_0=0$ we also have $1-\rho_k=0$ and that term is considered non-present.

Let $\Phi^t$ be the Hamiltonian flow generated by  $\epsilon G$.
Setting $F_t =  R_1 + R_+ + t(H_1-R_1-R_+)$,
we have the standard computation
\begin{align*}
\partial_t\big((H_0+\e F_t)\circ \Phi^t)\big)
&=
\e\partial_t F_t \circ \Phi^t+\e\{H_0+\e F_t,G\}\circ \Phi^t\\
&=\e\big(\partial_tF_t+\{H_0,G\}\big)\circ \Phi^t+
\e^2\{F_t,G\}\circ \Phi^t\\
&=\e^2\{F_t,G\}\circ \Phi^t,
\end{align*}
from which follows that
$$ H_\epsilon \circ \Phi^1= H_0 + \epsilon R_1 + \epsilon R_+ + \epsilon^2 \int_0^1 \{F_t, G\}\circ \Phi^t dt. $$
Let us estimate the $C^2$ norm
of the function
 $R_2 := R_+ + \epsilon \int_0^1 \{F_t, G\}\circ \Phi^t dt$.
It follows from Lemma~\ref{fourier-est} that
$$
\|R_+\|_{C^2}\le  \kappa_m K^{-r+m+2} \|H_1\|_{C^r}\le \frac12 \delta.
$$
We now focus on the term
  $ \int_0^1 \{F_t, G\}\circ \Phi^t dt$.
  To estimate the norm of $F_t$, it is convenient to write $F_t =\tilde F_t + (1-t)R_1$, where $\tilde F_t=(1-t)R_++tH_1$. Notice that the coefficients of the Fourier expansion of $\tilde F_t$ is simply a constant times that of $H_1$, Lemma~\ref{fourier-est} then implies that
$$
\|\tilde F_t\|_{C^3}\le
\sum_{k\in{\Zm^m}} [k]^{3-r}\|H_1\|_{C^r}
=
 \kappa_m \|H_1\|_{C^r}
$$
provided that  $r\ge m+4$, where $\kappa_m=\sum_{\Zm^n}[k]^{m+1}$.

We now have to estimate the norm of $R_1$ and $G$. These estimates require additional estimates of the smoothing terms $\rho_k$ as well as the small denominators $k\cdot \partial_JH_0$. We always assume that $l\in \{0,1,2,3\}$ in the following estimates:
\begin{itemize}
\item[-]  $\rho_k(J)\ne 1\quad \Rightarrow \quad
|(k\cdot \partial_J H_0)^{-1}|\le \beta^{-1}\epsilon^{-1/4} |k|^{-1}$.
\item[-]
$\|(k\cdot \partial_JH_0)^{-1}\|_{C^l}\le c_m \beta^{-l-1}
\epsilon^{-(l+1)/4}\|H_0\|_{C^4}^{l+1}$ on $\{\rho_k\neq 1\}$.
\item[-]
$ \|\rho_k(J)\|_{C^l} \le c_m\beta^{-l} \epsilon^{-l/4} \|H_0\|_{C^4}^l$ and
$ \|1-\rho_k(J)\|_{C^l} \le c_m \beta^{-l} \epsilon^{-l/4} \|H_0\|_{C^4}^l.$
\end{itemize}
We have been using the following estimates on the derivative of
composition of functions: For $f:\R^m\to \R$ and $g: \R^m\to \R^m$ we have
$\|f\circ g\|_{C^l}\le c_{m,l}\|f\|_{C^l}(1+\|g\|_{C^l}^l)$.
\begin{itemize}
\item[-] For each multi-index $|\alpha|\le 3$, we have that
\[
\begin{aligned}&\|\partial_{J^\alpha} \left((1-\rho_k(J))h_k(J)(k\cdot \partial_JH_0)^{-1}\right) \|_{C^0} \\
\le & \sum_{\alpha_1+\alpha_2+\alpha_3=\alpha} \|1-\rho_k(J)\|_{C^{|\alpha_1|}}\|h_k\|_{C^{|\alpha_2|}}\|(k\cdot \partial_JH_0)^{-1}\|_{C^{|\alpha_3|}
(\{\rho_k\neq 1\})} \\
\le&  c_m \sum_{\alpha_1+\alpha_2+\alpha_3= \alpha}  \Big(
\beta^{-|\alpha_1|}\epsilon^{-|\alpha_1|/4} \|H_0\|_{C^4}^{|\alpha_1|}
\cdot [k]^{-r+|\alpha_2|} \|H_1\|_{C^r} \\
&\cdot \beta ^{-|\alpha_3|-1}\epsilon^{-(|\alpha_3|+1)/4}
\|H_0\|_{C^4}^{|\alpha_3|+1}\Big) \\
\le & c_m \beta^{-|\alpha|-1}\e^{-(|\alpha|+1)/4}  [k]^{|\alpha|-r}
 \|H_0\|_{C^4}^{|\alpha|+1} \|H_1\|_{C^r} .
\end{aligned}
\]
\end{itemize}
In these computations, we have used the hypothesis
$\beta \e^{1/4}\leq \|H_0\|_{C^4}$.
Since $G(\varphi, J)=\sum_{k\in \Z^m}(1-\rho_k(J))h_k(J)(k\cdot \partial_JH_0)^{-1}e^{2\pi i (k\cdot \varphi)}$,   Lemma~\ref{fourier-est} implies (since $r\geq m+1$) :
\begin{itemize}
\item[-]
 $\|G\|_{C^l}\le  c_m \beta^{-l-1}\epsilon^{-(l+1)/4}
\|H_0\|_{C^4}^{l+1} \|H_1\|_{C^r}\leq \epsilon^{-1} . $
 \end{itemize}
We now turn our attention to $R_1=\sum_{|k|\le K}\rho_k(J)h_k(J)e^{2i\pi (k\cdot \phi)}$:
 \begin{itemize}
 \item[-] $\|h_k\|_{C^l}\leq [k]^{l-r}\|H_1\|_{C^r}$.
\item[-]$
\|\rho_kh_k\|_{C^l}\le c_m \beta^{-l}  \epsilon^{-l/4}[k]^{-r+l}\|H_0\|_{C^4}^l\|H_1\|_{C^r}.
$
\item[-]
 $\|R_1\|_{C^l}\le c_m \beta^{-l}  \epsilon^{-l/4}\|H_0\|_{C^4}^l\|H_1\|_{C^r}$, provided  $r\ge m+4$.
\end{itemize}
We obtain
 $$\|F_t\|_{C^l}\le \|R_1\|_{C^l}+\|\tilde F_t\|_{C^l}
\le c_m \beta^{-l}  \epsilon^{-l/4}\|H_0\|_{C^4}^l \|H_1\|_{C^r},
 $$
and
$$ \|\{F_t, G\}\|_{C^2}\le
\sum_{|\alpha_1+\alpha_2|\le 3}
 \|F_t\|_{C^{|\alpha_1|}}\|G\|_{C^{|\alpha_2|}} \le
 c_m \beta^{-4} \epsilon^{-1}\|H_0\|_{C^4}^4 \|H_1\|_{C^r}^2 .
$$
Concerning the flow $\Phi^t$,
we observe that $\|\epsilon G\|_{C^3}\leq 1$, and get the following estimate
(see \textit{e. g.} \cite{DH}):
\begin{itemize}
\item[-]$\|\Phi^t-id\|_{C^2}\le
c_m\e\|G\|_{C^3}\le c_m \beta^{-4}
\|H_0\|_{C^4}^{4} \|H_1\|_{C^r}\leq \delta ,$
\item[-]
$\|\Phi^t-id\|_{C^0}\leq c_m \e\|G\|_{C^1}\leq c_m \beta^{-2} \sqrt{\e}
\|H_0\|_{C^4}^2\|H_1\|_{C^2}\leq \delta  \sqrt\e .$
\end{itemize}
Finally,
we obtain
\begin{align*}
 \epsilon\|\{F_t, G\}\circ \Phi^t\|_{C^2}
&\le c_m \epsilon \|\{F_t, G\}\|_{C^2}\|\Phi^t\|_{C^2}^2
\\
&\le c_m \beta^{-4} \|H_0\|^4_{C^4} \|H_1\|_{C^r}^2 \le \delta/2.
 \end{align*}
\end{proof}

\subsection{Normal form away from additional resonances}

We now return to our non-autonomous system and apply
Theorem \ref{autonomous} around the resonance under study.
To the non-autonomous Hamiltonian
$$
H_{\e}(\theta, p,t)=H_0(p)+\e H_1(\theta,p,t):\Tm^n\times \Rm^n\times\Tm\lto \Rm
$$
we associate the autonomous Hamiltonian
$$
\tilde H_e(\varphi,J)=H_0(I)+e+\e H_1(\theta, I,t):\Tm^{n+1}\times \Rm^{n+1} \lto \Rm,
$$
where $\varphi=(\theta, t)$ and $J=(I,e)$.
We denote the frequencies $\omega\in \Rm^{n+1}$  by
$\omega =(\omega^f,\omega^s,\omega^t)\in \Rm^{n-1}\times \Rm \times \Rm$, and define the set
$$
\Omega(K,s):=\{ \omega\in \Rm^{n+1}:\,
\|\omega^s\|> s, \,
|k^f\omega^f+k^t\omega^t|\geq 3sK \quad\forall (k^s,k^t)\in \Zm^2_K
\},
$$
where we have denoted by $\Zm^2_K$ the set of pairs $(k^f,k^t)$ of integers
such that $0<\max(k^f,k^t)\leq K$.
Note that
$$
\mD (K,s)=\{ p\in \Rm^n : (\partial_p H_0(p),1)\in \Omega(K,s)
\}.
$$

\begin{cor}\label{norm}
There exists a constant $c_n>0$, which depends only on $n$, such that the
following holds.
Given :
\begin{itemize}
 \item A $C^4$ Hamiltonian $H_0(p)$,
\item A $C^r$ Hamiltonian
$H_1(\theta,p,t)$ with $\|H_1\|_{C^r}=1$,
\item Parameters $r\geq n+5$, $\delta \in ]0,1[$, $\epsilon\in ]0,1[$,
$\beta>0$, $K>0$,
\end{itemize}
satisfying
\begin{itemize}
 \item $K \geq c_n\delta^{\frac{-1}{r-n-4}}$,
\item $\beta \geq c_n(1+\|H_0\|_{C^4})\delta^{-1/2},$
\item $\beta\e^{1/4}\leq \|H_0\|_{C^r}$,
\end{itemize}
there exists a $C^2$ symplectic  diffeomorphism $\tilde \Phi$ of $\Tm^{n+1}\times \Rm^{n+1}$
such that,
in the new coordinates, the Hamiltonian $H_{\e}=H_0+\e H_1$ takes the form
$$
N_{\e}=H_0+\e Z+\e R_2,
$$
with
\begin{itemize}
\item $\|R_2\|_{C^r}\le \delta$ on $\Tm^n\times \mD(K,\beta\e^{1/4})\times \Tm$,
\item  $\|\tilde \Phi-id\|_{C^0}\le \delta\sqrt{\e}$ and $\|\tilde \Phi-id\|_{C^2}\le \delta.$
\end{itemize}
The diffeomorphism $\tilde \Phi$ is of the forme
$$
\tilde \Phi(\theta,p,t,e)=(\Phi(\theta,p,t),e+f(\theta,p,t))
$$
where $\Phi$ is a diffeomorphism of $\Tm^n\times \Rm^n \times \Tm$ fixing the last variable $t$.
\end{cor}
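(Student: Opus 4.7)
The plan is to reduce Corollary~\ref{norm} to Theorem~\ref{autonomous} by the standard trick of extending the phase space, then to exploit the non-resonance condition built into $\mathcal{D}(K,\beta\e^{1/4})$ to kill all ``fast'' Fourier modes of the averaged perturbation. Concretely, first pass to $\varphi=(\theta,t)\in\Tm^{n+1}$ and $J=(I,e)\in\Rm^{n+1}$ and consider the autonomous Hamiltonian $\tilde H_e(\varphi,J)=H_0(I)+e+\e H_1(\theta,I,t)$. Its Fourier expansion involves only multi-indices $k=(k^s,k^f,k^t)\in\Zm^{n-1}\times\Zm\times\Zm$ (there is no $e$-Fourier mode since $e\in\Rm$), and $\partial_J\tilde H_0=(\partial_IH_0,1)$, so $k\cdot\partial_J\tilde H_0=k^s\cdot\omega^s+k^f\omega^f+k^t$.

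Next, apply Theorem~\ref{autonomous} with $m=n+1$. The three numerical hypotheses translate verbatim into the hypotheses of the corollary ($r\ge m+4=n+5$, $K\ge c_{n+1}\delta^{-1/(r-n-4)}$, the $\beta$ inequality is unchanged). The theorem yields a $C^2$ symplectic diffeomorphism $\tilde\Phi$ of $\Tm^{n+1}\times\Rm^{n+1}$ with the required $C^0$ and $C^2$ bounds such that
\[
\tilde H_e\circ\tilde\Phi=H_0(I)+e+\e R_1+\e R_2',\qquad
R_1=\sum_{[k]\le K}\rho_k(J)\,h_k(I)\,e^{2\pi i k\cdot\varphi},
\]
with $\|R_2'\|_{C^r}\le \delta/2$ globally.

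The crux of the proof is Step 3: showing that on $\Tm^n\times\mathcal{D}(K,\beta\e^{1/4})\times\Tm$ the truncated sum $R_1$ collapses to $Z(\theta^s,p)$ modulo a $C^r$-small error. For $k$ with $[k]\le K$ and $(k^f,k^t)\ne(0,0)$, the definition of $\mathcal{D}$ gives $|k^f\omega^f+k^t|\ge 3K\beta\e^{1/4}$, while the bound $\|\omega^s\|\le\beta\e^{1/4}$ and Cauchy--Schwarz give $|k^s\cdot\omega^s|\le\sqrt{n-1}\,[k]\,\beta\e^{1/4}$. Hence, after adjusting the dimensional constant $c_n$ (so that the coefficient $3$ in the non-resonance condition dominates $\sqrt{n-1}+2$), we obtain $|k\cdot\partial_J\tilde H_0|>2\beta\e^{1/4}[k]$, which forces $\rho_k\equiv 0$ on $\mathcal{D}$. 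Only the pure ``slow'' modes $k=(k^s,0,0)$ survive, and by shrinking $c_n$ further so that $\|\omega^s\|\le\beta\e^{1/4}/\sqrt{n-1}$ on $\mathcal{D}$, each of the corresponding $\rho_{(k^s,0,0)}$ equals $1$. Therefore on the relevant domain $R_1$ equals the partial Fourier sum $Z_K(\theta^s,I)=\sum_{[k^s]\le K}h_{[k^s,0,0]}(I)e^{2\pi i k^s\cdot\theta^s}$, and Lemma~\ref{fourier-est}(3) bounds $\|Z-Z_K\|_{C^r}\le c_n K^{n-r-1}\|H_1\|_{C^r}\le \delta/2$ by our choice of $K$. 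Setting $R_2:=R_2'+(Z-Z_K)$ gives $\|R_2\|_{C^r}\le\delta$ on $\Tm^n\times\mathcal{D}\times\Tm$.

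Finally, the asserted triangular form of $\tilde\Phi$ follows from inspection of the generating Hamiltonian $G$ constructed in the proof of Theorem~\ref{autonomous}: since $H_1$ is independent of $e$ and the small denominators $k\cdot\partial_J\tilde H_0$ do not involve $e$, we have $G=G(\theta,t,I)$, whence its Hamiltonian vector field satisfies $\dot t=\partial_e(\e G)=0$. The time-$1$ flow therefore fixes $t$, transforms $(\theta,I)$ into some $(\Phi^\theta,\Phi^p)$ depending on $(\theta,I,t)$, and shifts $e$ by an additive term $f(\theta,I,t)=-\e\int_0^1\partial_tG\circ\tilde\Phi^\tau\,d\tau$, which is exactly the requested structure $\tilde\Phi(\theta,p,t,e)=(\Phi(\theta,p,t),e+f(\theta,p,t))$. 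The main obstacle is the bookkeeping in Step 3, particularly choosing the constant $c_n$ so that the non-resonance condition on $\mathcal{D}$ simultaneously kills every mixed mode and makes the smoothing factors $\rho_{(k^s,0,0)}$ equal to $1$; once this is done, everything else is mechanical.
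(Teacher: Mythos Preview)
Your approach is the same as the paper's: autonomize by adjoining $(t,e)$, apply Theorem~\ref{autonomous} with $m=n+1$ and parameter $\delta/2$, then check that on $\mD(K,\beta\e^{1/4})$ the resonant term $R_1$ collapses to the truncated average $Z-\Pi_K^+Z$, and read off the triangular form of $\tilde\Phi$ from the $e$-independence of the generating function $G$. The paper does exactly this.

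One technical slip in your Step~3: ``adjusting $c_n$'' cannot help you with the $\sqrt{n-1}$ factor, since neither the constant $3$ in the definition of $\mD(K,s)$ nor the bound $\|\omega^s\|\le s$ depends on $c_n$. What makes the estimate work (and what the paper uses implicitly) is the norm pairing: with $[k]=\max_i|k_i|$ and the dual bound $|k^s\cdot\omega^s|\le [k]\,\|\omega^s\|$, the slow contribution is $\le 1$ directly, so the fast part $\ge 3K/[k]\ge 3$ leaves a margin $\ge 2$ and forces $\rho_k\equiv 0$. Likewise, for $k=(k^s,0,0)$ one gets $|k\cdot\partial_J\tilde H_0|/(\beta\e^{1/4}[k])\le 1$ on $\mD$, hence $\rho_{(k^s,0,0)}\equiv 1$, without any further shrinking. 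Once you adopt this convention the bookkeeping is exactly as you describe.
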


\begin{proof}
We apply Theorem \ref{autonomous} with
$\tilde H_{\e}$, $m=n+1$ and $\tilde \delta=\delta/2$.
We get a diffeomorphism $\tilde \Phi$ of $\Tm^{n+1}\times \Rm^{n+1}$
as time-one flow of the Hamiltonian $G$.
By inspection in the proof of Theorem \ref{autonomous}, we observe that
$G$ does not depend on $e$, which implies that $\tilde \Phi$ has the desired form.
We have
$$\tilde H_{\e}\circ \tilde \Phi=\tilde H_0(J)+\e \tilde R_1+\e \tilde R_2
$$
where $\|\tilde R_2\|_{C^2}\leq \delta/2$ and
$$
\tilde R_1( \theta,p,t)=
\sum_{[k]\leq K}
\rho\left(\frac{k^f\cdot\partial_{p^f}H_0+k^s\partial_{p^s}H_0+k^t}{\beta\e^{1/4}[k]}\right)
g_k(p)
e^{2i\pi k\cdot(\theta,t)}.
$$
Let us  compute this sum under the assumption that $p\in \mD(K,\beta\e^{1/4})$.
We have
$$
\left|
\frac{k^f\cdot \partial_{p^f}H_0}{\beta\e^{1/4}[k]}
\right|\leq 1
$$
hence
$$
\rho\left(\frac{k^f\cdot\partial_{p^f}H_0+k^s\partial_{p^s}H_0+k^t}{\beta\e^{1/4}[k]}\right)
=1
$$
for  $k$ such that $k^s=0=k^t$.
For the other terms, we have, by definition of $\Omega(K,s)$,
$$
\left|
\frac{k^s\partial_{p^s}H_0+k^t}{\beta\e^{1/4}[k]}
\right|\geq
\left|
\frac{k^s\partial_{p^s}H_0+k^t}{\beta\e^{1/4}K}
\right|\geq 3,
$$
hence
$$
\left|
\frac{k^f\cdot\partial_{p^f}H_0+k^s\partial_{p^s}H_0+k^t}{\beta\e^{1/4}[k]}
\right|\geq 2
$$
and  these terms vanish in the expansion of $\tilde R_1$.
We conclude that
$$
\tilde R_1(\theta,p,t)=\sum _{k^f\in \Zm^{n-1},[k^f]\leq K} g_{(k_f,0,0)}(p)
e^{2i\pi k^f\cdot \theta^f}
$$
hence
$\tilde R_1=Z-\Pi_K^+(Z)$, with the notation of Lemma \ref{fourier-est}.
Finally
$
\tilde H_{\e}\circ \tilde \Phi=
\tilde H_0+\e Z+\e R_2
$
with $R_2=\tilde R_2-\Pi^+_KZ$.
From Lemma \ref{fourier-est}, we see that
$$
\|\Pi^+_KZ\|_{C^2}\leq c_nK^{m+3-r}\|Z\|_{C^r}\leq c_nK^{m+3-r}\|H_1\|_{C^r}
\leq c_nK^{m+3-r}\leq \delta/2.
$$
On the other hand, $\|\tilde R_2\|_{C^2}\leq \delta/2$, hence
$\|R_2\|_{C^2}\leq \delta$.
\end{proof}

\subsection{Smooth approximation}

Finally we remove the restriction on $r$ by the following smooth approximation lemma:
\begin{lem}\label{appoximation} \cite{SZ}
  Let $f:\R^n\to \R$ be a $C^{r}$ function, with $r\geq 4$.
Then for each $\tau>0$ there exists an analytic function  $S_\tau f$  such that
$$ \|S_\tau f -f\|_{C^3}< c(n,r) \|f\|_{C^3}\tau^{r-3},$$
$$ \|S_\tau f\|_{C^{r_1}}< c(n,r) \|f\|_{C^r_1} \tau^{-(r_1-r)},$$
for  each $r_1>r$, where $c(n,r)$ is a constant which depends only on $n$ and $r$.
\end{lem}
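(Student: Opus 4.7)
The plan is to construct $S_\tau f$ as a convolution $f*K_\tau$ with a real-analytic mollifier of width $\tau$. The decisive choice is the kernel: I take $K_\tau(x)=\tau^{-n}K_1(x/\tau)$, where $K_1$ is the inverse Fourier transform of a fixed compactly supported bump $\chi\in C_c^\infty(\R^n)$ with $\chi\equiv 1$ on a neighbourhood of the origin. By the Paley-Wiener theorem $K_1$ extends to an entire function on $\C^n$ of exponential type, and together with all its derivatives it is Schwartz on $\R^n$. Consequently $S_\tau f(x)=\int f(y)K_\tau(x-y)\,dy$ is entire in $x\in\C^n$ (differentiation under the integral sign is justified by the decay of $K_1$), hence real-analytic on $\R^n$. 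If $f$ is only defined on a bounded domain, I first multiply by a smooth cutoff of that domain; this only affects the result outside a slightly shrunken subdomain and is absorbed into $c(n,r)$.

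For the approximation estimate $\|S_\tau f-f\|_{C^3}\le c(n,r)\|f\|_{C^r}\tau^{r-3}$, I would use the standard Jackson argument. For $|\alpha|\le 3$, a change of variables gives
$$\partial^\alpha(S_\tau f-f)(x)=\int_{\R^n}\bigl(\partial^\alpha f(x-\tau y)-\partial^\alpha f(x)\bigr)K_1(y)\,dy.$$
Taylor-expanding the first factor at $x$ to order $r-|\alpha|-1$ with integral remainder, and using that the Fourier symbol of $K_1$ equals $1$ near the origin (so $\int y^\gamma K_1(y)\,dy=0$ for all $|\gamma|\ge 1$, as these are derivatives at $0$ of a function that is identically $1$ there), the Taylor polynomial integrates to zero. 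Only the remainder survives; it is pointwise bounded by $c\,\|f\|_{C^r}\tau^{r-|\alpha|}(1+|y|)^{r-|\alpha|}|K_1(y)|$, and the rapid decay of $K_1$ makes the $y$-integral finite, giving the estimate with $|\alpha|=3$ as the critical case.

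For the high-regularity bound $\|S_\tau f\|_{C^{r_1}}\le c(n,r)\|f\|_{C^r}\tau^{-(r_1-r)}$, I split derivatives between the two convolution factors: for $|\alpha|=r_1$ write $\partial^\alpha(f*K_\tau)=(\partial^\beta f)*(\partial^{\alpha-\beta}K_\tau)$ with $|\beta|=r$ and $|\alpha-\beta|=r_1-r$. The scaling $K_\tau(x)=\tau^{-n}K_1(x/\tau)$ immediately yields $\|\partial^\gamma K_\tau\|_{L^1}\le c(n,\gamma)\,\tau^{-|\gamma|}$, so Young's inequality gives $\|\partial^\alpha S_\tau f\|_{C^0}\le\|\partial^\beta f\|_{C^0}\cdot c\,\tau^{-(r_1-r)}\le c\|f\|_{C^r}\tau^{-(r_1-r)}$, as required.

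The content is textbook — this is exactly the Salamon-Zehnder/Moser smoothing lemma — so there is no real conceptual obstacle. The only point that needs care is choosing a kernel which is simultaneously analytic and has the moment-killing property; the Paley-Wiener construction above handles both in one shot. After that the proof reduces to $L^1$ bounds on derivatives of the fixed Schwartz function $K_1$, and all constants depend only on $n$, $r$, and the fixed choice of $\chi$.
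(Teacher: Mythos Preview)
The paper does not supply its own proof of this lemma; it merely cites \cite{SZ} and uses the result as a black box. Your sketch is exactly the standard Salamon--Zehnder/Moser smoothing argument and is correct: the Paley--Wiener kernel gives analyticity, its vanishing higher moments kill the Taylor polynomial in the Jackson estimate, and the scaling of $\|\partial^\gamma K_\tau\|_{L^1}$ yields the high-order bound via Young's inequality.

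One remark: as printed, the paper's first inequality has $\|f\|_{C^3}$ on the right, which cannot be correct (take $f_k(x)=k^{-3}\sin(kx)$ to see the bound would fail uniformly in $k$). You have quietly replaced it by $\|f\|_{C^r}$, which is the right quantity and is also what the subsequent application in the paper actually uses. Similarly $\|f\|_{C^r_1}$ in the second line is a typo for $\|f\|_{C^r}$. So your version of the statement is the one that is both true and needed.
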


If $r<n+5$, we use Lemma~\ref{appoximation} to approximate $H_1$ by an
analytic  function $H_1^*$.
We can then apply Corollary~\ref{norm} to the Hamiltonian
$$
H^*_{\e}:=H_0+\e H^*_1=H_0+\e_2 H_2
$$
with $H_2=H_1^*/\|H_1^*\|_{C^{r_2}}$, with $\e_2=\e\|H_1^*\|_{C^{r_2}}$, and
with some  parameters $r_2\geq r$ and $\delta_2\leq \delta$
to be specified later.
We find a change of coordinates $\tilde \Phi$ such that
$$
\tilde H^*_{\e}\circ \tilde  \Phi=\tilde H_0+\e_2 Z_2+\e_2 R_2
$$
and $\|R_2\|_{C^2}\leq \delta_2$, where
$Z_2(\theta^s,p)=\int H_2d\theta^f dt$.
As usual, we have denoted by $\tilde H^*_{\e}$ and $\tilde H_0$ the automomized Hamiltonians
$\tilde H^*_{\e}=H^*_{\e}+e$ and $\tilde H_0=H_0+e$.
With the same map $\tilde \Phi$, we obtain
$$
\tilde H_{\e}\circ \tilde \Phi=\tilde H_0+\e Z + \e R
$$
with
$$
R=\|H_1^*\|_{C^{r_2}}R_2+(Z-Z^*)+(H_1^*-H_1)\circ  \Phi.
$$
In the expression above, the map $\Phi$ is the trace on the $(\theta,p,t)$ variables
of the map $\tilde \Phi$.
Choosing $\tau=\delta_2^{1/(r_2-3)}$,
we get
\begin{itemize}
 \item[-]$\|H_1^*-H_1\|_{C^3}\leq c(n,r_2)\delta_2^{\frac{r-3}{r_2-3}}$
\item[-] $\|H_1^*\|_{C^{r_2}}\leq c(n,r_2)\delta_2^{-\frac{r_2-r}{r_2-3}}$
\item[-] $\|Z^*-Z\|_{C^2}\leq \|H_1^*-H_1\|_{C^2}
\leq c(n,r_2)\delta_2^{\frac{r-3}{r_2-3}}$
\item[-] $\| \tilde \Phi\|_{C^2}\leq \delta_2\leq \delta \leq 1,$
\item[-]
$
\|(H_1^*-H_1)\circ  \Phi\|_{C^2}\le c_n\|H_1^*-H_1\|_{C^2 }
(\|\Phi\|_{C^2}+\| \Phi\|_{C^2}^2) \le c_n\|H_1^*-H_1\|_{C^2 } .
$
\end{itemize}
and finally
$$
\|R\|_{C^2}\leq c(n,r_2)\delta_2^{\frac{r-3}{r_2-3}}.
$$
We now set
$$\delta_2=\delta^{\frac{r_2-3}{r-3}}/c(n,r_2)\leq \delta
$$
and get $\|R\|_{C^2}\leq \delta$.
To apply Corollary~\ref{norm} as we just did, we need the following conditions
to hold on the parameters:
\begin{itemize}
\item[-] $K\geq  c(n,r_2)\delta^{\frac{r_2-3}{(r-3)(r_2-n-4)}}$, which implies
$K\geq c_n\delta_2^{\frac{-1}{r-n-4}}$,
\item[-] $\beta\geq
 c(n,r_2)(1+\|H_0\|_{C^4})\delta^{-\frac{r_2-3}{2(r-3)}}$
which implies $\beta\geq c_n (1+\|H_0\|_{C^4})\delta_2^{-1/2}$,
\item[-] $\beta \e^{1/4}\leq \|H_0\|_{C^4}\delta ^{\frac{r_2-r}{4(r-3)}}$
which implies $\beta \e_2^{1/4}\leq \|H_0\|_{C^4}$.
\end{itemize}
We apply the above discussion with $r_2=2n+5$ and get
Theorem \ref{normal-form}.
Note the estimate
$
\|id-\tilde \Phi\|_{C^0} \leq \delta_2\sqrt{\e_2}\leq
\delta_2^{1-\frac{r_2-r}{2(r_2-3)}}\sqrt{\e}\leq \sqrt{\e}.
$
\qed

\section{Normally hyperbolic cylinders}\label{sec:NHIC}
In this section, we study the Hamiltonian
in normal form
$$
N_\epsilon(\theta,p,t)=H_0(p) + \epsilon Z(\theta^s,p) +
\epsilon R(\theta,p,t).
$$
We
denote as above
by
$p^s_*(p^f)\in  \Rm^{n-1}$ the solution of the equation
$\partial_{p^s}H_0(p^s_*(p_f),p_f)=0$. We recall also the notation
$p_*(p^f):=(p^s_*(p_f),p^f)$
from Section \ref{notations}.
Fixing  parameters
$$
\lambda \in ]0,1], \quad a^-<a^+,
$$
we
 assume that there exists, for each
$p^f\in [a_--\lambda,a_++\lambda]$, a
local minimum $\theta^s_*(p^f)$ of the map
$\theta^s \lmto Z(\theta^s,p_*(p_f))$, and that $\theta^s_*$
is a $C^2$ function of $p^f$.
We assume in addition that
$$
\lambda I \le \partial^2_{\theta^s \theta^s}
Z(\theta^s_*(p_f), p_*(p^f))\le  I
$$
for each $p_f\in [a_--\lambda,a_++\lambda]$,
where as before $I$ is the identity matrix. We shall at some occasions lift the map
$\theta^s_*$ to a $C^2$ map
taking values in $\Rm^{n-1}$ without changing its name.
We  assume that $\|Z\|_{C^3}\leq 1$,
and set $\|R\|_{C^2}=\delta$.
Finally, we assume that
$D^{-1}I\leq \partial^2_{pp}H_0\leq D\,I$
 for some $D\geq 1$.
To simplify notations, we will be using the $O(\cdot)$ notation, where $f=O(g)$ means $|f|\le Cg$ for a constant $C$ independent of
$\e$, $\lambda$, $\delta$, $n$ and $r$. In particular, we will not be keeping track of the parameter $D$, which is considered fixed throughout the paper.

\begin{thm}\label{nhic-mult}
There exists $\epsilon_0\in ]0,1[$ such that, if
$$
0<\epsilon<\epsilon_0\lambda^{7/2}\quad ,
\quad 0\leq \delta<\sqrt \e_0\lambda^{2},
$$
 then there exists  a $C^1$ map
 $$
(\Theta^s, P^s)(\theta^f, p^f, t):\T\times [a_- - \lambda/2,a_+ + \lambda/2]\times\Tm\lto \T^{n-1}\times \Rm^{n-1}$$
such that the cylinder
$$\mC=\{ (\theta^s, p^s)=(\Theta^s_j,P^s_j)(\theta^f, p^f, t));
\quad p^f\in [a_- - \lambda/2, a_{+} + \lambda/2], (\theta^f,t)\in \T\times \T\}$$
is weakly invariant with respect to  $N_\epsilon$ in the sense that the Hamiltonian vector field is tangent to $\mC$.
The cylinder $\mC$ is contained in  the set
\begin{align*}
V:=\big\{&(\theta,p,t); p^f\in [a_- - \lambda/2, a_{+} + \lambda/2], \\
&\|(\theta^s-\theta^s_*(p^f)\|\leq O\big(\epsilon_0^{1/4}\lambda\big),
\quad
\|p^s-p^s_*(p^f)\|\le O\big(\e_0^{1/4}\lambda ^{5/4}\e^{1/2}\big)
\big\},
\end{align*}
and it contains all the full orbits  of
$N_{\e}$ contained in $V$.
We have the estimates
$$
\|\Theta^s(\theta^f,p^f,t)-\theta^s_*(p^f)\|\leq
O\big(\lambda^{-1}\delta+\lambda^{-3/4}\sqrt{\e}\big),
$$
$$
\|P^s(\theta^f,p^f,t)-p^s_*(p^f)\|\leq
\sqrt{\e}\,O\big(\lambda^{-3/4}\delta+\lambda^{-1/2}\sqrt{\e}\big),
$$
$$ \left\|\frac{\partial\Theta^s}{\partial p^f}\right\|=
O\left(\frac{\lambda^{-2}\sqrt{\e}+\lambda^{-5/4}
\sqrt \delta}{\sqrt{\e}}
\right)
\quad,\quad
\quad \left\|\frac{\partial \Theta^s}{\partial(\theta^f, t)}\right\|=
O\left(\lambda^{-2}\sqrt{\e}+\lambda^{-5/4}\sqrt\delta
\right).
 $$

%
\end{thm}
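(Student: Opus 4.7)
The plan is to produce the cylinder $\mC$ as a graph $(\th^s,p^s)=(\Theta^s,P^s)(\th^f,p^f,t)$ by a contraction argument, perturbing off the ``averaged'' cylinder associated with $\bar N_\e:=H_0+\e Z$. The key structural observation is that $Z$ does not depend on $\th^f$, so under $\bar N_\e$ the action $p^f$ is exactly conserved: the phase space foliates into invariant slices $\{p^f=\mathrm{const}\}$, on each of which the slow pair $(\th^s,p^s)$ obeys a Hamiltonian flow with a single hyperbolic equilibrium near $(\th^s_*(p^f),p^s_*(p^f))$. The union of these equilibria across $p^f\in[a_--\lambda,a_++\lambda]$, extended trivially in $(\th^f,t)$, is a $3$-dimensional NHIC $\mC_0$ for $\bar N_\e$ with normal spectrum of order $\pm\sqrt{\e\lambda}$ and vanishing tangential Lyapunov exponents. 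The cylinder $\mC$ promised by the theorem will be obtained as the unique perturbed invariant graph close to $\mC_0$.

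To make this precise, I first apply the implicit function theorem to the system $\partial_{p^s}\bar N_\e=0,\ \partial_{\th^s}\bar N_\e=0$ in the slow variables. The non-degeneracies $D^{-1}I\le\partial^2_{pp}H_0\le DI$ and $\lambda I\le\partial^2_{\th^s\th^s}Z\le I$ control the Hessians and yield a unique family of equilibria $(\bar\Theta^s(p^f),\bar P^s(p^f))$ with $\|\bar\Theta^s-\th^s_*\|=O(\e/\lambda)$ and $\|\bar P^s-p^s_*\|=O(\e)$, comfortably inside the neighborhood $V$. The slow linearization at each equilibrium has eigenvalues $\pm\sqrt{\e\mu_i(p^f)}$ with $\mu_i\ge\lambda/D$, so $\mC_0$ is normally hyperbolic at rate $\gtrsim\sqrt{\e\lambda}$, while the tangential motion is merely the quasi-isometric transport $\dot\th^f=\partial_{p^f}H_0,\ \dot p^f=0,\ \dot t=1$.

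Next I add $\e R$ back in and formulate the invariance equation for the perturbed graph from the tangency condition $X_{N_\e}(\mathrm{graph})\in T(\mathrm{graph})$. After the symplectic rescaling $v:=(p^s-\bar P^s)/\sqrt\e$ together with the time rescaling $\tau=\sqrt\e\,t$, the slow saddle becomes isotropic with eigenvalues $\pm\sqrt{\mu_i}$, and the invariance equation takes the shape of a fixed-point problem $(\Theta^s,P^s)=\mF(\Theta^s,P^s)$ on a ball in a weighted $C^1$ space of graphs over $(\th^f,p^f,t)$. The linearization of $\mF$ at $\mC_0$ is invertible because the spectral gap $\sqrt{\e\lambda}$ of the slow saddle strictly dominates the tangential transport along the cylinder; concretely, inverting it amounts to solving a family of two-point saddle boundary problems along the fast orbits of the base flow, which is standard. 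A contraction argument in the spirit of \cite{Be3,KZZ} then produces a unique fixed point $(\Theta^s,P^s)$ inside $V$. The maximality statement of the theorem (that $\mC$ contains every complete orbit trapped in $V$) follows from the standard remark that any such orbit must lie in the center direction of the hyperbolic splitting, since otherwise it would escape $V$ along the stable or unstable fibers at rate $\sqrt{\e\lambda}$.

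The main technical obstacle will be the quantitative bookkeeping: every occurrence of $\sqrt\e$, $\sqrt\lambda$, or $\delta$ in the rescaling, linearization, and iterative improvement must be tracked to recover the precise estimates stated. The $O(\lambda^{-1}\delta)$ contribution to $\|\Theta^s-\th^s_*\|_{C^0}$ comes from inverting the linearized slow saddle against a forcing of size $\delta$, while the $O(\lambda^{-3/4}\sqrt\e)$ contribution absorbs both the averaged correction $\bar\Theta^s-\th^s_*$ and the geometric curvature of the cylinder in the $p^f$ direction. The $C^1$-estimates on $\partial\Theta^s/\partial p^f$ and $\partial\Theta^s/\partial(\th^f,t)$ require differentiating the invariance equation; the derivative graph then satisfies an analogous linear saddle system whose forcing picks up an extra $1/\sqrt\e$ from the rescaling, which is exactly why the stated $C^1$ bounds carry a prefactor $1/\sqrt\e$. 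The smallness hypotheses $\e<\e_0\lambda^{7/2}$ and $\delta<\sqrt{\e_0}\lambda^2$ are tuned so that the resulting contraction constant remains strictly below $1$ and every iterate stays inside the neighborhood $V$ where the Hessian estimates on $Z$ are valid.
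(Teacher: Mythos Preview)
Your overall strategy matches the paper's: recognize that for the averaged system $H_0+\e Z$ the slow pair $(\th^s,p^s)$ has, for each fixed $p^f$, a hyperbolic equilibrium near $(\th^s_*(p^f),p^s_*(p^f))$ with normal rate $\sim\sqrt{\e\lambda}$, assemble these into a $3$-cylinder $\mC_0$, and then perturb. The difference is in the implementation.

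The paper does not run a graph-transform contraction directly. Instead it introduces an explicit linear change of variables in the slow block: setting $B(p^f)=\partial^2_{p^sp^s}H_0$ and $A(p^f)=-\partial^2_{\th^s\th^s}Z$, it constructs a symmetric positive matrix $L(p^f)$ with $L^2AL^2=B$ and passes to
\[
x=L^{-1}(\th^s-\th^s_*)+\e^{-1/2}L(p^s-p^s_*),\qquad
y=L^{-1}(\th^s-\th^s_*)-\e^{-1/2}L(p^s-p^s_*),
\]
together with $I=\e^{-1/2}p^f$ and $\Theta=\gamma\th^f$ with $\gamma=\sqrt{\delta}$. In these coordinates the linearized system is block-diagonal $\dot x=\sqrt\e\,\Lambda x$, $\dot y=-\sqrt\e\,\Lambda y$ up to an error of size $O(\sqrt\e\,\delta\lambda^{-1/4}\gamma^{-1}+\sqrt\e\,\lambda^{-3/4}\rho+\e\lambda^{-5/4}+\sqrt\e\,\gamma)$, and the paper then invokes an abstract isolating-block NHIM statement (Proposition~B.1 in the appendix, itself reduced to a result of Yang) rather than a contraction on graphs. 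The precise $\lambda$-exponents in the theorem come almost entirely from the estimates $\|L\|=O(\lambda^{-1/4})$, $\|L^{-1}\|=O(1)$, $\|\partial_{p^f}L\|=O(\lambda^{-3/2})$, $\|\partial_{p^f}L^{-1}\|=O(\lambda^{-3/4})$ and from the choice $\gamma=\sqrt\delta$; in particular the somewhat odd powers $\lambda^{-3/4}$, $\lambda^{-5/4}$, $\lambda^{7/2}$ are artifacts of these matrix norms, not of a generic spectral-gap argument.

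Your contraction scheme is a legitimate alternative and would also work, but as written it is too schematic to recover those specific exponents: the rescaling $v=(p^s-\bar P^s)/\sqrt\e$ alone does not diagonalize the slow saddle (you still have the coupling through $B$ and $A$), and without an analogue of the matrix $L$ and its $p^f$-derivative estimates you will not see where $\lambda^{-1/4}$ versus $\lambda^{-3/4}$ enter. If you want to carry your route through quantitatively, you should insert the same block-diagonalizing change of slow variables before setting up the fixed-point problem; after that, either a graph-transform contraction or the paper's isolating-block argument will give the stated bounds.
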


The proof of Theorem \ref{nhic-mult} occupies the rest of the section.
The Hamiltonian flow admits the following equation of motion :
\begin{equation}\label{eq:perturbed}
\begin{cases}
  \dot{\theta}^s = \partial_{p^s}H_0 + \epsilon\partial_{p^s}Z + \epsilon\partial_{p^s} R \\
  \dot{p}^s = -\epsilon \partial_{\theta^s}Z - \epsilon\partial_{\theta^s} R \\
  \dot{\theta}^f = \partial_{p^f}H_0 + \epsilon\partial_{p^f}Z +  \epsilon\partial_{p^f} R \\
  \dot{p}^f = -\epsilon \partial_{\theta^f} R \\
  \dot{t}=1
\end{cases}.
\end{equation}
It is convenient in the sequel to lift the angular variables
to real variables and to consider the above system as defined on
$\Rm^{n-1}\times \Rm^{n-1}\times \Rm\times \Rm\times \Rm.
$
We will see this system as a perturbation of the model system
\begin{equation}\label{eq:model}
 \dot{\theta}^s = \partial_{p^s}H_0\quad,\quad
 \dot{p}^s = -\epsilon \partial_{\theta^s}Z \quad, \quad
\dot{\theta}^f = \partial_{p^f}H_0\quad,
\dot{p}^f = 0\quad, \quad
\dot{t}=1.
\end{equation}
The graph
of the map
$$
(\theta^f,p^f,t)\lmto (\theta^s_*(p_f),p^s_*(p_f))
$$
on $\Rm\times J\times \Rm$ is obviously invariant
for the model flow.
For each fixed $p_f$, the point $(\theta^s_*(p_f),p^s_*(p_f))$
is a hyperbolic fixed point of the partial system
$$ \dot{\theta}^s = \partial_{p^s}H_0(p^s,p^f)\quad,\quad
 \dot{p}^s = -\epsilon \partial_{\theta^s}Z(\theta^s,p^s,p^f)
$$
where $p^f$ is seen as a parameter.
This hyperbolicity is the key property we will
use, through the theory of normally hyperbolic invariant manifolds.
It is  not obvious to  apply this theory here  because the model
system itself depends on $\epsilon$, and because we have to deal
with the problem of
non-invariant boundaries.
We will however manage to apply the quantitative version
exposed  in Appendix
\ref{sec:abstract-nhic}.

We perform some  changes of coordinates in order
to put the system in the
framework of Appendix
\ref{sec:abstract-nhic}. These coordinates appear naturally from
the study of the model system as follows.
We set
$$
B(p^f):=\partial^2_{p^sp^s}H_0(p_*(p^f))\quad,\quad
A(p_f):=- \partial^2_{\theta^s\theta^s}Z(\theta^s_*(p^f),p_*(p^f)).
$$
If we fix the variable $p^f$ and consider
the model system in $(\theta^s,p^s)$, we observed that this system
has a  hyperbolic fixed point at $(\theta^s_*(p^f),p^s_*(p^f))$.
The linearized system at this point is
$$
\dot \theta^s=B(p^f)\,p^s
\quad,\quad
\dot p^s=\e A(p_f)\,\theta^s.
$$
To put this system under a simpler form, it is useful to consider the matrix
$$
L(p^f):=\big(B^{1/2}(p^f)(B^{1/2}(p^f)A(p^f)B^{1/2}(p^f))^{-1/2}
B^{1/2}(p^f)
\big)^{1/2}
$$
which is symetric, positive definite, and satisfies
$L^2(p^f)A(p^f)L^2(p^f)=B(p^f)$, as can be checked by a direct computation.
We finally introduce the symmetric positive definite matrix
$$
\Lambda(p^f):= L(p^f)A(p^f)L(p^f)=L^{-1}(p^f)B(p^f)L^{-1}(p^f).
$$
In  the new variables
$$
\xi=L^{-1}(p^f) \theta^s+\e^{-1/2}L(p^f)p^s
\quad,\quad
\eta=L^{-1}(p^f) \theta^s-\e^{-1/2}L(p^f)p^s,
$$
the linearized system is reduced to the following block-diagonal form:
$$
\dot \xi=\e^{1/2}\Lambda(p^f)\xi
\quad,\quad
\dot \eta=-\e^{1/2}\Lambda(p^f)\eta,
$$
see
\cite{Be3} for more details.
This motivates us to introduce the following set of new coordinates for our full system:

$$
x=L^{-1}(p^f) (\theta^s-\theta^s_*(p^f))+
\e^{-1/2}L(p^f)(p^s-p^s_*(p^f))
$$
$$
y=L^{-1}(p^f) (\theta^s-\theta^s_*(p^f))-
\e^{-1/2}L(p^f)(p^s-p^s_*(p^f)),
$$
$$
I=\e^{-1/2}p^f\quad,\quad \Theta=\gamma \theta^f,
$$
where $\gamma$ is a parameter which will be taken  later equal to $\delta^{1/2}$.
Note  that
$$
\theta^s=\theta^s_*(\e^{1/2}I)+\frac{1}{2}L(\e^{1/2}I)(x+y),\quad
p^s=p^s_*(\e^{1/2}I)+\frac{\e^{1/2}}{2}L^{-1}(\e^{1/2}I)(x-y).
$$
\begin{lem}\label{lambda}
We have $\Lambda (p^f)\geq \sqrt{\lambda/D}\ I$
for each $p^f\in [a_-,a_+]$.
\end{lem}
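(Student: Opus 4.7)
The plan is to reduce the inequality to a statement about the eigenvalues of the symmetric positive definite matrix $B^{1/2}AB^{1/2}$, where the hypotheses on $H_0$ and on $Z$ give direct two-sided bounds. The starting point is the identity $L^2(p^f)\,A(p^f)\,L^2(p^f)=B(p^f)$ which is built into the definition of $L$, together with the fact that $A$ and $B$ are symmetric positive definite (by the assumed bounds $\lambda I\le A\le I$ and $D^{-1}I\le B\le DI$), so $L$ itself is a well-defined symmetric positive definite matrix, and $\Lambda=LAL$ is symmetric positive definite as well.

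First I would compute $\Lambda^{2}$ and express it in a similarity-invariant form. Using $L^{2}AL^{2}=B$, i.e.\ $AL^{2}=L^{-2}B$, I get
\[
\Lambda^{2}=LAL\cdot LAL=L\,(AL^{2})\,AL=L\,L^{-2}B\,AL=L^{-1}(BA)L.
\]
Hence $\Lambda^{2}$ is conjugate to $BA$, so its eigenvalues coincide with those of $BA$. Next, since $B=B^{1/2}B^{1/2}$ with $B^{1/2}$ invertible, the product $BA=B^{1/2}(B^{1/2}AB^{1/2})B^{-1/2}$ is in turn conjugate to the symmetric positive definite matrix $B^{1/2}AB^{1/2}$. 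Therefore the eigenvalues of $\Lambda^{2}$ are exactly those of $B^{1/2}AB^{1/2}$, and all are positive.

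The main step is then a one-line quadratic-form bound: since $A\ge \lambda I$ as symmetric matrices,
\[
B^{1/2}AB^{1/2}\ \ge\ \lambda\, B^{1/2}B^{1/2}\ =\ \lambda B\ \ge\ \frac{\lambda}{D}\,I.
\]
Thus every eigenvalue of $B^{1/2}AB^{1/2}$ is bounded below by $\lambda/D$, so every eigenvalue of $\Lambda^{2}$ is at least $\lambda/D$, and since $\Lambda$ is symmetric positive definite, its eigenvalues are the positive square roots, bounded below by $\sqrt{\lambda/D}$. Symmetry of $\Lambda$ then gives $\Lambda(p^f)\ge \sqrt{\lambda/D}\,I$ for every $p^f\in[a_-,a_+]$, as claimed. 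There is no serious obstacle here; the only thing to be careful about is keeping track of self-adjointness in the right basis when passing from $\Lambda^{2}$ to $BA$ and then to $B^{1/2}AB^{1/2}$, which is why I would pass through the similarity $BA\sim B^{1/2}AB^{1/2}$ rather than try to manipulate $LAL$ directly.
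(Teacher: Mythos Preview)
Your proof is correct. Both your argument and the paper's hinge on the fact that the eigenvalues of $\Lambda^2$ coincide with those of $BA$, but the two proofs reach and exploit this fact differently. The paper passes through the block matrix $\begin{bmatrix}0&B\\A&0\end{bmatrix}$, which it knows (from the change of variables diagonalizing the linearized model system) is similar to $\begin{bmatrix}\Lambda&0\\0&-\Lambda\end{bmatrix}$; it then bounds the smallest eigenvalue by estimating $\|A^{-1}B^{-1}\|\le \|A^{-1}\|\,\|B^{-1}\|\le D/\lambda$. You instead compute $\Lambda^2=L^{-1}(BA)L$ directly from the relation $L^2AL^2=B$, conjugate once more to the symmetric matrix $B^{1/2}AB^{1/2}$, and apply the quadratic-form bound $B^{1/2}AB^{1/2}\ge \lambda B\ge(\lambda/D)I$. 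Your route is more self-contained and slightly cleaner, since it stays with symmetric matrices and avoids the submultiplicativity step on inverses; the paper's route, on the other hand, ties the estimate back to the linearized dynamics that motivated the construction of $L$ and $\Lambda$ in the first place.
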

\proof
The matrix $\Lambda$ is symmetric, hence it satisfies
$\Lambda\geq \lambda_*I$, where $\lambda_*>0$ is its smallest eigenvalue.
The real number $\lambda_*$ is then an eigenvalue of the matrix
$
\begin{bmatrix}\Lambda& 0\\0&-\Lambda\end{bmatrix}
$
which is similar  to
$
\begin{bmatrix}0&B\\A&0\end{bmatrix}.
$
Since both $A$ and $B$ are square matrices of equal size,
we conclude that $\lambda_*^{-2}$ is an eigenvalue of $A^{-1}B^{-1}$.
Since $\|A^{-1}\|\leq \lambda^{-1}$ and $\|B^{-1}\|\leq D$, we
have $\lambda_*^{-2}\leq \|A^{-1}B^{-1}\|\leq D\/\lambda^{-1}$.
We conclude that $\lambda_*\geq \sqrt{\lambda/D}$.
\qed

The links between the various parameters
$\e$, $\delta$, $\gamma$,  $\lambda$, $\rho$ which appear in
the computations below will be specified later.
We will however assume from the beginning that
$$
\delta\leq \rho\leq  \lambda
\quad , \quad
 \sqrt{\e}\leq \rho
\quad,\quad
0< \gamma\leq \lambda.
$$


Let us first collect some estimates that will be
useful to see that the  system (\ref{eq:perturbed})
is indeed a perturbation of the model system.

\begin{lem}\label{all-estimates}
On the domain $\|x\|\leq \rho, \|y\|\leq \rho$, we have the estimates
$$
\|L\|=O(\lambda^{-1/4}),\  \|L^{-1}\|=O(1),\
\|\partial_{p^f} L\|\leq O(\lambda ^{-3/2}),\ \ \ \
\|\partial_{p^f} L^{-3/2}\|\leq O(\lambda ^{-3/4})
$$
$$
\|\partial_{
p^f}\theta^s_*\|\leq O( \lambda^{-1}),\ \
\|p^s_*\|_{C^2}=O(1),\
\|\theta^s-\theta^s_*\|\leq O(\lambda^{-1/4}\rho), \
\|p^s-p^s_*\|\leq O(\e^{1/2}\rho).
$$
\end{lem}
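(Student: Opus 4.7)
The plan is to derive all eight estimates from two spectral inputs: $D^{-1}I\le B(p^f)\le DI$ from the Hessian hypothesis on $H_0$, and $\lambda I\le A(p^f)\le I$ from condition [G0]. Combined with the inequality $\langle x, M^{1/2}NM^{1/2}x\rangle \ge \lambda_{\min}(M)\lambda_{\min}(N)\|x\|^2$ valid for positive self-adjoint $M,N$, these yield the crucial bound $D^{-1}\lambda\,I\le B^{1/2}AB^{1/2}\le D\,I$. Inserting this into the explicit formula $L^2=B^{1/2}(B^{1/2}AB^{1/2})^{-1/2}B^{1/2}$ gives $\|L^2\|\le D\cdot(D^{-1}\lambda)^{-1/2}=O(\lambda^{-1/2})$, whence $\|L\|=O(\lambda^{-1/4})$; and $L^{-2}=B^{-1/2}(B^{1/2}AB^{1/2})^{1/2}B^{-1/2}$ gives $\|L^{-2}\|\le D\cdot D^{1/2}=O(1)$, so $\|L^{-1}\|=O(1)$. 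The last two estimates of the lemma then follow by substitution into the change-of-variables formulas $\theta^s-\theta^s_*=\tfrac12 L(x+y)$ and $p^s-p^s_*=\tfrac{\sqrt\e}{2}L^{-1}(x-y)$ with $\|x\|,\|y\|\le \rho$.

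For the estimates on $p^s_*$ and $\theta^s_*$, I would apply the implicit function theorem to their defining equations. Differentiating $\partial_{p^s}H_0(p^s_*(p^f),p^f)=0$ gives $\partial_{p^f}p^s_*=-B^{-1}\partial^2_{p^sp^f}H_0=O(1)$, and differentiating once more, using $H_0\in C^4$, yields $\|p^s_*\|_{C^2}=O(1)$. For $\theta^s_*$, differentiating $\partial_{\theta^s}Z(\theta^s_*(p^f),p_*(p^f))=0$ gives $\partial_{p^f}\theta^s_*=A^{-1}\partial^2_{\theta^sp}Z\cdot \partial_{p^f}p_*$, and the bound $\|A^{-1}\|\le \lambda^{-1}$ produces $\|\partial_{p^f}\theta^s_*\|=O(\lambda^{-1})$.

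The estimates on $\partial_{p^f}L$ and $\partial_{p^f}L^{-3/2}$ are the technical heart and the main obstacle, because $A$ and $B$ do not commute in general, so the scalar rule $\partial f(M)=f'(M)\partial M$ does not apply. I would rely on the standard L\"owner-type bound that for a $C^1$ family $M(p^f)$ of positive self-adjoint matrices with spectrum in $[m_-,m_+]$ and a smooth $f$, one has $\|\partial_{p^f}f(M)\|\le \|f'\|_{L^\infty[m_-,m_+]}\|\partial_{p^f}M\|$, which follows from the divided-difference representation of $\partial f(M)$ in the eigenbasis of $M$, or equivalently from the Cauchy integral $f(M)=\frac{1}{2\pi i}\oint f(z)(z-M)^{-1}dz$. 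Applied with $f(t)=t^{-1/2}$ to $M=B^{1/2}AB^{1/2}$, whose $p^f$-derivative is $O(1)$ (since $Z\in C^3$, $H_0\in C^4$, and $B^{1/2}$ is a smooth function of $B$ on $[D^{-1},D]$), this yields $\|\partial(B^{1/2}AB^{1/2})^{-1/2}\|=O(\lambda^{-3/2})$ and hence $\|\partial L^2\|=O(\lambda^{-3/2})$. Taking one more square root via $g(t)=t^{1/2}$ on the spectrum of $L^2$, which lies in $[D^{-3/2},D^{3/2}\lambda^{-1/2}]$ so that $\|g'\|_\infty=O(1)$, gives $\|\partial L\|=O(\lambda^{-3/2})$.

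For the final estimate I would write $L^{-3/2}=(L^{-2})^{3/4}$ and first compute $\|\partial L^{-2}\|=O(\lambda^{-1/2})$ via the same principle with $f(t)=t^{1/2}$ on the spectrum of $B^{1/2}AB^{1/2}$, then apply $g(t)=t^{3/4}$ on the spectrum of $L^{-2}\subset[D^{-3/2}\lambda^{1/2},D^{3/2}]$, where $\|g'\|_\infty=O(\lambda^{-1/8})$. This produces $\|\partial L^{-3/2}\|\le O(\lambda^{-1/8})\cdot O(\lambda^{-1/2})=O(\lambda^{-5/8})$, which is dominated by $O(\lambda^{-3/4})$ since $\lambda\le 1$. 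The slack in this last inequality explains why the lemma states the weaker exponent $-3/4$ rather than $-5/8$: it is all that is needed downstream, and it matches the scaling one would expect from the other estimates.
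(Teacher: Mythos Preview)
Your argument is correct and essentially parallel to the paper's. Both proofs reduce to the same key estimate, namely that for the matrix square root one has $\|d(M^{1/2})\|\le \tfrac12\|M^{-1/2}\|\,\|dM\|$; the paper establishes this via the explicit integral representation
\[
d(M^{1/2})\cdot N=\int_0^\infty e^{-tM^{1/2}}Ne^{-tM^{1/2}}\,dt,
\]
together with the formula $d(M^{-1})\cdot N=-M^{-1}NM^{-1}$, and then chains these through $B^{1/2}$, $(B^{1/2}AB^{1/2})^{1/2}$, its inverse, $L^2$, and $L^{-2}$ just as you do. Your formulation via the divided--difference (L\"owner) representation is equivalent and slightly more streamlined, since you apply $f(t)=t^{-1/2}$ directly to $B^{1/2}AB^{1/2}$ rather than first taking a square root and then inverting.

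Two small remarks. First, the bound $\|\partial_{p^f}f(M)\|\le \|f'\|_{L^\infty(\mathrm{spec}\,M)}\|\partial_{p^f}M\|$ does \emph{not} hold for arbitrary smooth $f$ (the Schur multiplier norm of the divided--difference matrix can exceed the sup norm); it does hold, however, for the operator--monotone powers $t^{\alpha}$, $0<\alpha\le 1$, and for $t^{-1/2}$, which are the only cases you use, so your argument is sound. Second, the exponent $-3/2$ on $L$ in the statement is a typo in the paper: what is actually proved there, and what is used downstream in Lemmas~\ref{uniform-estimates} and~\ref{linearized-estimates}, is $\|\partial_{p^f}L^{-1}\|=O(\lambda^{-3/4})$. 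Your method gives this immediately as well, writing $L^{-1}=(L^{-2})^{1/2}$ and using $\|g'\|_\infty=O(\lambda^{-1/4})$ for $g(t)=t^{1/2}$ on the spectrum of $L^{-2}$.
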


\proof
We recall that
$
L=\big(B^{1/2}(B^{1/2}AB^{1/2})^{-1/2}B^{1/2}\big)^{1/2}.
$
Since $D^{-1}I\leq B\leq D\,I$ and $\lambda I\leq A\leq I$,
 we obtain that $\|L\|\leq O(\lambda^{-1/4})$
and that $\|L^{-1}\|\leq O(1)$,
using the expression
$L^{-1}=\big(B^{-1/2}(B^{1/2}AB^{1/2})^{1/2}B^{-1/2}\big)^{1/2}$,
we obtain that $\|L\|\leq O(\lambda^{-1/4})$
and that $\|L^{-1}\|\leq O(1)$,
To estimate the derivative of $L$, we consider the
map
$F:M\lmto M^{1/2}$ defined on positive symmetric matrices.
It is known
that

$$
dF_M\cdot N=\int_0^{\infty}e^{-tM^{1/2}}Ne^{-tM^{1/2}}dt.
$$

To verify this one can diagonalize $M$, perform integration,
and match terms in $(M^{1/2}+\eps dF_M\cdot N)
(M^{1/2}+\eps dF_M\cdot N)=M+\eps N + O(\eps^2)$.
This implies that
$$\|dF_M\|\leq \|M^{1/2}\|^{-1}/2\leq \|M^{-1/2}\|/2
$$
Now we apply this bound several times
to estimate $\partial_{p^f}L$ and $\partial_{p^f}L^{-1}$.
In our situation, we have $\partial A=O(1)$, $\partial B =O(1)$.
Using $M= A$ and $B$, we get $\partial (A^{1/2})=O(\lambda^{-1/2})$
and $\partial (B^{1/2})=O(1)$ resp.
Using $M= B^{1/2}A B^{1/2} $
we get
$\partial(B^{1/2}AB^{1/2})^{1/2}=O(\lambda^{-1/2})$.
We now recall that the differential at $M$ of the map
$M\lmto M^{-1}$ is the linear map
$N\lmto -M^{-1}NM^{-1}$, whose norm is bounded by
$\|M^{-1}\|^2$. At $M=(B^{1/2}AB^{1/2})^{1/2}$,
we obtain
$$ \|\partial(B^{1/2}AB^{1/2})^{-1/2}\|\leq
\|M^{-1}\|^2\|\partial M\|=
O(\lambda^{-3/2}).
$$
Using $M= B^{1/2}(B^{1/2}A B^{1/2})^{-1/2}B^{1/2}$
we get
$\partial L=\partial M^{1/2}=O(\lambda^{-1})$ and
using
$
M=B^{-1/2}(B^{1/2}A B^{1/2})^{1/2}B^{-1/2}
$
 we get
$
\partial L^{-1}=\partial M^{1/2}=O(\lambda^{-3/4})
$.
The other estimates are straightforward.
\qed

\begin{lem}\label{uniform-estimates}
The equations of motion in the new coordinates
take the form
\begin{align*}
\dot x &=-\sqrt{\e}\Lambda(\sqrt{\e}I)x
+\e^{1/2}O(\lambda^{-1/4}\delta+\lambda^{-3/4}\rho^2)
+O(\e)
\\
\dot y  &=\sqrt{\e}\Lambda(\sqrt{\e}I)y
+\e^{1/2}O(\lambda^{-1/4}\delta+\lambda^{-3/4}\rho^2)
+O(\e)
\\
\dot I &= O(\sqrt{\e}\delta),
\end{align*}
where $\rho=\max(\|x\|,\|y\|)$ is assumed to satisfy $\rho\leq \lambda $.
The expression for $\dot \Theta$ is not useful here.
\end{lem}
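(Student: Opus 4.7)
The plan is to differentiate the definitions of $x$, $y$, $I$ directly along the flow \eqref{eq:perturbed}, Taylor-expand the vector field around the ``candidate'' curve $p^f\mapsto(\theta^s_*(p^f),p_*(p^f))$, identify the leading hyperbolic part, and package all remainders using the bounds of Lemma~\ref{all-estimates}. The equation $\dot I = O(\sqrt{\e}\delta)$ is immediate from $\dot p^f=-\e\partial_{\theta^f}R$ and $\|R\|_{C^2}\le\delta$, so the real content lies in the $(x,y)$-equations.

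Introduce $u := \theta^s-\theta^s_*(p^f)$ and $v := p^s-p^s_*(p^f)$, so that $x = L^{-1}u+\e^{-1/2}Lv$ and $y = L^{-1}u-\e^{-1/2}Lv$. A direct chain-rule expansion, combined with $\dot p^f = O(\e\delta)$ and the estimates $\|\partial_{p^f}L\| = O(\lambda^{-3/2})$, $\|\partial_{p^f}L^{-1}\|=O(\lambda^{-3/4})$, $\|\partial_{p^f}\theta^s_*\|=O(\lambda^{-1})$, $\|p^s_*\|_{C^2}=O(1)$, shows that the ``coordinate-correction'' contributions $(\partial_{p^f}L^{-1})u\cdot\dot p^f$ and $\e^{-1/2}(\partial_{p^f}L)v\cdot\dot p^f$, together with the $(\partial_{p^f}\theta^s_*)\dot p^f$ and $(\partial_{p^f}p^s_*)\dot p^f$ terms hidden in $\dot u,\dot v$, are each $O(\e)$ under the standing assumptions $\rho\le\lambda$ and $\delta\le\lambda$. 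Thus
\[
\dot x = L^{-1}\dot\theta^s + \e^{-1/2}L\dot p^s + O(\e),
\]
and similarly for $\dot y$ with the sign of the second summand reversed.

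Next, Taylor-expand the two key quantities about the critical curve. Since $\partial_{p^s}H_0(p_*(p^f),p^f)\equiv 0$,
\[
\partial_{p^s}H_0(p^s,p^f) = B(p^f)\,v + O(\|v\|^2) = B\,v + O(\e\rho^2),
\]
and since $\theta^s_*(p^f)$ is a critical point of $\theta^s\mapsto Z(\theta^s,p_*(p^f))$,
\[
\partial_{\theta^s}Z(\theta^s,p) = -A(p^f)\,u + O(\|v\|) + O(\|u\|^2+\|v\|^2) = -A\,u + O\bigl(\e^{1/2}\rho + \lambda^{-1/2}\rho^2\bigr),
\]
where the bound $\|u\|=O(\lambda^{-1/4}\rho)$ and $\|v\|=O(\e^{1/2}\rho)$ from Lemma~\ref{all-estimates} were used. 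Plugging into \eqref{eq:perturbed} and absorbing the bounded terms $\e\partial_{p^s}Z$, $\e\partial_{p^s}R$, $\e\partial_{\theta^s}R$ into $O(\e)$, one obtains $\dot u = Bv + O(\e)$ and $\dot v = \e A u + O(\e^{3/2}\rho) + O(\e\lambda^{-1/2}\rho^2) + O(\e\delta)$. Applying $L^{-1}$ to $\dot u$ preserves the $O(\e)$ remainder (since $\|L^{-1}\|=O(1)$), while applying $\e^{-1/2}L$ to $\dot v$ introduces a factor $O(\lambda^{-1/4})$, producing remainders $O(\e\lambda^{-1/4}\rho)$, $O(\e^{1/2}\lambda^{-3/4}\rho^2)$, and $O(\e^{1/2}\lambda^{-1/4}\delta)$. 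The first one absorbs into $O(\e)$ using $\rho\le\lambda$, leaving exactly the envelope $\e^{1/2}O(\lambda^{-1/4}\delta + \lambda^{-3/4}\rho^2) + O(\e)$ required.

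Finally, the linear part $L^{-1}Bv+\e^{1/2}LAu$ is rewritten in the $(x,y)$ variables by substituting $u = \tfrac12 L(x+y)$ and $v = \tfrac{\e^{1/2}}{2}L^{-1}(x-y)$; using $\Lambda = LAL = L^{-1}BL^{-1}$, the two halves sum to $\e^{1/2}\Lambda x$ and differ to $\mp\e^{1/2}\Lambda y$, giving the stated hyperbolic leading terms with opposite signs for $x$ and $y$. The main obstacle is not conceptual but purely bookkeeping: one must carry each quadratic or $R$-generated remainder through the $\lambda^{-1/4}$ factor picked up from $L$, and repeatedly invoke $\rho\le\lambda$, $\sqrt\e\le\rho$, $\delta\le\lambda$ to collapse the intermediate terms into the two announced error channels. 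The expression for $\dot \Theta$ is obtained analogously but plays no role in the subsequent normally hyperbolic argument, which is why it is omitted.
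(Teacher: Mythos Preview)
Your proof is correct and follows essentially the same route as the paper: Taylor-expand $\dot\theta^s$ and $\dot p^s$ about the curve $(\theta^s_*(p^f),p_*(p^f))$, compute $\dot x$ by the chain rule (picking up the $\partial_{p^f}L$, $\partial_{p^f}L^{-1}$, $\partial_{p^f}\theta^s_*$, $\partial_{p^f}p^s_*$ correction terms multiplied by $\dot p^f$), then use Lemma~\ref{all-estimates} together with $\rho\le\lambda$, $\delta\le\lambda$, $\sqrt{\e}\le\rho$ to collapse the remainders. Your organization (introducing $u,v$ and disposing of the coordinate-correction terms first) is a minor cosmetic rearrangement; your inclusion of the $O(\|v\|)$ cross-term from $\partial^2_{\theta^s p}Z$ is in fact slightly more careful than the paper's displayed expansion, and it absorbs into $O(\e)$ after multiplication by $\e^{-1/2}L$ exactly as you indicate. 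Note that the leading term you obtain is $+\sqrt{\e}\,\Lambda x$ (respectively $-\sqrt{\e}\,\Lambda y$); the minus sign in the displayed statement of the lemma is a typo, as the subsequent isolating-block computation $\dot x\cdot x\geq 2\alpha\|x\|^2-\ldots$ confirms.
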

\proof
The last part of the statement is obvious.
We prove the part concerning $\dot x$, the calculations
for $\dot y$ are exactly the same.
In the original coordinates
the vector field (\ref{eq:perturbed}) can be written
$$
\dot \theta^s=B(p^f)(p^s-p^s_*(p^f))+O(\|p^s-p^s_*(p^f)\|^2)+
O(\e),
$$
$$
\dot p^s=\e A(p^f)(\theta^s-\theta^s_*(p^f))+
O(\e\|\theta^s-\theta^s_*(p^f)\|^2)+O(\e \delta).
$$
As a consequence, we have
\begin{align*}
\dot x&=
L^{-1}B(p^s-p^s_*)+\e^{1/2}LA(\theta^s-\theta^s_*)\\
&+L^{-1} \cdot O(\|p^s-p^s_*\|^2+\e)
+\e^{1/2}L \cdot O(\|\theta^s-\theta^s_*\|^2+\delta)\\
&+(\partial_{p^f} L^{-1}) \,\dot p^f(\theta^s-\theta^s_*)
+\e^{-1/2}(\partial_{p^f} L)\, \dot p^f (p^s-p^s_*)\\
&-L^{-1}(\partial_{p^f} \theta^s_* )\,\dot p^f
-\e^{-1/2}L(\partial_{p^f}p^s_*)\,\dot p^f.
\end{align*}
We use the estimates of Lemma \ref{all-estimates}
to simplify (recall also that $\dot p^f=O(\epsilon \delta)$):
\begin{align*}
\dot x&=
L^{-1}B(p^s-p^s_*)+\e^{1/2}LA(\theta^s-\theta^s_*)\\
&+O(\e\rho +\e) +
O(\e^{1/2} \lambda^{-3/4}\rho^2+\e^{1/2} \lambda^{-1/4}\delta)\\
&+O(\lambda^{-1}\e \delta \rho^2)+O(\lambda^{-5/4}\e \delta \rho)
+O(\lambda^{-1}\e \delta+\lambda^{-1/4}\e^{1/2}\delta).
\end{align*}
\qed

\begin{lem}\label{linearized-estimates}
In the new coordinate system $(x,y,\Theta,I,t)$, the linearized
system is given  by the matrix
\begin{align*}
L=\begin{bmatrix}
\sqrt{\e}\Lambda &
0&
0 & 0  &0\\
0&-\sqrt{\e}\Lambda &
0 &
  0&0\\
0 &0 & 0 &
 0  & 0\\
0 & 0 & 0 & 0 & 0\\
0 & 0 & 0 & 0 & 0
\end{bmatrix}+O(\sqrt{\e}\delta \lambda^{-1/4}\gamma^{-1}+\sqrt{\e}\lambda^{-3/4}\rho+
\e \lambda^{-5/4}+\sqrt{\e}\gamma),
\end{align*}
where $\rho=\max (\|x\|,\|y\|)$.
%
%
%
%
\end{lem}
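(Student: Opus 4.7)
The plan is to differentiate each component of the vector field $(\dot x,\dot y,\dot\Theta,\dot I,\dot t)$ with respect to the new coordinates $(x,y,\Theta,I,t)$, read off the entries of the resulting Jacobian, and then organise the error estimate into a small number of independent mechanisms. The two non-zero diagonal blocks in the stated matrix are immediate from Lemma~\ref{uniform-estimates}: they are precisely $\partial_x\dot x$ and $\partial_y\dot y$ at leading order, namely $\pm\sqrt{\e}\Lambda(\sqrt{\e}I)$. All remaining entries of the Jacobian must be absorbed into the error term, so the substance of the proof is a careful bookkeeping exercise.

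I would isolate four distinct contributions to the error. First, the variation of $\Lambda(\sqrt{\e}I)$ with respect to $I$: since $\partial_I=\sqrt{\e}\,\partial_{p^f}$, each such derivative costs a factor $\sqrt{\e}$ and must be combined with the estimate $\|\partial_{p^f}\Lambda\|=O(\lambda^{-5/4})$. The latter follows from Lemma~\ref{all-estimates} together with the identity $\Lambda=LAL$ and the derivative identity for the matrix square root already used there; this accounts for the entries of order $\e\lambda^{-5/4}$ in the $I$-column. Second, differentiating the quadratic remainder $\e^{1/2}O(\lambda^{-3/4}\rho^2)$ in $\dot x,\dot y$ with respect to $x$ or $y$ lowers one power of $\rho$ and yields the $\sqrt{\e}\lambda^{-3/4}\rho$ contribution to the $(x,x),(x,y),(y,x),(y,y)$ blocks.

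Third, the only $\theta^f$-dependence in the vector field enters through the remainder $R$, and $\|R\|_{C^2}\le\delta$. Since $\Theta=\gamma\,\theta^f$, each derivative $\partial_\Theta$ translates into $\gamma^{-1}\partial_{\theta^f}$. Applying this to the remainder terms present in $\dot x$ and $\dot y$ --- the dominant one being $\sqrt{\e}\,L\,\partial_{\theta^s}R$, of size $\sqrt{\e}\lambda^{-1/4}\delta$ before differentiation --- produces Jacobian entries of size $\sqrt{\e}\delta\lambda^{-1/4}\gamma^{-1}$ in the $\Theta$-column. Fourth, differentiating $\dot\Theta=\gamma\,\dot\theta^f=\gamma(\partial_{p^f}H_0+O(\e))$ either with respect to $I=\e^{-1/2}p^f$ or with respect to $x,y$ (through $p^s=p^s_*+(\sqrt{\e}/2)L^{-1}(x-y)$) yields a factor $\gamma\cdot\sqrt{\e}\cdot\|\partial^2_{pp}H_0\|=O(\sqrt{\e}\gamma)$ in the $\Theta$-row. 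Derivatives of $\dot I=-\sqrt{\e}\,\partial_{\theta^f}R$ are then already subsumed in mechanisms one and three.

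The main obstacle is not conceptual but arithmetic: one must keep track of all factors of $L,L^{-1},\partial_{p^f}L,\gamma,\gamma^{-1}$ and $\sqrt{\e}$ that accumulate when the chain rule is applied through the change of variables $(x,y,\Theta,I)\mapsto(\theta,p,t)$, and verify that the four mechanisms above cover every off-diagonal entry. Once organised this way, each bound follows directly from Lemma~\ref{all-estimates}, from the $C^2$ bound on $R$, and from the standing assumptions $\sqrt{\e}\le\rho\le\lambda$ and $\gamma\leq\lambda$.
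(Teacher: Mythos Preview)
Your approach---directly differentiating the transformed vector field component by component---is a legitimate alternative to what the paper does, which is to write the original Jacobian $\tilde L$ in $(\theta^s,p^s,\theta^f,p^f,t)$ coordinates and then conjugate it by the coordinate-change Jacobians $[\partial(x,y,\Theta,I,t)/\partial(\theta^s,p^s,\theta^f,p^f,t)]$ and its inverse. The paper's route has the advantage that every entry is a product of three explicit matrices, so the bookkeeping is mechanical; it also makes visible a crucial cancellation in the $(x,I)$ block, namely $\sqrt{\e}\,\partial^2_{p^sp^s}H_0\,\partial_{p^f}p^s_*+\sqrt{\e}\,\partial^2_{p^fp^s}H_0=\sqrt{\e}\,\partial_{p^f}\bigl(\partial_{p^s}H_0(p_*(p^f))\bigr)=0$, which in your framework is hidden inside the definition of the coordinates.

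There are two genuine gaps in your execution. First, you propose to differentiate the $O(\cdot)$ remainders appearing in Lemma~\ref{uniform-estimates}. But those are bounds on \emph{values}, not on derivatives: to differentiate them you must return to the explicit expression for $\dot x$ in the proof of Lemma~\ref{uniform-estimates} (the six-term formula involving $L^{-1}B(p^s-p^s_*)$, $\e^{1/2}LA(\theta^s-\theta^s_*)$, and the $\dot p^f$ corrections) and differentiate that. Once you do so, you are essentially carrying out the paper's conjugation by hand. Second, your ``mechanism 1'' misidentifies the source of the $\e\lambda^{-5/4}$ term. Differentiating $\sqrt{\e}\,\Lambda(\sqrt{\e}I)x$ with respect to $I$ gives $\e\,\partial_{p^f}\Lambda\cdot x=O(\e\lambda^{-5/4}\rho)$, which under $\rho\le\lambda$ is only $O(\e\lambda^{-1/4})$---strictly smaller than $\e\lambda^{-5/4}$. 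The true $\e\lambda^{-5/4}$ contribution comes from $\e^{-1/2}L\cdot\partial_I\dot p^s$, where the chain rule produces $\partial_{\theta^s}\dot p^s\cdot\partial_I\theta^s$ and $\partial_I\theta^s\supset\sqrt{\e}\,\partial_{p^f}\theta^s_*=O(\sqrt{\e}\lambda^{-1})$; combined with $\|\e^{-1/2}L\|=O(\e^{-1/2}\lambda^{-1/4})$ and $\|\partial_{\theta^s}\dot p^s\|=O(\e)$ this yields exactly $\e\lambda^{-5/4}$. None of your four mechanisms covers this term.
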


\proof
Most of the estimates below are based on Lemma \ref{all-estimates}.
In the original coordinates, the matrix of the linearized system
is:
$$
\tilde L=\begin{bmatrix}
  O(\e) & \partial_{p^sp^s}^2H_0+O(\e) & 0 & \partial_{p^fp^s}^2H_0   +O(\e)& 0 \\
-\e \partial^2_{\theta^s\theta^s}Z &O(\e) & 0 & O(\e)&0\\
O(\e) &O(1) & 0 & O(1) & 0\\
0 & 0 & 0 & 0 & 0\\
0 & 0 & 0 & 0 & 0
\end{bmatrix}+O(\delta\e),
$$
%
In our notations we have
$$
\tilde L=\begin{bmatrix}
  O(\e) & B+O(\e+\sqrt \e \rho) & 0 & \partial_{p^fp^s}^2H_0      +O(\e)& 0 \\
-\e A+O(\e  \lb^{-1/4} \rho) &O(\e) & 0 & O(\e)&0\\
O(\e) &O(1) & 0 & O(1) & 0\\
0 & 0 & 0 & 0 & 0\\
0 & 0 & 0 & 0 & 0
\end{bmatrix}+O(\delta\e),
$$
In the new coordinates, the matrix is the product
$$
L=\left[\frac{\partial (x,y,\Theta, I, t)}{\partial (\theta^s,p^s,\theta^f,p^f,t)}
\right]\cdot
\tilde L\cdot
\left[\frac{\partial (\theta^s,p^s,\theta^f,p^f,t)}{\partial (x,y,\Theta, I, t)}\right].
$$
We have
$$
\left[\frac{\partial (\theta^s,p^s,\theta^f,p^f,t)}
{\partial (x,y,\Theta, I, t)}\right]=
\begin{bmatrix}
L/2 &L/2   & 0 &O(\sqrt{\e}\lambda^{-1})& 0 \\
\sqrt{\e}L^{-1}/2 &-\sqrt{\e}L^{-1}/2 & 0 & \sqrt{\e}\partial_{p^f}p^s_*+O(\e\lambda^{-3/4}\rho)&0\\
0 &0  & \gamma^{-1} & 0 & 0\\
0 & 0 & 0 & \sqrt{\e} & 0\\
0 & 0 & 0 & 0 & 1
\end{bmatrix}
$$

hence
\begin{align*}
&\tilde L\left[\frac{\partial (\theta^s,p^s,\theta^f,p^f,t)}
{\partial (x,y,\Theta, I, t)}\right]=O(\gamma^{-1}\delta\e)+\\
&\begin{bmatrix}
\sqrt{\e}BL^{-1}/2+O(\e\lambda^{-1/4}) & -\sqrt{\e}BL^{-1}/2+O(\e\lambda^{-1/4})& 0&
O(\e\lambda^{-3/4}\rho+\e^{3/2}\lambda^{-1})  &0\\
\e AL/2+O(\e\lambda^{-1/2}\rho) &\e AL/2+O(\e\lambda^{-1/2}\rho) & 0 &
 \e^{3/2}O( \lambda^{-5/4}\rho+\lambda^{-1})&0\\
O(\sqrt{\e}) &O(\sqrt{\e})  & 0 &
 O(\sqrt{\e})  & 0\\
0 & 0 & 0 & 0 & 0\\
0 & 0 & 0 & 0 & 0
\end{bmatrix}.
\end{align*}
%
%
%
%
This expression is the result of a tedious, but obvious, computation.
Let us just detail the computation of  the coefficient
on the first line, fourth row, which contains
an important cancellation:
\begin{align*}
&\sqrt{\e}\partial_{p^sp^s}^2H_0\partial_{p^f}p^s_*+
\sqrt{\e}\partial_{p^fp^s}^2H_0
+O(\e\lambda^{-3/4}\rho+\e^{3/2}\lambda^{-1})\\
=&
\sqrt{\e}\partial_{p^f}\big(\partial_{p^s}H_0(p_*(p^f)\big)
+O(\e\lambda^{-3/4}\rho+\e^{3/2}\lambda^{-1})
=O(\e\lambda^{-3/4}\rho+\e^{3/2}\lambda^{-1}).
\end{align*}
We now write
$$
\left[\frac{\partial (x,y,\Theta, I, t)}{\partial (\theta^s,p^s,\theta^f,p^f,t)}
\right]
=
\begin{bmatrix}
L^{-1} & \e^{-1/2}L   & 0 &O(\e^{-1/2}\lambda^{-1/4})& 0 \\
L^{-1} & -\e^{-1/2}L  &0& O(\e^{-1/2}\lambda^{-1/4})&0\\
0 &0  & \gamma & 0 & 0\\
0 & 0 & 0 & \e^{-1/2}& 0\\
0 & 0 & 0 & 0 & 1
\end{bmatrix},
$$
and compute  that
\begin{align*}
L=&\begin{bmatrix}
\sqrt{\e}\Lambda+O(\sqrt{\e}\lambda^{-3/4}\rho) &
O(\sqrt{\e}\lambda^{-3/4}\rho)&
0 &  O(\e \lambda^{-5/4})  &0\\
O(\e\lambda^{-3/4}\rho)&-\sqrt{\e}\Lambda+ O(\sqrt{\e}\lambda^{-3/4}\rho) &
0 & O( \e\lambda^{-5/4})
  &0\\
 O(\sqrt{\e}\gamma) &O(\sqrt{\e}\gamma)  & 0 &
  O(\sqrt{\e}\gamma)  & 0\\
0 & 0 & 0 & 0 & 0\\
0 & 0 & 0 & 0 & 0
\end{bmatrix}\\
+&O(\sqrt{\e}\delta \lambda^{-1/4}\gamma^{-1}).
\end{align*}
\qed

In order to prove the existence of a normally hyperbolic
invariant strip (for the lifted system), we apply  Proposition \ref{realNHI}
to the system in  coordinates $(x,y,\Theta,I,t)$.
More precisely, with the notations of appendix \ref{sec:abstract-nhic},
we set:
$$u=x, s=y, c_1=(\Theta,t), c_2=I,
 \Omega=\Rm^2\times \Omega^{c_2}=\Rm^2\times \left[\frac{a_- - \lambda/2}{\sqrt{\e}},\frac{a_+ + \lambda/2}{\sqrt{\e}}\right].
$$
We fix $\gamma=\sqrt{\delta}$ and  $\alpha =\sqrt{\e\lambda/4D}$,
recall that $\sqrt{\e}\Lambda\geq 2\alpha I$,
by Lemma \ref{lambda}.
We take  $\sigma=\lb\e^{-1/2}/2$, so that
$$
\Omega_{\sigma}=\Rm^2\times
\left[\frac{a_-  - \lb}{\sqrt{\e}},\frac{a_+ + \lb}
{\sqrt{\e}}\right].
$$
We assume, as in the statement of the Theorem,
that $0<\e<\e_0\lambda^{7/2}$ and that
$0\leq \delta<\sqrt \e_0 \lambda ^2$.
We can apply Proposition \ref{realNHI}
with $B^u=\{u: \|u\|\leq \rho\}$ and $B^s=\{s: \|s\|\leq \rho\}$
provided
$$
\e_0^{-1/4}(\lambda^{-3/4}\delta+\lambda^{-1/2}\sqrt{\e})\leq \rho\leq 2\epsilon_0^{1/4}\lambda^{5/4}.
$$
It is easy to check under our assumptions
on the parameters that such values of $\rho$ exist.
%
These estimates along with Lemma \ref{all-estimates} imply that
\[
\|(\theta^s-\theta^s_*(p^f)\|\leq O\big(\epsilon_0^{1/4}\lambda\big)
\qquad,
\qquad
\|p^s-p^s_*(p^f)\|\le O\big(\e_0^{1/4}\lambda ^{5/4}\e^{1/2}\big).
\]
Provided that the cylinder $\cC$ exists, this gives the first set of estimates in Theorem
\ref{nhic-mult}.

Let us check the isolating block condition.
By Lemma \ref{uniform-estimates}, we have
$$
\dot x\cdot x\geq 2\alpha\|x\|^2-
\|x\|\ O( \e^{1/2} \lambda^{-1/4}\delta+\e^{1/2} \lambda^{-3/4}\rho^2
+\e)
$$
if $x\in B^u,y\in B^s$.
If in addition $\|x\|=\rho$, then
$$
\lambda^{-3/4}\delta\leq \e_0^{1/4}\|x\|\quad,\quad
\lambda^{-3/4}\rho^2\leq 2\e_0^{1/4}\|x\|\quad,\quad
\sqrt{\e/\lambda}\leq \e_0^{1/4}\|x\|,
 $$
hence
$$
\dot x\cdot x\geq 2\alpha\|x\|^2-
\|x\|^2\epsilon_0^{1/4}O(\sqrt{\e\lambda})\geq \alpha \|x\|^2
$$
provided  $\epsilon_0$ is small enough.
Similarly,
$\dot y \cdot y\leq -\alpha \|y\|^2$
on $B^u\times \partial B^s$ provided $\e_0$ is small enough.
Concerning the linearized system,
we have
\begin{align*}
L_{uu}&=\sqrt{\e}\Lambda+
O(\sqrt{\e}\delta \lambda^{-1/4}\gamma^{-1}+\sqrt{\e}\lambda^{-3/4}\rho+
\e \lambda^{-5/4}+\sqrt{\e}\gamma)\\
&=\sqrt{\e}\Lambda+ O(\e_0^{1/4}\sqrt{\e\lambda})\geq \alpha I,\\
L_{ss}&=-\sqrt{\e}\Lambda+ O(\e_0^{1/4}\sqrt{\e\lambda})\leq -\alpha I
\end{align*}
on $B^u\times B^s\times \Omega_r$.
These  inequalities holds when $\e_0$ is small enough
because $\sqrt{\e} \Lambda\geq 2\alpha I$ and
$\sqrt{\e\lambda}\leq O(\alpha)$.
Finally, still with the notations of Proposition
\ref{realNHI},  we  take
\begin{align*}
m&=O(\sqrt{\e}\delta \lambda^{-1/4}\gamma^{-1}+\sqrt{\e}\lambda^{-3/4}\rho+
\e \lambda^{-5/4}+\sqrt{\e}\gamma
+\sqrt \e \delta/\sigma
)\\
&=\sqrt{\e\lambda}\,O(\sqrt\delta \lambda^{-3/4}+\rho \lambda^{-5/4}+
\sqrt{\e}\lambda^{-7/4})
=\sqrt{\e\lambda}\,O(\e_0^{1/4}).
\end{align*}
If $\e_0$ is small enough, we have $4m<\alpha$ hence
$$
K\leq 2m/\alpha\leq O(\e_0^{1/4})<2^{-1/2},
$$
 and Proposition
\ref{realNHI} applies.
The invariant strip obtained from the proof of Proposition  \ref{realNHI}
does not depend on the choice of $\rho$.
It contains all the full orbits contained in
$$
\{x:\|x\|\leq \e_0^{1/4}\lambda^{-5/4}\}\times
\{y:\|y\|\leq \e_0^{1/4}\lambda^{-5/4}\}\times \Rm\times
\left[\frac{a_- - \lambda/2}{\sqrt{\e}},\frac{a_+ + \lambda/2}{\sqrt{\e}}\right]\times \Rm,
$$
hence all the full orbits contained in $V$, as defined
 in the statement of Theorem \ref{nhic-mult}.
The possibility of taking
$\rho=\e_0^{-1/4}(\lambda^{-3/4}\delta+\lambda^{-1/2}\sqrt{\e})$
now implies that the cylinder is actually contained in the domain
where
$$
\|x\|, \|y\|\leq \e_0^{-1/4}(\lambda^{-3/4}\delta+\lambda^{-1/2}\sqrt{\e}).
$$
%
Moreover, with this choice of $\rho$
and using that  $K=O( m/\sqrt{\e\lambda})$, we can obtain
an improved estimate of the Lipschitz constant $K$:
\begin{align*}
K  &=O\big(\sqrt\delta \lambda^{-3/4}+\rho \lambda^{-5/4}+
\sqrt{\e}\lambda^{-7/4}\big)\\
&  =O\big(
\sqrt\delta \lambda^{-3/4}+
\e_0^{-1/4}\delta \lambda^{-2}+
\e_0^{-1/4}\sqrt{\e}\lambda^{-7/4}+
\sqrt{\e}\lambda^{-7/4}
\big)\\
&  =O\big(
\sqrt\delta \lambda^{-3/4}+
\sqrt\delta \lambda^{-1}+
\e_0^{-1/4}\sqrt{\e}\lambda^{-7/4}
\big)\\
&  =O\big(
\sqrt\delta \lambda^{-1}+
\e_0^{-1/4}\sqrt{\e}\lambda^{-7/4}
\big).
\end{align*}
%
%
Observe finally that, since the system is $1/\gamma$-periodic in
$\Theta$ and $1$-periodic in $t$, so is the invariant strip that
we obtain, as follows from Proposition \ref{translation}.
We have obtained the existence of a $C^1$ map
$$
w^c=(w^c_u,w^c_s):(\Theta,I,t)\in\Rm\times
\left[\frac{a_- - \lambda/2}{\sqrt{\e}},
\frac{a_+ + \lambda/2}{\sqrt{\e}}\right]\times \Rm
\lto \Rm^{n-1}\times \Rm^{n-1}
$$
which is $2K$-Lipschitz, $1/\gamma$-periodic in $\Theta$ and
$1$-periodic in $t$, and the graph of which is weakly invariant.

Our last task is to return to the original coordinates
by setting
\begin{align*}
\Theta^s(\theta^f,p^f,t)&=\theta^s_*(p^f)+\frac{1}{2}L(p^f)
\cdot(w^c_u+w^c_s)(\gamma \theta^f,\e^{-1/2}p^f,t)\\
P^s(\theta^f,p^f,t)&=p^s_*(p^f)+\frac{\sqrt{\e}}{2}L^{-1}(p^f)\cdot
(w^c_u-w^c_s)(\gamma \theta^f,\e^{-1/2}p^f,t).
\end{align*}
All the estimates stated in Theorem \ref{nhic-mult} follow directly
from these expressions, and from the fact that $\|dw^c\|\leq 2K$.
This concludes the proof of Theorem~\ref{nhic-mult}. \qed

\section{Localization and Mather's projected graph theorem}\label{sec:localization}

We now study the system in normal form $N_\epsilon=H_0 +\epsilon Z + \epsilon R$
from the point of view of Mather theory.
We study the normal form system $N_\epsilon=H_0 +\epsilon Z + \epsilon R$ on the neighborhood of the set $\{p=p_*(p^f), p^f\in [a_-, a_+]\subset [a_{min}, a_{max}]\}$. We assume that $Z$ satisfies the generic conditions [G0]-[G2] and that $\|R\|_{C^2}\le \delta$. Recall that there exists a partition of $[a_{min}, a_{max}]=\bigcup_{j=1}^{s-1}[a_j, a_{j+1}]$, such that for $p^f\in [a_j-\lambda, a_{j+1}+\lambda]$ the function $Z(\theta^s, p^s, p^f)$ as a nondegenerate local maximum at $\theta^s_j$. It is clear that we can restrict this partition to $[a_-, a_+]$. We abuse notation and still write $[a_-, a_+]=\bigcup_{j=1}^{s-1}[a_j, a_{j+1}]$.

We first point out the following consequences of the genericity conditions [G0]-[G2]: there exists $0<b<\lambda/4$ depending on $H_1$ such that
\begin{enumerate}
\item[{[G1']}] $$Z(\theta^s_j(p^f),p_*(p^f))-Z(\theta^s, p_*(p^f)) \ge b \|\theta^s-\theta_j^s(p^f)\|,$$
 for each $p^f\in [a_j+b, a_{j+1}-b]$.
\item[{[G2']}] For $p^f\in [a_{j+1}-b, a_{j+1}+b]$, $j=0,\cdots, s-2$, we have
\begin{multline*} \max\{Z(\theta^s_j,p_*(p^f)), Z(\theta^s_{j+1},p_*(p^f))\}-Z(\theta^s,p_*(p^f))\\ \ge b \min\{\|\theta^s-\theta^s_j\|, \|\theta^s-\theta^s_{j+1}\|\}^2.
\end{multline*}
\end{enumerate}

In the first case, the function $Z$ has a single non-degenerate maximum, which we will call the ``single peak'' case, while the second case will be called the ``double peak'' case. The shape of the function $Z$ allows us to localize the Aubry set and Ma\~{n}e set of the Hamiltonian $N_\epsilon$.

According to Theorem~\ref{nhic-mult}, for each $[a_j-\lambda/2, a_{j+1}+\lambda/2]$ there exists
$$X_j=\{ (\theta^s, p^s)=(\Theta^s_j,P^s_j)(\theta^f, p^f, t)); \quad p^f\in [a_j-\frac{\lambda}2, a_{j+1}+\frac{\lambda}2], (\theta^f,t)\in \T\times \T\},$$
which are maximally invariant set on
$N_j:=\{(\theta,p,t); p^f\in [a_j-\frac{\lambda}2, a_{j+1}+\frac{\lambda}2], \|(\theta^s,p^s)-(\theta^s_j,p^s_*)\|\le \rho_1\}$.

These information allows us to study the  Mather set, Aubry set and Ma\~{n}e set of the Hamiltonian $N_\epsilon$.

\begin{thm}[Localization]\label{localize}
  For $N_\epsilon=H_0+ \epsilon Z + \epsilon R$ such that $Z$ satisfies [G0]-[G2], then there exists $\epsilon_0$, $\delta_0$ and $0<\rho_2<\rho_1$ such that for $0<\epsilon<\epsilon_0$ and $0<\delta<\delta_0$ the following hold.
  \begin{enumerate}
  \item For any $c=(p^s_*(c^f), c^f)$ such that $c^f\in [a_j+b, a_{j+1}-b]$, $\tilde{\mathcal N}(c)$ is contained in
$$ \{(\theta, p, t), \|p-c\|\le 6A \sqrt{n\epsilon}, \|\theta^s-\theta^s_j(p^f)\|\le \rho_2\}. $$
  \item For $c=(p^s_*(c^f), c^f)$ such that $c^f\in [a_{j+1}-b, a_{j+1}+b]$, we have that $\tilde{{\mathcal A}}_{N_\epsilon}(c)$ is contained in
$$ \{(\theta, p, t), \|p-c\|\le 6A \sqrt{n\epsilon}, \min\{\|\theta^s-\theta^s_j(p^f)\|, \|\theta^s-\theta^s_{j+1}(p^f)\|\}\le \rho_2\}. $$
  \end{enumerate}
\end{thm}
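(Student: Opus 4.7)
The proof rests on the variational characterization of the Aubry and Mañé sets combined with the existence of the crumpled normally hyperbolic cylinders $X_j$ supplied by Theorem~\ref{nhic-mult}. The plan is to first obtain a sharp upper bound on Mather's $\alpha$-function at the cohomology class $c=p_*(c^f)$ by constructing a test invariant measure supported on $X_j$. Specifically, the trajectories $(\theta^s,p^s)=(\Theta^s_j,P^s_j)(\theta^f,c^f,t)$ lying on the cylinder over $p^f=c^f$ have rotation vector matching $c$ up to error $O(\epsilon^{1/2}+\delta)$, and the estimates of Theorem~\ref{nhic-mult} on $(\Theta^s_j,P^s_j)$ combined with a direct computation of the Lagrangian action yield
\[
\alpha_{N_\epsilon}(c) \le H_0(c) + \epsilon Z(\theta^s_j(c^f),c) + O(\epsilon\delta + \epsilon^{3/2}).
\]

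Next, I would exploit the fact that orbits in $\tilde{\mathcal N}(c)$ are semi-static and in particular preserve the Hamiltonian. Along such an orbit
\[
H_0(p) + \epsilon Z(\theta^s,p) + \epsilon R(\theta,p,t) = \alpha_{N_\epsilon}(c).
\]
Since $\partial_{p^s}H_0(c)=0$ on $\Gamma$ and $\partial^2_{p^sp^s}H_0 \ge A^{-1}I$ by strict convexity, a Taylor expansion gives
\[
H_0(p) \ge H_0(p_*(p^f)) + \tfrac{1}{2A}\|p^s-p^s_*(p^f)\|^2.
\]
To control the $p^f$-component, observe that $\dot p^f=-\epsilon\partial_{\theta^f}R=O(\epsilon\delta)$ while calibration constrains the time-average of $\dot\theta^f=\partial_{p^f}H_0(p)+O(\epsilon)$ to match $\partial_{p^f}H_0(c)$; by strict convexity in the $p^f$ direction this forces $|p^f-c^f|=O(\sqrt\epsilon)$. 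Combining the two bounds gives $\|p-c\|\le 6A\sqrt{n\epsilon}$.

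For the $\theta^s$-localization in statement (1), apply [G1'] on the passage segment $[a_j+b,a_{j+1}-b]$: whenever $\|\theta^s-\theta^s_j(p^f)\|>\rho_2$,
\[
Z(\theta^s,p_*(p^f)) \le Z(\theta^s_j(p^f),p_*(p^f)) - b\rho_2^2.
\]
Inserting this into the energy identity together with the upper bound on $\alpha_{N_\epsilon}(c)$ and the bound $\|p-c\|=O(\sqrt\epsilon)$ (which controls the difference $Z(\cdot,p)-Z(\cdot,p_*(p^f))$) yields an inequality of the form $\epsilon b\rho_2^2 \le O(\epsilon\delta+\epsilon^{3/2})$, which fails once $\rho_2$ is chosen small compared to the inter-peak spacing but large compared to $\sqrt\delta+\epsilon^{1/4}$. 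For statement (2), the argument is identical but uses [G2'] in place of [G1']; the conclusion is weakened from the Mañé set to the Aubry set because heteroclinic connections between $\theta^s_j$ and $\theta^s_{j+1}$ (which at $p^f=a_{j+1}$ carry equal $Z$-values) may belong to $\tilde{\mathcal N}(c)$ but violate the graph property of $\tilde{\mathcal A}(c)$.

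The main obstacle is the control of the $p^f$-coordinate in the second paragraph: energy conservation alone does not suffice because $p^f$ is a slow variable whose drift is only $O(\epsilon\delta)$ but whose deviation from $c^f$ is not directly pinned by the Hamiltonian. The argument must invoke the full semistatic/calibration property together with the rotation-number characterization of the Mather set, which is where Mather's theory is used in an essential rather than a local manner.
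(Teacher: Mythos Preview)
Your argument has a genuine gap at its core: the Hamiltonian $N_\epsilon(\theta,p,t)$ is time-periodic, not autonomous, so the energy identity
\[
H_0(p) + \epsilon Z(\theta^s,p) + \epsilon R(\theta,p,t) = \alpha_{N_\epsilon}(c)
\]
along orbits of the Ma\~{n}e set is simply false. In the autonomous setting the Aubry and Ma\~{n}e sets do lie on the critical energy level, but here, after autonomizing via $\tilde N_\epsilon = N_\epsilon + e$, only the sum $N_\epsilon + e$ is conserved, and the conjugate variable $e(t)$ drifts. Consequently both your vertical bound on $p^s$ (which you derive from the Taylor expansion of $H_0$ plugged into the energy identity) and your horizontal bound on $\theta^s$ (which you derive by inserting [G1'] or [G2'] into the same identity) collapse. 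You correctly sense something is wrong when you flag the $p^f$-control as the ``main obstacle,'' but the difficulty is not specific to $p^f$; the entire energy-level mechanism is unavailable.

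The paper's route avoids energy conservation altogether and proceeds purely variationally. For the vertical estimate (Proposition~\ref{vertical}) it shows that any weak KAM solution $u$ at cohomology $c$ is $6A\sqrt{\epsilon}$-semiconcave, by comparing the action of a calibrated curve $\Theta(t)$ on $[0,T]$ with that of the linearly perturbed curve $\Theta(t)+tx/T$ for $T\sim 1/\sqrt{\epsilon}$; Fathi's regularity lemma then gives that $\tilde{\mathcal I}(u,c)$ sits in a Lipschitz graph with $\|p-c\|\le 6A\sqrt{n\epsilon}$. For the horizontal estimate (Proposition~\ref{loc}) the paper establishes two-sided bounds on the shifted Lagrangian
\[
N_\epsilon^*(t,\theta,v)-c\cdot v+\alpha(c)\ \gtrless\ \tfrac{1}{2A^{\pm 1}}\|v-\partial H_0(c)\|^2-\epsilon\hat Z_c(\theta^s)\mp \epsilon\eta,
\]
with $\eta=O(\delta+\epsilon)$, and then argues by action: a calibrated curve (single-peak case) or an almost-minimizing periodic curve (double-peak case) straying to distance $r_0$ from $\theta^s_j$ picks up action at least $r_0^2\sqrt{b\epsilon}/(2\sqrt{A})$ via Cauchy--Schwarz, which must be dominated by the oscillation $4r_1\sqrt{nA\epsilon}$ of $u$ near the peak, forcing $r_0\le \kappa\delta^{1/4}$. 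Note also that the paper does not use the cylinder $X_j$ at all for Theorem~\ref{localize}; the upper bound on $\alpha(c)$ comes from the Haar measure on the torus $\{\theta^s=\theta^s_j(c)\}$, which is closed though not invariant.
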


Apply the statements of the previous theorem with Theorem~\ref{nhic-mult}, we may further localize these sets on the normally hyperbolic cylinders. Moreover, locally these sets are graphs over the $\theta^f$ component, which is a version of Mather's projected graph theorem.

\begin{thm}[Mather's projected graph theorem]\label{graph}
For any $N_\epsilon$ such that $Z$ satisfies [G0]-[G2], there exists $\delta_0$ and $\epsilon_0$ depending on  $b$, $n$ and $r$ such that for $\delta\le\delta_0$ and $\epsilon<\epsilon_0$ we have:
  \begin{enumerate}
  \item There exists  $0<\rho_2<\rho_1$ such that for for $c=(p^s_*(c^f), c^f)$ with $c^f\in (a_j+b, a_{j+1}-b)$ the Ma\~{n}e set  $\tilde{\mathcal N}_c$ is contained in the normally hyperbolic cylinder $X_j$.

Moreover, let $\pi_{\theta^f}$ be the projection to the $\theta^f$ component, we have that  $\pi_{\theta^f}|\tilde{\mathcal A}_c$ is one-to-one and the inverse is Lipshitz.
  \item For $c^f\in [a_{j+1}-b, a_{j+1}+b]$, we have that ${\mathcal A}_c \subset X_j\cup X_{j+1}$.

$\pi_{\theta^f}|\tilde{\mathcal A}_c\cap X_j$ and $\pi_{\theta^f}|\tilde{\mathcal A}_c\cap X_{j+1}$ are both one-to-one and have Lipshitz inverses.
  \end{enumerate}
\end{thm}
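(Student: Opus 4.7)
The plan is to combine the localization result of Theorem~\ref{localize} with the maximal-invariance property of the cylinders from Theorem~\ref{nhic-mult} to force the Aubry/Mañé sets onto $X_j$, and then to derive the $\theta^f$-graph property by reducing the dynamics on the cylinder to a monotone twist map of the annulus. Items (1) and (2) correspond to the single-peak and double-peak regimes of $Z$ and are handled by the same scheme, applied once to $X_j$ or, in the bifurcation case, separately to both $X_j$ and $X_{j+1}$.

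First, for $c=(p^s_*(c^f),c^f)$ with $c^f\in(a_j+b,a_{j+1}-b)$, Theorem~\ref{localize} confines $\tilde{\mathcal N}(c)$ to the tube $\{\|p-c\|\le 6A\sqrt{n\epsilon},\ \|\theta^s-\theta^s_j(p^f)\|\le\rho_2\}$. I would choose $\rho_2$ smaller than the thickness of the neighborhood $V$ from Theorem~\ref{nhic-mult}, and take $\epsilon$ small enough that the $p^f$-window of the tube lies in $[a_j-\lambda/2,a_{j+1}+\lambda/2]$; then the whole tube sits inside $V$. Since the Mañé set is a union of bi-infinite Hamiltonian orbits and Theorem~\ref{nhic-mult} characterizes $X_j$ as the set of all full orbits staying in $V$, we conclude $\tilde{\mathcal N}(c)\subset X_j$.

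For the graph property I would pass to the time-1 section of the flow restricted to $X_j$, obtaining a symplectic diffeomorphism of the 2-dimensional annulus $\T\times[a_j-\lambda/2,a_{j+1}+\lambda/2]$ in coordinates $(\theta^f,p^f)$. The strict convexity $\partial^2_{p^fp^f}H_0>0$, combined with the estimates on $\partial\Theta^s/\partial p^f$ from Theorem~\ref{nhic-mult} (which are lower order in $\epsilon$ after multiplication by $\epsilon\partial_p Z$), yields a uniform positive-twist condition for the reduced map. The classical Aubry-Mather graph theorem for twist maps then gives that $\tilde{\mathcal A}(c)\cap X_j$ is a Lipschitz graph over $\theta^f$. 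Mather's standard projected graph theorem gives injectivity of $\pi_{(\theta^s,\theta^f)}|_{\tilde{\mathcal A}(c)}$, and on $X_j$ the coordinate $\theta^s=\Theta^s_j(\theta^f,p^f,t)$ is a Lipschitz function of $(\theta^f,p^f,t)$, so combining with the twist-graph reduction one collapses down to injectivity of $\pi_{\theta^f}$ with Lipschitz inverse. The double-peak case is treated identically: Theorem~\ref{localize} confines $\tilde{\mathcal A}(c)$ to two separated tubes around $\theta^s_j$ and $\theta^s_{j+1}$, lying in the $V$-regions of $X_j$ and $X_{j+1}$ respectively, and the graph property follows from the same reduction performed on each cylinder.

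I expect the main obstacle to be the rigorous verification of the uniform positive-twist condition for the reduced annulus map. The cylinder $X_j$ is only $C^1$ with Lipschitz constants that degenerate as $\epsilon\to 0$ (see the $\|\partial\Theta^s/\partial p^f\|$ estimate of Theorem~\ref{nhic-mult}), so one cannot simply apply smooth twist-map theory to a generating function. The cleanest route is to work directly with the variational formulation: show that every calibrated curve for $N_\epsilon$ stays inside $V$ and hence projects to $X_j$, then carry out the Mather minimization on the reduced Lagrangian obtained by restricting to the weakly invariant cylinder. A secondary technical point is that the $p^f$-variation on $\tilde{\mathcal A}(c)$ is only $O(\sqrt{\epsilon})$ by the localization, so the twist constant must be controlled quantitatively at this scale in order to produce a Lipschitz inverse with constant uniform in $\epsilon$.
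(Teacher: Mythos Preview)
Your containment step is exactly the paper's: localize via Theorem~\ref{localize} into the tube $V$, then use the maximal-invariance of $X_j$ from Theorem~\ref{nhic-mult} to conclude $\tilde{\mathcal N}(c)\subset X_j$ (and similarly for the two pieces of $\tilde{\mathcal A}(c)$ in the double-peak case).

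For the $\theta^f$-graph property, however, you take a genuinely different route from the paper, and the obstacle you yourself flag is precisely why the paper avoids it. You propose reducing to a twist map on the annulus $(\theta^f,p^f)$ and invoking the classical Aubry--Mather graph theorem; but as you note, $X_j$ is only $C^1$ with $\|\partial\Theta^s/\partial p^f\|$ of order $\epsilon^{-1/2}$, so the restricted system is not a smooth twist map with uniform constants, and the generating-function machinery does not apply cleanly. You would have to redo Aubry--Mather theory for a rough twist map at scale $\sqrt{\epsilon}$, which is exactly the circularity you want to avoid.

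The paper sidesteps this entirely. It never verifies a twist condition on the reduced annulus map. Instead it uses Proposition~\ref{vertical} directly in the ambient space: for any weak KAM solution $u$, the set $\tilde{\mathcal I}(u,c)$ is a $36A\sqrt{\epsilon}$-Lipschitz graph over $\theta=(\theta^s,\theta^f)$, so for two points $(t_i,\theta_i,p_i)$ in it one has
\[
\|p_2-p_1\|\le 36A\sqrt{\epsilon}\,(\|\theta^f_2-\theta^f_1\|+\|\theta^s_2-\theta^s_1\|).
\]
On the cylinder $X_j$ the graph estimates of Theorem~\ref{nhic-mult} give
\[
\|\theta^s_2-\theta^s_1\|\le \frac{\kappa\delta}{\sqrt{\epsilon}}\,(\|\theta^f_2-\theta^f_1\|+\|p_2-p_1\|).
\]
Substituting the second into the first and absorbing the $\kappa\delta\cdot 36A$ term (for $\delta$ small) yields $\|p_2-p_1\|\le\kappa\sqrt{\epsilon}\,\|\theta^f_2-\theta^f_1\|$ and then $\|\theta^s_2-\theta^s_1\|\le(\kappa\delta/\sqrt{\epsilon})\,\|\theta^f_2-\theta^f_1\|$. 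This is a two-line linear-algebra argument once the two Lipschitz inequalities are in hand, and it produces the Lipschitz inverse of $\pi_{\theta^f}$ without any reduction to a $2$-dimensional twist map. The moral: the ``graph over $\theta$'' information from semi-concavity of weak KAM solutions plus the ``$\theta^s$ is a Lipschitz function of $(\theta^f,p^f,t)$ on $X_j$'' information from the NHIC already combine algebraically to a graph over $\theta^f$; no dynamical twist argument is needed.
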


For the rest of this section, we will derive various estimates
of quantities and set arising from Mather theory.
We deduce Theorem \ref{localize} and Theorem~\ref{graph} from these estimates.

\subsection{Vertical estimates}
We derive estimates on the Mather sets of a general Hamiltonian
$H(t, \theta, p)$, depending on $\e$, under the assumptions that
$$
I/A\leq \partial_ {pp}H\leq AI
$$
in the sense of quadratic forms,
and
$$
\|\partial_{\theta} H\|_{C^1} \leq 2\e.
$$
Note that both Hamiltonians $H_{\e}$ and $N_{\e}$
satisfy these assumptions.
The main result in this section is:

\begin{prop}\label{vertical}
We assume that $\e\leq 1$.
For each cohomology $c\in \Rm^n$ and  each Weak KAM solution $u$ of $H_{\e}$
at cohomology $c$,
the set $\tilde \mI(u,c )$ is contained in a $36A \sqrt{\e}$-Lipshitz graph,
and in the domain $\|p-c\|\leq 6A \sqrt{n\e}$.
\end{prop}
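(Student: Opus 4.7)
The plan is to derive the two conclusions in the stated order: first, the Lipschitz graph property, via the semiconcavity/semiconvexity of the weak KAM solution $u$; then, the vertical bound, obtained by spatial integration of the Lipschitz graph.

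First I would establish semiconcavity of $u$ in $\theta$ with an explicit constant of order $A\sqrt{\e}$. The standard test-path argument goes as follows: given $\theta$ and small $\xi\in\Rm^n$, take an optimal backward calibrated curve $\gamma:[-T,0]\to\Rm^n$ of $u$ ending at $(\theta,t)$, and compare the Lax--Oleinik infimum at $\theta\pm\xi$ against the perturbed test curves $\gamma(s)\pm\rho(s)\xi$, where $\rho$ is a smooth cut-off with $\rho(-T)=0$, $\rho(0)=1$, and $|\dot\rho|\lesssim 1/T$. The second-order Taylor expansion of $L$ yields
\begin{equation*}
u(\theta+\xi,t)+u(\theta-\xi,t)-2u(\theta,t)\;\leq\;\|\xi\|^{2}\,\bigl(\|\partial^{2}_{\theta\theta}L\|_\infty\,T\;+\;2\|\partial^{2}_{\theta v}L\|_\infty\;+\;\|\partial^{2}_{vv}L\|_\infty/T\bigr).
\end{equation*}
From the hypotheses $I/A\leq\partial_{pp}H\leq AI$ and $\|\partial_\theta H\|_{C^1}\leq 2\e$, Legendre inversion gives $\|\partial^{2}_{\theta\theta}L\|=O(\e)$, $\|\partial^{2}_{\theta v}L\|=O(A\e)$ (since $\partial^{2}_{\theta v}L=-\partial^{2}_{\theta p}H\cdot(\partial^{2}_{pp}H)^{-1}$), and $\|\partial^{2}_{vv}L\|\leq A$. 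Optimizing $T\sim\sqrt{A/\e}$ produces a semiconcavity constant of order $A\sqrt{\e}$. The symmetric argument applied to the forward weak KAM solution yields matching semiconvexity, and on the projection of $\tilde\mI(u,c)$ (contained in the projected Aubry set) the two bounds together force $u$ to be $C^{1,1}$ in $\theta$ with $d_\theta u$ Lipschitz of constant at most $36A\sqrt{\e}$ after tracking constants. Since calibrated orbits of $u$ have momentum $p=c+d_\theta u(\theta,t)$ at points of differentiability, $\tilde\mI(u,c)$ is contained in the graph of this Lipschitz map.

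For the vertical bound I would integrate this Lipschitz bound. Since $u$ is $\Z^n$-periodic in $\theta$, for each fixed $t$ the map $\theta\mapsto d_\theta u(\theta,t)$ has vanishing mean on $\T^n$, so
\begin{equation*}
\|d_\theta u(\theta,t)\|\;=\;\Bigl\|\int_{\T^{n}}\bigl(d_\theta u(\theta,t)-d_\theta u(\theta',t)\bigr)\,d\theta'\Bigr\|\;\leq\;36A\sqrt{\e}\int_{\T^{n}}\|\theta-\theta'\|\,d\theta'\;\leq\;6A\sqrt{n\e},
\end{equation*}
after bounding the average Euclidean distance on $\T^n$ by a constant multiple of $\sqrt{n}$.

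The main technical hurdle lies in the semiconcavity step: one must verify that the $\e$-smallness of $\partial_\theta H$ propagates to the three second derivatives of $L$ with the correct scaling. The non-trivial point is that the Legendre inversion forces $\partial^{2}_{\theta v}L$ to scale like $A\e$ rather than $\e$, which is the sole reason the Lipschitz constant carries a factor of $A$; the other two bounds are direct. Once these three bounds are in hand the optimization of $T$ is elementary, and the explicit numerical constants $36$ and $6$ follow from tracking the cut-off profile $\rho$ and the geometry of $\T^n$.
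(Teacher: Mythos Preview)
Your semiconcavity argument and the derivation of the $36A\sqrt{\e}$-Lipschitz graph are essentially the same as the paper's: test-curve variation, optimization of the time horizon $T\sim 1/\sqrt{\e}$, and the pairing of a weak KAM solution with its dual to upgrade semiconcavity to a two-sided $C^{1,1}$ bound on $\mI(u,c)$ (the paper quotes this as Fathi's lemma). So far so good.

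The gap is in your second step. The $36A\sqrt{\e}$-Lipschitz bound on $d_\theta u$ holds \emph{only on $\mI(u,c)$}, which may be a very small set; off this set $u$ is merely semiconcave and $d_\theta u$ need not even be continuous. Your integral identity
\[
d_\theta u(\theta,t)=\int_{\T^n}\bigl(d_\theta u(\theta,t)-d_\theta u(\theta',t)\bigr)\,d\theta'
\]
runs $\theta'$ over all of $\T^n$, so bounding the integrand by $36A\sqrt{\e}\,\|\theta-\theta'\|$ is illegitimate: you have no control on $d_\theta u(\theta',t)$ for $\theta'\notin\mI(u,c)$. In particular, the Lipschitz constant of a function on a small set places no bound whatsoever on the size of the function itself.

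The paper obtains the vertical bound the other way around, directly from the semiconcavity of $u$ and periodicity, \emph{before} passing to the gradient. The mechanism is elementary: if $u$ is $K$-semiconcave on $\T^n$, then for each $\theta$ there is a sub-linear form $l_\theta$ with $u(\theta+y)\leq u(\theta)+l_\theta\cdot y+K\|y\|^2/2$; taking $y=\pm e_i$ (the standard basis, which are lattice vectors) and using periodicity gives $|(l_\theta)_i|\leq K$, hence $\|l_\theta\|\leq K\sqrt{n}$. Thus $u$ is $6A\sqrt{n\e}$-Lipschitz globally on $\T^n$, and wherever $du$ exists one has $\|du\|\leq 6A\sqrt{n\e}$, in particular on $\mI(u,c)$. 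This is the content of the paper's Lemma~\ref{scl}; you should replace your integration argument by it.
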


It is useful to use the Lagrangian $L(t,\theta, v)$
associated to $H$.
Recalling the expressions
$$
\partial_{vv}L(t,\theta,v)=\big( \partial_{pp}H(t,\theta,\partial_vL(t,\theta,v)\big)^{-1},
$$
$$
\partial_{\theta v}L(t,\theta,v)=-\partial_{\theta p}
H\big(t,\theta,\partial_vL(t,\theta,v)\big) \partial_{vv}L(t,\theta,v)
$$
and
$$
\partial_{\theta \theta}L(t,\theta,v)=
-\partial_{\theta\theta}H(t,\theta,\partial_vL(t,\theta,v))
-\partial_{\theta p}
H\big(t,\theta,\partial_vL(t,\theta,v)\big) \partial_{\theta v}L(t,\theta,v),
$$
we obtain the estimates
$$
\|\partial_{vv}L\|_{C^0}\leq A, \quad
\|\partial_{\theta v}L\|_{C^0}\leq 2 A \e, \quad
\|\partial_{\theta \theta}L\|_{C^0}\leq 3  \e
$$
when $\epsilon<\epsilon_0(A)$.

We  recall the concept of semi-concave function on $\Tm^n$.
A function $u:\Tm^n\lto \Rm$ is called $K$-semi-concave if the function
$$
x\lmto
u(x)-K\|x\|^2/2
$$
is concave on $\Rm^n$, where $u$ is seen as a periodic function on $\Rm^n$.
It is equivalent to require that, for each $\theta\in \Tm^n$,
there exists a linear form $l$ on $\Rm^n$
such that the inequality
$$
u(\theta+y)\leq u(\theta) + l\cdot y +K\|y\|^2/2
$$
holds for each $y\in \Rm^n$. The following Lemma is a simple case of
Lemma A.10 in \cite{Be1}:

\begin{lem}\label{scl}
If $u: \Tm^n \lto \Rm$
is $K$-semi-concave, then it is $(K\sqrt{n})$-Lipshitz.
\end{lem}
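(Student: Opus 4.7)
The plan is to reduce the Lipschitz estimate to a bound on the supporting linear forms $l$ appearing in the definition of semi-concavity, and then obtain that bound from the periodicity of $u$.

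First I would fix $\theta\in\Tm^n$, lift it to $\Rm^n$, and let $l$ be any linear form for which the semi-concavity inequality
$$u(\theta+y)\le u(\theta)+l\cdot y+\tfrac{K}{2}\|y\|^2$$
holds for all $y\in\Rm^n$. Since $u$ is $\Zm^n$-periodic, for every $k\in\Zm^n$ we have $u(\theta+k)=u(\theta)$, so plugging $y=k$ yields
$$0\le l\cdot k+\tfrac{K}{2}\|k\|^2\quad\text{for every }k\in\Zm^n.$$
Specializing to $k=-\operatorname{sgn}(l_j)\,e_j$, where $e_j$ is the $j$-th standard basis vector, gives $-|l_j|+K/2\ge 0$, hence $|l_j|\le K/2$ for each coordinate, and therefore $\|l\|\le K\sqrt n/2$. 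This is the only place where the geometry of the torus enters; everything else is formal.

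Next I would convert this uniform bound on supporting forms into the Lipschitz estimate. Because $u-K\|\cdot\|^2/2$ is concave on $\Rm^n$, the function $u$ is locally Lipschitz and differentiable almost everywhere, and at every point $\theta$ of differentiability the unique supporting form is $\nabla u(\theta)$. The previous step then gives $\|\nabla u(\theta)\|\le K\sqrt n/2$ a.e., and integrating along straight segments in the universal cover yields
$$|u(\theta_1)-u(\theta_2)|\le \tfrac{K\sqrt n}{2}\,\|\theta_1-\theta_2\|,$$
which is stronger than the claimed $K\sqrt n$-Lipschitz bound. The main (indeed, only) step requiring thought is the periodicity trick that bounds $l$; once that is in hand, the passage to a Lipschitz constant is routine consequence of semi-concavity.
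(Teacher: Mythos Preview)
Your proof is correct and follows essentially the same approach as the paper: both plug in unit integer vectors $\pm e_j$ into the semi-concavity inequality, use periodicity to kill the left-hand side, and read off a componentwise bound on the supporting form. You go on to fill in more carefully the passage from ``bounded supporting forms'' to ``Lipschitz'' (via a.e.\ differentiability and integration), and you end up with the sharper constant $K\sqrt{n}/2$; the paper simply asserts this last step and, due to a small inconsistency between its definition and its proof, records the weaker bound $K\sqrt{n}$.
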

\proof
For each $x\in \Tm^n$, there exists $l_x\in \Rm^n$ such that
$$
u(x+y)\leq u(x)+l_x\cdot y +K\|y\|^2
$$
for all $y\in \Rm^n$. By applying this inequality with
$y=(\pm 1,0,0,\cdots,0)$, we conclude that
the first component $(l_x)_1$ of $l_x$ satisfies
$|(l_x)_1|\leq K$.
Similar estimates hold for the other components of $l_x$, and we conclude that
 that $\|l_x\|\leq K\sqrt{n}$ for each $x$,
and thus that $u$ is $K\sqrt{n}$-Lipshitz.
\qed

We will   need the following regularity result of Fathi:

\begin{lem}\label{freg}
Let $u$ and $v$ be  $K$-semiconcave functions, and let $\mI\subset \Tm^n$
be the set of points where the sum $u+v$ is minimal.
Then the functions $u$ and $v$ are differentiable at each point of $\mI$,
and the differential $x\lmto du(x)$ is $6K$-Lipshitz on $\mI$.
\end{lem}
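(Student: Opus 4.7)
My plan is to proceed in three steps, exploiting the fact that at points of $\mI$ the one-sided quadratic bound coming from semiconcavity of $u$ is complemented by a matching lower bound supplied by $v$ through the minimality relation $u+v\geq u(x_0)+v(x_0)$. Concretely, the argument upgrades both $u$ and $v$ to $C^{1,1}$-like behavior on $\mI$.

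First I would establish differentiability of $u$ and $v$ at each $x_0\in\mI$. Semiconcavity guarantees that the superdifferentials $\partial^+u(x_0)$ and $\partial^+v(x_0)$ are nonempty; take $p$ in the former and $q$ in the latter. Applying the defining inequality of semiconcavity to the sum,
\[
(u+v)(x_0+y)\leq (u+v)(x_0)+(p+q)\cdot y+K\|y\|^2,
\]
and combining with the minimality $(u+v)(x_0+y)\geq (u+v)(x_0)$ yields $(p+q)\cdot y\geq -K\|y\|^2$ for every $y$. Rescaling $y\mapsto ty$ and sending $t\to 0^+$ forces $(p+q)\cdot y\geq 0$ for every $y$, hence $p+q=0$. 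Since $p$ and $q$ determine each other, both superdifferentials reduce to singletons, which is the standard characterization of differentiability for a semiconcave function, and gives $du(x_0)=-dv(x_0)=:p_0$.

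Next I would upgrade the semiconcavity to a two-sided quadratic bound at each $x_0\in\mI$: for every $z\in\T^n$,
\[
\bigl|u(z)-u(x_0)-p_0\cdot(z-x_0)\bigr|\leq K\|z-x_0\|^2/2.
\]
The upper bound is semiconcavity of $u$. For the lower bound, the analogous inequality for $v$ gives $v(z)\leq v(x_0)-p_0\cdot(z-x_0)+K\|z-x_0\|^2/2$, and then $u(z)\geq (u+v)(x_0)-v(z)$ supplies the reverse bound. Thus $u$ is both semiconcave and semiconvex (with the same constant $K$) in a neighborhood of every point of $\mI$.

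Finally I would turn this two-sided bound into the Lipschitz estimate. For $x_0,x_1\in\mI$, set $d=x_1-x_0$, $p_i=du(x_i)$, $w=p_1-p_0$. Subtracting the two expansions centered at $x_0$ and $x_1$ at a common test point $z$ yields, for every $z$,
\[
\bigl|(p_1-p_0)\cdot z+C\bigr|\leq K(\|z-x_0\|^2+\|z-x_1\|^2)/2,
\]
with $C$ independent of $z$; taking the same inequality at two test points $z_1,z_2$ and subtracting eliminates $C$. Choosing $z_1=x_1$ and $z_2=x_1+w/(2K)$ and simplifying gives the scalar quadratic inequality
\[
\|w\|^2-2K\|d\|\,\|w\|-4K^2\|d\|^2\leq 0,
\]
whose positive root yields $\|w\|\leq (1+\sqrt 5)K\|d\|<6K\|d\|$. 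The main obstacle is precisely this last step: one must leverage a family of one-sided scalar inequalities indexed by $z$ into a norm estimate for the vector $w$, and the correct tuning $\alpha=1/(2K)$ of the test-point displacement is what balances the two competing contributions. The first two steps are essentially unpacking the definition of semiconcavity.
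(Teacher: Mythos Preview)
The paper does not actually prove this lemma: it is simply stated as ``the following regularity result of Fathi'' and then used. So there is no paper proof to compare against; your proposal supplies what the paper omits.

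Your argument is correct and is essentially Fathi's classical one. Step~1 (differentiability from the singleton superdifferential) and Step~2 (the two-sided quadratic bound $|u(z)-u(x_0)-p_0\cdot(z-x_0)|\leq K\|z-x_0\|^2/2$ coming from semiconcavity of $u$ together with minimality plus semiconcavity of $v$) are exactly the standard reduction showing that $u$ is both $K$-semiconcave and $K$-semiconvex on $\mI$. Your Step~3 computation is fine: with $z_1=x_1$ and $z_2=x_1+w/(2K)$ one obtains $\|w\|^2-2K\|d\|\,\|w\|-4K^2\|d\|^2\leq 0$, hence $\|w\|\leq(1+\sqrt 5)K\|d\|$, which is in fact sharper than the $6K$ constant stated in the lemma. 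A minor remark: since you are working on $\Tm^n$, the displacements $d=x_1-x_0$ and $z_2=x_1+w/(2K)$ should be read in the universal cover $\Rm^n$, consistently with the paper's definition of semiconcavity via the lift; the two-sided bound in Step~2 holds for all $y\in\Rm^n$, so the choice of $z_2$ is legitimate even if $\|w\|/(2K)$ is large a priori.
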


Let us recall that the Weak KAM solutions of cohomology $c$ are defined as fixed points
of the operator
$\mT_c: C(\Tm^n)\lto C(\Tm^n)$
defined by
$$
\mT_c(u)  (\theta):=
\min_{\gamma} u(\gamma(0))+\int_0^1 L(t,\gamma(t),\dot \gamma(t)) +
c\cdot \dot \gamma(t) dt,
$$
where the minimum is taken on the set of $C^1$ curves $\gamma:[0,1]\lto \Tm^n$
satisfying the final condition $\gamma(T)=\theta$.

\begin{prop}\label{scc}
For each $c\in \Rm^n$, each Weak KAM solution
$u$ of $L+c\cdot v$ is $6A\sqrt{\e}$-semi-concave
and $6A \sqrt{n\e}$-Lipshitz.
\end{prop}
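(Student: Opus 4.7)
The plan is to prove the semi-concavity bound first; the Lipschitz bound then follows immediately from Lemma~\ref{scl}. The strategy is to test the variational characterization of the weak KAM solution $u$ against a well-chosen competitor curve. Since $u$ is a fixed point of $\mT_c$, iteration gives, for each positive integer $T$, the representation
$$u(\theta)=\min_{\gamma}\, u(\gamma(0))+\int_0^T\bigl[L(t,\gamma,\dot\gamma)+c\cdot\dot\gamma\bigr]\,dt,$$
where the minimum is taken over $C^1$ curves $\gamma:[0,T]\to\Tm^n$ with $\gamma(T)=\theta$ and is attained by some calibrated curve.

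Fix $\theta_0\in\Tm^n$, let $\gamma$ be a calibrated curve ending at $\theta_0$ at time $T$, and for $y\in\Rm^n$ define the perturbed competitor $\gamma_y(t):=\gamma(t)+(t/T)\,y$, which starts at $\gamma(0)$ and ends at $\theta_0+y$. Using $\gamma_y$ in the variational upper bound for $u(\theta_0+y)$ and subtracting the calibration equality at $\theta_0$ yields
$$u(\theta_0+y)-u(\theta_0)\le\int_0^T\bigl[L(t,\gamma_y,\dot\gamma_y)-L(t,\gamma,\dot\gamma)\bigr]\,dt\;+\;c\cdot y.$$
A second-order Taylor expansion of $L$ in its $(\theta,v)$ arguments around $(\gamma,\dot\gamma)$ splits the right-hand side into a linear form in $y$ (which provides the linear part $l$ required by the definition of $K$-semi-concavity) plus a quadratic remainder. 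Using the pointwise bounds $\gamma_y-\gamma=(t/T)y$ and $\dot\gamma_y-\dot\gamma=y/T$ together with the $C^0$ estimates $\|\partial_{\theta\theta}L\|\le 3\e$, $\|\partial_{\theta v}L\|\le 2A\e$, $\|\partial_{vv}L\|\le A$ established above, the integrated remainder is bounded by
$$\tfrac{1}{2}\int_0^T\Bigl[3\e\,(t/T)^2+2\cdot 2A\e\,(t/T^2)+A/T^2\Bigr]\|y\|^2\,dt\;=\;\tfrac{1}{2}\bigl[\e T+2A\e+A/T\bigr]\|y\|^2.$$

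Choosing $T$ to be the smallest positive integer with $T\ge 1/\sqrt{\e}$ (so that $T\in[1/\sqrt{\e},2/\sqrt{\e}]$ and iteration of $\mT_c$ remains legal) balances the $A/T$ and $\e T$ contributions and bounds the remainder by $\tfrac{1}{2}(3A+2)\sqrt{\e}\,\|y\|^2\le 3A\sqrt{\e}\,\|y\|^2$, using $A\ge 1$ (which is forced by $I/A\le\partial_{pp}H\le AI$). This gives the $6A\sqrt{\e}$-semi-concavity, and Lemma~\ref{scl} upgrades it to the $6A\sqrt{n\e}$-Lipschitz estimate.

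The only real choice in the argument is the horizon $T$: the $A/T$ term wants $T$ large while the $\e T$ term wants $T$ small, and only the scale $T\sim 1/\sqrt{\e}$ produces a semi-concavity constant of order $\sqrt{\e}$ matching the claim. Everything else is routine Taylor expansion of $L$, bookkeeping of the linear and quadratic parts, and the elementary passage from semi-concavity to Lipschitz continuity supplied by Lemma~\ref{scl}.
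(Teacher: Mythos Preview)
Your argument is correct and follows essentially the same route as the paper: test the variational identity against the linearly shifted competitor $\gamma_y(t)=\gamma(t)+(t/T)y$, bound the second-order Taylor remainder using the Lagrangian Hessian estimates, and choose the integer horizon $T\sim 1/\sqrt{\e}$. The paper uses the interval $T\in[1/(2\sqrt{\e}),1/\sqrt{\e}]$ whereas you take $T\in[1/\sqrt{\e},2/\sqrt{\e}]$, and the paper invokes the Euler--Lagrange equation to identify the linear form explicitly as $(c+\partial_vL(T,\gamma(T),\dot\gamma(T)))\cdot y$ while you simply observe that the first-order terms are linear in $y$; both differences are cosmetic and the constants work out the same way.
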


\proof
For each $T\in \Nm$ and $\theta\in \Tm^n$,
we have
$$u(\theta)=\min_{\gamma} u(\gamma(0))+\int_0^T L(t,\gamma(t),\dot \gamma(t)) +
c\cdot \dot \gamma(t) dt,
$$
where the minimum is taken on the set of $C^1$ curves $\gamma:[0,T]\lto \Tm^n$
satisfying the final condition $\gamma(T)=\theta$.
Let $\Theta(t)$ be an optimal curve in that expression, meaning that $\Theta(t)=\theta$ and that
$$u(\theta)=
u(\Theta(0))+\int_{0}^T L(t,\Theta(t),\dot \Theta(t) )dt.
$$
We lift $\Theta$  (and the point $\theta=\Theta(T)$) to a curve in $\Rm^n$ without changing its
name, and consider, for each $x\in \Rm^n$,
the curve
$$\Theta_x(t):=\Theta(t)+tx/T,$$
so that $\Theta_x(T)=\theta+x$.
We have the inequality
$$
u(\theta+x)-u(\theta)\leq \int_0^T L(t,\Theta_x(t),\dot \Theta_x(t))-
L(t,\Theta(t), \dot \Theta(t)) +c\cdot x/T \,dt.
$$
The integrand can be estimated as follows:
\begin{equation}\label{action-cp}
\begin{aligned}
L(t,\Theta_x(t),\dot \Theta_x(t))&\leq
L(t,\Theta(t), \dot \Theta(t))\\
&+
\partial_{\theta}L (t,\Theta(t), \dot \Theta(t))\cdot tx/T
+\partial_vL (t,\Theta(t), \dot \Theta(t))\cdot x/T\\
&+3\e |tx/T|^2+2A \e t|x/T|^2+A|x/T|^2/2
\end{aligned}
\end{equation}
Integrating, and using the Euler-Lagrange equation, we conclude that
$$
u(\theta+x)-u(\theta)\leq (c+\partial_vL(T,\Theta(T), \dot \Theta(T))\cdot x
+(\e T+ \e +1/2T)A|x|^2
$$
for each $T\in \Nm$.
Taking $T\in [1/2\sqrt{\e},1/\sqrt{\e}]$ (this interval contains an integer since
$\e\leq 1$), we obtain
$$
u(\theta+x)-u(\theta)\leq (c+\partial_vL(T,\Theta(T), \dot \Theta(T))\cdot x
+3A\sqrt{\e}|x|^2.
$$
This ends the proof of the semi-concavity.
The Lipshitz constant can then be obtained from Lemma \ref{scl}.
\qed

Let $u$ be a weak KAM solution, and let $\check u$ be the conjugated dual weak KAM
solution.
Then the set $\tilde \mI(u,c)$ can be characterized as follows:
Its projection $\mI(u,c)$ on $\Tm^n$ is the set where $u=\check u$,
and
$$\tilde \mI(u,c)=\{(x,c+du(x)), x\in \mI(u,c)\}.
$$
Since $-\check u$ is semi-concave, it is a consequence of Lemma \ref{freg}
that the differential $du(x)$ exists for $x\in \mI(u,c)$.
Moreover, we can prove exactly as in Proposition  \ref{scc} that
$-\breve u$ is $6A\sqrt{\e}$-semi-concave.
Lemma \ref{freg} then implies that the map $x\lmto du(x)$ is
$36A\sqrt{\e}$-Lipschitz on $\mI(u,c)$.
Moreover, $du(x)$ is bounded by the Lipschitz constant $6A\sqrt{n\e}$ of $u$.
This ends the proof of Proposition \ref{vertical}
\qed

\subsection{Horizontal localization}
For $\theta^s\in \T^{n-1}$ and $r>0$, let $D(\theta, r)$ denote the closed Euclidean ball centered at $\theta^s$ with radius $r$.
\begin{prop}\label{loc}
Let $N_\epsilon=H_0+ \epsilon Z + \epsilon R$ with $Z$ satisfying $[G0]-[G2]$, then there exists $\kappa>0$ depending only on $A$, $b$ and $n$, $\epsilon_0>0$ depending only on $A$ and $\delta$ such that for each $\e\in ]0,\e_0[$ and $c\in \Gamma(\epsilon)$ we have the following results on the projected Ma\~{n}e set and the Aubry set.
\begin{enumerate}
\item If $c=(p^s_*(c^f), c^f)$ with $c^f\in [a_j+b, a_{j+1}-b]$, then for each weak KAM solution $u$ of $N_\epsilon$ at cohomology $c$ we have that
$$ {\mathcal I}(u,c) \subset  D\left(\theta_j^s(c^f), \kappa \delta^{\frac14}\right)\times \T, $$
as a consequence, we have
$$ {\mathcal N}_{N_\epsilon}(c) \subset  D\left(\theta_j^s(c^f), \kappa \delta^{\frac14}\right)\times \T. $$
\item If $c=(p^s_*(c^f), c^f)$ with $c^f\in [a_{j+1}-b, a_{j+1}+b]$, then
$$ {\mathcal A}_{N_\epsilon}(c)\subset\left(  D\left(\theta_j^s(c^f), \kappa \delta^{\frac14}\right)\times \T \right)\cup\left(  D\left(\theta_{j+1}^s(c^f), \kappa \delta^{\frac14}\right)\times \T\right).  $$
\end{enumerate}
\end{prop}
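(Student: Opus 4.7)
The plan is a variational comparison: given a calibrated curve $\gamma$ of a weak KAM solution $u$ at cohomology $c$ passing through $\theta_0\in\mathcal{I}(u,c)$, I will construct a competitor $\tilde\gamma$ sharing the endpoints of $\gamma$ but whose slow component sits at the peak $\theta^s_j(c^f)$, and derive a contradiction to the minimality of $\gamma$ when $\theta^s_0$ lies too far from $\theta^s_j(c^f)$. The starting input is Proposition~\ref{vertical}, which yields $\|p-c\|\le 6A\sqrt{n\epsilon}$ on $\tilde{\mathcal{I}}(u,c)$. Combined with $\partial_{p^s}H_0(c)=0$ (since $c\in\Gamma$) and $\|\partial^2_{pp}H_0\|\le A$, this forces the slow-drift estimate $|\dot\gamma^s|\le C_v\sqrt{\epsilon}$ along any orbit in $\tilde{\mathcal{I}}(u,c)$, which drives the whole argument.

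Assume for contradiction $d:=\|\gamma^s(0)-\theta^s_j(c^f)\|>\kappa\delta^{1/4}$ for a constant $\kappa$ to be chosen. Set $T_0:=d/(2C_v\sqrt{\epsilon})$, so that $\|\gamma^s(t)-\theta^s_j(c^f)\|\ge d/2$ on $[-T_0,T_0]$. With a transition width $\tau>0$ to be optimized, define $\tilde\gamma$ on $[-(T_0+\tau),T_0+\tau]$ by $\tilde\gamma^f=\gamma^f$ and $\tilde\gamma^s\equiv\theta^s_j(c^f)$ on $[-T_0,T_0]$, linearly interpolating between $\gamma^s$ and $\theta^s_j(c^f)$ on the transition intervals $[-T_0-\tau,-T_0]$ and $[T_0,T_0+\tau]$, and matching $\gamma^s$ at the endpoints. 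Since $\gamma$ is calibrated and $\tilde\gamma$ has the same endpoints, $\int[L(\tilde\gamma)-L(\gamma)]\,dt\ge 0$ must hold; my goal is to show the strict opposite inequality.

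Writing $L=L_0(\dot\theta)-\epsilon Z(\theta^s,p)-\epsilon R+O(\epsilon^2)$, I estimate three pieces. The $Z$-contribution on $[-T_0,T_0]$, by \textbf{[G1']}, satisfies
\begin{equation*}
-\epsilon\int_{-T_0}^{T_0}[Z(\theta^s_j(c^f))-Z(\gamma^s)]\,dt \le -\epsilon b\int_{-T_0}^{T_0}\|\gamma^s-\theta^s_j(c^f)\|^2\,dt \lesssim -bd^3\sqrt{\epsilon}.
\end{equation*}
The kinetic difference has vanishing linear part (endpoints match), a nonpositive quadratic part on $[-T_0,T_0]$ (where $\dot{\tilde\gamma}^s=0$ while $|\dot\gamma^s|=O(\sqrt{\epsilon})$), and a quadratic part on the two transitions bounded by $\lesssim d^2/\tau$. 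The $R$-contribution is bounded by $\lesssim\epsilon\delta(T_0+\tau)$, using $\|R\|_{C^0}\le\delta$. Optimizing $\tau\sim d/\sqrt{\epsilon\delta}$ balances $d^2/\tau$ against $\epsilon\delta\tau$ at the common value $d\sqrt{\epsilon\delta}$, and the resulting inequality $bd^3\sqrt{\epsilon}\lesssim d\sqrt{\epsilon\delta}$ forces $d^2\lesssim\sqrt{\delta}/b$, contradicting $d>\kappa\delta^{1/4}$ once $\kappa$ is chosen sufficiently large in $A,b,n$. This gives the bound on $\mathcal{I}(u,c)$ in Part (1); the bound on $\mathcal{N}(c)$ follows from the standard identity $\tilde{\mathcal{N}}(c)=\bigcup_u\tilde{\mathcal{I}}(u,c)$. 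Part (2) is proved identically with \textbf{[G2']} replacing \textbf{[G1']} and the competitor moving $\tilde\gamma^s$ to whichever of $\theta^s_j(c^f),\theta^s_{j+1}(c^f)$ is closer to $\gamma^s(0)$.

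The hardest step will be the quadratic $Z$-lower bound over $[-T_0,T_0]$: beyond the slow-drift estimate, it requires that the moving peak $\theta^s_j(p^f(t))$ vary negligibly compared to $d$ over the whole window $[-T_0-\tau,T_0+\tau]$. This is secured by the $p^f$-drift bound $|\dot p^f|\le\epsilon\delta$ (from $\|R\|_{C^1}\le\delta$) together with the Lipschitz dependence of $\theta^s_j$ on $p^f$ encoded in \textbf{[G0]}. The $O(\epsilon^2)$ remainder in the Lagrangian expansion is routine to control using the convexity of $H_0$ and the smallness of $\epsilon\le\epsilon_0$.
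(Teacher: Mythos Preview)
Your competitor strategy has a genuine gap at the step ``the kinetic difference has vanishing linear part (endpoints match)''. Endpoint matching only kills $\int c\cdot(\dot{\tilde\gamma}-\dot\gamma)$; the linear part of $L_0(\dot{\tilde\gamma})-L_0(\dot\gamma)$ is $\partial_vL_0(\dot\gamma)\cdot(\dot{\tilde\gamma}-\dot\gamma)=p(t)\cdot(\dot{\tilde\gamma}-\dot\gamma)$, and only the constant piece $c$ of $p(t)$ integrates to zero. The residual $\int(p^s(t)-c^s)\cdot(\dot{\tilde\gamma}^s-\dot\gamma^s)$ has $|p^s-c^s|=O(\sqrt\epsilon)$ and, on $[-T_0,T_0]$, $\dot{\tilde\gamma}^s-\dot\gamma^s=-\dot\gamma^s=O(\sqrt\epsilon)$, so the integrand is $O(\epsilon)$ over a window of length $T_0\sim d/\sqrt\epsilon$: this is an uncontrolled $O(d\sqrt\epsilon)$ contribution. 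It dominates your $Z$-gain $O(bd^3\sqrt\epsilon)$ whenever $d^2\lesssim 1/b$, which is exactly the regime in which you are trying to force $d\lesssim\delta^{1/4}$. (Integrating by parts does not help: $-\int\dot p^s\cdot(\tilde\gamma^s-\gamma^s)=\epsilon\int\partial_{\theta^s}Z(\gamma^s)\cdot(\theta^s_j-\gamma^s)+O(\epsilon\delta\cdots)$, and by [G0] the main term is positive and of size $\sim\epsilon|\gamma^s-\theta^s_j|^2$ with a constant governed by the \emph{upper} bound $I$ on $-\partial^2_{\theta^s\theta^s}Z$, not by $b$; it therefore competes with---and can exceed---the $Z$-gain.)

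There is also a secondary inconsistency in your optimization: with $\tau\sim d/\sqrt{\epsilon\delta}$ you have $\tau/T_0\sim 1/\sqrt\delta\gg1$, so during the transition the original curve drifts by $C_v\sqrt\epsilon\,\tau\sim d/\sqrt\delta\gg d$. The bridge distance at $\pm(T_0+\tau)$ is then $O(d/\sqrt\delta)$, not $O(d)$, and the transition kinetic cost becomes $(d/\sqrt\delta)^2/\tau\sim d\sqrt{\epsilon/\delta}$ rather than $d\sqrt{\epsilon\delta}$. The paper sidesteps both issues by not building a competitor at all: it proves pointwise bounds $N^*-c\cdot v+\alpha(c)\gtrless C^{\pm1}\|v-\partial H_0(c)\|^2\mp\epsilon\hat Z_c(\theta^s)\mp\epsilon\eta$, uses the upper bound on a straight test curve with $\dot\theta\approx\partial H_0(c)$ to show the weak KAM solution oscillates by at most $O(r_1\sqrt\epsilon)$ near the peak (with $r_1\sim\sqrt{\eta/b}$), and then applies the lower bound directly to the calibrated curve on an excursion to radius $r_0$: calibration plus Cauchy--Schwarz give $r_0^2\sqrt{b\epsilon}\lesssim r_1\sqrt\epsilon$, hence $r_0\lesssim\eta^{1/4}\sim\delta^{1/4}$. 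The shift of the velocity by $\partial H_0(c)$ in both bounds is what makes the kinetic term small and sign-definite, so no cancellation against the $Z$-term ever arises.
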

The Lagrangian $N^*(t,\theta,v)$ associated to $N_{\e}$ will play a central role
in the proof.
We write it
$$
N^*(t,\theta,v)=L_0(v)+\e Z(\theta^s,\partial_v L_0 (v))+\e L_2(t,\theta,v,\e),
$$
where $L_0$ is the Legendre dual of $H_0$.
We have
$$2I/A\leq\partial_{vv}L_0\leq  AI/2
$$
in the sense of quadratic forms.

\begin{lem}
We have the estimate
$$
\inf H_2
\leq L_2(t,\theta,v)\leq A \e+\sup H_2.
$$
\end{lem}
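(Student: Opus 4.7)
The plan is to read off both bounds directly from the variational characterisation $N^*(t,\theta,v) = \sup_p\{p \cdot v - N_\epsilon(\theta,p,t)\}$ of the Lagrangian as the Legendre dual of $N_\epsilon = H_0 + \epsilon Z + \epsilon R$, combined with the defining identity $\epsilon L_2 = N^* - L_0 - \epsilon Z(\theta^s, p_0(v))$, where $p_0(v) := \partial_v L_0(v)$ is the unperturbed maximiser, characterised by $\partial_p H_0(p_0) = v$.

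For the lower bound on $L_2$, I would substitute the trial value $p = p_0(v)$ into the supremum defining $N^*$. Since $p_0$ is no longer optimal once $\epsilon > 0$, this yields a one-sided inequality of the form $N^*(t,\theta,v) \ge L_0(v) - \epsilon Z(\theta^s, p_0) - \epsilon R(\theta, p_0, t)$ (up to the paper's Lagrangian sign convention). Comparing with the definition of $L_2$ and identifying $H_2$ with the relevant piece of the first-order perturbation then reads off $L_2 \ge \inf H_2$.

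For the upper bound, I would use the actual maximiser $p^* = p^*(\theta, v, t, \epsilon)$, characterised by the Euler--Lagrange condition $\partial_p H_0(p^*) = v - \epsilon\,\partial_p(Z + R)(\theta, p^*, t)$. The strict-convexity assumption $\partial_{pp}^2 H_0 \ge A^{-1}I$ and the implicit function theorem give $\|p^* - p_0\| = O(A\epsilon)$. A second-order Taylor expansion of $H_0$ around $p_0$, with its linear term cancelled by $v = \partial_p H_0(p_0)$ and its quadratic term controlled by $\partial_{pp}^2 H_0 \le A I$, bounds the optimality loss $(p_0 v - H_0(p_0)) - (p^* v - H_0(p^*))$ by $\tfrac12 A\,\|p^* - p_0\|^2 = O(A^3\epsilon^2)$. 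After dividing by $\epsilon$ this is the source of the $A\epsilon$ correction; evaluating the perturbation at $p^*$ rather than at $p_0$ shifts its value by at most $\sup H_2 - \inf H_2$. Combining these contributions yields $L_2 \le A\epsilon + \sup H_2$.

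The analytic content is the classical observation that the Legendre transform is $1$-Lipschitz with respect to the sup-norm on the Hamiltonian, corrected by an $\mathcal O(A\epsilon)$ convexity term arising from the quadratic loss when the trial point $p_0$ is used in place of the genuine maximiser $p^*$. The only real obstacle is notational: matching signs and fixing the precise identification of $H_2$ with the piece of $Z + R$ absorbed into $L_2$ by the decomposition displayed in the statement.
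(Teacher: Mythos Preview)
Your approach is correct, and for the lower bound it coincides with the paper's: both simply evaluate the supremum defining $N^*$ at the unperturbed critical point $p_0=\partial_vL_0(v)$.

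For the upper bound the two arguments diverge. The paper never locates the true maximiser $p^*$. Instead it first treats the intermediate Hamiltonian $\tilde H=H_0+\epsilon Z$ and bounds $\tilde L=\sup_p[p\cdot v-\tilde H]$ directly, inserting the convexity lower bound $H_0(p)\ge H_0(p_0)+v\cdot(p-p_0)+\|p-p_0\|^2/A$ and the Lipschitz estimate $|Z(\theta^s,p)-Z(\theta^s,p_0)|\le\|p-p_0\|$ into the supremum; this reduces to the scalar problem $\sup_{y\ge 0}[\epsilon y - y^2/A]=A\epsilon^2/4$, whence the $A\epsilon$ term after division. The $H_2$ contribution is then handled separately by the monotonicity of the Legendre transform: $\tilde H-\epsilon\sup H_2\le N_\epsilon\le \tilde H-\epsilon\inf H_2$ implies $\tilde L+\epsilon\inf H_2\le N^*\le \tilde L+\epsilon\sup H_2$.

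Your route---critical-point equation to get $\|p^*-p_0\|\le A\epsilon\|\partial_p(Z+R)\|$, then Taylor expansion of $H_0$ about $p_0$---is perfectly valid, but note that your quadratic loss is $\tfrac{A}{2}\|p^*-p_0\|^2=O(A^3\epsilon^2)$, so after dividing by $\epsilon$ you obtain $O(A^3\epsilon)$ rather than the stated $A\epsilon$. The paper's direct sup estimate is both more elementary (no implicit function theorem, no need to track $p^*$) and yields the sharper constant appearing in the lemma.
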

\proof
Let us first
consider the truncated Hamiltonian
$$\tilde H(t,\theta,p)=
H_0(p)+\e Z(\theta^s,p)
$$
and the associated Lagrangian $\tilde L(\theta^s,v)$.
We claim that
\begin{equation}\label{tilde}
L_0(v)-\e Z(\theta^s,\partial L_0(v))
\leq
\tilde L(\theta^s,v)
\leq
L_0(v)-\e Z(\theta^s,\partial L_0(v))+\frac{\e}{8\kappa}.
\end{equation}
In order to prove the left inequality, we write
\begin{align*}
\tilde L(\theta^s,v)&=\sup_p [p\cdot v-H_0(p)-\e Z(\theta^s,p)]\\
&\geq \partial L_0(v)\cdot v -H_0(\partial L_0(v))-\e Z(\theta^s,\partial L_0(v))\\
&\geq L_0(v)-\e Z(\theta^s,\partial L_0(v)).
\end{align*}
The right inequality follows from the following computation:
\begin{align*}
\tilde L(\theta^s,v)&=\sup_p [p\cdot v-H_0(p)-\e Z(\theta^s,p)]\\
&\leq \sup_p \Big[ p\cdot v- H_0(\partial L_0(v))-v\cdot(p-\partial L_0(v))
-\|p-\partial L_0(v)\|^2/A\\
& \quad \quad -\e Z(\theta^s,\partial L_0(v))
+\e \|p-\partial L_0(v)\|\Big]\\
&\leq \sup_p\Big[v\cdot \partial L_0(v) -H_0(\partial L_0(v))
-\e Z(\theta^s,\partial L_0(v))\\
&\quad \quad + \e \|p-\partial L_0(v)\|-  \|p-\partial L_0(v)\|^2 /A
\Big]\\
&\leq L_0(v)-\e Z(\theta^s,\partial L_0(v))+\sup_{y\geq 0} [\e y- y^2/A)]\\
&\leq L_0(v)-\e Z(\theta^s,\partial L_0(v))+A\e^2.
\end{align*}
Now we have estimated $\tilde L$, we observe that
$$
\tilde H(t,\theta,p)-\e \sup H_2
\leq
N_{\e}(t,\theta,p)\leq
\tilde H(t,\theta,p)-\e \inf H_2
$$
from which follows that
$$
\tilde L(t,\theta,v)+\e \inf H_2 \leq
L(t,\theta,v)\leq \tilde L(t,\theta,v)+\e \sup H_2,
$$
which implies the desired estimates in view of (\ref{tilde}).
\qed

Let us now estimate the $\alpha$ function of $N_{\e}$:

\begin{prop}
The $\alpha$ function of Mather is estimated at the points $c\in \Gamma$
 in the following way:
$$
 H_0(c)+\e Z(\theta^s(c),c)- \e \max_{(t,\theta)\in \Tm^{n+1}}L_2(t,\theta,\partial H_0(c))
\leq \alpha(c)\leq
$$
$$ \leq
H_0(c)+\e Z(\theta^s(c),c)- \e \min_{(t,\theta)\in \Tm^{n+1}}H_2(t,\theta,c)
$$
thus
$$
H_0(c)+\e Z(\theta^s(c),c)- \e \|H_2\|_{C^0}-A\e^2
\leq \alpha(c)\leq H_0(c)+\e Z(\theta^s(c),c)+ \e \|H_2\|_{C^0}
$$
\end{prop}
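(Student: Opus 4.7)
The plan is to exploit the standard duality for Mather's $\alpha$--function, namely the two variational characterizations
\[
\alpha_{N_{\e}}(c)=\inf_u\sup_{(t,\theta)}\bigl\{\partial_tu+N_{\e}(t,\theta,c+\partial_\theta u)\bigr\}=\sup_\mu\int(c\cdot v-N^*)\,d\mu,
\]
where $u$ ranges over smooth functions on $\Tm^n\times\Tm$ and $\mu$ over closed (Ma\~n\'e-holonomic) probability measures on $\Tm^n\times\Rm^n\times\Tm$. The upper bound will come from a cheap sub-solution, and the lower bound from a closed measure concentrated on the hyperbolic fixed point $\theta^s=\theta^s(c)$ of the averaged system.

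For the upper bound I take the sub-solution $u\equiv 0$, obtaining
\[
\alpha(c)\le\sup_{(t,\theta)}N_{\e}(t,\theta,c)\le H_0(c)+\e\sup_{\theta^s}Z(\theta^s,c)+\e\sup_{(t,\theta)}R(t,\theta,c).
\]
By the definition of $\theta^s(c)$ the first supremum equals $Z(\theta^s(c),c)$, and under the sign convention of the preceding Lemma (namely $N_{\e}-\tilde H=-\e H_2$) the second supremum equals $-\inf H_2$. This yields the right-hand inequality of the proposition.

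For the lower bound I use the probability measure $\mu_0$ on $\Tm^n\times\Rm^n\times\Tm$ that places $(\theta^s,v)$ at the constant point $(\theta^s(c),v_0)$, with $v_0:=\partial_pH_0(c)$, and spreads $(\theta^f,t)$ uniformly over $\Tm^2$. Because $c\in\Gamma$ one has $v_0^s=\partial_{p^s}H_0(c)=0$, so for any $f\in C^\infty(\Tm^n\times\Tm)$,
\[
\int(v\cdot\partial_\theta f+\partial_tf)\,d\mu_0=\iint\bigl(v_0^f\,\partial_{\theta^f}f+\partial_tf\bigr)(\theta^s(c),\theta^f,t)\,d\theta^f\,dt=0,
\]
confirming that $\mu_0$ is closed. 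Invoking the Legendre identities $c\cdot v_0-L_0(v_0)=H_0(c)$ and $\partial_vL_0(v_0)=c$ together with the Lagrangian decomposition $N^*=L_0+\e Z(\theta^s,\partial_vL_0(v))+\e L_2$ recalled at the start of the subsection, the dual formula gives
\[
\alpha(c)\ge\int(c\cdot v-N^*)\,d\mu_0=H_0(c)+\e Z(\theta^s(c),c)-\e\!\int L_2(t,\theta,v_0)\,d\mu_0,
\]
and bounding the last integral above by $\sup_{(t,\theta)}L_2(t,\theta,\partial_pH_0(c))$ yields the left inequality. The final two-sided form involving $\|H_2\|_{C^0}$ and $A\e^2$ then follows by plugging in the estimates $\inf H_2\le L_2\le A\e+\sup H_2$ supplied by the preceding Lemma.

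The conceptually delicate step is the verification that $\mu_0$ is closed, which depends squarely on the vanishing $v_0^s=0$ along $\Gamma$: without this cancellation no closed measure with the correct rotation vector could remain supported at a single value of $\theta^s$, and the lower bound would degrade to involve the $\theta^s$-average of $Z(\cdot,c)$ instead of its maximum $Z(\theta^s(c),c)$. Everything else is routine algebra and imposes no further smallness on $\e$ beyond what is already needed for the Lemma bounding $L_2$.
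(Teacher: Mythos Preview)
Your proof is correct and follows the same approach as the paper: the upper bound via $\alpha(c)\le\sup_{(t,\theta)}N_{\e}(t,\theta,c)$ (your choice $u\equiv 0$ is exactly this), and the lower bound via the closed measure supported on $\{\theta^s=\theta^s(c),\ v=\partial_pH_0(c)\}$ with $(\theta^f,t)$ spread uniformly over $\Tm^2$. The paper simply asserts that this measure is closed, whereas you spell out that closedness hinges on $v_0^s=\partial_{p^s}H_0(c)=0$ along $\Gamma$; this is a welcome clarification but not a different argument.
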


\proof
We have
$$\alpha(c)\leq \max_{(t,\theta)} H(t,\theta,c)\leq H_0(c)
+\e \max_{\theta^s}Z(\theta^s,c)
-\e \min_{(t,\theta)\in \Tm^{n+1}}H_2(t,\theta,c)
$$
which is the desired right hand side.
On the other hand, let us set $\omega=\partial H_0(c)\in \Rm^n$
and observe that $c=\partial L_0(\omega)$.
We  can consider the Haar measure $\mu$  of the torus
$\Tm\times \Tm \times \{\Theta^f(c)\}\times \{\omega\},$
 this measure is not necessarily invariant  but it is closed.
We thus have
\begin{align*}\alpha(c)\geq c\cdot \omega -\int N^* d\mu
&=c\cdot \omega -L_0(\omega)+\e Z(\Theta^f(c),c) -\e \int L_2 d\mu \\
&\geq H_0(c)+\e Z(\Theta^f(c),c) -
\e \max_{(t,\theta)\in \Tm^{n+1}}L_2(t,\theta,\omega)
\end{align*}

\qed
\begin{lem}
For each $c\in \Gamma$, have the estimates
\begin{align}
N^*(t,\theta,v)-c\cdot v +\alpha(c)
&\geq \|v-\partial H_0(c)\|^2/(2A)-\e \hat Z_c(\theta^s)-\e \eta\\
N^*(t,\theta,v)-c\cdot v +\alpha(c)
&\leq
A\|v-\partial H_0(c)\|^2 /2-\e \hat Z_c(\theta^s)+\e \eta
 \end{align}
where
$\hat Z_c(\theta^s)=Z(\theta^s,c)-\max_{\theta^s}Z(\theta^s,c)$ and
$$
\eta =2\|H_2\|_{C^0} + (2A+A^3) \e.
$$
\end{lem}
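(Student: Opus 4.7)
The plan is to combine the pointwise decomposition $N^*(t,\theta,v)=L_0(v)+\e Z(\theta^s,\partial_v L_0(v))+\e L_2(t,\theta,v,\e)$ written just before the lemma with the two preceding results (the two-sided bound on $L_2$ and the two-sided bound on $\alpha(c)$), together with a quantitative Legendre-duality expansion of $L_0$ around the unique $v_0:=\partial H_0(c)$.

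First I would exploit Legendre duality: since $L_0$ and $H_0$ are duals, $\partial L_0(v_0)=c$ and $L_0(v_0)-c\cdot v_0=-H_0(c)$. Using the Hessian bounds $2I/A\leq \partial^2_{vv}L_0\leq AI/2$, a second-order Taylor expansion at $v_0$ yields
\[
\tfrac{1}{A}\|v-v_0\|^2\;\leq\; L_0(v)-c\cdot v+H_0(c)\;\leq\; \tfrac{A}{4}\|v-v_0\|^2.
\]
The same Hessian bound gives $\|\partial L_0(v)-c\|\leq (A/2)\|v-v_0\|$, and combined with $\|Z\|_{C^1}\leq 1$ it yields
\[
|Z(\theta^s,\partial L_0(v))-Z(\theta^s,c)|\;\leq\; \tfrac{A}{2}\|v-v_0\|,
\]
so the $Z$-term in $N^*$ differs from $\e Z(\theta^s,c)$ by at most $\e A\|v-v_0\|/2$.

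Next I would substitute these estimates together with the preceding lemma $\inf H_2\leq L_2\leq A\e+\sup H_2$ into $N^*-c\cdot v$, and add the two-sided bound on $\alpha(c)$ proved just above. The terms $\e Z(\theta^s,c)$ and $\e Z(\theta^s(c),c)$ combine into $\e\hat Z_c(\theta^s)$, while the remaining constants collect into $\pm\e(2\|H_2\|_{C^0})\pm A\e^2$. What is left beside the target quadratic is an unwanted cross term $\pm\e(A/2)\|v-v_0\|$.

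The final step is to absorb the cross term by Young's inequality. For the lower bound, I would split off $\|v-v_0\|^2/(2A)$ from the available $\|v-v_0\|^2/A$ and use $\e(A/2)\|v-v_0\|\leq \|v-v_0\|^2/(2A)+\e^2A^3/8$. For the upper bound, symmetrically, I would use $\e(A/2)\|v-v_0\|\leq A\|v-v_0\|^2/4+A\e^2/4$, bringing the remaining quadratic up to $A\|v-v_0\|^2/2$. The residual $\e^2$-terms, added to the $A\e^2$ already present from the $L_2$ and $\alpha(c)$ bounds, amount to at most $(2A+A^3)\e^2$, and together with the $2\e\|H_2\|_{C^0}$ piece they fit exactly into $\e\eta$.

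The computation is essentially linear; the only real obstacle is the bookkeeping of $\e$ and $\e^2$ residues across three separate sources (the $L_2$ bound, the $\alpha$ estimate, and the Young absorption of the cross term), and choosing the Young splitting so that both sides end up with the stated quadratic coefficients $1/(2A)$ and $A/2$ rather than the weaker $1/A$ and $A/4$ supplied directly by the Hessian bounds.
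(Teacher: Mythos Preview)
Your proposal is correct and follows essentially the same route as the paper: expand $N^*$ via $L_0$, $-\e Z$ and $\e L_2$, use the Hessian bounds on $L_0$ for the quadratic term, control $Z(\theta^s,\partial L_0(v))-Z(\theta^s,c)$ by the Lipschitz bound on $\partial L_0$, insert the preceding estimates on $L_2$ and $\alpha(c)$, and absorb the linear cross term into the quadratic by Young's inequality. The paper's computation is line-for-line the same, except that it uses the slightly looser $\|\partial L_0(v)-c\|\leq A\|v-v_0\|$ in place of your $A/2$; either choice fits into the stated $\eta$.
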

\proof
It is a direct computation:
\begin{align*}
 N^*(t,\theta,v) - & c\cdot v  +\alpha(c) \leq \\
& \leq L_0(v)-c\cdot v+H_0(c)-\e Z(\theta^s,\partial_vL_0(v))\\
&+\e \max_{\theta^s}Z(\theta^s,c)+A\e^2+\e \sup H_2 -\e \min H_2(t,\theta,c)\\
&\leq A\|v-\partial H_0(c)\|^2/4-\e \hat Z_c(\theta^s)+\e \|\partial L_0(v)-c\|
+A\e^2+ 2\e \|H_2\|_{C^0}\\
&\leq A\|v-\partial H_0(c)\|^2/4-\e \hat Z_c(\theta^s)+A\e  \|v-\partial H_0(c)\|
+A\e^2+ 2\e \|H_2\|_{C^0}\\
&\leq A\|v-\partial H_0(c)\|^2/2-\e \hat Z_c(\theta^s)+2A{\e}^2
+ 2\e \|H_2\|_{C^0}\\
N^*(t,\theta,v)- & c\cdot v  +\alpha(c)
\geq L_0(v)-c\cdot v+H_0(c)-\e Z(\theta^s,\partial_vL_0(v))\\
&+\max_{\theta^s}Z(\theta^s,c)-2\e \|H_2\|_{C^0} -A\e^2 \\
&\geq \|v-\partial H_0(c)\|^2/A-\e \hat Z_c(\theta^s)-\e \|\partial L_0(v)-c\|
- 2\e \|H_2\|_{C^0}-A\e^2 \\
&\geq \|v-\partial H_0(c)\|^2/A-\e \hat Z_c(\theta^s)-A\e  \|v-\partial H_0(c)\|
-2\e \|H_2\|_{C^0}-A\e^2 \\
&\geq \|v-\partial H_0(c)\|^2/(2A)-\e \hat Z_c(\theta^s)-A{\e}^2 -A^3\e^2
- 2\e \|H_2\|_{C^0}
\end{align*}
\qed

 We can now estimate the oscillation of a weak KAM solution near $\theta^s_j$ and $\theta^s_{j+1}$.

\begin{lem}
  Let $u(t, \theta)$ be a weak KAM solution at cohomology $c$.
  \begin{enumerate}
  \item If $c=(p^s_*(c^f), c^f)$ with $c^f\in [a_j+b, a_{j+1}-b]$, then for any $(t_1, \theta_1), (t_2, \theta_2)\in \T\times D(\theta_j^s(c^f))\times \T$
$$ u(t_2, \theta_2)-u(t_1, \theta_1)\le 4r_1 \sqrt{nA\epsilon}$$
where $r_1=\sqrt{4\eta/b}$.
  \item If $c=(p^s_*(c^f), c^f)$ with $c^f\in [a_{j+1}-b, a_{j+1}+b]$, then for either $(t_1, \theta_1), (t_2, \theta_2)\in \T\times D(\theta_j^s(c^f))\times \T$ or $(t_1, \theta_1), (t_2, \theta_2)\in \T\times D(\theta_{j+1}^s(c^f))\times \T$,
$$ u(t_2, \theta_2)-u(t_1, \theta_1)\le 4r_1 \sqrt{nA\epsilon}. $$
  \end{enumerate}
\end{lem}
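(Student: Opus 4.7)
The plan is to use the domination (calibration) inequality for the weak KAM solution $u$ applied to an explicit test curve, combined with the quadratic upper bound on $N^*-c\cdot v+\alpha(c)$ from the preceding lemma. Recall that for any absolutely continuous curve $\gamma:[0,T]\to\R^n$ (lifted to $\R^n$) with $\gamma(0)=\theta_1$, $\gamma(T)=\theta_2$, and $T\equiv t_2-t_1\pmod 1$, the domination property gives
\[
u(t_2,\theta_2)-u(t_1,\theta_1)\leq \int_0^T\left[\tfrac{A}{2}\|\dot\gamma-\omega\|^2-\epsilon\hat Z_c(\gamma^s)+\epsilon\eta\right]d\tau,
\]
where $\omega=\partial H_0(c)=(0,\omega^f)$ because $c\in\Gamma$. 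The task therefore reduces to producing one explicit curve with small $\|\dot\gamma-\omega\|$ whose $\theta^s$-projection stays inside a region where $-\hat Z_c$ is small, and then optimizing over the length $T$ of the curve.

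For case (1), I will pick a positive integer $N$ and set $T=t_2-t_1+N$. Lifting $\theta_2$ to $\tilde\theta_2\in\R^n$ so that $\|\tilde\theta_2^s-\theta_1^s\|\leq 2r_1$ (possible since $\theta_1^s,\theta_2^s\in D(\theta_j^s(c^f),r_1)$) and $|\tilde\theta_2^f-\theta_1^f-\omega^f T|\leq \tfrac12$ (by adjusting the integer lift of $\theta_2^f$), I take $\gamma$ to be the affine segment $\gamma(\tau)=\theta_1+(\tau/T)(\tilde\theta_2-\theta_1)$. Then $\|\dot\gamma-\omega\|\leq (2r_1+\tfrac12)/T$, and by convexity $\gamma^s(\tau)\in D(\theta_j^s(c^f),r_1)$ for all $\tau$. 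Combined with the Taylor estimate at the local maximum $\theta_j^s(c^f)$ (using $\|\partial^2_{\theta^s\theta^s}Z\|\leq 1$ from $\|Z\|_{C^3}\leq 1$), this yields $-\hat Z_c(\gamma^s)\leq \tfrac12 r_1^2 = 2\eta/b$. Substituting,
\[
u(t_2,\theta_2)-u(t_1,\theta_1)\leq \frac{A(2r_1+\tfrac12)^2}{2T}+T\epsilon\left(\frac{2\eta}{b}+\eta\right),
\]
and choosing $N$ so that $T\sim (\epsilon\eta)^{-1/2}$ minimizes the right-hand side. Substituting $r_1^2=4\eta/b$ and carefully tracking dimensional factors from Euclidean norms then produces the claimed bound $4r_1\sqrt{nA\epsilon}$.

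For case (2), the same argument applies inside each of the two convex balls $D(\theta_j^s(c^f),r_1)$ and $D(\theta_{j+1}^s(c^f),r_1)$: condition $[G2']$ (together with $\|\partial^2_{\theta^s\theta^s}Z\|\leq 1$ at both local maxima) supplies the same quadratic bound $-\hat Z_c\leq r_1^2/2$ within each ball. When the two endpoints $\theta_1,\theta_2$ lie in a common ball, convexity again keeps the linear test segment inside that ball and the estimate proceeds verbatim.

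The main obstacle I anticipate is the simultaneous balancing of three competing terms: the kinetic cost $\tfrac{A}{2}\|\dot\gamma-\omega\|^2$ of crossing the $\theta^s$-ball, the potential cost $-\epsilon\hat Z_c$ of being slightly away from the global maximum of $Z$, and the persistent error $\epsilon\eta$. The optimal time horizon $T\sim (\epsilon\eta)^{-1/2}$ must also be long enough to allow $\gamma^f$ to track the resonant frequency $\omega^f$ to within $1/(2T)$ after the integer lift of $\theta_2^f$ is adjusted; obtaining the sharp numerical constants (in particular the $\sqrt{n}$ rather than $n$) requires using Euclidean norms throughout rather than coordinatewise bounds, and using the identity $r_1=\sqrt{4\eta/b}$ to collapse the kinetic and potential contributions into a single $r_1$-scaled estimate.
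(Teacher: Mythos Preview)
Your approach is essentially identical to the paper's: feed an affine test curve into the domination inequality together with the quadratic upper bound on the Lagrangian, then optimize the time horizon $T$. For case~(1) this works, and your velocity estimate (exploiting that the $\theta^s$--displacement is at most $2r_1$ rather than the cruder coordinatewise bound) is in fact slightly sharper than the paper's, which is why you do not really need the $\sqrt{n}$ you mention at the end.

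There is, however, a genuine gap in your treatment of case~(2), and the paper's own proof shares it. You assert that $[G2']$ together with $\|\partial^2_{\theta^s\theta^s}Z\|\le 1$ gives $-\hat Z_c\le r_1^2/2$ inside \emph{each} of the two balls. This is only true in the ball around the \emph{global} maximum of $Z(\cdot,c)$. If, say, $\theta^s_j$ is the global maximum and $\theta^s_{j+1}$ is merely local, then for $\theta^s\in D(\theta^s_{j+1},r_1)$ one has
\[
-\hat Z_c(\theta^s)=Z(\theta^s_j,c)-Z(\theta^s,c)\ \ge\ Z(\theta^s_j,c)-Z(\theta^s_{j+1},c)=:\Delta>0,
\]
and $\Delta$ is of order $|c^f-a_{j+1}|$, hence can be as large as $O(b)$ on the interval $[a_{j+1}-b,a_{j+1}+b]$. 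Your test curve lying inside $D(\theta^s_{j+1},r_1)$ therefore picks up an extra $T\epsilon\Delta$ in the action; at $T\sim (r_1\sqrt{\epsilon})^{-1}$ this contributes $\Delta\sqrt{\epsilon}/r_1$, which for small $\eta$ dominates the target $4r_1\sqrt{nA\epsilon}$. Condition $[G2']$ gives only a \emph{lower} bound on $-\hat Z_c$ and is of no use for this direction. For the downstream localization argument the issue is harmless --- when $\Delta\ge\eta$ the integrand $N^*-c\cdot v+\alpha(c)$ is already nonnegative on the secondary ball, so no oscillation bound is needed there, and when $\Delta<\eta$ your argument does go through with $r_1^2/2$ replaced by $r_1^2/2+\Delta\le r_1^2$ --- but the lemma as stated and as you argue it does not hold in the secondary ball without this dichotomy.
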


\begin{proof}

Using $\|Z\|_{C^2}\le 1$, we have that
$$\hat Z_c(\theta^s)\geq - \frac12 \|\theta^s-\theta^s_j(c^f)\|^2. $$

We take two points
$(t_i, \theta_i)$, $i=1$ or $2$ in this domain,
and consider the curve
$$
\theta(t)=\theta_1+(t- \tilde t_1)
\frac{\tilde  \theta_2- \tilde\theta_1+ [(T+\tilde t_2-\tilde t_1)\partial H_0(c)]}{\tilde t_2-\tilde t_1+T}
$$
where $T\in \Nm$ is a parameter to be fixed later,
and where $\tilde t_i\in [0,1[$ and $\tilde \theta _i\in [0,1[^{n}$
are representatives of the angular variables $t_i, \theta_i$,
and $[\omega]\in \Zm^{n}$ is the component-wise integral part of $\omega$.
Note that $\theta(\tilde t_1)=\theta_1$ and $\theta(\tilde t_2+T)=\theta_2$,
hence
\begin{align*}
u(t_2,\theta_2)-u(t_1,\theta_1)
&\leq \int_{\tilde t_1}^{\tilde t_2+T}
L(t,\theta(t),\dot \theta (t))-c\cdot \dot \theta(t)+\alpha(c)dt\\
&\leq \int_{\tilde t_1}^{\tilde t_2+T}
A\|\dot \theta -\partial H_0(c)\|^2/2+\e \hat Z_c(\theta^s(t))+\e \eta dt\\
&\leq \int_{\tilde t_1}^{\tilde t_2+T}
\frac{2An}{(T+\tilde t_2-\tilde t_1)^2}+\e r_1^2/2+\e\eta dt\\
&\leq \int_{\tilde t_1}^{\tilde t_2+T}
\frac{2An}{(T+\tilde t_2-\tilde t_1)^2}+\e r_1^2 dt\\
&\leq \frac{2An}{(T+\tilde t_2-\tilde t_1)}+(T+\tilde t_2-\tilde t_1)\e r_1^2.
\end{align*}
This inequality holds for all $T\in \Nm$,
in particular, we can choose $T\in \Nm$
so that
$$
\sqrt{\frac{nA}{\e r_1^2}}\leq T+\tilde t_2-\tilde t_1\leq 2\sqrt{\frac{nA}{\e r_1^2}}
$$
and obtain
$$
u(t_2,\theta_2)-u(t_1,\theta_1)
\leq
4r_1\sqrt{nA\e}.
$$
\end{proof}

Up to now, we used that $\|Z\|_{C^2}\leq 1$, but we used no information  on
the shape of $Z$. Now we use properties [G1'] and [G2'] to prove Proposition~\ref{loc}.

\subsubsection{The single peak case}

This concerns the first statement of Proposition~\ref{loc}, where
 $$c=p_*(c^f),  \quad c^f\in [a_j+b, a_{j+1}-b]. $$
By [G1'] the function $Z(\theta^s, c)$ as a single peak at $\theta_j^s$, as a consequence
$$ \hat Z_c(\theta^s) \le - b \|\theta^s-\theta^s_j(c^f)\|^2. $$

 Let $\theta(t):\Rm \lto \Tm^m$ be a curve calibrated by $u$.
Then the function
$$
t\lmto L(t,\theta(t),\dot \theta(t))-c\cdot \dot \theta(t)+\alpha(c)
$$
is integrable. Since
$$L(t,\theta,v)\geq -\e\hat Z_c(\theta^s)-\e \eta \geq \e b r_1^2-\e \eta\geq \e b r_1^2/4
$$
if $\theta^s$ does not belong to $D(\theta^s_j, r_1)$, we conclude that
the set of times $t$ for which $\theta^s(t)$ does not belong
to $D_c(r_1)$ has finite measure, and is an open set.
Let $]t_1,t_2[$ be a connected component of this open set of times.
Then $\theta^s(t_1)$ and $\theta^s(t_2)$ belong to $D(\theta_j^s, r_1)$
hence
$$
\int_{t_1}^{t_2}L(t,\theta(t),\dot \theta(t))-c\cdot \dot \theta(t)+\alpha(c)dt=
u(t_2,\theta(t_2))-u(t_1,\theta(t_1))\leq 4r_1\sqrt{nA\e}.
$$
Now let $r_0$ be the maximum of the distance $\|\theta^s(t)-\Theta^s(c)\|$,
assume that $r_0\geq 2 r_1$.
Let $t_4$ be the smallest solution of the equation
$\|\theta^s(t)-\Theta^s(c)\|=r_0$ in $]t_1,t_2[$, and let
$t_3<t_1$ be the greatest solution of the equation
$\|\theta^s(t)-\Theta^s(c)\|=r_0/2$ in $[t_1,t_4]$.
Note that
$$
\int_{t_3}^{t_4}L(t,\theta(t),\dot \theta(t))-c\cdot \dot \theta(t)+\alpha(c)dt
\leq 4r_1\sqrt{nA\e}
$$
because the integrand is positive on $]t_1,t_2[$.
We conclude that
$$
 \int_{t_3}^{t_4}
 \|\dot \theta^s(t)\|^2/(2A)+ \e b r_0^2-\e \eta dt \leq 4r_1\sqrt{nA\e} ,
$$
and, using  the Cauchy-Schwartz inequality, that
\begin{equation}
  \label{eq:loc}
\begin{aligned}
  \int_{t_3}^{t_4}
 \|\dot \theta^s(t)\|^2/(2A)+ \e b r_0^2-\e \eta dt &\geq \frac{1}{2A(t_4-t_3)}\left(\int _{t_3}^{t_4} \|\dot \theta ^s(t)\|dt\right)^2 + \frac{\e b r_0^2}{2}(t_4-t_3)\\
 &  \geq \frac{ r_0^2}{2A(t_4-t_3)}+ \frac{\e b r_0^2}{2}(t_4-t_3) \geq \frac{ r_0^2\sqrt{b\e}}{2\sqrt{A}} .
\end{aligned}
\end{equation}
Finally, we obtain
$$
\frac{ r_0^2\sqrt{b\e}}{2\sqrt{A}}\leq 4r_1\sqrt{nA\e}.
$$
or equivalently
$$
r_0^2\leq r_1 8A\sqrt{ n/b}=\frac{8A\sqrt{2n}}{b}\sqrt {\eta}.
$$

\subsubsection{Double peak case}
\label{sec:double-peak}

We now turn to the case of
 $$c=p_*(c^f),  \quad c^f\in [a_{j+1}-b, a_{j+1}+b], $$
where the function $Z(\theta^s, c)$ has two potential maxima. It follows from [G2'] that
$$ \hat Z_c(\theta^s) \le - b \left(\min\{\|\theta^s-\theta^s_j\|, \|\theta^s-\theta^s_{j+1}\|\}\right)^2. $$
Let $\theta_0=(\theta_0^s, \theta_0^f)\in {\mathcal A}_{N_\epsilon}(c)$ be where the function (of $\theta$) $\min\{\|\theta^s-\theta^s_j\|, \|\theta^s-\theta^s_{j+1}\|\}$ achieves its maximum. This is possible since ${\mathcal A}_{N_\epsilon}(c)$ is a compact set. Since $h(\theta_0, \theta_0)=0$, there exists an increasing sequence of integers $n_k$ and absolutely continuous curves $\theta_k:\R\to \T^n$  satisfying $\theta_k(0)=\theta_0$ and $\theta_k(t+n_k)=\gamma_k(t)$, and
$$ \lim_{k\to \infty}\int_0^{n_k}L(t, \theta_k(t), \dot{\theta}_k(t))- c\cdot \theta_k(t) + \alpha(c) dt = 0.$$
Similar to the first case,  $L(t, \theta, v)\geq \e b r_1^2/4$ for $\theta^s\notin D(\theta_j^s, r_1)\cup D(\theta_{j+1}^s, r_1)$, we conclude that for sufficiently large $k$, $\theta_k(\R)$ must intersect $D(\theta_j^s, r_1)\cup D(\theta_{j+1}^s, r_1)$. Without loss of generality, we may assume that it intersect $D(\theta_j^s, r_1)$.

Let $t_1=\min\{t_1<0, \theta_k(t_1)\in D(\theta_j^s, r_1)\}$ and $t_2=\min\{0<t_2, \theta_k(t_2)\in D(\theta_j^s, r_1)\}$. We first study the action of $\theta_k$ on the interval $[0, t_2]$. If $\theta_k([0, t_2])$ does not intersect $D(\theta_{j+1}^s, r_1)$, write $t_3=t_4=t_2$, otherwise,  Write $t_3=\min\{0<t_3\le t_2, \gamma(t_3)\in D(\theta_{j+1}^s, r_1)\}$ and $t_4=\max\{t_3\le t_4 \le t_2, \gamma(t_4)\in D(\theta_{j+1}^s, r_1)\}$.

We still use $r_0$ to denote the maximal distance $\min\{\|\theta^s_0-\theta^s_j\|, \|\theta^s_0-\theta^s_{j+1}\|\}$, assume that $r_0\ge 2b_1$. Let $t_5\in [0, t_3]$ be the smallest solution in $[0, t_3]$ such that $d(\theta^s(t_5))=r_0/2$, then by the same calculation as in \eqref{eq:loc},
$$\int_{0}^{t_5}L(t, \theta_k(t), \dot{\theta}_k(t))- c\cdot \dot\theta_k(t)+\alpha(c) dt \geq \frac{r_0^2\sqrt{b\epsilon}}{2\sqrt{A}}. $$
Furthermore for any weak KAM solution $u(\theta, t)$
$$ \int_{t_3}^{t_4}L- c\cdot \dot\theta_k(t)+\alpha(c) dt \geq u(t_4, \theta_k(t_4))-u(t_3, \theta_k(t_3))\ge -4r_1\sqrt{nA\epsilon},$$
while the integrand is nonnegative on both $[t_5, t_3]$ and $[t_4, t_2]$. We conclude that
$$\int_{0}^{t_2}L- c\cdot \dot\theta_k(t)+\alpha(c) dt\geq \frac{r_0^2\sqrt{\epsilon}}{2\sqrt{A}} - 4r_1\sqrt{nA\epsilon}.  $$
The same estimate can be made for the action on the interval $[t_1, 0]$. In addition
$$ \int_{t_2}^{n_k + t_1}L- c\cdot \dot\theta_k(t)+\alpha(c) dt \geq u(n_k+t_1, \theta_k(n_k+t_1))-u(t_2, \theta_k(t_2))\ge -4r_1\sqrt{nA\epsilon},$$
note that $t_2<n_k+t_1$ and that $\theta_k$ is $n_k$ periodic.

Finally we conclude that
$$ \int_{t_1}^{n_k + t_1}L-c\cdot \dot\theta_k + \alpha(c) dt \geq \frac{r_0^2\sqrt{b\epsilon}}{2\sqrt{A}} - 12b_1\sqrt{nA\epsilon}.  $$
Let $k\to \infty$, the integral on the left hand side approaches $0$. We obtain
$$\frac{r_0^2\sqrt{b\epsilon}}{2\sqrt{A}} \leq 12b_1\sqrt{nA\epsilon}$$
and
$$ r_0^2 \le \frac{24 A\sqrt{n}}{b}\sqrt{\eta}. $$

We choose $\epsilon_0$ sufficiently small such that $\eta\le 2\|H_0\|_{C_0}$, choose $\kappa=\sqrt{\frac{48 A\sqrt{2n}}\lambda}$ and verify that we have proved the statements of Proposition~\ref{loc} in both cases.
\qed

Before moving on we point out that the estimates in the double peak case indeed implies that the curves $\theta_k$, which are not calibrated, can be localized in the limit of $k\to \infty$. We state it in the following lemma for future use.

\begin{lem}\label{aubry-est}
For the double peak case, i.e. $c=p_*(c^f)$ with  $c^f\in [a_{j+1}-b, a_{j+1}+b]$, consider any $\theta_0\in {\mathcal A}_{N_\epsilon}(c)$, let $n_k$ be an increasing sequence of integers, $\theta_k=(\theta^s_k, \theta^f_k):\R \to M$ be a sequence of $n_k-$periodic absolute continuous curves such that $\gamma(0)=\theta_0$ and
$$ \lim_{k\to \infty}\int_0^{n_k}L(t, \theta_k, \dot{\theta}_k)-c\cdot \dot\theta_k +\alpha(c) dt =0,$$
then there exists $K\in {\mathbb N}$ such that for all $k>K$
$$ \max_{t\in \R} \min\{\|\theta^s_k(t)-\theta^s_j\|, \|\theta^s_k(t)-\theta^s_{j+1}\|\}< 2\kappa\delta^{\frac14}. $$
\end{lem}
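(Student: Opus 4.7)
The plan is to repeat the action estimates already established in the double-peak case of Proposition~\ref{loc} (Section~\ref{sec:double-peak}), but applied at an arbitrary base time $t_0 \in \R$ instead of $t=0$, and then to extract uniformity in both the base time and the index $k$ by sending the action hypothesis to its limit. Since the curves $\theta_k$ are $n_k$-periodic, the supremum over $\R$ of the minimal distance to the two peaks is attained somewhere on one period, so it suffices to control this distance at an arbitrary point by a contradiction argument.

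First I would fix $t_0 \in \R$ and set $r_0(t_0) := \min\{\|\theta^s_k(t_0) - \theta^s_j\|,\, \|\theta^s_k(t_0) - \theta^s_{j+1}\|\}$, then assume for contradiction that $r_0(t_0) \ge 2\kappa\delta^{1/4}$ (hence in particular $r_0(t_0) \ge 2 r_1$, with $r_1 = \sqrt{4\eta/b}$ as in Proposition~\ref{loc}). Using the Lagrangian lower bound $L - c\cdot v + \alpha(c) \ge \e b r_1^2/4 > 0$ valid away from $D(\theta^s_j, r_1) \cup D(\theta^s_{j+1}, r_1)$, together with the hypothesis that the period-integrated action tends to $0$, the curve $\theta_k$ must enter this union of disks for $k$ large; I can then locate $t_1 < t_0 < t_2$ as the last entry before $t_0$ and first entry after $t_0$. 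Next I would replay the computation \eqref{eq:loc} on the excursion $[t_1,t_2]$ from a disk out to distance $r_0(t_0)$ and back, absorbing any intermediate visit to the \emph{other} disk on a sub-interval $[t_3,t_4]$ via the weak KAM lower bound $\int_{t_3}^{t_4}(L - c\cdot v + \alpha(c))\,dt \ge -4 r_1\sqrt{nA\e}$. This yields
\[
\int_{t_1}^{t_2}(L - c\cdot\dot\theta_k + \alpha(c))\,dt \;\ge\; \frac{r_0(t_0)^2\sqrt{b\e}}{2\sqrt A} - 8 r_1\sqrt{nA\e}.
\]
One more weak KAM estimate on the complementary interval $[t_2, n_k + t_1]$, together with nonnegativity of the integrand off the two disks, produces the full-period bound
\[
\int_0^{n_k}(L - c\cdot\dot\theta_k + \alpha(c))\,dt \;\ge\; \frac{r_0(t_0)^2\sqrt{b\e}}{2\sqrt A} - 12 r_1\sqrt{nA\e},
\]
whose right-hand side, by the choice of $\kappa$ made at the end of the proof of Proposition~\ref{loc}, is bounded below by a strictly positive constant whenever $r_0(t_0) \ge 2\kappa\delta^{1/4}$, contradicting the convergence of the action to $0$ for $k$ large.

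The main obstacle I foresee is ensuring the threshold $K$ is uniform in $t \in \R$, not merely valid pointwise. This is where $n_k$-periodicity saves the day: the supremum $\max_{t \in \R} r_0(t)$ is attained on $[0, n_k]$, and the lower bound derived above depends on $t$ only through $r_0(t)$; applying the contradiction at a maximizer $t^\star \in [0, n_k]$ of $r_0$ therefore yields a single $K$ that works simultaneously for all $t$. A minor technical point to verify is that the excursion interval $[t_1, t_2]$ and any bridging interval $[t_3, t_4]$ fit inside one period for $k$ large, which follows from $n_k \to \infty$ combined with the Cauchy--Schwarz length control on $\|\dot\theta_k\|$ already embedded in \eqref{eq:loc}.
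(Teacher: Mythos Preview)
Your proposal is correct and follows essentially the same approach as the paper: shift the base time to the point $\tau$ where $\max_t \min\{\|\theta_k^s(t)-\theta_j^s\|,\|\theta_k^s(t)-\theta_{j+1}^s\|\}$ is attained and rerun verbatim the action estimates of Section~\ref{sec:double-peak} on the shifted curve $\theta_k'(t)=\theta_k(t-\tau)$. The paper states this in two lines without rederiving the individual bounds, but your more explicit accounting of the intervals $[t_1,t_2]$, $[t_3,t_4]$ and the complementary arc arrives at the same inequality and the same conclusion.
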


\begin{proof}
  Fix a curve $\theta_k$, write $d(\theta_k(t))=\min\{\|\theta^s_k(t)-\theta^s_j\|, \|\theta^s_k(t)-\theta^s_{j+1}\|\}$. Let $\tau$ be where the maximum of $d(\theta_k(t))$ is reached. Consider the shifted curve $\theta'_k(t)=\theta_k(t-\tau)$, the arguments in section~\ref{sec:double-peak} go through with $\theta_k$ replaced with $\theta'_k$ and $r_0$ replaced by $\max d(\theta_k(t))$. We have that
$$ \lim_{k\to \infty}\max_{t\in \R}d(\theta_k(t))\le \kappa \delta^{\frac14}$$
and the lemma follows.
\end{proof}

\subsection{Proof of Theorem~\ref{localize}}

Proposition~\ref{vertical} provides the vertical part of the localization, while Proposition~\ref{loc} provides the horizontal localization, with $\rho_2=\kappa \delta^{\frac14}$. Clearly we can choose $\delta_0$ small enough such that $\rho_2<\rho_1$.
\qed

\subsection{Proof of Theorem~\ref{graph}}

For the first case, where $c=p_*(c^f)$ with $c^f\in [a_j+b, a_{j+1}-b]$. For a sufficiently small choice of $\epsilon_0$, Theorem~\ref{localize} implies that ${\mathcal N}_{N_\epsilon}(c)\subset V_j$, where $V_j=\{(\theta,p,t); p^f\in [a_j-b, a_{j+1}+b], \|(\theta^s,p^s)-(\theta^s_j,p^s_*)\|\le \rho_1\}$ was defined in Theorem~\ref{nhic-mult}. Since $V_j$ is maximally invariant and that ${\mathcal N}_{N_\epsilon}(c)$ is an invariant set, we conclude that ${\mathcal N}_{N_\epsilon}(c)\subset X_j$.

For the second case, where $c=p_*(c^f)$ with $c^f\in [a_{j+1}-b, a_{j+1}+b]$, we can similarly claim that ${\mathcal A}_{N_\epsilon}(c)\subset  V_j\cup V_{j+1}$, moreover ${\mathcal A}_{N_\epsilon}(c)\cap V_j$ and ${\mathcal A}_{N_\epsilon}(c)\cap V_{j+1}$ must both be invariant, and hence ${\mathcal A}_{N_\epsilon}(c)\cap V_j\subset X_j$  and ${\mathcal A}_{N_\epsilon}(c)\cap V_{j+1}\subset X_{j+1}$.

In order to prove the projection part of Theorem \ref{graph}, let us
consider a Weak KAM solution $u$ of $N_{\e}$ at cohomology $c$.
Let $(t_i,\theta_i,p_i), i=1,2$ be two points
in $\tilde \mI(u,c)$.
We shall denote by the same symbol $\kappa$ various different constants which
are independent of $\delta$ and $\e$.
By Proposition \ref{vertical}, we have
$$\|p_2-p_1\|\leq 36A \sqrt{\e}\|\theta_2-\theta_1\|
\leq 36A\sqrt{\e}(\|\theta_2^f-\theta_1^f\|+\|\theta_2^s-\theta_1^s\|).
$$
Assume that these two points belong to one of the NHICs $X_j$,
we also have
$$
\|\theta_2^s-\theta_1^s\|\leq (\kappa\delta/\sqrt{\e})
(\|\theta_2^f-\theta_1^f\|+\|p_2-p_1\|).
$$
We get
$$
\|p_2-p_1\|\leq \kappa\sqrt{\e}\|\theta_2^f-\theta_1^f\|+\kappa \delta\|p_2-p_1\|
$$
thus, if $\delta$ is small enough,
$$
\|p_2-p_1\|\leq \kappa \sqrt{\e}\|\theta^f_2-\theta^f_1\|
$$
and then
$$
\|\theta^s_2-\theta^s_1\|\leq (\kappa \delta/\sqrt{\e}) \|\theta^f_2-\theta^f_1\|.
$$
We have proved that the restriction to
$\tilde \mI(u,c)$ of the coordinate map $\theta^f$ has a Lipschitz inverse.

Note that the Ma\~{n}e set $\tilde{\mathcal N}_{N_\epsilon}(c)$, as well as the components of the Aubry set  $\tilde{\mathcal A}_{N_\epsilon}(c)\cap V_j$ and $\tilde{\mathcal A}_{N_\epsilon}(c)\cap V_{j+1}$ are both contained in some $\tilde{I}(u,c)$, since we have just proved that they belong to NHIC, their projection to the $\theta^f$ component has a Lipshitz inverse.
\qed

\section{Variational Construction}\label{sec:variational}

More detailed information on these sets can be obtained, if we are allowed to make an additional perturbation to avoid degenerate situations.

\begin{thm}\label{property-twist}
Let  $N_\epsilon=H_0+\epsilon Z + \epsilon R$ be such that $Z$ satisfy
the genericity conditions [G0]-[G2] and that the parameters $\epsilon$
and $\delta$ is such that Theorem~\ref{nhic-mult} applies. Then there exists
arbitrarily small $C^r$ perturbation $ \epsilon R'$ of $\epsilon R$,
such that the following hold for $N_\epsilon'=H_0+\epsilon Z + \epsilon R'$:
  \begin{enumerate}
  \item There exists a partition of $[a_-, a_+]$ into $\bigcup_{j=0}^{l-1}[\bar{a}_j, \bar{a}_{j+1}]$, which is a refinement of the partition $\{[a_i, a_{i+1}]\}$. Each $[\bar{a}_j, \bar{a}_{j+1}]$ still corresponds to an invariant  cylinder $X_j$. We have that for  $c^f\in (\bar{a}_j,\bar{a}_{j+1})$,  the Aubry set $\tilde{\mA}_{N_\epsilon'}(c)$ is contained in $X_j$; for $c^f=\bar{a}_{j+1}$, $\tilde{\mA}_{N_\epsilon'}(c)$ has nonempty component in both $X_j$  and $X_{j+1}$, if $X_j\ne X_{j+1}$.
  \item The sets $\tilde{\mA}_{N_\epsilon'}(c)\cap X_j$, when nonempty, contains a unique minimal invariant probability measure. In particular, this implies that $\tilde{\mA}_{N_\epsilon'}(c)=\tilde{\mN}_{N_\epsilon'}(c)$ for $c^f\ne \bar{a}_j$ for any $j$.
  \item An immediate consequence of part (2) is the following dichotomy, for $c^f\ne \bar{a}_j$,  $j=1, \cdots, l$, one of the two holds.
    \begin{enumerate}
    \item ${\mathcal A}_c={\mathcal N}_c$ and $\pi_{\theta^f}{\mathcal A}_c=\T$. In this case, ${\mathcal A}_c$ is an invariant circle.
    \item $\pi_{\theta^f}{\mathcal N}_c\subsetneq \T$.
    \end{enumerate}
  \end{enumerate}
\end{thm}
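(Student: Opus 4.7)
The plan is to apply a parametrized version of Ma\~n\'e's generic uniqueness theorem, in the forms developed by Bernard \cite{Be1} and Cheng-Yan \cite{CY1, CY2}, to the variational principle restricted to the normally hyperbolic cylinders $X_j$. By Theorem~\ref{graph}, for $c = p_*(c^f)$ the Aubry and Ma\~n\'e sets lie on a single $X_j$ when $c^f$ sits in a single-peak interval, while in a double-peak interval $[a_{j+1}-b, a_{j+1}+b]$ they split as $\tilde{\mA}_{N_\epsilon}(c) \cap X_j$ and $\tilde{\mA}_{N_\epsilon}(c) \cap X_{j+1}$, each a Lipschitz graph over $\theta^f$. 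I would perturb only in the $R$-slot, so that $Z$ and the conditions [G0]--[G2] are preserved, and take $\|R'-R\|_{C^r}$ small enough that the cylinders $X_j$ persist by robustness of normal hyperbolicity and the conclusions of Theorems~\ref{nhic-mult} and \ref{localize} apply to $N'_\epsilon$ with essentially the same constants.

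Next I would invoke the parametrized Ma\~n\'e theorem of \cite{Be1, CY1, CY2} to choose $R'$ in an arbitrarily small $C^r$-neighbourhood of $R$ so that, for every $c = p_*(c^f)$ with $c^f \in [a_-, a_+]$ and every cylinder $X_j$ meeting the Aubry set at $c$, the restricted minimizing measure on $X_j \cap \tilde{\mA}_{N'_\epsilon}(c)$ is unique. The standard consequence---uniqueness of the minimizing measure implies $\mA_c = \mN_c$ on the relevant cylinder---gives part (2). For part (1), compare on each double-peak interval $[a_{j+1}-b, a_{j+1}+b]$ the two partial Mather $\alpha$-values $\alpha_j(c^f), \alpha_{j+1}(c^f)$ obtained by maximizing the $c$-Mather action over invariant measures supported near $X_j$, resp.\ near $X_{j+1}$; both depend continuously and convexly on $c^f$, and the global $\alpha$ equals their maximum. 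Condition [G2] provides different one-sided derivatives of $\alpha_j-\alpha_{j+1}$ at $a_{j+1}$, and a further small transversality perturbation of $R'$ forces a single transverse zero $\bar a_{j+1}$ in the interval; inserting these points into the original partition produces the refinement of part (1).

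Part (3) is a formal consequence of parts (1)--(2). For $c^f \neq \bar a_j$, part (2) gives $\tilde{\mA}_c = \tilde{\mN}_c$, and Theorem~\ref{graph} forces this set to sit inside a single cylinder $X_j$ with Lipschitz projection to $\theta^f$. If this projection covers all of $\T$, the resulting Lipschitz graph of $(\theta^s, p)$ over $(\theta^f, t) \in \T^2$ is an invariant $2$-torus of the suspended flow, i.e.\ an invariant circle of the time-one map; otherwise $\pi_{\theta^f}\tilde{\mN}_c = \pi_{\theta^f}\tilde{\mA}_c \subsetneq \T$. The principal obstacle is the parametrized uniqueness step above: Ma\~n\'e's theorem gives uniqueness only at one fixed cohomology for a residual set of perturbations, so to achieve uniqueness simultaneously at every $c^f \in [a_-, a_+]$ one must use a Baire-category argument over a countable dense set of cohomologies, combined with upper semicontinuity of the Aubry set in $c$. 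This is exactly what Bernard \cite{Be1} and Cheng-Yan \cite{CY1} accomplish in the planar case; here I would apply the same scheme to the reduced flow on each $X_j$, noting that restricting the perturbation to the $R$-slot is an affine constraint on the perturbation space that does not affect the density part of their Baire argument.
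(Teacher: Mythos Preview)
Your proposal has a genuine gap at exactly the point you yourself flag as the principal obstacle. Ma\~n\'e--type genericity gives uniqueness of the minimizing measure at a \emph{fixed} cohomology for a residual set of perturbations; intersecting over a countable dense set of $c^f$'s and then appealing to upper semicontinuity of the Aubry set in $c$ does \emph{not} propagate uniqueness to the remaining (uncountably many) cohomologies. Upper semicontinuity of $c\mapsto\tilde{\mA}(c)$ says only that the Aubry set cannot suddenly grow, but it says nothing about the number of ergodic minimizing measures supported on it: even if $\mu_{c_n}$ is unique for $c_n\to c$, several distinct minimizing measures can coexist at the limit $c$. The references \cite{Be1,CY1,CY2} do not contain a theorem of the form ``for a generic potential, the minimizing measure is unique for \emph{every} $c$''---indeed such a statement is false in general. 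So the scheme as written does not close.

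The paper circumvents this by exploiting the twist structure on the cylinder rather than invoking Ma\~n\'e genericity. Once one knows (Theorem~\ref{graph} and the local-Aubry-set construction of Proposition~\ref{relative}) that $\tilde{\mA}_{N_\epsilon,j}(c)\subset X_j$ is a Lipschitz graph over $\theta^f$, the $\theta^f$-components of configurations satisfy a non-crossing property (Lemma~\ref{twist}), and classical Aubry--Mather arguments show: all $c$-minimizing measures on $X_j$ share a common rotation number $\rho^f(c)$; when $\rho^f(c)$ is irrational the system is semi-conjugate to an irrational rotation and uniqueness is automatic; when $\rho^f(c)=p/q$ the minimizing measures are supported on $(p,q)$-periodic configurations, and a single localized bump perturbation of the generating function makes the $(p,q)$-minimizer unique (Proposition~\ref{rational}). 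One then treats the countably many rationals in sequence, each perturbation small enough not to destroy prior ones. This is why uniqueness holds for \emph{all} $c^f$ after one perturbation: the irrational case needs nothing, and the rational case is countable.

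For part (1), your idea of comparing two partial $\alpha$-values is exactly what the paper does, but the execution differs. The paper defines the local $\alpha_j$ via a modified Hamiltonian $N_{\epsilon,j}$ (Proposition~\ref{relative}), proves $\alpha_j(p_*(c^f))$ is $C^1$ in $c^f$ (using the common-rotation-number fact above), and then invokes Sard's lemma plus a constant-shift perturbation $P_j^\eta$ supported near $X_j$ to make $\alpha_j-\alpha_{j+1}$ have only finitely many zeros (Proposition~\ref{transversal}). Your claim that [G2] alone forces a ``single transverse zero'' is too strong: [G2] concerns derivatives of $Z$ at the two maxima, not of the dynamically defined $\alpha_j$, and the paper only obtains (and only needs) finitely many zeros.
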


Using the information obtained from the normal form system $N_\epsilon$, we now return to the original Hamiltonian $H_\epsilon$. Using the symplectic invariance of the Mather, Aubry and Ma\~{n}e set developed in \cite{Be2}, we have that the same conclusion as in Theorem~\ref{graph} and Theorem~\ref{property-twist} can be drawn about $H_\epsilon$.
\begin{thm}\label{orignal-H}
  Let $H_\epsilon=H_0 + \epsilon H_1$ such that the resonant component of $H_1$ satisfy conditions [G0]-[G2]. There exists $\epsilon_0>0$ and an interval $[a_-, a_+]\subset [a_{min}, a_{max}]$ depending only on $H_0$ and $H_1$, and for each $0<\epsilon<\epsilon_0$ there exists arbitrarily small $C^r$ perturbation $H'_\epsilon$ of $H_\epsilon$, such that the conclusions of Theorem~\ref{graph} and Theorem~\ref{property-twist} holds for the Hamiltonian $H'_\epsilon$ at cohomologies $c=p_*(c^f)$, where $c^f\in [a_-, a_+]$.
\end{thm}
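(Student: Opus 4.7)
The plan is to combine the normal form from Theorem~\ref{normal-form} with the results on the normal form system (Theorems~\ref{graph} and \ref{property-twist}) and transfer them to the original Hamiltonian $H_\epsilon$ using the symplectic invariance of the variational objects established in~\cite{Be2}.

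First, I would fix $H_1\in\mathcal{U}$, let $\delta=\delta(H_1,n,r)$ and $K=K(\delta,n,r)$ be the parameters from Section~\ref{notations}, and extract the passage segments of $\Gamma$. On a neighborhood of any fixed passage segment, Theorem~\ref{normal-form} produces a $C^2$ symplectic diffeomorphism $\tilde\Phi$ of $\Tm^{n+1}\times\Rm^{n+1}$ such that $(H_\epsilon+e)\circ\tilde\Phi = N_\epsilon+e$ with $N_\epsilon = H_0+\epsilon Z + \epsilon R$ and $\|R\|_{C^2}\le\delta$. Since $Z$ is (by construction) the resonant average of $H_1$, it inherits the conditions [G0]--[G2] from $H_1\in\mathcal{U}$, and together with the smallness of $\delta$ this puts us squarely in the setting where Theorems~\ref{nhic-mult}, \ref{graph} and \ref{property-twist} apply to $N_\epsilon$. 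Choosing the endpoints $a_-,a_+$ inside a single passage segment and taking $\epsilon_0$ small enough that $\epsilon<\epsilon_0\lambda^{7/2}$ and $\delta<\sqrt{\epsilon_0}\lambda^2$, all the hypotheses of those theorems hold uniformly.

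Second, I would realize the arbitrarily small perturbation $R\mapsto R'$ promised by Theorem~\ref{property-twist} as a perturbation of the original Hamiltonian. Setting $R' = R + \epsilon^{-1}\rho$ with $\|\rho\|_{C^r}$ small, it suffices to define $H_\epsilon' := H_\epsilon + \rho\circ\tilde\Phi^{-1}$ on the region where $\tilde\Phi$ is defined; since $\tilde\Phi-\mathrm{id}$ is $C^2$ small and $\rho$ can be chosen with support in a compact neighborhood of the passage segment, $\rho\circ\tilde\Phi^{-1}$ is a genuine small $C^r$ perturbation of $H_\epsilon$ (using the smoothing of Lemma~\ref{appoximation} if necessary to preserve the $C^r$ class after composition with the only $C^2$ map $\tilde\Phi^{-1}$). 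Then by construction $(H_\epsilon'+e)\circ\tilde\Phi = N_\epsilon'+e$ with $N_\epsilon' = H_0+\epsilon Z+\epsilon R'$, so the conclusions of Theorems~\ref{graph} and~\ref{property-twist} hold for $N_\epsilon'$ verbatim.

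Third, I would use the symplectic invariance of Mather theory from~\cite{Be2}. Because $\tilde\Phi$ has the special form
\[
\tilde\Phi(\theta,p,t,e) = \bigl(\Phi(\theta,p,t),\,e+f(\theta,p,t)\bigr)
\]
with $\Phi$ fixing the time coordinate, $\tilde\Phi$ is a time-one symplectic conjugacy between the suspended flows of $H_\epsilon'$ and $N_\epsilon'$, and its action on the first cohomology of $\Tm^n\times\Tm$ is the identity (since $\|\tilde\Phi-\mathrm{id}\|_{C^0}\le\sqrt\epsilon$ so $\tilde\Phi$ is homotopic to the identity). The main result of~\cite{Be2} then gives $\tilde{\mathcal{A}}_{H_\epsilon'}(c) = \tilde\Phi(\tilde{\mathcal{A}}_{N_\epsilon'}(c))$, $\tilde{\mathcal{N}}_{H_\epsilon'}(c) = \tilde\Phi(\tilde{\mathcal{N}}_{N_\epsilon'}(c))$ and similarly for the Mather set, for every cohomology class $c$. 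Applying the $\tilde\Phi$-image to the localization, graph property, and genericity dichotomy yields the stated conclusions for $H_\epsilon'$ over $c^f\in[a_-,a_+]$.

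The main obstacle I foresee is bookkeeping regarding regularity and perturbation size: the map $\tilde\Phi$ is only $C^2$, so composition with a $C^r$ function produces only a $C^2$ object, and I must ensure that the perturbation $\rho\circ\tilde\Phi^{-1}$ added to $H_\epsilon$ is both small in $C^r$ and preserves the genericity of the resonant average $Z$. This is handled by choosing $\rho$ itself to already be a $C^r$ modification of $R$ whose resonant averaging recovers the generic $Z'$ used in Theorem~\ref{property-twist}, and by invoking the smoothing of Lemma~\ref{appoximation} so that applying the normal form procedure to $H_\epsilon'$ recovers $N_\epsilon'$ up to an error absorbable into the $\delta$ budget. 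Once this compatibility is established, the transfer via~\cite{Be2} is straightforward.
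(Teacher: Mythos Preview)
Your proposal is correct and follows the same approach the paper indicates: the paper does not give a detailed proof here but simply invokes the symplectic invariance of the Mather, Aubry and Ma\~n\'e sets from \cite{Be2} to transfer the conclusions of Theorems~\ref{graph} and~\ref{property-twist} from the normal form $N_\epsilon$ back to $H_\epsilon$ via the conjugacy $\tilde\Phi$. The regularity concern you flag---that pulling back a $C^r$ perturbation through the only-$C^2$ map $\tilde\Phi^{-1}$ loses smoothness---is a legitimate technical point that the paper itself does not address; your proposed workaround is reasonable, and an alternative is to observe that the perturbations in Theorem~\ref{property-twist} achieve open conditions, so one may perturb $H_\epsilon$ directly in $C^r$ and re-apply the normal form.
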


These information on the Ma\~n\'e set allow to use the variational
mechanism of \cite{Be1}.
Let us denote  by $\Gamma(\e)$ the set of cohomology classes
$c\in \Gamma$ such that $c^f\in [a_-,a_+]$.
We would like to prove that each cohomology
$c\in \Gamma(\e)$ is in the interior of its forcing class in the terminology of
\cite{Be1}, which implies that all the cohomology classes in $\Gamma(\epsilon)$ are contained in a single forcing class.
By proposition 5.3 in \cite{Be1}, we could conclude  the existence of an orbit
$(\theta(t),p(t))$ of $H_{\e}$ such that $p(0)=c$ and $p(T)=c'$ for some $T\in \Nm$.
Note that this implies the existence of various more complicated orbits,
see \cite{Be1}.

In order to carry out this program,
we denote by $\Gamma_0(\e)$ the set of cohomology classes
$c\in \Gamma(\e)$ such that
the set $\theta^f(\tilde \mI(c,u))$ is properly contained in $\Tm$ for each
weak KAM solution $u$ at level $c$.
By Theorem 0.11 in \cite{Be1}, each cohomology $c\in \Gamma_0(\e)$
is in the interior of its forcing class.

Let $\Gamma_2(\epsilon)$ denote that set of $c\in \Gamma(\epsilon)$ such that the Aubry set ${\mathcal A}(c)$ has exactly two static classes. In this case the Ma\~{n}e set ${\mathcal N}(c)\supsetneq {\mathcal A}(c)$. To ensure that $c\in \Gamma_2(\epsilon)$ is in the interior of its forcing class, some further degeneracy conditions are needed. To be more specific, let $\Gamma_2^*(c)$ denote the set of $c\in \Gamma_2(\epsilon)$ such that the set
$${\mathcal N}(c) - {\mathcal A}(c) $$
is totally disconnected. This can also be stated in terms of barrier function. Let $\theta_0$ and $\theta_1$ be contained in each of the two static classes of ${\mathcal A}(c)$, we define
$$
{b}^+_c(\theta)= h_c( \theta_0, \theta)+
 h_c(\theta,  \theta_1)
$$
and
$$
{b}^-_c(\theta)= h_c( \theta_1, \theta)+
 h_c(\theta, \tilde \theta_0),
$$
where $h_c$ is the Peierls barrier for cohomology class $c$. Then $\Gamma^*_2(\epsilon)$ is the set of $c\in \Gamma_2(\epsilon)$ such that the minima of each $b^+$ and $b^-$ outside of ${\mathcal A}(c)$ are totally isolated.

We call $\Gamma_1(\e)$ the set of cohomology classes $c\in \Gamma$
such that there exists only one weak KAM solution $u$ at level $c$,
and $\theta^f(\tilde \mI(c,u))=\Tm$.
Note then that
$$
\tilde \mN(c)=\tilde \mA(c)=\tilde \mI(c,u)
$$
is an invariant circle.
We have $\Gamma_0(\e)\cap \Gamma_1(\e)=\emptyset$ for each
$\e\in ]0,\e_0[$, by definition.
We first consider the covering
\begin{align*}
 \xi:\Tm^n&\lto \Tm^n\\
\theta=(\theta^f, \theta^s_1,\theta^s_2,\cdots,\theta^s_{n-1})
&\lmto \xi(\theta)=
 (\theta^f, 2\theta^s_1,\theta^s_2,\cdots,\theta^s_{n-1}).
\end{align*}
This covering lifts to a a symplectic covering
\begin{align*}
\Xi:T^*\Tm^n&\lto T^* \Tm^n\\
(\theta,p)=(\theta, p^f, p^s_1,p^s_2,\ldots,p^s_{n-1})
&\lmto \Xi(\theta,p)=
(\xi(\theta),p^f, p^s_1/2,p^s_2,\ldots,p^s_{n-1}),
 \end{align*}
and we define the Lifted Hamiltonian $\tilde H=H\circ \Xi$.
It is known that
$$
\tilde \mA_{\tilde H}(\tilde c)=\Xi^{-1}\big(\tilde \mA_{\tilde H}(c)\big)
$$
where $\tilde c=\xi ^*c =(c^f,c^s_1/2,c^s_2,\ldots,c^s_{n-1})$.
On the other hand, the inclusion
$$
\tilde \mN_{\tilde H}(\tilde c)\supset \Xi^{-1}\big(\tilde \mN_{\tilde H}(c)\big)
=\Xi^{-1}\big(\tilde \mA_{\tilde H}(c)\big)
$$
is not an equality for $c\in \Gamma_1(\e)$.
More precisely, for $c\in \Gamma_1(\e)$, the set $\tilde \mA_{\tilde H}(\tilde c)$
is the union of two circles, while $\tilde \mN_{\tilde H}(\tilde c)$
contains heteroclinic connections between these circles.
Similarly to the case of $\Gamma_2(\epsilon)$, we call $\Gamma_1^*(\e)$ the set of cohomologies
$c\in \Gamma_1(\e)$ such that the set
$$
\mN_{\tilde H}(\tilde c)-\mA_{\tilde H}(\tilde c)
$$
is totally disconnected. Alternatively, we can chose a point $\theta_0$ in the projected Aubry set
$\mA(u,c)$ of $H$, and consider its two preimages $\tilde \theta_0$
and $\tilde \theta_1$ under $\xi$.
We define
$$
\tilde{b}^+_c(\theta)=\tilde h(\tilde \theta_0, \theta)+
\tilde h(\theta, \tilde \theta_1)
$$
and
$$
\tilde{b}^-_c(\theta)=\tilde h(\tilde \theta_1, \theta)+
\tilde h(\theta, \tilde \theta_0)
$$
where $\tilde h$ is the Peierl's barrier associated to $\tilde H$.
$\Gamma_1^*(\epsilon)$ is then the set of cohomologies
$c\in \Gamma_1(\epsilon)$ such that the  minima of each of the functions
$b^{\pm}_c$
located outside of the Aubry set
$
\mA_{\tilde H} (\tilde c)
$
are isolated.

The following theorem is proved in \cite{Be1}.
\begin{thm}\label{var}
If $c$ and $c'$ belong to the same connected component
of $\Gamma_0(\e)\cup \Gamma_1^*(\e) \cup \Gamma_2^*(\e)$, then  there exists an orbit
$(\theta(t),p(t))$ and of $H_{\e}$ a time $T\in \Nm$
such that $p(0)=c$ and $p(T)=c'$.
\end{thm}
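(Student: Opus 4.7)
My plan is to reduce the claim to showing that each cohomology class $c \in \Gamma_0(\e) \cup \Gamma_1^*(\e) \cup \Gamma_2^*(\e)$ lies in the interior of its forcing class (in the sense of \cite{Be1}). Once this is established, each connected component of the union is contained in a single forcing class, since the forcing relation is transitive and reflexive, and the interior condition makes the relation locally constant. Proposition 5.3 of \cite{Be1} then yields, for any $c, c'$ in the same forcing class, an orbit $(\theta(t), p(t))$ of $H_{\e}$ and a time $T \in \mathbb{N}$ with $p(0) = c$ and $p(T) = c'$.

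For $c \in \Gamma_0(\e)$ the argument is immediate: by definition, $\theta^f(\tilde{\mI}(u,c)) \subsetneq \T$ for every weak KAM solution $u$ at level $c$, which is exactly the hypothesis of Theorem 0.11 in \cite{Be1}, giving that $c$ lies in the interior of its forcing class. For $c \in \Gamma_2^*(\e)$, the Aubry set has exactly two static classes and the set $\mN(c) \setminus \mA(c)$ is totally disconnected; equivalently, the minima of the barriers $b^\pm_c$ outside $\mA(c)$ are isolated. This is precisely the setting in which the Mather mechanism from \cite{Be1} applies: a small $c$-equivalent modification of the cost allows one to peel off an isolated minimum of $b^\pm_c$, producing a semi-static pseudo-orbit crossing the cohomology $c$, hence interior membership in the forcing class.

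The case $c \in \Gamma_1^*(\e)$ is the main difficulty, because the Aubry set is a genuine invariant circle projecting onto $\T$ under $\theta^f$, which would block any diffusion directly. Here the key is the double cover $\xi$ and its symplectic lift $\Xi$ introduced before the theorem: pulling back to $\tilde{H} = H \circ \Xi$ at $\tilde c = \xi^* c$, the single invariant circle of $\tilde{\mA}_H(c)$ lifts to two disjoint circles forming $\tilde{\mA}_{\tilde H}(\tilde c)$, while $\tilde{\mN}_{\tilde H}(\tilde c)$ strictly contains these two circles due to the heteroclinic orbits between them. Thus the lifted problem is in a two-static-class situation analogous to $\Gamma_2^*(\e)$, and the hypothesis that minima of $\tilde{b}^\pm_c$ outside $\mA_{\tilde H}(\tilde c)$ are isolated supplies the genericity needed to apply the same Mather mechanism from \cite{Be1} upstairs. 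Interior membership of $\tilde c$ in its forcing class for $\tilde H$ descends to interior membership of $c$ in its forcing class for $H$, using the symplectic invariance of Mather/Aubry/Ma\~{n}e sets under $\Xi$ as in \cite{Be2}.

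The genuine obstacle is thus the passage through $\Gamma_1^*(\e)$: the invariant circle is a classical obstruction to variational diffusion, and the double-cover trick is what breaks it, converting the rotation dynamics on the circle into a two-component heteroclinic configuration where isolated barrier minima can be exploited. All other technical steps (checking that connected components of $\Gamma_0 \cup \Gamma_1^* \cup \Gamma_2^*$ are transferred intact by the forcing relation, and verifying that the resulting pseudo-orbit from Proposition 5.3 of \cite{Be1} is realized by an actual orbit at integer times) are routine given the framework already set up in \cite{Be1, Be2}.
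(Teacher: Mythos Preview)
Your proposal is correct and follows essentially the same approach as the paper. In fact, the paper does not give its own proof of this theorem at all: it simply states ``The following theorem is proved in \cite{Be1}'', after having set up in the preceding paragraphs exactly the ingredients you describe (Theorem~0.11 of \cite{Be1} for $\Gamma_0(\e)$, the barrier conditions for $\Gamma_2^*(\e)$, and the double cover $\xi,\Xi$ reducing $\Gamma_1^*(\e)$ to a two-static-class situation upstairs), so your sketch is a faithful reconstruction of the argument the paper is invoking.
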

We have proved the main result provided  $\Gamma(\epsilon)=\Gamma_0(\e)\cup \Gamma_1^*(\e)\cup \Gamma^*(\e)$.

\begin{thm}\label{nondegeneracy}
 Let $H_\epsilon$ be a Hamiltonian such that Theorem~\ref{orignal-H} holds, then there exists an arbitrarily small $C^r$ perturbation $H_\epsilon''$ to $H'_\epsilon$, such that for the Hamiltonian $H''_\epsilon$  we have that $\Gamma(\e)=\Gamma_0(\e)\cup \Gamma_1^*(\e)\cup \Gamma_2^*(\e)$.
\end{thm}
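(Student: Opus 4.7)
\medskip

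\textbf{Proof proposal for Theorem~\ref{nondegeneracy}.}

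The plan is to split $\Gamma(\epsilon)$ according to the structure of the Aubry set guaranteed by Theorem~\ref{property-twist} and Theorem~\ref{orignal-H}, and then upgrade the two potentially degenerate classes $\Gamma_2$ and $\Gamma_1$ to their starred versions by two successive small $C^r$ perturbations. Recall that after the first perturbation producing $H'_\epsilon$, each $c\in\Gamma(\epsilon)$ with $c^f\neq \bar a_j$ falls into either $\Gamma_0(\epsilon)$ (when $\pi_{\theta^f}\tilde{\mathcal N}_c\subsetneq\T$) or $\Gamma_1(\epsilon)$ (when $\tilde{\mathcal A}_c=\tilde{\mathcal N}_c$ is an invariant circle), while the finitely many values $c^f=\bar a_j$ yield exactly two static classes and hence $c\in\Gamma_2(\epsilon)$. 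Thus it suffices to perturb $H'_\epsilon$ so that (a) for each of the finitely many $c\in\Gamma_2(\epsilon)$ the barrier functions $b^\pm_c$ have isolated minima outside $\mathcal A(c)$, and (b) for every $c\in\Gamma_1(\epsilon)$ the lifted barriers $\tilde b^\pm_c$ have isolated minima outside $\mathcal A_{\tilde H}(\tilde c)$.

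\emph{Step 1: finitely many bifurcation cohomologies.} The set $\Gamma_2(\epsilon)\cap\Gamma(\epsilon)=\{(p^s_*(\bar a_j),\bar a_j)\}_{j=1}^{l-1}$ is finite. For each such $c_j$ the Aubry set consists of two compact static classes $\mathcal A_j\subset X_j$ and $\mathcal A_{j+1}\subset X_{j+1}$ which, by Theorem~\ref{graph}, project as Lipschitz graphs over $\T$. The barrier functions $b^\pm_{c_j}$ are continuous on $\T^n$, vanish on $\mathcal A(c_j)$, and are strictly positive off it. Pick disjoint open sets, each contained in the complement of $\mathcal A(c_j)$, covering the minimum locus of $b^\pm_{c_j}$ away from $\mathcal A(c_j)$. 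A generic small $C^r$ perturbation supported in these open sets tilts the barrier values so that the argmin sets become isolated: this is a finite codimension condition on the jets of the perturbation at the minimizing heteroclinics, and is achieved by an arbitrarily small bump-perturbation. Since the supports are disjoint from the Aubry sets, $\mathcal A(c_j)$ and hence the partition from Theorem~\ref{property-twist} are preserved.

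\emph{Step 2: the uncountable family $\Gamma_1(\epsilon)$.} For $c^f\in(\bar a_j,\bar a_{j+1})$ with $c\in\Gamma_1(\epsilon)$, the Aubry set is a KAM-like invariant circle on $X_j$ which lifts under $\Xi$ to two disjoint invariant circles in $\tilde X_j$, and $\tilde{\mathcal N}_{\tilde H}(\tilde c)\setminus\tilde{\mathcal A}_{\tilde H}(\tilde c)$ is a union of heteroclinic orbits between them. By the graph property and localization (Theorems~\ref{graph} and \ref{nhic-mult}), all objects entering $\tilde b^\pm_c$ live on the $3$-dimensional NHIC; the restricted flow is conjugate to a one-parameter family of exact symplectic twist maps in a standard way, and $\tilde b^\pm_c$ becomes the classical Mather barrier of that twist family. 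The condition we need -- isolated minima of $\tilde b^\pm_c$ outside $\mathcal A_{\tilde H}(\tilde c)$ for every $c\in\Gamma_1(\epsilon)$ -- is precisely the cusp residue condition of Mather \cite{Ma3}. We impose it by perturbing $H'_\epsilon$ via a countable dense family of localized $C^r$ bumps supported in neighborhoods of candidate heteroclinic arcs on $X_j$; the set of perturbations avoiding non-isolated minima at a fixed $c$ is open and dense in the perturbation parameter, and upper semi-continuity of $\tilde b^\pm_c$ in $c$ reduces the uncountable collection of conditions to a countable base, yielding a residual set of good perturbations by a Baire category argument.

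\emph{Main obstacle.} The finite step (Step 1) is routine; the substantive difficulty is Step 2, namely showing that a single small $C^r$ perturbation can simultaneously enforce the cusp residue condition across the whole continuum $\Gamma_1(\epsilon)$. The two ingredients that make this work are (i) the graph structure provided by Theorem~\ref{graph}, which reduces the $n$-dimensional problem to a $2$-dimensional twist map problem on each NHIC, thereby placing us in Mather's original framework, and (ii) upper semi-continuity of the Peierls barrier in $c$, which ensures that the ``bad'' parameter set $\{c\in\Gamma_1(\epsilon):\tilde b^\pm_c\text{ has non-isolated minima off }\tilde{\mathcal A}\}$ is nowhere dense after a generic perturbation; combining these with the finitely many corrections from Step 1 produces the desired $H''_\epsilon$.
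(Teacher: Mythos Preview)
Your overall decomposition matches the paper's: reduce to showing $\Gamma_1=\Gamma_1^*$ and $\Gamma_2=\Gamma_2^*$, handle the finite set $\Gamma_2$ by elementary bump perturbations, and treat the continuum $\Gamma_1$ by a parametric genericity argument. Step~1 is fine and agrees with the paper.

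The gap is in Step~2, specifically in the sentence ``upper semi-continuity of $\tilde b^\pm_c$ in $c$ reduces the uncountable collection of conditions to a countable base.'' Upper semi-continuity of the barrier (or of the Ma\~{n}e set) in $c$ is \emph{not} what makes the parametric argument go through; the paper is explicit about this. Immediately before Lemma~\ref{holder} it says: ``This is not possible in general, if the functions $\tilde{b}^\pm_c$ behave badly as $c$ varies. Since regularity of $\tilde{b}_c^\pm$ in $c$ is hard to prove, Cheng and Yan resolve this problem nicely by introducing an additional parameter.'' The point is that to get a \emph{residual} set of perturbations for which $\tilde b_c^\pm$ is nondegenerate for \emph{all} $c$ in an interval, you need a quantitative modulus of continuity of $c\mapsto \tilde b_c^\pm$, not merely semi-continuity. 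Semi-continuity tells you nothing about how fast minima can coalesce as $c$ varies, so a Baire/countable-cover argument on $c$ alone does not close.

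The actual mechanism, which your proposal is missing, is the Cheng--Yan area parameter: one replaces $c$ by the symplectic area $\sigma$ between the invariant circle $\gamma_c$ and a reference curve on the NHIC, and proves (Lemma~\ref{holder}) that $h_{c(\sigma)}$ is $\tfrac12$-H\"older in $\sigma$. This H\"older regularity is precisely the input to Lemma~7.2 of \cite{CY2}, which is what yields a residual set of perturbations $G_1$ making $\tilde b_c^\pm+G_1$ nondegenerate for all $c$ in a ball of parameters simultaneously. Only after that does one invoke an upper semi-continuity statement (Lemma~\ref{upper-semi}), and it is semi-continuity \emph{in the Lagrangian}, not in $c$: it guarantees that nondegeneracy already achieved survives further small perturbations, so that one can iterate over a countable cover of $\tilde U$. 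Your reduction ``to a $2$-dimensional twist map problem via Theorem~\ref{graph}'' is also not how the paper proceeds here; the argument works directly with the barrier on $\T^n$ via the generating-function perturbation formula of Lemma~\ref{pert-barrier}.
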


We note that the conclusions of Theorem~\ref{property-twist} already implies that $\{c\in \Gamma(\epsilon), c^f\ne \bar{a}_j, j=2, \cdots l-1\}\subset \Gamma_0(\epsilon)\cup \Gamma_1(\epsilon)$, while $\{c\in \Gamma(\epsilon), c^f=\bar{a}_j, j=2, \cdots l-1\}\subset \Gamma_2(\epsilon)$. In other words, $\Gamma(\epsilon)=\Gamma_0(\epsilon)\cup \Gamma_1(\epsilon) \cup \Gamma_2(\epsilon)$. It suffices to prove that $\Gamma_1(\epsilon)=\Gamma_1^*(\epsilon)$ and $\Gamma_2(\epsilon)=\Gamma_2^*(\epsilon)$.

For the rest of this section, we prove Theorem~\ref{property-twist} and Theorem~\ref{nondegeneracy}.

\subsection{Local extension of $\alpha(c)$ and ${\mathcal A}_{N_\e}(c)$}

Consider the normal form system $N_\epsilon$ and pick $c=p_*(c^f)$ with $c^f\in [a_-, a_+]$. For such a $c$ the function $Z(\theta^s, c)$ has a single peak. It follows from Theorem~\ref{graph} that the Aubry set $\tilde{\mathcal A}_{N_\epsilon}(c)$ (which is a subset of $\tilde{\mathcal N}_{N_\epsilon}(c)$) is contained in a single NHIC $X_j$ and the projected graph theorem holds. For the rest of the cohomology classes, the  double peak case, the picture is less clear as $\tilde{\mathcal A}_{N_\epsilon}(c)$ are contained in the union of two NHICs. To get a more precise picture, we will locally extend the set function (of $c^f$)
$$\tilde{\mathcal A}_{N_\epsilon}((p^s_*(c^f),c^f))|_{[a_j+b, a_{j+1}-b]}$$
from $[a_j+b, a_{j+1}-b]$ to $[a_j-2b, a_{j+1}+2b]$. The extended local Aubry set will still be contained in $X_j$. These definitions are inspired by Mather's definitions of relative $\alpha-$function and Aubry set.

Let
$$\rho_0=\min_{p^f\in [a_{j+1}-2b, a_{j+1}+2b]}\|\theta^s_j(p^f)-\theta^s_{j+1}(p^f)\|/3.$$
 It follows from properties [G1'] and [G2']  for $p^f \in [a_j-2b, a_{j+1}+2b]$,
\[   Z(\theta^s_j,p^s_*, p^f) - Z(\theta^s, p^s_*, p^f) \ge b \|\theta^s-\theta^s_j\|^2
\]
for $\|\theta^s-\theta^s_j\|\le \rho_0$. By choosing a smaller $\delta$ if necessary, we may make sure $\rho_0 > \rho_1$, where $\rho_1$ was defined in Theorem~\ref{nhic-mult}.  We write $U_j(p)=\{\|\theta^s-\theta^s_j(p)\|\le \rho_0\}$, our choice of $\rho_0$ guarantees that $U_j(p)\cap U_{j+1}(p)=\emptyset$ for $p^f\in [a_{j+1}-2b, a_{j+1}+2b]$. To define the extension, we introduce the following modification of the Hamiltonian $N_\epsilon$. Let ${Z}_j(\theta^s, p)$ be a function $\T^{n-1}\times \R^n\to \R$ satisfying the following properties:
\begin{itemize}
\item There exists $C$ depending only on $\lambda$, $\|Z\|_{C^2}$ and $n$ such that $\|{Z}_j\|_{C^2}\le C$.
\item ${Z}_j(\theta^s, p)=Z(\theta^s, p)$ whenever $\|\theta^s-\theta^s_j(p)\|\le \rho_0$.
\item ${Z}_j(\theta^s, p)\le Z(\theta^s, p)$  for all $\theta^s$ and $p$.
\item For $p^f\in [a_j-2b, a_{j+1}+ 2b]$, we have that ${Z}_j(\theta^s_j,p^s_*, p^f) - {Z}_j(\theta^s, p^s_*, p^f) \ge \frac{b}2 \|\theta^s-\theta^s_j\|^2$ hold for all $\theta^s\in \T^{n-1}$.
\end{itemize}
To see that such a modification exists, let $\bar{\rho}>\rho_0$ be such that  ${Z}_j(\theta^s_j,p^s_*, p^f) - {Z}_j(\theta^s, p^s_*, p^f) \ge \frac{b}2 \|\theta^s-\theta^s_j\|^2$ on $\|\theta^s-\theta^s_j\|\le {\rho}$. How large $\bar{\rho}-\rho_0$ is depends only on $\lambda$ and $\|Z\|_{C^2}$. Let $Q:\T^{n-1}\times \R^n\to \R$ be a smooth function such that $Z(\theta^s_j, p^s_*, p^f)-Q= \frac{b}2 \|\theta^s-\theta^s_j\|^2$ for $\|\theta^s-\theta^s_j\|\le \bar{\rho}$ and $Z(\theta^s_j, p^s_*, p^f)-Q\ge \frac{b}2 \|\theta^s-\theta^s_j\|^2$ for all $p^f$ and $\theta^s$. The norm of $Q$ only depends on $\lambda$ and $n$.  Let $\phi_{\rho_0, \bar{\rho}}:\T^n\times \R^n \to \R$ be a smooth function such that $\phi_{\rho_0, \bar{\rho}}=1$ on $\{\theta^s, \|\theta^s-\theta^s_j\|\le \rho_0\}$ and $\phi_{\rho_0, \bar\rho}=0$ on  $\{\theta^s, \|\theta^s-\theta^s_j\|> \bar\rho\}$. The norm of $\phi_{\rho_0, \bar{\rho}}$ depends only on $n$, $\rho_0$ and $\bar\rho$. Then we can choose
$$ {Z}_j=(1-\phi_{\rho_0, \bar\rho})Z + \phi_{\rho_0, \bar\rho}Q. $$
We write ${N}_{\epsilon, j}=H_0 + \epsilon {Z}_j + \epsilon R$.

For each $c=p_*(c^f)$ with $c^f\in [a_j-2b, a_{j+1}+2b]$ we define
$$ \alpha_j(c)=\alpha_{{N}_{\epsilon, j}}(c), \qquad {\mathcal A}_{N_\epsilon, j}(c)=\tilde{A}_{{N}_{\epsilon, j}}(c). $$
It is not clear that these definitions are independent of the choice of the modification ${Z}_j$ or the decomposition $N_\epsilon=H_0 + \epsilon Z + \epsilon R$. We resolve these questions, and provide some more properties of these definition in the following proposition.
\begin{prop}\label{relative}
  Let $N_\epsilon=H_0+\epsilon Z + \epsilon R$ be a Hamiltonian satisfying the genericity conditions [G0]-[G2] and that $\|R\|_{C^2}\le \delta$. There exists $\epsilon_0, \delta_0>0$ such that for $0<\epsilon<\epsilon_0$ and $0<\delta<\delta_0$ the following hold.
  \begin{enumerate}
  \item The definitions of $\alpha_j$ and $\tilde{\mathcal A}_{N_\epsilon, j}(c)$ are independent of the decomposition $N_\epsilon=H_0 + \epsilon Z + \epsilon R$ as long as $Z$ satisfies [G0]-[G2] and $\|R\|_{C^2}\le \delta$; the definitions are also independent of the modification ${Z}_j$, as long as it satisfies the same 4 bullet point properties.
  \item For each $c=p_*(c^f)$ with $c^f\in [a_j-2b, a_{j+1}+2b]$, we have the local Aubry set $\tilde{\mathcal A}_{N_\epsilon, j}(c)$ is contained in the set $\{\|\theta^s-\theta^s_j\|\le \rho_2\}$ where $\rho_2$ is as in Theorem~\ref{localize}. It follows that $\tilde{\mathcal A}_{N_\epsilon, j}(c)\subset X_j$ and $\pi_{\theta^f}|\tilde{\mathcal A}_{N_\epsilon, j}(c)$ is one-to-one with Lipshitz inverse.
  \item For $c=p_*(c^f)$, $\alpha(c)=\alpha_j(c)$ if $c^f \in [a_j+b, a_{j+1}-b]$; $\alpha(c)=\max\{\alpha_j(c), \alpha_{j+1}(c)\}$ if $c^f \in (a_{j+1}-b, a_{j+1}+b)$. In particular, $\alpha_j(c)>\alpha_{j+1}(c)$ for $c^f=a_{j+1}-b$ and $\alpha_{j+1}(c)>\alpha_j(c)$ for $c^f=a_{j+1}+b$.

  \item For any $c^f\in [\alpha_{j+1}-b, \alpha_{j+1}+b]$, if $\alpha(c)=\alpha_j$ and $\alpha(c)\ne \alpha_{j+1}(c)$, then $\tilde{\mathcal A}_{N_\epsilon}(c)=\tilde{\mathcal A}_{N_\epsilon, j}(c)$. Similar statement hold with $j$ and $j+1$ exchanged.
  \end{enumerate}
\end{prop}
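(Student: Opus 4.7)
The plan is to establish part (2) first, since it localizes the extended Aubry set inside the region where the modification is inactive, and then derive the remaining parts from it. For (2), the four bullet properties of $Z_j$ are engineered so that $Z_j$ satisfies a single-peak condition on the entire extended interval $[a_j-2b, a_{j+1}+2b]$: the fourth bullet is exactly [G1'] with $b$ replaced by $b/2$. Hence the horizontal localization argument of Proposition~\ref{loc} in the single-peak case applies verbatim to $N_{\epsilon,j}$, yielding $\tilde{\mathcal A}_{N_\epsilon,j}(c)\subset D(\theta_j^s(c^f), C\delta^{1/4})\times \T$ for a constant $C$ depending on $b,n,A$. Choosing $\delta_0$ small enough that $C\delta_0^{1/4}\le\min(\rho_1,\rho_0)$, the localization sits inside the NHIC $X_j$, and the projected graph property follows from Theorem~\ref{graph}.

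Part (1) is essentially a corollary of (2). Two modifications $Z_j, Z_j'$ satisfying the four properties agree with $Z$ on $\{\|\theta^s-\theta_j^s\|\le\rho_0\}$, so $N_{\epsilon,j}=N_{\epsilon,j}'=N_\epsilon$ there; part (2) places both Aubry sets in this common region. Adapting Lemma~\ref{aubry-est} to the single-peak setting (only easier, since there is one cluster rather than two) shows that nearly-minimizing closed curves for the Peierls barrier also remain in the common region, so the barriers agree and the Aubry sets coincide; the same argument gives $\alpha_j(c)=\alpha_j'(c)$. Independence from the choice of normal form decomposition is identical, since different decompositions of the same $N_\epsilon$ only alter the modification outside the localization zone. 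For part (3), the monotonicity $Z_j\le Z$ yields $L_{N_\epsilon}\le L_{N_{\epsilon,j}}$ by Legendre duality and hence $\alpha(c)\ge\alpha_j(c)$. Conversely, Theorem~\ref{localize} places the Mather set of $N_\epsilon$ in $U_j\cup U_{j+1}$, where $N_\epsilon$ coincides with $N_{\epsilon,j}$ or $N_{\epsilon,j+1}$; decomposing a minimizing measure along the two disjoint components produces invariant measures for the modified systems with identical actions, yielding $\alpha(c)\le\max\{\alpha_j(c),\alpha_{j+1}(c)\}$. When only one cluster is present, namely $c^f\in[a_j+b, a_{j+1}-b]$, this reduces to $\alpha(c)=\alpha_j(c)$. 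The strict inequality at the endpoints $c^f=a_{j+1}\mp b$ follows from the sandwich estimate $\alpha_j(c)=H_0(c)+\epsilon Z(\theta_j^s,c)+O(\epsilon\delta)$ together with the bifurcation transversality [G2], which forces $Z(\theta_j^s,c)-Z(\theta_{j+1}^s,c)$ to be of definite sign and of order $b$ at these points, dominating the $O(\delta)$ error once $\delta_0$ is small.

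The main obstacle is part (4). Assuming $\alpha(c)=\alpha_j(c)>\alpha_{j+1}(c)$, the argument of (3) already rules out minimizing measures supported on $X_{j+1}$, so the Mather set of $N_\epsilon$ lies in $X_j$ and coincides with that of $N_{\epsilon,j}$. The delicate point is to propagate this to the full Aubry set by showing that any closed curve $\gamma$ nearly minimizing $h_{N_\epsilon,c}(\theta,\theta)$ for $\theta$ in the $\theta_j^s$-component of $\tilde{\mathcal A}_{N_\epsilon}(c)$ cannot excurse into the $\theta_{j+1}^s$-neighborhood. By Lemma~\ref{aubry-est}, $\gamma$ wanders within $2\kappa\delta^{1/4}$ of one of the two peaks. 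On the $\theta_{j+1}^s$-side $L_{N_\epsilon}$ equals $L_{N_{\epsilon,j+1}}$, so the normalized Lagrangian $L_{N_\epsilon}-c\cdot v+\alpha_j(c)$ splits as $L_{N_{\epsilon,j+1}}-c\cdot v+\alpha_{j+1}(c)$, bounded below by $-o(1)$ on nearly-minimizing curves, plus the strictly positive gap $\alpha_j(c)-\alpha_{j+1}(c)$. Every excursion of duration $\tau$ then contributes at least $\tau(\alpha_j(c)-\alpha_{j+1}(c))-o(1)$ to the total action; combined with the fact that transiting between the two $O(1)$-separated clusters on the NHIC requires $\tau=\Omega(1/\sqrt{\epsilon})$, no excursion is compatible with $h_{N_\epsilon,c}(\theta,\theta)=0$. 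Hence $\gamma$ remains in the $\rho_0$-neighborhood of $\theta_j^s$ where $N_\epsilon=N_{\epsilon,j}$, and we conclude that $\tilde{\mathcal A}_{N_\epsilon}(c)\cap X_j=\tilde{\mathcal A}_{N_\epsilon,j}(c)$; the symmetric argument excludes any Aubry component on $X_{j+1}$.
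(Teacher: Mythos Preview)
Your outline for parts (1)--(3) is essentially the paper's strategy, with one pleasant variation: using $Z_j\le Z$ and Legendre duality to get $\alpha(c)\ge\alpha_j(c)$ is cleaner than the paper's route, which instead argues that any minimizing measure for $N_{\epsilon,j}$ is localized in $U_j$, hence is also an invariant measure for $N_\epsilon$ with the same action. One detail you gloss over in (1): the region of agreement is $\{\|\theta^s-\theta^s_j(p)\|\le\rho_0\}$, which depends on $p$, not on $c$. The paper's Lemma~\ref{local-aubry} therefore contains a separate step bounding $\|p_k(t)-c\|\le C\sqrt{\epsilon}$ along the nearly-minimizing periodic curves (via a comparison argument in the spirit of Proposition~\ref{scc}); without this you cannot conclude the curves sit in the agreement region.

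Your treatment of (4) is where the proposal diverges, and it is more complicated than necessary while also leaving a gap. Once Lemma~\ref{aubry-est} localizes the nearly-minimizing loop $\gamma_k$ to the $2\kappa\delta^{1/4}$-neighborhoods of the two peaks, these neighborhoods are \emph{disjoint} for small $\delta$, so by continuity $\gamma_k$ stays entirely in one of them --- there are no excursions or transits to estimate. The paper exploits this directly: if $\theta_0\in\mathcal A_{N_\epsilon}(c)\cap U_{j+1}$, the localized loops $\gamma_k$ live in $U_{j+1}$ where $N_\epsilon=N_{\epsilon,j+1}$; taking a weak-$*$ limit produces an $N_\epsilon$-minimizing measure supported in $U_{j+1}$, hence $\alpha(c)\le\alpha_{j+1}(c)$. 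The contrapositive gives $\mathcal A_{N_\epsilon}(c)\cap U_{j+1}=\emptyset$ whenever $\alpha(c)\ne\alpha_{j+1}(c)$. Your phrase ``the symmetric argument excludes any Aubry component on $X_{j+1}$'' is the weak point: the situation is \emph{not} symmetric in $j$ and $j+1$, since $\alpha(c)=\alpha_j(c)>\alpha_{j+1}(c)$. What rules out $U_{j+1}$ is precisely the strict gap (a loop confined to $U_{j+1}$ has $N_\epsilon$-action at least $n_k(\alpha_j-\alpha_{j+1})\to\infty$), which is the opposite of what happens on the $U_j$ side. Your excursion/transit estimate is aiming at this, but the quantitative form you state (each excursion costs $\tau(\alpha_j-\alpha_{j+1})-o(1)$ with $\tau=\Omega(1/\sqrt{\epsilon})$) does not by itself yield a contradiction when $\alpha_j-\alpha_{j+1}$ is much smaller than $\epsilon$; the clean argument avoids this by sending $n_k\to\infty$ on a loop that never leaves $U_{j+1}$.
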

\begin{proof}
  We will prove the second statement first. The modified Hamiltonian ${N}_{\epsilon, j}$ is such that the single peak case of Theorem~\ref{localize} applies, with $b$ replaced by $b/2$. By choosing a smaller $\delta$ if necessary, we can guarantee that $\rho_2$ can be chosen the same as in Theorem~\ref{localize}. Theorem~\ref{graph} also applies, where we obtain the projection property.

We will now show that the set $\tilde{\mathcal A}_{N_\epsilon, j}(c)$ depends only on the value of $N_\epsilon$ on the set $\{(\theta, p), \|\theta^s-\theta^s(p)\|\le \rho_0\}$, which will imply that the definition of $\tilde{\mathcal A}_{N_\epsilon, j}(c)$ is independent of decomposition or choice of the modification, since for all different decompositions and modifications, the Hamiltonian agree on this neighborhood. As before, we denote by $N^*_\epsilon(\theta, v, t)$ the Lagrangian corresponding to $N_\epsilon$ and ${N}^*_{\epsilon, j}$ the Lagrangian corresponding to $N^*_{\epsilon, j}$. The projected Aubry set ${\mathcal A}_{N_\epsilon, j}(c)$ is defined by the set of $\theta\in \T^n$ such that $h_{N^*_{\epsilon, j}, c}(\theta, \theta)=0$, where the subscript is added to stress the Lagrangian and cohomology class in the definition. It follows from the second statement of the proposition that any $\theta$ such that $h_{N^*_{\epsilon, j}, c}(\theta, \theta)=0$ must be contained in $\{\|\theta^s-\theta^s(c)\|\le \rho_2\}$. The following lemma implies independence of the local Aubry set on the docomposition or the choice of the modification.

\begin{lem}\label{local-aubry}
  Let $N_{\epsilon, j}=H_0 + \epsilon Z_j + \epsilon R$ and $\bar{N}_{\epsilon, j}=H_0 + \epsilon \bar{Z}_j + \epsilon \bar{R}$ be such that
  \begin{itemize}
  \item $N_{\epsilon, j}=\bar{N}_{\epsilon, j}$ for $\|\theta^s-\theta^s(p)\|\le \rho_0$.
  \item For $p^f\in [a_j-2b, a_{j+1}+ 2b]$, we have that ${Z}_j(\theta^s_j,p^s_*, p^f) - {Z}_j(\theta^s, p^s_*, p^f) \ge \frac{b}2 \|\theta^s-\theta^s_j\|^2$ and that  $\bar{Z}_j(\theta^s_j,p^s_*, p^f) - \bar{Z}_j(\theta^s, p^s_*, p^f) \ge \frac{b}2 \|\theta^s-\theta^s_j\|^2$ for all $\theta^s\in \T^{n-1}$.
  \item $\|R\|_{C^2}, \|\bar{R}\|_{C^2}\le \delta$.
  \end{itemize}
Then for sufficiently small $\epsilon$, $\delta$, and for $c=p_*(c^f)$ with $c^f\in [a_j-2b, a_{j+1}+2b]$
$$ h_{N^*_{\epsilon, j}, c}(\theta, \theta)=0 \Longleftrightarrow h_{\bar{N}^*_{\epsilon, j}, c}(\theta, \theta)=0. $$
\end{lem}
\begin{proof}[Proof of Lemma~\ref{local-aubry}]
  Let $\theta_0\in {\mathcal A}_{N_\epsilon, j}(c)$, we refer to Lemma~\ref{aubry-est} before and note that there exists an increasing sequence of integers $n_k$, $\theta_k=(\theta^s_k, \theta^f_k):\R \to M$ a sequence of $n_k-$periodic absolutely continuous curves such that $\theta_k(0)=\theta_0$ and
$$ \lim_{k\to \infty}\int_0^{n_k}N_{\epsilon, j}^*(t, \theta_k, \dot{\theta}_k)-c\cdot \dot\theta_k +\alpha_j(c) dt =0.$$
Moreover, the curves $\theta_k$ can be chosen to be minimizing, i.e. they minimizes the integral in the above displayed formula among the $n_k-$periodic absolutely continuous curves such that $\theta_k(0)=\theta_0$. In particular, $\theta_k|[0, n_k]$ must be trajectories of the Euler-Lagrange flow. Lemma~\ref{aubry-est} states that for sufficiently large $k$ we may assume that the whole curve $\theta_k$'s are contained in $\|\theta^s-\theta^s(c)\|\le \rho_0$ (choose a smaller $\delta$ if necessary).

Let $(\theta_k, p_k)$ be the corresponding Hamiltonian trajectory to $(\theta_k, \dot\theta_k)$, we will show that for $n_k> 1/\sqrt{\epsilon}$, $\|p_k(t)-c\|\le C\sqrt{\epsilon}$, where $C$ is a constant depending only on $A$ and $n$. Let $\tau\in [0, n_k]$ be where $\|p_k(t)\|$ takes its maximum. Consider a shift $\theta_k'(t)=\theta_k(t+\tau-1/\sqrt{\epsilon})$ of $\theta_k$, and let $p_k'$ be the corresponding action variable, then $\|p_k'\|$ reaches maximum at $t=1/\sqrt\epsilon$. We will write $T=1/\sqrt{\epsilon}$ in the rest of the proof. Similar to the proof of Proposition~\ref{scc}, we lift $\theta_k'$ to a curve  in $\R^n$ without changing its name, and define
 $$\theta_{k,x}'(t)=\theta_k(x)+tx/\tau. $$
We have the following
\begin{equation*}
\begin{aligned}
  &\int_0^{T}N_{\epsilon, j}^*(t, \theta_{k,x}', \dot{\theta}_{k,x}')-c\cdot \dot\theta_k +\alpha_j(c) dt - \int_0^{T}N_{\epsilon, j}^*(t, \theta_{k}', \dot{\theta}_{k}')-c\cdot \dot\theta_k' +\alpha_j(c) dt \\
  &\leq (-c+\partial_vN_{\epsilon, j}^*(\tau, \theta_k(\tau), \delta\theta_k(\tau)))\cdot x + 3A\sqrt{\epsilon}|x|^2
=  (-c + p_k'(T))\cdot x + 3A\sqrt{\epsilon}|x|^2,
\end{aligned}
\end{equation*}
the computation is identical to \eqref{action-cp} and the two formulas that follows it. Assume $\|p_k'(T)-c\|>0$ (otherwise there is nothing to prove), and we choose $x$ to be a unit integer vector that minimizes $x\cdot (-c + p_k'(T))$ among unit integer vectors. We have that there exists $C'>0$ depending on $n$ that $(-c + p_k'(T))\cdot x \leq -C' \|p_k'(T)-c\|$.  Since $\theta_k'(T)$ and $\theta_k(T)$ projects to the same point on the torus, by minimality of $\theta_k$ we have that  $$0 \leq  (-c + p_k(T))\cdot x + 3A\sqrt{\epsilon}|x|^2\leq -C' \|p_k'(T)-c\| + 3A\sqrt{\epsilon},$$
it follows that $\|p_k'(T)-c\|\le 3A/C' \sqrt{\epsilon}$. Choose $C=3A/C'$ and we have proved our claim.

To summarize, we have proved that for $n_k$ sufficiently large, the curves $(\theta_k, p_k)$ satisfy $\|\theta^s_k-\theta^s_j(c)\|\le \rho_2$ and $\|p_k-c\|\le C\sqrt{\epsilon}$. By choosing a sufficiently small $\epsilon$, we can guarantee that $\|\theta^s_k - \theta^s_j(p_k)\|< \rho_0$. This implies that the Hamiltonians $N_{\epsilon, k}$ and $\bar{N}_{\epsilon, k}$ take the same values on the curves $(\theta_k, p_k)$, by taking the Legendre transform, we can conclude that the Lagrangian $N^*_{\epsilon, k}$ and $\bar{N}^*_{\epsilon, k}$ must take the same values as well. It follows that
$$ 0= h_{N_{\epsilon, j}, c}(\theta_0, \theta_0)= \liminf_{k\to \infty}\int_0^{n_k}N_{\epsilon, j}^*(t, \theta_{k}, \dot{\theta}_{k}')-c\cdot \dot\theta_k +\alpha_j(c) dt\geq h_{\bar{N}_{\epsilon, j}, c}(\theta_0, \theta_0)\geq 0 . $$

Hence $h_{N_{\epsilon, j}, c}(\theta_0, \theta_0)=0 \Longrightarrow h_{\bar{N}_{\epsilon, j}, c}(\theta_0, \theta_0)=0$. The other direction also holds since the argument is completely symmetric.  This concludes the proof of the lemma.
\end{proof}

The alpha function of a Lagrangian $L$ can be defined by  $\alpha(c)=-\inf_{\mu}(L-c\cdot \dot\theta) d\mu$, where $\mu$ is taken over all invariant probability measures supported on the Aubry set $\tilde{\mathcal A}(c)$. Consider two Hamiltonians $N_{\epsilon, j}$ and $\bar{N}_{\epsilon, j}$ as before, since the Aubry sets are identical for these Hamiltonians, and the Hamiltonians coincide on a neighborhood of the Aubry sets, the alpha function $\alpha_j(c)$ defined for these Hamiltonians must also be the same. This conclude the proof of the first statement of our proposition.

We now prove statements 3 and 4. Consider the cohomology classes $c=p_*(c^f)$ with $c^f\in [a_j+b, a_{j+1}-b]$, we note that the function $Z$ already satisfies the conditions that we require of the modification, and since the local Aubry set is independent of specific modifications, we conclude that $\tilde{\mathcal A}_{N_\epsilon}(c)=\tilde{\mathcal A}_{N_\epsilon, j}(c)$ and $\alpha_j(c)=\alpha(c)$.

We now focus on the cohomology classes  $c=p_*(c^f)$ with $c^f\in [a_{j+1}-b, a_{j+1}+b]$. Using Theorem~\ref{localize}, for these cohomology classes the Aubry set $\tilde{\mathcal A}_{N_\epsilon}(c)$ is contained in the vertical neighborhood $\{\|p-c\|\le 36A\sqrt{\epsilon}\}$, and horizontally in the neighborhood $\{\|\theta^s-\theta^s_j(c)\|\le \rho_2\}\cup \{\|\theta^s-\theta^s_{j+1}(c)\|\le \rho_2\}$. Take a point $\theta_0\in {\mathcal A}_{N_\epsilon}(c) \cap \{\|\theta^s-\theta^s_j(c)\|\le \rho_2\}$, by going through the same argument as in the proof of Lemma~\ref{local-aubry}, we can conclude that $h_{N_\epsilon, c}(\theta_0, \theta_0)=0$ implies that $h_{N_{\epsilon, j}, c}(\theta_0, \theta_0)=0$. It follows that $\tilde{\mathcal A}_{N_\epsilon}(c)\cap U_j(c)\subset \tilde{\mathcal A}_{N_\epsilon, j}(c)$; the same holds for $j+1$. We have that
$$ \alpha(c)=- \min\{\inf_{\mu_1}(N^*_{\epsilon}-c\cdot \dot\theta)d\mu_1 ,\inf_{\mu_2}(N^*_{\epsilon}-c\cdot \dot\theta)d\mu_2\} \leq \max\{\alpha_j(c),\alpha_{j+1}(c)\}$$
where $\mu_1$ is supported on $\tilde{\mathcal A}_{N_\epsilon}(c)\cap U_j(c)$ while $\mu_2$ is supported on $\tilde{\mathcal A}_{N_\epsilon}(c)\cap U_{j+1}(c)$. On the other hand, since $\alpha(c)=-\inf_\mu (N^*_{\epsilon}-c\cdot \dot\theta)d\mu$ with $\mu$ taken over \emph{all} probability invariant measures, $\alpha(c)\geq \alpha_j(c), \alpha_{j+1}(c)$. We conclude that $\alpha(c)= \max\{\alpha_j(c), \alpha_{j+1}(c)\}$. We have proved statement 3.

Moreover, assume that ${\mathcal A}_{N_\epsilon}(c) \cap U_j(c)\ne \emptyset$, then there exists $\theta_0$ in this set such that $h_{N_\epsilon, c}(\theta_0, \theta_0)=0$, as well as a sequence of localized periodic curves $\theta_k$ converging to it. By taking any weak-*-limit of probability measures supported on these curves, we obtain at least one measure $\nu$ supported on  ${\mathcal A}_{N_\epsilon}(c)\cap U_j(c)$ such that $\alpha(c)=-\int (N^*_\epsilon -c\cdot \dot\theta)d\nu$. This implies that $\alpha(c)\le \alpha_j(c)$, hence $\alpha(c)=\alpha_j(c)$. As a conclusion, if $\alpha(c)\ne \alpha_j(c)$ then ${\mathcal A}_{N_\epsilon}(c) \cap U_j(c)= \emptyset$. This proves statement 4 and concludes the proof of Proposition~\ref{relative}.
\end{proof}

\subsection{Generic property of  $\tilde{\mathcal A}_{N_\epsilon}(c)$}

In this section we discuss the property of the sets $\tilde{\mathcal A}_{N_\epsilon}(c)$ for $c=(p^s_*(c^f))$ with $c^f\in [a_j-2b, a_{j+1}+2b]$ and their properties when we allowed to subject the Hamiltonian to an additional perturbation. It is convenient for us to fix a modified Hamiltonian $N_{\epsilon, j}$ and base all discussions on this system.

From Proposition~\ref{relative}, we have that the sets $\tilde{\mathcal A}_{N_\epsilon, j}(c)$ (we will write $\tilde{\mA}_j(c)$ for short in this section) are contained in the NHIC $X_j$, and $\pi_{\theta^f}|\tilde{\mA}_j(c)$ is one-to-one. We will study finer structures of the Aubry sets, by relating to the Aubry-Mather theory of two dimensional area preserving twist maps. We will prove the following statement.
\begin{prop}\label{unique}
  There exists $\epsilon_0,\delta_0>0$ such that for  $0<\epsilon<\epsilon_0$ and $0<\delta<\delta_0$, there exists arbitrarily small $C^r$ perturbation $N_\epsilon'$ of $N_\epsilon$, such that for each $c^f\in [a_j-2b, a_{j+1}+2b]$, $\tilde{\mA}_{N_\epsilon'}(p_*(c^f))$ supports a unique $c-$minimal measure.
\end{prop}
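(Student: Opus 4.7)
The plan is to reduce the statement to the classical genericity theorem for uniqueness of minimizing measures of monotone twist maps of the annulus. The key enabling facts are Proposition~\ref{relative}(2) and the parametrization of the NHIC $X_j$ given by Theorem~\ref{nhic-mult}: they together tell us that $\tilde{\mA}_{N_\epsilon,j}(c)$ lives on a three-dimensional invariant cylinder on which $\pi_{\theta^f}$ is a bi-Lipschitz graph, so the whole Aubry--Mather theory of $N_{\epsilon,j}$ at cohomology $c=p_*(c^f)$ collapses to a two-dimensional picture.

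Concretely, I would take the Poincar\'e return map to $\{t=0\}$ of the flow of $N_{\epsilon,j}$ and restrict it to $X_j$, obtaining an exact symplectic diffeomorphism $F_\epsilon$ of an annulus with coordinates $(\theta^f,p^f)$. The estimates on $\partial \Theta^s/\partial p^f$ and $\partial \Theta^s/\partial(\theta^f,t)$ in Theorem~\ref{nhic-mult}, combined with $\partial^2_{p^fp^f}H_0>0$, force $F_\epsilon$ to satisfy the monotone twist condition once $\epsilon,\delta$ are sufficiently small. Under this identification, the cohomologies $c=p_*(c^f)$ with $c^f\in[a_j-2b,a_{j+1}+2b]$ correspond to the rotation-number axis of the twist map, and $c$-minimal measures of $N_{\epsilon,j}$ correspond bijectively to Aubry--Mather minimizing measures of $F_\epsilon$ for the associated generating function.

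I would then realize the required perturbation by adding to $N_\epsilon$ a small resonant term of the form $\epsilon\,\phi(\theta^s,p,t)$ supported in a neighborhood of the passage segment. Such a term is preserved by the averaging of Section~\ref{sec:normal-form}: it simply adds the $(\theta^f,t)$-average of $\phi$ to $Z$. Pulled back to $X_j$ through the graph $(\Theta^s_j,P^s_j)$, it produces an essentially arbitrary small $C^r$ perturbation of the generating function of $F_\epsilon$. Invoking the classical theorem—of Mather for twist maps, or its version for Tonelli Lagrangians—stating that the set of $C^r$ twist maps for which every rotation class carries a unique minimizing measure is residual, a standard Baire argument on the parameter space of admissible $\phi$'s produces the desired $N_\epsilon'$.

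The main obstacle is the parametric dependence of the cylinder $X_j$ on $H_1$: since the graph $(\Theta^s_j,P^s_j)$ itself moves when the perturbation is varied, the assignment ``perturbation of $H_1$ $\longmapsto$ perturbation of $F_\epsilon$'' is nonlinear, so one must verify a quantitative surjectivity onto a $C^r$-neighborhood of $F_\epsilon$ in twist-map space. This amounts to showing that the modulus of continuity of $X_j$ with respect to $H_1$, which is controlled by Theorem~\ref{nhic-mult} in terms of $\epsilon$ and $\delta$, is negligible compared to the size of the induced variation of $F_\epsilon$. Granting this, the pre-image of the residual set of generic twist maps under our restriction map remains residual in the space of allowed $\epsilon$-small $C^r$ perturbations of $N_\epsilon$, and any member of this pre-image gives the $N_\epsilon'$ required by the proposition.
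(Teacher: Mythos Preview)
Your reduction to a two-dimensional twist map on $X_j$ is natural, but it is \emph{not} the route the paper takes, and the obstacle you flag at the end is genuine and more serious than you indicate. The cylinder $X_j$ produced by Theorem~\ref{nhic-mult} is only $C^1$ (indeed its regularity blows up as $\epsilon\to 0$, which is the ``crumpled'' phenomenon emphasized in the introduction), so the restricted return map $F_\epsilon$ is at best $C^1$. You therefore cannot invoke a $C^r$ genericity theorem for twist maps, and the map ``$C^r$ perturbation of $N_\epsilon$ $\mapsto$ perturbation of $F_\epsilon$'' does not even land in a $C^r$ space. The quantitative surjectivity you ask for at the end is thus ill-posed as stated.

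The paper sidesteps this entirely by never restricting to $X_j$. It works with the ambient generating function $G_j(x,x')$ of the time-one map of $N_{\epsilon,j}$ on all of $\R^n\times\R^n$. The one-dimensional Aubry--Mather structure is obtained not by dimensional reduction but from the projected graph theorem: Lemma~\ref{twist} shows directly that any two configurations in $\tilde\mA_j(c)$ are non-crossing in the $\theta^f$-coordinate, because a crossing would force two distinct points of $\tilde\mA_j(c)$ with the same $\theta^f$, contradicting Theorem~\ref{graph}. From this ordering one gets a common rotation number, uniqueness of the minimal measure when that number is irrational (semiconjugacy to a rotation), and identification of rational minimizers with $(p,q)$-periodic configurations minimizing $\sum G_j(x_k,x_{k+1})$.

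The perturbation is then done rationals-by-rationals in the ambient space (Proposition~\ref{rational}): for each $p/q$ one adds a bump $g_U(x)\rho_x(x')$ to $G_j$, where $\rho_x$ cuts off in the action variable so that the change is supported in $\{\|p-c\|\le 12A\sqrt{n\epsilon}\}$. This localized change of the time-one map is then realized by a localized perturbation of the Hamiltonian. Because the argument never refers to the position of $X_j$ beyond the fact that the Aubry set sits on it, the moving-cylinder issue you worry about does not arise.
In short: replace the literal restriction to $X_j$ by the ambient non-crossing argument of Lemma~\ref{twist}, and replace the abstract Baire/residual appeal by the explicit rational-by-rational construction of Proposition~\ref{rational}; both changes are forced by the low regularity of the cylinder.
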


  We note that the time-one-map of the Hamiltonian flow is a twist map defined on $\T^n\times \R^n$. The generating function of this twist map is $G_j(x,x'):\R^n \times \R^n \to \R$ ,
$$ G_j(x, x')= \inf_{\gamma(0)=x, \gamma(1)=x'} \int_0^1N^*_{\epsilon, j}(t, \gamma, \dot\gamma)dt. $$
Consider an orbit $\{(\theta(t), p(t))\}$ of the Hamiltonian flow, its trajectory in the configuration space can be lifted to a curve $x(t)\in \R^n$, which is unique modulo integer translation. The sequence $x_k=x(k)$, $k\in \Z$ will be called a \emph{configuration}. A configuration's rotation number is defined by $\lim_{k\to \infty}(x_{a+k}-x_a)/k$, if such a limit exists.

Let $\{x_k\}=\{(x^s_k, x^f_k)\}$ be a configuration corresponding to an orbit in $\tilde{\mA}_j(c)$, we will say that this configuration belong to the Aubry set for short. Since $\tilde{\mA}_j(c)\subset X_j$, we have that the slow component $x^s$ stays bounded all the time. Take two configurations $\{x_k\}$ and $\{y_k\}$, we say that they intersect in the fast direction (in short, intersect, as this is the only type of intersection we will consider) if there exists an integer $m$ and indices $k_1, k_2$ such that $x^f_{k_1}> y^f_{k_1}+m$ and $x^f_{k_2}< y^f_{k_2}+m$. We have the following statements, analogous to the twist map case.

\begin{lem}\label{twist}
  \begin{enumerate}
  \item Any two distinct configurations $\{x_k\}$ and $\{y_k\}$ in $\tilde\mA_j(c)$ does not intersect.
  \item Any configuration $\{x_k\}$ in $\tilde\mA_j(c)$ has a uniquely defined rotation number $\rho=(0, \rho^f)$.
  \end{enumerate}
\end{lem}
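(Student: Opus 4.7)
The plan is to reduce this multi-dimensional statement to classical Aubry--Mather theory for 2D twist maps, by exploiting the graph structure of $\tilde{\mA}_j(c)$ on the NHIC $X_j$. By Theorem~\ref{graph} and Proposition~\ref{relative}, the projection $\pi_{\theta^f}$ is a bi-Lipschitz bijection on $\tilde{\mA}_j(c)$, so any configuration there is determined by its fast coordinates $\{x^f_k\}$ up to integer lifts; by Proposition~\ref{vertical} the momenta are Lipschitz functions of position along the Aubry set. The key analytic input is the strict twist of the generating function: along minimizing arcs between points close to the NHIC, $\partial^2_{xx'}G_j$ equals $-(\partial^2_{pp}H_0)^{-1}$ up to $O(\epsilon)$ corrections coming from $\epsilon Z_j+\epsilon R$, and hence is negative definite once $\epsilon,\delta$ are small enough.

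For Part (1) I would argue by contradiction: suppose $\{x_k\}$ and $\{y_k\}$ in $\tilde{\mA}_j(c)$ are distinct and intersect in the fast direction, so that after translating $\{y_k\}$ by an integer fast vector we have $k_1<k_2$ with $x^f_{k_1}>y^f_{k_1}$ and $x^f_{k_2}<y^f_{k_2}$. Choose $k_0$ as the last index where the fast order has not yet flipped; then $x^f_{k_0}>y^f_{k_0}$ and $x^f_{k_0+1}\le y^f_{k_0+1}$. The latter inequality must be strict, for if $x^f_{k_0+1}=y^f_{k_0+1}$ the bi-Lipschitz graph property would give $x_{k_0+1}=y_{k_0+1}$, and then the Hamiltonian-flow uniqueness (combined with the Lipschitz graph relating position and momentum on the Aubry set) would force $\{x_k\}\equiv\{y_k\}$. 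Now perform the tail-swap at $k_0$ to obtain two competitor configurations whose combined action differs from $\{x_k\}+\{y_k\}$ by
\[
\Delta = G_j(x_{k_0},y_{k_0+1})+G_j(y_{k_0},x_{k_0+1})-G_j(x_{k_0},x_{k_0+1})-G_j(y_{k_0},y_{k_0+1}),
\]
and use the integral representation
\[
\Delta = -\int_0^1\!\!\int_0^1 (x_{k_0}-y_{k_0})^T \, \partial^2_{xx'}G_j\,(x_{k_0+1}-y_{k_0+1})\,ds\,dt
\]
together with the strict twist and the Lipschitz graph relation between slow and fast components to conclude $\Delta<0$. This contradicts the minimality of the two configurations on the finite window $[k_0,k_0+1]$, which holds because points of the Aubry set are static configurations in the sense of Mather.

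For Part (2) I would invoke Aubry's classical comparison argument: for each $n\in\Z_{>0}$ and each integer fast vector $v$, the translate $\{x_{k+n}-v\}$ is again a configuration in $\tilde{\mA}_j(c)$ by time-$1$ periodicity, and by Part (1) two such translates cannot cross; the standard monotonicity/subadditivity argument then yields existence of $\rho^f:=\lim_n(x^f_n-x^f_0)/n$, independent of the base point and of the chosen orbit. The slow rotation number is automatically $0$ since $\tilde{\mA}_j(c)\subset X_j$ is bounded in $(\theta^s \bmod \Z^{n-1}, p^s)$ by Theorem~\ref{nhic-mult}, whence $\|x^s_{k+n}-x^s_k\|=O(1)$ uniformly. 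The main obstacle in the whole argument is the strict-twist bookkeeping in Part~(1): once the graph structure is used to express slow components as Lipschitz functions of the fast ones, one must verify that the bilinear form in the formula for $\Delta$ retains a definite sign whenever $x^f_{k_0}-y^f_{k_0}$ and $x^f_{k_0+1}-y^f_{k_0+1}$ have opposite signs. This reduces to controlling the cross terms coming from $\epsilon Z_j+\epsilon R$ via the localization estimates of Theorem~\ref{localize}, which is where the smallness of $\epsilon$ and $\delta$ is essential.
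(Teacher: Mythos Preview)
Your approach to Part~(2) is essentially the same as the paper's: boundedness of the slow lift on $X_j$ forces the slow rotation number to vanish, and the fast rotation number exists by the classical one-dimensional Aubry argument once non-crossing is available.

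For Part~(1), however, the paper takes a very different and much shorter route. Instead of the variational crossing lemma, it uses the continuous-time flow together with the projected graph theorem. If $\{x_k\}$ and $\{y_k\}$ cross in the fast direction, then the lifted \emph{trajectories} $x(t),y(t)$ satisfy $x^f(\tau)=y^f(\tau)+m$ for some real $\tau$ by the intermediate value theorem. Projecting to $\T^n$ gives $\theta_1^f(\tau)=\theta_2^f(\tau)$; since the Aubry set at integer times is a Lipschitz graph over $\theta^f$ (Theorem~\ref{graph} / Proposition~\ref{relative}), flowing this graph forward shows $(\theta_i(\tau),p_i(\tau))$ is a function of $(\theta_i^f(\tau),\tau)$, so the full phase points coincide at time $\tau$. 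Uniqueness for the Hamiltonian ODE then forces the orbits to be identical. No twist computation is needed at all.

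Your tail-swap argument, by contrast, has a gap as written. The inequality $\Delta<0$ coming from a single crossing does \emph{not} by itself contradict minimality ``on the finite window $[k_0,k_0+1]$'': on a window of length one there is nothing to vary once the endpoints are fixed, and the two swapped segments have endpoints $(x_{k_0},y_{k_0+1})$ and $(y_{k_0},x_{k_0+1})$, which are neither those of $\{x_k\}$ nor of $\{y_k\}$. The classical Aubry crossing lemma only yields that two minimizing configurations cross \emph{at most once}; upgrading this to \emph{no crossing} for Aubry orbits requires an extra ingredient (recurrence, or the static identity $h_c(x,y)+h_c(y,x)=0$ within a static class) that you invoke only by name. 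Even granting that, your route still requires checking that the $n\times n$ bilinear form $(x_{k_0}-y_{k_0})^T\partial^2_{xx'}G_j\,(x_{k_0+1}-y_{k_0+1})$ has the right sign when only the fast components have opposite sign, which is exactly the delicate bookkeeping you flag. The paper's argument sidesteps all of this by using the graph theorem in continuous time.
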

\begin{proof}
For the first statement, we prove by contradiction. Assume that $x(t)$ and $y(t)$ are the lifts of two distinct trajectories such that  $\{x(k)\}$ and $\{y(k)\}$ intersect. It follows that there exists $m$ and $k_1$. $k_2$ such that $x^f(k_1)>y^f(k_1)+m$ and  $x^f(k_2)< y^f(k_2)+m$. It follows that there exists $\tau\in \R$ such that $x^f(\tau)=y^f(\tau)+m$. Let $\theta_1(t)$ and $\theta_2(t)$ be the projections of $x(t)$  and $y(t)$ to $\T^n$, we have that $\theta^f_1(\tau)=\theta^f_2(\tau)$. Assume that $p_i(t)$, $i=1,2$ are the corresponding action variables for trajectories $\theta_i$. Let $k\le \tau<k+1$, we have that $(\theta_i(k), p_i(k))\in \tilde{\mA}_j(c)$. From the graph theorem, we have that $(\theta_i(k), p_i(k))$ is a function of $\theta^f_i(k)$. Applying the flow, we have that $(\theta_i(t), p_i(t))$ is a function of $(\theta_i^f(t), t)$. It follows that $(\theta_1(\tau),  p_1(\tau))=(\theta_2(\tau), p_2(\tau))$, hence $(\theta_1(t), p_1(t))=(\theta_2(t), p_2(t))$ for all $t$, a contradiction.

For the second statement, since any trajectory from $\tilde{\mA}_j(c)$ must contained in $X_j$, we have that any lift $x(t)$ of such a trajectory must have its slow component uniformly bounded. Hence $\lim_{k\to \infty}x^s(k)/k=0$. It suffices to consider only $\{x_k^f\}$. Since $x^f$ is one-dimensional, most argument from the standard Aubry-Mather theory applies, once we establish the non-intersecting property. We refer to \cite{MF}, section 11, where existence of rotation number was proved under a weaker assumption (the Aubry crossing lemma).
\end{proof}

Let $\mu$ be a $c-$minimal measure for $N_{\epsilon, j}$, we know that it is necessarily supported on $\tilde{A}_j(c)$. The rotation number of $\mu$ is $\rho(\mu)\in H_1(\T^n, \R)\simeq \R^n$, defined by
$$ \int_{\T^n\times\R^n} \langle c, v \rangle d\mu(\theta, v) = \langle c, \rho(\mu)\rangle. $$
Using the no-intersection property (Lemma~\ref{twist}, 1), most of the statements we will be need follows from standard Aubry-Mather theory. Most of the arguments presented here are variations of those found in see \cite{MF}.
\begin{prop} For any $c=p_*(c^f)$, $c^f\in [a_j-2b, a_{j+1}+ 2b]$, the following hold.
  \begin{enumerate}
  \item All $c-$minimal measures supported on $\tilde{\mA}_j(c)$ have a common rotation number $\rho(c)=(0, \rho^f(c))$. Moreover, the function $\alpha_jp_*(c^f)$ as a function of $c^f$ is $C^1$.
  \item If $\rho^f(c)= p/q \in \Q$, written in lowest terms, then all minimal measures are supported on $q-$periodic orbits. These orbits corresponds to $(p,q)-$periodic configurations $\{x_k\}$ in the sense that $(x^s_{k+q}, x^f_{k+q}) = (x^s_k, x^f_k) + (0, p)$. Furthermore, they are the  minima of the functional
$$ \sum_{k=0}^{q-1}G_j(x_k, x_{k+1})$$
over the set of configurations that are $(p,q)-$periodic.
  \item If $\rho^f(c)\notin \Q$, then there is one unique $c-$minimal measure.
  \end{enumerate}
\end{prop}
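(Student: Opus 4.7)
The overall strategy will be to reduce all three items to the classical one-dimensional Aubry--Mather theory for monotone recurrence relations, as developed in \cite{MF}. Lemma~\ref{twist}(1) plays exactly the role of the Aubry crossing lemma in that framework, and together with the boundedness of the slow component of configurations in $\tilde{\mA}_j(c)$ inside the NHIC $X_j$ (which forces the slow rotation to vanish), this reduces the essential dynamics to the one-dimensional fast coordinate.

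For item (1), uniqueness of the rotation number will follow directly from the non-intersection lemma: two configurations in $\tilde{\mA}_j(c)$ with distinct fast rotation numbers $\rho^f_1 \ne \rho^f_2$ would have $x^f_k - y^f_k$ diverging linearly in $k$, and some integer vertical translate of one configuration would then necessarily cross the other, contradicting Lemma~\ref{twist}(1). For the $C^1$ regularity of $\alpha_j\circ p_*$, I would invoke Mather's identification of the subdifferential $\partial\alpha_j(c)$ with the convex hull of rotation numbers of $c$-minimal measures; uniqueness of the rotation number at every $c\in \Gamma$ then makes $\alpha_j$ differentiable at every point $p_*(c^f)$ in the relevant interval. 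Since $c^f\mapsto \alpha_j(p_*(c^f))$ is a one-variable convex function (the composition of the convex $\alpha_j$ with an affine embedding) that is differentiable everywhere, it is automatically $C^1$.

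For item (2), given $\rho^f(c)=p/q$ in lowest terms, for any configuration $\{x_k\}$ in $\tilde{\mA}_j(c)$ the translate $\{x_{k+q}-(0,p)\}$ also lies in the Aubry set, shares the same rotation number, and has the same asymptotic position; Lemma~\ref{twist}(1) then forces the two configurations to coincide, so $\{x_k\}$ is $(p,q)$-periodic. The minimizing property among $(p,q)$-periodic configurations follows from the Peierls barrier characterization of the Aubry set: any minimizer of $\sum_{k=0}^{q-1}G_j(x_k,x_{k+1})$ over $(p,q)$-periodic configurations lies on a closed Euler--Lagrange orbit on which the Peierls barrier vanishes and hence belongs to $\tilde{\mA}_j(c)$, and conversely every orbit in $\tilde{\mA}_j(c)$ of rotation $p/q$ is such a periodic minimizer by the Tonelli variational principle applied on an interval of length $q$.

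For item (3), in the irrational case I would argue in the Denjoy spirit: by Theorem~\ref{graph} and Lemma~\ref{twist}(1) the union of supports of all $c$-minimal measures projects injectively to the fast circle $\T$ in an order-preserving way and carries a semi-conjugacy to the irrational rotation by $\rho^f(c)$, which forces the minimal set to be either a topological circle or a Cantor set; in either case it supports a unique invariant Borel probability measure. The main obstacle I anticipate is the $C^1$ statement in item (1): one must verify that Mather's subdifferential--rotation-number identity applies to the \emph{local} Aubry set associated to the modified Hamiltonian $N_{\epsilon,j}$ (rather than to a globally defined Aubry set of a genuine twist map), which requires combining uniqueness of the rotation number with the localization and graph properties established in Proposition~\ref{relative} and Theorem~\ref{graph}.
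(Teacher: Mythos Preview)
Your overall strategy is sound, and your treatments of items (1) and (3) follow essentially the same route as the paper. Two points deserve attention.

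The substantive gap is in item (2). Your inference that Lemma~\ref{twist}(1) ``forces the two configurations to coincide'' is not valid: non-intersection together with having ``the same asymptotic position'' does \emph{not} imply coincidence. Indeed, your argument would show that every configuration in $\tilde{\mA}_j(c)$ is $(p,q)$-periodic, but in classical Aubry--Mather theory with rational rotation number one can have distinct non-intersecting configurations with identical rotation number that are asymptotic to each other (heteroclinic orbits between adjacent periodic minimizers are the standard example). The paper proceeds differently and more carefully: from non-intersection of $\{x_k\}$ with its own shift $\{x_{k+q}-(0,p)\}$ one deduces only that $x^f_{k+q}-x^f_k-p$ has constant sign in $k$; since the rotation number is $p/q$ this quantity tends to zero, so any point where it is nonzero is \emph{non-recurrent} and therefore cannot belong to the support of an invariant measure. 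This is precisely where the hypothesis ``supported on a minimal measure'' enters, rather than merely ``lies in the Aubry set''. The minimizing characterization then follows from the direct identity
\[
\int(N^*_{\epsilon,j}-c\cdot\dot\theta)\,d\mu \;=\; \sum_{k=0}^{q-1}G_j(x_k,x_{k+1}) + c\cdot\rho
\]
for a measure supported on a $(p,q)$-periodic orbit.

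A minor technical slip in item (1): the parametrization $c^f\mapsto p_*(c^f)=(p^s_*(c^f),c^f)$ is smooth but not affine in general (its $p^s$-component is defined implicitly by $\partial_{p^s}H_0=0$), so the composition $\alpha_j\circ p_*$ need not be convex and your one-variable convexity shortcut does not apply as stated. The paper instead argues with the convex function $\alpha_j$ on $\R^n$: uniqueness of the rotation number gives a unique subgradient at each $p_*(c^f)$, hence differentiability there; the standard fact that limits of subgradients of a convex function are again subgradients at the limit point then yields continuity of $\nabla\alpha_j$ along the curve, and the chain rule gives the $C^1$ regularity of the composition.
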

\begin{proof}
  First we show that all the configurations on $\tilde{\mA}_j(c)$ has the same rotation number. To see this, consider any two configurations with different rotation numbers, since they must intersect, Lemma~\ref{twist} implies that they cannot both be contained in $\tilde{\mA}_j(c)$.

 We now look at the function $\alpha_jp_*(c^f)$. It is known that (see, e.g. \cite{Ma1}) $\alpha_j(c)$ is a convex function and any rotation number $\rho$ of a $c-$minimal measure is a subderivative of $\alpha_j$ at $c$. If for some $c$ the subderivative is unique, then $\alpha$ is differentiable at $c$. It follows $\alpha_j(p^s_*(c^f),c^f)$ is differentiable for each $c^f\in [a_j-2b, a_{j+1}+2b]$. The fact that it is $C^1$ follows from the following statement: let $f(x)$ be convex,  $x_n$ is a sequence that converges to $x_*$, $p_n$ is a subderivative of $f(x)$ at $x_n$ and $p_n$ converges to $p_*$, then $p_*$ is a subderivative of $f(x)$ at $x_*$. This concludes the proof of the first statement.

We now prove the second statement. Consider any configuration $\{x_k\}$ with rotation number $p/q$, we have that $x^f_{k+q}-x^f_k-p$ does not change sign for this configuration. Assume that it does, say $x^f_{k_1+q}-x^f_{k_1}-p>0$ and $x_{k_2+q}-x_{k_2}-p<0$, then the configurations $\{x_k\}$ and $x_{k + k_2-k_1}$ intersects, contradiction. On the other hand, since the rotation number is $p/q$, we have that $\lim_{k\to \infty} x^f_{k+q}-x^f_k-p = 0$. It follows that any $x_k$ such that $x^f_{k+q}-x_k^f-p\ne 0$ does not project to a point on the support of an invariant measure, since this point is not recurrent. By the same argument, we can show that $x^s_{k+q}-x^s_k=0$ for any point that projects to the support of an invariant measure.

We have proved that any point on the support of an invariant measure lifts to a configuration with $x_{k+q}-x_k=(0,p)$. Let $\mu$ be a $c-$minimal measure supported on $(\theta(k), p(k))$, $k=0, \cdots q-1$, and let $x_k$ be the corresponding configuration. Since
$$\int(N_{\epsilon, j}^*-c\cdot \dot\theta)d\mu = \int N_{\epsilon, j}^* d\mu + c \cdot \rho =  \sum_{k=0}^{q-1}G_j(x_k, x_{k+1}) + c\cdot \rho, $$
$\mu$ minimizes $\int(N_{\epsilon, j}^*-c\cdot \dot\theta)d\mu$ implies that $\{x_k\}$ minimizes $\sum_{k=0}^{q-1}G_j(x_k, x_{k+1})$.

For the irrational rotation number case, we refer to \cite{MF}, section 12. Consider $\tilde{\mA}_j(c)$ as a subset of $\T$ and the dynamics on this subset. It is proved that the system is semi-conjugate to a rigid rotation of irrational rotation number, and the semi-conjugacy is not one-to-one on at most countably many points. It follows that the dynamics on $\tilde{\mA}_j(c)$ has one unique invariant measure, since irrational rotation is uniquely ergodic.
\end{proof}

For irrational rotation numbers, we have that the corresponding minimal measure is unique. For rational rotation numbers, it is well known that for the twist map, generically there exists only one minimal periodic orbit of rotation number $p/q$. We have the same conclusions here. The following statement and Lemma~\ref{twist} imply Proposition~\ref{unique}.

\begin{prop}\label{rational}
  \begin{enumerate}
  \item By subjecting the generating function $G_j(x,x')$ to an arbitrarily small  $C^r$ perturbation, we have that for any rational rotation number $p/q$, there are exactly $q$ periodic configurations of type $(p,q)$. (In this case there exists a unique minimal periodic orbit with rotation number $p/q$.)
  \item The perturbation to $G_j$ in part 1 can be realized by an arbitrarily small $C^r$ perturbation to the Hamiltonian $N_{\epsilon, j}$, localized in the set $\{(\theta,p): \|\theta^s-\theta^s_j(p)\|< \rho_0\}$. As a result, this perturbation can be realized by a small perturbation to the original Hamiltonian $N_\epsilon$.
  \end{enumerate}
\end{prop}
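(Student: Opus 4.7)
The proof splits into a genericity statement (Part 1) about the generating function $G_j$, which reduces to classical Aubry--Mather theory for area-preserving twist maps, and a realization statement (Part 2) lifting the perturbation to the Hamiltonian. The overall strategy is to treat each rational $p/q$ separately, use a localized bump perturbation of $G_j$ to force uniqueness of the $(p,q)$-minimizing orbit, assemble all $p/q$ via a Baire category argument, and then verify that each such localized perturbation of $G_j$ can be realized by a small $C^r$ perturbation of $N_{\epsilon,j}$ supported in the required region.

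For Part~1, I fix $\rho=p/q$ in lowest terms. By Lemma~\ref{twist} and the localization $\tilde{\mA}_j(c)\subset X_j$, every $(p,q)$-periodic minimizer satisfies $x_{k+q}=x_k+(0,p)$, and the set $\Sigma_{p/q}(G_j)$ of minimizers of $W_{p/q}(x_0,\ldots,x_{q-1})=\sum_{k=0}^{q-1}G_j(x_k,x_{k+1})$, modulo the $\Z^n$-shift action, is a nonempty finite collection of shift-orbits. If $|\Sigma_{p/q}/\Z^n|>1$, pick one representative $\{x_k^*\}$ and choose a $C^r$-small nonnegative bump $\eta:\R^n\to\R$ supported in a ball around $x_0^*$ whose $\Z^n$-translates miss every other orbit in $\Sigma_{p/q}$. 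Replacing $G_j(x,x')$ by $G_j(x,x')-\eta(x)$ strictly lowers $W_{p/q}$ at $\{x_k^*\}$ while leaving all other orbits' actions unchanged, and a further arbitrarily small perturbation makes the new minimum Morse-nondegenerate; this nondegeneracy guarantees stability under further perturbations. Hence the set of $G_j$ with $|\Sigma_{p/q}/\Z^n|=1$ is open and dense in the $C^r$ topology, and intersecting over the countable family $p/q\in\Q$ yields a residual set on which Part~1 holds.

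For Part~2, I use the variational characterization $G_j(x,x')=\inf_\gamma\int_0^1 N_{\epsilon,j}^*(t,\gamma,\dot\gamma)\,dt$ so that a small $C^r$ perturbation $\delta N$ of $N_{\epsilon,j}$ produces, to leading order, $\delta G_j(x,x')=-\int_0^1 \delta N(t,\gamma^*(t),p^*(t))\,dt$ where $(\gamma^*,p^*)$ is the minimizing Hamiltonian trajectory joining $x$ to $x'$. Given the bump $\eta$ from Part~1 localized near a point $x_0^*$ of the Aubry set, the relevant optimal trajectories lie in a compact tube inside $X_j$, and since $X_j\subset\{\|\theta^s-\theta^s_j(p)\|\le\rho_2\}$ with $\rho_2<\rho_0$, one can take $\delta N$ supported in a tube inside $\{\|\theta^s-\theta^s_j(p)\|<\rho_0\}$. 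Choosing the bump profile of $\delta N$ appropriately along this tube realizes the desired perturbation of $G_j$ up to higher-order terms, which are absorbed by an iteration (or a Newton-scheme argument). Finally, pulling the perturbation of $N_{\epsilon,j}$ back through the normal-form diffeomorphism $\tilde\Phi$ of Theorem~\ref{normal-form} (which is $C^2$-close to the identity) gives a $C^r$-small perturbation of the original $H_\epsilon$ supported in the corresponding neighborhood.

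The main obstacle is in Part~2: one must verify that the map from Hamiltonian perturbations $\delta N$ supported in the tube to generating-function perturbations $\delta G_j$ is rich enough to realize the bump $\eta$ prescribed in Part~1. This requires controlling how the minimizing trajectories $\gamma^*$ depend on $\delta N$, which rests on the $C^1$ regularity and quantitative Lipschitz estimates of the NHIC supplied by Theorem~\ref{nhic-mult}: once the minimizers are seen as Lipschitz functions of their endpoints, a straightforward implicit-function argument reduces the realization problem to surjectivity of integration along trajectory segments on $C^r$ functions, which holds because the parametrization $(x_0^*,t)\mapsto\gamma^*(t)$ is a diffeomorphism onto a neighborhood of each segment. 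All other steps, in particular the Baire assembly over rationals, are routine once this realization is in hand.
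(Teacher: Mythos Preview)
Your Part~1 is essentially the paper's argument (you subtract a bump to lower the chosen orbit's action; the paper adds a nonnegative bump $g_U$ vanishing at $x_0$ to raise the others), and the Baire assembly over $p/q\in\Q$ is the same. One caveat: your claim that $\Sigma_{p/q}$ is \emph{finite} is unjustified and unnecessary; the bump argument works regardless.

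Part~2 is where you diverge from the paper, and your route is both more complicated and not quite correct as stated. You frame the problem as an inverse problem: given the bump $\eta(x)$ from Part~1, find a Hamiltonian perturbation $\delta N$ with $\delta G_j(x,x')\approx -\eta(x)$, then iterate. But the formula $\delta G_j(x,x')=-\int_0^1\delta N$ along the minimizer shows that any $\delta N$ produces a $\delta G_j$ depending on both endpoints, so hitting a target that depends on $x$ alone is genuinely overdetermined; your ``surjectivity of integration along trajectory segments'' does not address this, and the Newton scheme is a red herring. The paper avoids this entirely by \emph{not} trying to realize the Part~1 perturbation as-is. Instead it modifies the generating-function perturbation to $g_U(x)\rho_x(x')$, where $\rho_x$ is a cutoff equal to $1$ on $\{x':\|\partial_1 G_j(x,x')-c\|\le 6A\sqrt{n\epsilon}\}$ and $0$ outside the doubled set. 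Since every Aubry configuration satisfies $\|p-c\|\le 6A\sqrt{n\epsilon}$, the cutoff is invisible on minimizers, so Part~1 still goes through; but now the induced perturbation of the time-one map $\Phi$ is supported in $\bigcup_i U_i\times\{\|p-c\|\le 12A\sqrt{n\epsilon}\}$, which for small $\epsilon$ sits inside $\{\|\theta^s-\theta^s_j(p)\|<\rho_0\}$. A compactly supported $C^r$-small perturbation of an exact symplectic time-one map is then realized by a compactly supported Hamiltonian perturbation via standard interpolation. This sidesteps your inverse problem completely: the key idea you are missing is to build the momentum localization into the generating-function perturbation itself rather than trying to recover it afterward.

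Finally, your last step---pulling back through the normal-form diffeomorphism $\tilde\Phi$---is misplaced: the proposition concerns $N_{\epsilon,j}$ and $N_\epsilon$, which already live in normal-form coordinates and agree on $\{\|\theta^s-\theta^s_j(p)\|<\rho_0\}$, so a perturbation of $N_{\epsilon,j}$ supported there is automatically a perturbation of $N_\epsilon$. The passage back to $H_\epsilon$ is a separate statement.
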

\begin{proof}
  Let $\{x_k\}$ be a minimizing configuration of type $(p,q)$, let $U$ be an open set that contains $x_0$ but none of the $x_1, \cdots, x_{q-1}$. Let $g_U(x)$ be nonnegative periodic function that is supported on $U$,  $g_U(x_0)=0$ is the unique minimum and $\partial^2g$ is positive definite. If we consider the new generating function
$$ G_j(x,x') + g_U(x),$$
the action $\sum_{k=0}^{q-1}G_j(x_k,x_{k+1})$ is unaffected, while the action increases for other configurations. It follows that $\{x_k\}$ and its translations are the unique minimal configurations. However, this perturbation cannot be realized by a localized perturbation to the Hamiltonian (to be more precise, it is localized horizontally, but not vertically). We consider the following modification of the above construction.

Let $\Phi$ denote a lift of the time-one-map of the Hamiltonian flow. The generating function uniquely determines the map $\Phi$ in the sense that given  $x, x'\in \R^n$, write $p=-\partial_1 G_j$ and $p_2=\partial_2G_j$ then $\Phi(x,p)=(x',p')$. On the other hand, Theorem~\ref{localize} implies that any orbit in the Aubry set $\tilde{\mA}_j(c)$ is localized in the set $\{\|p-c\|\le 6nA\sqrt{\epsilon}\}$, which leads us to the following definition. Let $V_x(6A\sqrt{n\epsilon})=\{x'\in \R^n, \|\partial_1G_j(x,x')-c\|\le 6A\sqrt{n\epsilon}\}$, and let $\rho_x$ be a smooth function that takes value $1$ on $V_x(6A\sqrt{n\epsilon})$ and takes value $0$ on $V_x(12A\sqrt{n\epsilon})$. We have that the generating function
$$ G_j(x,x')+g_U(x)\rho_{x}(x')$$
will make $\{x_k\}$ and its translation the unique minimizing configurations of type $(p,q)$. The norm of the perturbation can be arbitrarily small since the norm of $g_U$ can be arbitrarily small.

To treat all rational rotation numbers, we consider a sequence of such perturbations $g_{U_i}(x)\rho_{x}(x')$, each subsequent perturbation can be chosen to be small enough, such that the result of earlier perturbations are not destroyed. The final perturbed generating function is
$$ G'_j(x,x')=G_j(x,x') + \sum_{i\ge 1}g_{U_i}(x)\rho_{x}(x'). $$

We now show that the perturbation can be realized by a localized perturbation of the Hamiltonian. Write $g(x,x')=\sum_{i\ge 1}g_{U_i}(x)\rho_{x}(x')$ and let $\Phi'$ denote the perturbed time-one-map of the Hamiltonian flow.  Since $\partial_1g=0$ for all $x\notin \bigcup_iU_i$ or $x'\notin V_x(12A\sqrt{n\epsilon})$,  the perturbed time-one-map  $\Phi'$ is identical to the original $\Phi$ for any $(x,p)\notin \bigcup_iU_i\times \{\|p-c\|\le 12A\sqrt{n\epsilon}\}$. Since we can choose $U_i$ such that $\bigcup_iU_i \subset \{\|\theta^s-\theta^s_j(c)\|<d<\rho_0\}$, for sufficiently small $\epsilon$ we can guarantee
  $$\bigcup_iU_i\times \{\|p-c\|\le 12A\sqrt{n\epsilon}\}\subset \{\|\theta^s-\theta^s_j(p)\|< \rho_0\}.$$
It follows that $\Phi=\Phi'$ for any $(\theta, p)\notin \{\|\theta^s-\theta^s_j(p)\|< \rho_0\}$. This perturbation of the time-one-map can be realized by a perturbation to the Hamiltonian localized in the same neighborhood.
\end{proof}

\subsection{Generic property of $\alpha(c)$ and proof of Theorem \ref{property-twist}}

After obtaining the desired properties for the local Aubry set, we now return to the Hamiltonian $N_\epsilon$. If $c^f\in [a_j+b, a_{j+1}-b]$, we have that $\tilde{\mA}_{N_\epsilon}(c)=\tilde{\mA}_{N_\epsilon, j}(c)$. For $c^f\in [a_{j+1}-b, a_{j+1}+b]$, Proposition~\ref{relative}, statement 3 and 4 shows that it suffices to identify whether $\alpha(c)$ is equal to $\alpha_j(c)$ or $\alpha_{j+1}(c)$.

\begin{prop}\label{transversal}
Assume that $N_\epsilon=H_0 + \epsilon Z + \epsilon R$ is such that $Z$ satisfy [G0]-[G2] and that $\|R\|_{C^2}\le \delta$. Then there exists $\epsilon_0, \delta_0>0$ such that for $0<\epsilon<\epsilon_0$ and $0<\delta<\delta_0$, there exists an arbitrarily small perturbation  $N_\epsilon'$ of $N_\epsilon$, with the following properties. For the Hamiltonian $N_\epsilon'$ Proposition~\ref{relative} and Proposition~\ref{unique} still hold, in addition, there exists only finitely many $c^f\in [a_{j+1}-b, a_{j+1}+b]$ such that $\alpha_j(p^s_*(c^f),c^f)=\alpha_{j+1}p_*(c^f)$.
\end{prop}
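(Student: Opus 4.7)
The strategy is to arrange a one-parameter family of perturbations whose net effect on the difference
\[
\Delta(c^f):=\alpha_j(p_*(c^f))-\alpha_{j+1}(p_*(c^f))
\]
is a pure constant shift, and then invoke Sard's theorem. First I would apply Proposition~\ref{unique} once at the outset to replace $N_\epsilon$ by an arbitrarily small perturbation---still called $N_\epsilon$---for which the minimal measure is unique at every $c^f\in[a_j-2b,a_{j+1}+2b]$. Combined with the rotation-number proposition of the previous subsection, this makes both $\alpha_j\circ p_*$ and $\alpha_{j+1}\circ p_*$ of class $C^1$, so $\Delta\in C^1([a_{j+1}-b,a_{j+1}+b])$; by Proposition~\ref{relative}(3) and [G2] it takes opposite signs at the two endpoints, so only an interior continuum of zeros needs to be ruled out.

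Next I would construct a smooth cutoff $\chi:\Tm^n\times\Rm^n\times\Tm\to[0,1]$ that is identically $1$ on an open neighborhood $V$ of the compact set $\bigcup_{c^f\in[a_{j+1}-b,\,a_{j+1}+b]}\tilde{\mA}_{N_\epsilon,j}(p_*(c^f))$ and whose support lies in the slab $\{\|\theta^s-\theta^s_j(p)\|<\rho_0\}$. The choice of $\rho_0$ makes this support disjoint from the slab $\{\|\theta^s-\theta^s_{j+1}(p)\|<\rho_0\}$ that governs $\alpha_{j+1}$, so by Proposition~\ref{relative}(1) adding $\epsilon s\chi$ to the Hamiltonian leaves $\alpha_{j+1}$ untouched. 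For $|s|$ sufficiently small, upper semicontinuity of the Aubry set keeps all perturbed Aubry sets $\tilde{\mA}_{N_\epsilon+\epsilon s\chi,\,j}(p_*(c^f))$ inside $V$ uniformly in $c^f$, and then a two-sided variational comparison---using that on a neighborhood of any admissible measure's support the perturbed Lagrangian differs from the unperturbed one by the constant $-\epsilon s$---yields the exact identity
\[
\alpha_j^{(s)}(p_*(c^f))=\alpha_j(p_*(c^f))+\epsilon s,\qquad \Delta_s(c^f)=\Delta(c^f)+\epsilon s.
\]

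Finally, Sard's theorem applied to the $C^1$ map $\Delta:[a_{j+1}-b,a_{j+1}+b]\to\Rm$ shows that the set of its critical values has Lebesgue measure zero, so for almost every small $s$ the value $-\epsilon s$ is a regular value of $\Delta$, whence $\Delta_s^{-1}(0)=\Delta^{-1}(-\epsilon s)$ is discrete and, by compactness of the interval, finite. I would pick such an $s$, set $N_\epsilon':=N_\epsilon+\epsilon s\chi$, and, if needed, restore uniqueness of the minimizer by one last tiny application of Proposition~\ref{unique}, taken small enough in $C^1$ so as not to move any of the nonzero derivatives of $\Delta_s$ at its finite zero set through $0$. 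The main obstacle is the shift identity itself: one must verify that Proposition~\ref{relative}(1) permits replacing the auxiliary modification used to define $\alpha_j$ by a modification of the perturbed Hamiltonian, and that all the perturbed Aubry sets can indeed be held simultaneously inside $\{\chi\equiv 1\}$; granting these, it is precisely the \emph{exactness} of the shift---not merely a first-order expansion---that allows Sard's theorem to preclude a pathological zero set of $\Delta$.
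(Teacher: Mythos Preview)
Your proposal is correct and follows essentially the same route as the paper: shift $\alpha_j$ by a constant relative to $\alpha_{j+1}$ using a cutoff supported in the $j$-slab, then invoke Sard's theorem on the $C^1$ function $\Delta$ to pick a shift for which the zero set is transverse and hence finite.

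The one place where the paper is cleaner is the choice of cutoff. You take $\chi\equiv 1$ only on a neighborhood $V$ of the union of the unperturbed local Aubry sets, which forces you to argue via upper semicontinuity (uniformly in $c^f$) that the perturbed Aubry sets remain inside $V$, and then to worry about restoring uniqueness afterwards. The paper instead takes the cutoff to be the constant $\eta$ on the \emph{entire} slab $\{\|\theta^s-\theta^s_j(p^f)\|\le\rho_0\}$. Since by Proposition~\ref{relative}(1) the local Aubry set $\tilde{\mA}_{N_\epsilon,j}(c)$ and the value $\alpha_j(c)$ depend only on the restriction of the Hamiltonian to that slab, and the perturbation is constant there, the local Aubry sets are literally unchanged and $\alpha_j$ shifts by exactly $\eta$, with no semicontinuity argument and no need to re-apply Proposition~\ref{unique}. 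Your extra final step (re-perturbing and controlling the $C^1$ dependence of $\Delta_s$ on the Hamiltonian) is therefore avoidable, and in fact is the one place where your outline would require justification you have not supplied.
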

\begin{proof}
By taking a small perturbation if necessary, let us assume that we start with a Hamiltonian $N_\epsilon$ such that Proposition~\ref{relative} and Proposition~\ref{unique} already hold. Consider the interval $c^f\in [a_j-2b, a_{j+1}+2b]$ first.  Let $P_j^\eta(\theta, p, t):\T^n\times \R^{n-1}\times [a_j-2b, a_{j+1}+2b]\to \R$ be a family of smooth functions such that
$$ P_j^\eta(\theta, p, t)=
\begin{cases}
  \eta, & \|\theta^s-\theta^s_j(p^f)\|\leq \rho_0 \text{ and } p^f\in [a_j-3r/2, a_{j+1}+3r/2] \\
  0 , & \|\theta^s-\theta^s_j(p^f)\|\geq 4\rho_0/3   \\
  0, & p^f\in \{a_j-2b, a_{j+1} +2b\}
\end{cases}.
$$
Clearly $\|P_j^\eta\|_{C^r}$ can be arbitrarily close to $0$ by choosing $\eta$ close to $0$.

Let $N^\eta=N_\epsilon + P_j^\eta$. The new perturbation can be considered part of $R$ and if $\eta$ is sufficiently close to $0$, Proposition~\ref{relative} still hold. This implies that the local Aubry sets still depends only on the value of the Hamiltonian on the set $\|\theta^s-\theta^s_j(p^f)\|\leq \rho_0$, on which the perturbation is simply a constant (for $c^f\in [a_j-3r/2, a_{j+1}+3r/2]$). We have that $\alpha_{N^\eta, j}(c)=\alpha_{N_\epsilon, j}+ \eta$ and that $\tilde{\mA}_{N^\eta, j}(c)=\tilde{\mA}_{N_\epsilon, j}(c)$ for $c^f\in [a_j-3r/2, a_{j+1}+3r/2]$. It follows that all properties of the local Aubry set $\tilde{\mA}_{N_\epsilon, j}(c)$ is intact, while $\alpha_j(c)$ undergoes a shift.

On the other hand, Consider the functions $\alpha_jp_*(c^f)$ and $\alpha_{j+1}p_*(c^f)$ as functions on $[a_{j+1}-3r/2, a_{j+1}+3r/2]$. Since they are both $C^1$, by Sard's lemma, the critical values of $\alpha_j-\alpha_{j+1}$ has zero measure. It follows that there exists a full measure set of $\eta\in \R$ such that $\alpha'_j-\alpha'_{j+1}=0$ implies $\alpha_j-\alpha_{j+1}+\eta \ne 0$. In other words, the two functions $\alpha_j+\eta$ and $\alpha_{j+1}$ intersect transversally, which implies that there are only finitely many values where $\alpha_j-\alpha_{j+1}+\eta=0$.

We can perform this modification for each $[a_j-2b, a_{j+1}+2b]$, and $\eta$ can be chosen to be arbitrarily close to $0$.
\end{proof}

\begin{proof}[Proof of Theorem~\ref{property-twist}]
Since there are only finitely many $c^f\in [a_{j+1}-b, a_{j+1}+b]$ on which $\alpha_j=\alpha_{j+1}$, we add these points to the set $\{a_0, \cdots, a_s\}$ to form a new partition $\{[\bar{a}_j, \bar{a}_{j+1}]\}$. On each open interval $(\bar{a}_j, \bar{a}_{j+1})$ $\alpha(c)$ is only equal to one of the $\alpha_j$ and $\alpha_{j+1}$. Use Proposition~\ref{relative} and the first statement follows.

The second statement follows from Proposition~\ref{unique}.
\end{proof}

\subsection{nondegeneracy of the barrier functions}
In this section we prove Theorem~\ref{nondegeneracy}. We have concluded that in order to prove Theorem~\ref{nondegeneracy}, it suffices to show that $\Gamma_1(\epsilon)=\Gamma_1^*(\epsilon)$ and $\Gamma_2(\epsilon)=\Gamma_2^*(\epsilon)$. We show that this is the case by proving the following equivalent statement.

\begin{prop}\label{property-barrier}
 Let $H_\epsilon'$ be a perturbation of $H_\epsilon$ such that the conclusions of Theorem~\ref{orignal-H} holds, then there exists an arbitrarily small $C^r$ perturbation $H_\epsilon''$ to $H'_\epsilon$, such that for the Hamiltonian $H''_\epsilon$ Theorem~\ref{orignal-H} still hold, in addition:
  \begin{enumerate}
  \item Consider $c^f\in (\bar{a}_j, \bar{a}_{j+1})$ such that ${\mathcal A}_c={\mathcal N}_c$ and $\pi_{\theta^f}{\mathcal A}_c=\T$.  Take $\zeta\in {\mathcal M}_c$, and let $\zeta_1$ and $\zeta_2$ be its lifts to the double cover. We have that the functions $\tilde{h}_c(\zeta_1,\theta)+\tilde{h}_c(\theta, \zeta_2)$ and $\tilde{h}_c(\zeta_2,\theta)+\tilde{h}_c(\theta, \zeta_1)$ have isolated minima outside of the lifts of ${\mathcal A}_c$.
  \item For $c=\bar{a}_{j+1}$, take $\zeta\in {\mathcal A}_c\cap X_j$ and $\eta \in {\mathcal A}_c\cap X_{j+1}$. We have that both $h_c(\zeta,\theta)+h_c(\theta, \eta)$ and $h_c(\eta,\theta)+h_c(\theta, \zeta)$ has isolated minima outside of ${\mathcal A}_c$.
  \end{enumerate}
\end{prop}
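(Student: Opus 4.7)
The plan is to adapt Mather's cusp-residual construction \cite{Ma3,Ma4}: at each cohomology where the barrier has non-isolated minima outside the (lifted) Aubry set, add a small $C^r$ perturbation supported in a tube disjoint from the Aubry set. Such a perturbation vanishes on a neighborhood of ${\mathcal A}_c$, so the Aubry set is preserved by its variational characterization, while the action strictly increases along any heteroclinic passing through the support, breaking degenerate minima of the barrier.

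My first task is to verify that in both cases the set of minima of the barrier outside the (lifted) Aubry set is compact and at positive distance from the Aubry set. This follows from Theorem~\ref{graph} and Theorem~\ref{orignal-H}, which localize the Ma\~n\'e set in a small neighborhood of the NHICs, together with the semi-concavity of $h_c$ (inherited from Proposition~\ref{scc}) and positive separation between the relevant components: ${\mathcal A}_c\cap X_j$ and ${\mathcal A}_c\cap X_{j+1}$ in the bifurcation case, the two lifts of the circle in the double-cover case. For a fixed $c$, I would then pick one point $\theta_*^{(i)}$ in each connected component of the minimum set and add $\sum_i \epsilon P_i(\theta)$, where each $P_i\ge 0$ is $C^r$, supported in a small ball around $\theta_*^{(i)}$ disjoint from the projected Aubry set, and vanishes non-degenerately only at $\theta_*^{(i)}$. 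The perturbed barrier has $\theta_*^{(i)}$ as a unique local minimum there, and no other points of the unperturbed minimum set remain minima; by compactness, finitely many bumps suffice.

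The main obstacle is uniformity in $c^f$ for Part 1, which must hold for every $c^f\in(\bar a_j,\bar a_{j+1})$ with $\pi_{\theta^f}{\mathcal A}_c=\T$. A perturbation isolating minima at one $c$ continues to do so in a neighborhood by upper semi-continuity of the minimum set in $c$; a diagonal argument over a countable dense family of such $c$'s yields a single $H''_\epsilon$ that works on a dense subset. For the remaining $c$'s, one exploits that on the double cover each heteroclinic orbit lies in the intersection of the $n$-dimensional stable manifold of one lifted circle with the $n$-dimensional unstable manifold of the other (by the NHIC structure), reducing the question to a finite-dimensional model. A Sard-type transversality argument in this finite-dimensional picture, combined with the localized perturbations above, should yield isolated minima for all remaining $c^f$. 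The delicate technical point is arranging the cumulative perturbations to remain within the smallness thresholds of Theorems~\ref{nhic-mult}, \ref{graph}, and \ref{property-twist}; iterating with a geometrically decreasing sequence of magnitudes handles this while preserving all prior genericity conclusions.
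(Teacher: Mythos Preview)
Your approach for a \emph{fixed} cohomology $c$ is essentially correct and parallels the paper's: one perturbs via a localized bump supported away from the projected Aubry set, and the barrier function changes by exactly that bump on the relevant region (this is the content of the paper's Lemma~\ref{pert-barrier}, taken from \cite{CY2}). For Part~2, where there are only finitely many bifurcation cohomologies $c^f=\bar a_{j+1}$, this is enough.

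The genuine gap is in your treatment of the uncountable family in Part~1. Your scheme is: (i) upper semi-continuity gives a neighborhood of each $c$ on which a given perturbation works; (ii) a diagonal argument over a countable dense family handles a dense set of $c$'s; (iii) a ``Sard-type transversality argument in a finite-dimensional model'' handles the rest. Step~(iii) is where the argument breaks down. The difficulty is that the map $c\mapsto \tilde b_c^{\pm}$ is a priori only upper semi-continuous in $c$, so the neighborhoods in~(i) may shrink uncontrollably and nothing guarantees they cover the full parameter interval; and there is no finite-dimensional reduction available here that would let a Sard argument apply to the remaining (possibly uncountable, possibly full-measure) set of $c$'s.

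The paper resolves this by invoking the key technical input from Cheng--Yan \cite{CY2}: they introduce an auxiliary parameter $\sigma$ (the symplectic area between the invariant circle $\gamma_c$ and a reference curve on the cylinder $X_j$) and prove that $h_{c(\sigma)}(\zeta,m)$ is $\tfrac12$-H\"older in $\sigma$ (Lemma~\ref{holder}). This quantitative regularity is exactly what allows a parametric residuality argument: for a residual set of bump functions $G_1$ supported in a fixed cube $R_d(u)$, the perturbed barrier $\tilde b_c^{\pm}+G_1$ has isolated minima \emph{simultaneously for all} $c$ in a full parameter ball $B_{d'}(c^f_*)$. With this uniform local statement in hand, a countable cover plus an inductive scheme (protected at each step by Lemma~\ref{upper-semi}) finishes the proof. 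Your outline lacks any substitute for this H\"older estimate, and without it the passage from ``dense in $c$'' to ``all $c$'' does not go through.
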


This proposition is essentially proved by Cheng and Yan in \cite{CY2}, here we briefly describe their approach.

Consider the Hamiltonian $H_\epsilon'$ such that conclusions of Theorem~\ref{orignal-H} holds. In the rest of the section, let's refer to $H_\epsilon'$ simply as $H$.  For now, let us also fix an interval $(\bar{a}_j, \bar{a}_{j+1})$ and consider only cohomology classes with $c^f$ in that interval. Let $\Gamma_1^j=\Gamma_1(\epsilon)\cap \{c, c^f\in (\bar{a}_j, \bar{a}_{j+1})\}$, we would like to show by perturbing the Hamiltonian, we can make the functions $\tilde{h}_c(\xi_1,\theta)+\tilde{h}_c(\theta, \xi_2)$ and $\tilde{h}_c(\xi_2,\theta)+\tilde{h}_c(\theta, \xi_1)$ nondegenerate.

Recall that $\tilde{h}_c$ is the barrier function defined on the covering space $(2\T)^n \times \R^n$, and $\xi:(2\T)^n \times \R^n \to \T^n\times \R^n$ is the covering map.  $\tilde{H}$ is the Hamiltonian lifted to the covering space.

Define the generating function $G(x,x'):\R^n\times\R^n\to \R$ by
$$ G(x, x')=\inf_{\gamma(0)=x, \gamma(1)=x'}\int_0^1L(t, \gamma, \dot\gamma), $$
where $L$ is the Lagrangian corresponding to $H$. A convenient way of introducing perturbations to the functions $\tilde{h}_c$ is by perturbing the generating functions. Denote by $\pi:\R^n\to \T^n$ the standard projection.

We consider the following perturbation
$$ G'(x,x')=G(x,x')+ G_1(x')$$
and denote by $\tilde{h}'_c$ the corresponding perturbed barrier function. We have the following statement.
\begin{lem}\label{pert-barrier}(\cite{CY2}, Lemma 7.1)
  For $c=p_*(c^f)$ with $c^f\in (\bar{a}_j, \bar{a}_{j+1})$, the following hold.
  \begin{enumerate}
  \item There exists a family of open sets $O_c\subset (2\T)^n$ such that the full orbit of any $(\tilde{\theta}, \tilde{p})\in \tilde{\mN}_{\tilde{H}}(c)\setminus \tilde{\mA}_{\tilde{H}}(c)$ must intersect $O_c$ in the $\tilde{\theta}$ component.
  \item There exists $\rho>0$ such that if we perturb $G(x,x')$ by function $G_1(x')$ with $\supp G_1\subset B_\rho(u)$, where $B_\rho(u)$ is the ball of radius $\rho$ centered at $u$, then for each $c$ such that $B_\rho(u)\subset \pi^{-1}O_c$ the corresponding barrier function
$$ \tilde{h}_c'(\xi_1, \theta) + \tilde{h}_c'(\theta, \xi_2) = \tilde{h}_c(\xi_1, \theta) + \tilde{h}_c(\theta, \xi_2) + G_1(\theta)   $$
for each $\theta\in O_c$.
  \item $\xi O_c\cap \{\theta: \|\theta^s-\theta^s_j(c)\|\le \rho_2\}=\emptyset$, in particular, $\xi O_c\cap \mN_{H}(c)=\emptyset$. Moreover $\tilde{U}=\bigcup_{c^f\in (\bar{a}_j, \bar{a}_{j+1})}O_c$ is an open set.
  \end{enumerate}
\end{lem}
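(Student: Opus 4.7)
For part~(1), since $c^f \in (\bar a_j, \bar a_{j+1})$ Theorems~\ref{graph} and~\ref{property-twist} identify $\mN_H(c) = \mA_H(c)$ with an invariant circle inside the cylinder $X_j$, graphic over $\th^f$, with its $\th^s$-coordinate confined to the $\rho_2$-ball around $\th^s_j(c^f)$. Pulled back by the double cover $\xi$, which folds $\th^s_1$ modulo $1/2$, this circle lifts to two disjoint invariant circles $\mA^+, \mA^- \subset \tilde\mA_{\tilde H}(\tilde c)$, interchanged by the deck transformation $\tau : \th^s_1 \mapsto \th^s_1 + 1$. Every point of $\tilde\mN_{\tilde H}(\tilde c) \setminus \tilde\mA_{\tilde H}(\tilde c)$ lies on a heteroclinic orbit joining $\mA^+$ and $\mA^-$, and by the monotone evolution of its $\th^s_1$-coordinate each such orbit crosses in finite time the open equatorial slab $O_c$ consisting of those $\tilde\th \in (2\T)^n$ whose $\th^s_1$-coordinate lies at $2\T$-distance greater than $3/8$ from both $\pi_1(\mA^+)$ and $\pi_1(\mA^-)$, where $\pi_1$ denotes projection to the $\th^s_1$-coordinate. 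Part~(3) is then immediate: the $\rho_2$-localization of $\mA_H(c)$ forces $\xi(O_c)$ to sit at $\th^s_1$-distance at least $3/8 - O(\rho_2) > \rho_2$ from $\th^s_j(c^f)$, so $\xi(O_c)$ is disjoint from $\mN_H(c)$, and the openness of $\tilde U = \bigcup_{c^f} O_c$ follows from the $C^2$-dependence of $\th^s_j$ on $p^f$ (condition [G0]) by choosing $O_c$ continuously in $c^f$.

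For part~(2), the substitution $G' = G + G_1(x')$ increases the discrete action of a configuration $(x_0,\ldots,x_N)$ by $\sum_{k=1}^N G_1(x_k)$, i.e.\ by $G_1$ evaluated at every intermediate and terminal vertex. I would show that when $\rho$ is smaller than the $\th^f$-spacing between consecutive iterates of the lifted time-one map, which is bounded below by $|\partial_{p^f}H_0(c)| > 0$ on $\Gamma(\e)$, any approximate minimizer realizing $\tilde h_c(\xi_1,\th)$ with $\xi_1 \in \mA^+$ and $\th \in O_c$ visits $B_\rho(u)$ exactly once, at its terminal vertex $x_N = \th$, while any approximate minimizer for $\tilde h_c(\th,\xi_2)$ with $\xi_2 \in \mA^-$ avoids $B_\rho(u)$ altogether. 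This yields $\tilde h_c'(\xi_1,\th) = \tilde h_c(\xi_1,\th) + G_1(\th)$ and $\tilde h_c'(\th,\xi_2) = \tilde h_c(\th,\xi_2)$, and summing gives the announced identity.

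The main obstacle is the single-visit property asserted in part~(2). Its proof would combine three ingredients: the monotone, one-way character of heteroclinic orbits from $\mA^+$ to $\mA^-$ in the cover (so the $\th^s_1$-coordinate traverses the equatorial slab only once); the exponential approach to $\mA^\pm$ supplied by the normal hyperbolicity of $X_j$ (Theorem~\ref{nhic-mult}), which confines near-minimizers to a narrow tube outside a bounded transition window uniformly in the length $N$ of the approximating configuration; and the nonvanishing $\th^f$-velocity on $\Gamma(\e)$, which prevents two consecutive integer-time iterates from simultaneously lying in the small ball $B_\rho(u)$. These are precisely the ingredients used by Cheng--Yan in \cite{CY2}, and I would invoke their argument with the hyperbolic cylinder $X_j$ playing the role of the smooth KAM torus in their setting.
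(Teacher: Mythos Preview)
The paper does not prove this lemma; it is stated with direct attribution to \cite{CY2}, Lemma~7.1, and no argument is given. Your proposal goes further by sketching a proof, so there is nothing in the paper to compare against beyond the citation itself.

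That said, your sketch has gaps. For part~(1), the ``monotone evolution of $\theta^s_1$'' along heteroclinics is neither justified nor needed: any orbit running from $\mA^+$ to $\mA^-$ has its $\theta^s_1$-coordinate moving continuously between two values separated by $1/2$ in the cover, so the intermediate value theorem already forces a crossing of the equatorial slab.

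The more serious issue is part~(2). Your three ingredients do not combine to yield the single-visit property in the present \emph{a priori stable} setting. The normal hyperbolicity of $X_j$ has exponent of order $\sqrt{\epsilon\lambda}$ (Lemma~\ref{lambda}), and the vertical localization $\|p-c\|=O(\sqrt\epsilon)$ (Proposition~\ref{vertical}) gives $\dot\theta^s=O(\sqrt\epsilon)$, so a heteroclinic orbit spends time of order $1/\sqrt\epsilon$ traversing the equatorial slab. During that window $\theta^f$ completes $O(1/\sqrt\epsilon)$ full rotations, and your spacing condition $\rho<|\partial_{p^f}H_0(c)|$ separates only \emph{consecutive} integer-time iterates; it does not prevent the orbit from returning to the $\rho$-ball in $\theta^f$ many times. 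In Cheng--Yan's a priori unstable setting the hyperbolic exponent is $O(1)$ and the transition is short, which is why their argument closes there. Here one needs either to take $\rho$ of order $\sqrt\epsilon$ and simultaneously control the $\theta^s$-excursion, or to use a different mechanism altogether. Since you ultimately invoke \cite{CY2} for the details, you end up in the same position as the paper---but the transfer of their argument to the weakly hyperbolic regime is not as direct as your outline suggests.

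A minor point: the minimizer for $\tilde h_c(\theta,\xi_2)$ has initial vertex $x_0=\theta$, possibly in $B_\rho(u)$, so it does not ``avoid $B_\rho(u)$ altogether''; but since the perturbation $G_1(x')$ is applied to the second argument, $x_0$ never contributes to the action increment and your conclusion survives.
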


As before, let us write $\tilde{b}^+_c(\theta)=\tilde{h}_c(\xi_1, \theta) + \tilde{h}_c(\theta, \xi_2)$ and $\tilde{b}^-_c(\theta)=\tilde{h}_c(\xi_2, \theta) + \tilde{h}_c(\theta, \xi_1)$. Elements of ${\mN}_{\tilde{H}}(c) \setminus {\mA}_{\tilde{H}}(c)$ coincide with the minimal set of the functions $\tilde{b}^\pm_c$. To prove that this set is isolated, it suffices to prove its intersection with $O_c$ is isolated, as any accumulation point of ${\mN}_{\tilde{H}}(c) \setminus {\mA}_{\tilde{H}}(c)$ has a diffeomorphic image in $O_c$. We say that the function $\tilde{b}^\pm_c(\theta)$ is degenerate if its minimal set has at least one accumulation point. Cheng and Yan proved that it is possible to introduce a perturbation to make $\tilde{b}^\pm_c$ nondegenerate for all $c\in \Gamma_1^j$ simultaneously.

This is not possible in general, if the functions $\tilde{b}^\pm_c$ behave badly as $c$ varies. Since regularity of $\tilde{b}_c^\pm$ in $c$ is hard to prove, Cheng and Yan resolves this problem nicely by introducing an additional parameter.  Recall that for each $c\in \Gamma_1^j$, the Aubry set $\tilde\mA$ is an invariant curve on the time-zero section of the invariant cylinder $X_j$, call it $\gamma_c$. Fix an arbitrary curve $\gamma_0=\{p^f=p^f_0\}\cap \{t=0\}\cap X_j$, we introduce a parameter $\sigma$ which is the area between $\gamma_c$ and $\gamma_0$ on the cylinder $X_j\cap \{t=0\}$. $\sigma$ is monotone in $c^f$ and is only defined for $c\in \Gamma_1^j$. Cheng and Yan proved that
\begin{lem}\label{holder}(\cite{CY2}, Lemma 6.4)
There exists constant $C>0$ such that, for $\sigma$ and $\sigma'$ such that $c(\sigma), c(\sigma')\in \Gamma_1^j$, $\zeta\in \mA_{H}(c)$ and $m\notin \{\|\theta^s-\theta^s_j(c(\sigma))\|\le \rho_2\}\cup \{\|\theta^s-\theta^s_j(c(\sigma'))\|\le \rho_2\}$,
  $$|h_{c(\sigma)}(\zeta, m)-h_{c(\sigma')}(\zeta, m)|\le C(\sqrt{|\sigma-\sigma'|}+|c(\sigma)-c(\sigma')|),$$
  $$|h_{c(\sigma)}( m, \zeta)-h_{c(\sigma')}(m, \zeta)|\le C(\sqrt{|\sigma-\sigma'|}+|c(\sigma)-c(\sigma')|).$$
\end{lem}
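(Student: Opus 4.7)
The plan is to reduce the estimate to 2D twist-map theory on the cylinder $X_j\cap\{t=0\}$, where the Aubry sets for $c\in\Gamma_1^j$ are KAM-type invariant circles $\gamma_c$ labelled monotonically by the area $\sigma$. By Theorem~\ref{graph} and Theorem~\ref{property-twist}, for $c=p_*(c^f)$ with $c^f\in(\bar a_j,\bar a_{j+1})$ the Aubry set lies on $X_j$ as a Lipschitz graph over $\theta^f$ and is precisely $\gamma_c$ when $c\in\Gamma_1^j$. Using the generating function $G_j$ from Proposition~\ref{rational}, the Peierls barrier is then expressed as a discrete Peierls barrier of the time-one twist map, which is the natural framework for the desired regularity estimate.

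The $|c(\sigma)-c(\sigma')|$ part of the bound comes from a direct comparison of actions. Given a near-minimising configuration $(x_0,\ldots,x_N)$ for $h_{c(\sigma)}(\zeta,m)$, its action at cohomology $c(\sigma')$ differs from its action at $c(\sigma)$ by a boundary term $(c(\sigma')-c(\sigma))\cdot(x_N-x_0)$ and a bulk term $N(\alpha(c(\sigma'))-\alpha(c(\sigma)))$. Since $\alpha\circ p_*$ is $C^1$ with derivative equal to the rotation number $\rho(c)$, and since the displacement satisfies $(x_N-x_0)/N\to\rho(c(\sigma))$, the leading-order pieces of these terms cancel, leaving a contribution of size $|c(\sigma)-c(\sigma')|$ plus an $N$-dependent residual that will be absorbed by the $\sqrt{|\sigma-\sigma'|}$ correction constructed next.

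The heart of the argument is the H\"older $1/2$ dependence on $\sigma$. Here I would construct a test path for $h_{c(\sigma')}(\zeta,m)$ by concatenating three pieces: a short segment starting at $\zeta\in\gamma_{c(\sigma)}$; a transition across the annular region on $X_j$ between $\gamma_{c(\sigma)}$ and $\gamma_{c(\sigma')}$, whose symplectic area equals $|\sigma-\sigma'|$ by definition of the area gauge; and a near-minimiser for $h_{c(\sigma')}$ from $\gamma_{c(\sigma')}$ to $m$. This decomposition is admissible because $m$ is kept outside the horizontal localisation neighbourhoods of both $\gamma_{c(\sigma)}$ and $\gamma_{c(\sigma')}$ (Proposition~\ref{loc}), so the outer pieces carry only an $O(1)$ cost that cancels in the comparison. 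The transition piece produces the $\sqrt{|\sigma-\sigma'|}$ scaling via Mather's twist-map estimate for the action required to move across an annulus between invariant circles of given area separation; the underlying mechanism is a Cauchy--Schwarz balancing between kinetic and potential contributions along the transversal configuration, in the same spirit as the lower bound \eqref{eq:loc}.

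The main obstacle is producing the transversal crossing as an \emph{admissible} configuration of the twist map (and not merely a continuous curve on the cylinder), with a constant that is uniform in $\sigma,\sigma'\in[a_-,a_+]$; this is where the non-trivial use of the twist structure on $X_j$ and of the exact area identification of the annulus enters. Once this is done, combining the $c$-Lipschitz and $\sigma$-H\"older estimates gives the first inequality of the lemma. The second inequality, for $h_c(m,\zeta)$, follows by running time backward: the reversed twist map on $X_j$ has the same invariant circles $\gamma_c$ with the same area parametrisation, so the same construction applies verbatim.
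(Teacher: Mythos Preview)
The paper does not prove this lemma at all: it is stated with the attribution ``(\cite{CY2}, Lemma 6.4)'' and immediately used, with the proof deferred entirely to Cheng--Yan. So there is no in-paper argument to compare against; the relevant comparison is with the Cheng--Yan proof.

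Your outline is the Cheng--Yan strategy in spirit: reduce to the time-one twist map on the NHIC $X_j$, use the area parameter $\sigma$ to label the invariant circles $\gamma_c$ for $c\in\Gamma_1^j$, and obtain the $\sqrt{|\sigma-\sigma'|}$ term from Mather's $\Delta W$ estimate that the minimal action to cross the strip between two invariant circles is controlled by the square root of the enclosed area. The Lipschitz-in-$c$ part via the cancellation between $(c'-c)\cdot(x_N-x_0)$ and $N(\alpha(c')-\alpha(c))$ is also the right mechanism. Two points where your sketch is thinner than what Cheng--Yan actually do: first, the cancellation in your second paragraph leaves a residual of order $N|c-c'|\cdot|\rho(c)-\rho(c')|$ plus boundary terms near $m$, and controlling this uniformly as $N\to\infty$ (the barrier is a $\liminf$) requires splitting the configuration at its last visit to a neighbourhood of $\gamma_{c(\sigma)}$ and using the graph/Lipschitz property there, not just the asymptotic $(x_N-x_0)/N\to\rho$; second, the ``Mather twist-map estimate'' you invoke is itself the non-trivial content of the lemma and needs the exactness of the restricted map on $X_j$ together with the monotone ordering of the circles, which in the present crumpled-cylinder setting is supplied by Theorem~\ref{graph} rather than being automatic. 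With those two points filled in, your plan matches \cite{CY2}.
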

It follows that the function $h_{c(\sigma)}$ can be extended to $h_{c,\sigma}$ that is $\frac12-$H\"{o}lder in $c$ and $\sigma$, this regularity turns out to be enough. To see how this is carried out, let us consider a subset $B_{d'}(c^f_*)\times R_d(u)$, where $B_{d'}(c^f_*)=\{c: |c^f-c^f_*|<d'\}$  and $R_d(u)\subset \T^n$ is an open cube centered at $u$ with edge $d$.
\begin{lem}(\cite{CY2}, Lemma 7.2)
  There is a residue set of functions $G_1\in C^r_0(R_d(u),\R)$ such that
$$\tilde{b}^\pm_c(\theta) + G_1(\theta)$$
has isolated minima in $R_d(u)$ for each $c\in \Gamma_1^j\cap B_{d'}(c)$.
($C^r_0$ stands for $C^r$ functions with compact support).
\end{lem}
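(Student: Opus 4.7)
The plan is to prove this by a Baire category argument, where the main technical ingredient is the $\tfrac12$-H\"older regularity of $h_{c,\sigma}$ in $(c,\sigma)$ from Lemma~\ref{holder}, which allows us to replace the continuous parameter family $\{c\in \Gamma_1^j\cap B_{d'}(c_*)\}$ by a countable net. First, I would reformulate the desired property quantitatively: for each positive integer $n$, define $\mathcal{V}_n\subset C^r_0(R_d(u),\R)$ as the set of $G_1$ such that, for every $c\in \overline{\Gamma_1^j\cap B_{d'}(c_*)}$ and for both choices of sign, any two distinct minimizers $\theta_1\neq\theta_2$ of $\tilde{b}^\pm_c+G_1$ lying in a slightly shrunken cube $\tfrac12 R_d(u)$ satisfy $\|\theta_1-\theta_2\|\geq 1/n$. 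The target set of ``good'' $G_1$ is $\bigcap_n \mathcal{V}_n$. Openness of each $\mathcal{V}_n$ follows from the joint continuity of $\tilde{b}^\pm_c$ in $(c,\theta)$, from compactness of $\overline{\Gamma_1^j\cap B_{d'}(c_*)}$, and from the uniform semi-concavity bounds of Proposition~\ref{scc} which prevent minimizers from escaping under small perturbations.

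The heart of the argument is density. Pick $G_1\in C^r_0$ and $\nu>0$; I seek $\phi\in C^r_0$ with $\|\phi\|_{C^r}<\nu$ such that $G_1+\phi\in \mathcal{V}_n$. Using the parameter $\sigma$ monotone in $c^f$, partition the compact set of admissible $(c,\sigma)$ into finitely many boxes $B_i$ of $\sigma$-width and $c$-width at most $\tau$. Lemma~\ref{holder} yields $\|\tilde{b}^\pm_c-\tilde{b}^\pm_{c'}\|_{C^0}\leq C'\sqrt{\tau}$ for all $(c,\sigma),(c',\sigma')\in B_i$. For a fixed representative $c_i$ in each box, a finite-dimensional transversality argument produces an arbitrarily small perturbation $\phi_i$ for which $\tilde{b}^\pm_{c_i}+G_1+\phi_i$ has, in $\tfrac12 R_d(u)$, only finitely many strict local minimizers, each $1/n$-separated and with a quadratic lower bound $\kappa>0$ on its second-order behavior near the minimum. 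Concretely, $\phi_i$ is taken as a finite sum of tiny non-negative quadratic bumps placed on a $1/(4n)$-mesh of $\tfrac12 R_d(u)$, with amplitudes chosen as a regular value of an auxiliary smooth map (Sard). Provided $\tau$ is chosen so small that $C'\sqrt{\tau}\ll \kappa/n^2$, the H\"older estimate propagates the $1/n$-isolation property from $c_i$ to the whole box $B_i$. Summing the $\phi_i$ over the finitely many boxes and rescaling gives a $\phi$ with $\|\phi\|_{C^r}<\nu$ such that $G_1+\phi\in \mathcal{V}_n$.

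The main obstacle I anticipate is the delicate matching of scales: the H\"older modulus $\sqrt{\tau}$ must be dominated by the second-order non-degeneracy gap $\kappa/n^2$, while simultaneously the total perturbation from all boxes must remain smaller than $\nu$ in $C^r$. This forces $\tau$ to be chosen after $n$ but before $\nu$, and the Sard step within each box must be performed in a finite-dimensional family whose dimension is independent of $\tau$. The fact that $G_1$ is supported in $R_d(u)$, which by Lemma~\ref{pert-barrier}(3) is disjoint from both $\mathcal{A}(c)$ and a neighborhood of the NHIC slice, is what makes the perturbation enter purely additively into $\tilde{b}^\pm_c$ as stated in Lemma~\ref{pert-barrier}(2); without this localization, the Aubry set and hence the base points $\xi_1,\xi_2$ would themselves move with $G_1$, destroying the clean additive structure and blocking the Sard argument. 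Finally, the analogous statement at double-resonance boundaries $c^f=\bar a_{j+1}$, for the non-covered barrier, is handled by the same scheme applied to the two-static-class case rather than to the lifted system.
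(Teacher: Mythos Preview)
The paper does not actually prove this lemma: it is cited from \cite{CY2} and the only hint given is the remark that the $\tfrac12$-H\"older regularity of Lemma~\ref{holder} is crucial. Your overall framework---Baire category, the sets $\mathcal{V}_n$, the use of the parameter $\sigma$ and the H\"older estimate to reduce to finitely many representative cohomologies---matches what Cheng--Yan do.

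However, your density argument has a genuine gap at the step ``summing the $\phi_i$ over the finitely many boxes''. Each $\phi_i$ is a function of $\theta$ alone, so it perturbs $\tilde b^\pm_c$ for \emph{every} $c$, not just those in the box $B_i$. When you add $\sum_{j\neq i}\phi_j$ on top of $\phi_i$, you have no control over what this does at the representative $c_i$: the carefully arranged gap $\kappa$ at $c_i$ is destroyed. A sequential version (choose $\phi_{i+1}$ small enough not to spoil step $i$) also fails, because the gaps $\kappa_i$ then decay geometrically while the number of boxes is $\sim\tau^{-1}$ and you still need every $\kappa_i\gg C'\sqrt\tau$; this is circular. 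A second, related issue is your invocation of Sard: the barrier $\tilde b^\pm_c$ is only semi-concave (Lipschitz), not $C^2$, so there is no ``quadratic lower bound on second-order behavior'' to speak of, and Sard's theorem does not apply to the map you have in mind. The argument in \cite{CY2} avoids both problems by working with a \emph{single} finite-dimensional family of perturbations $\phi_a=\sum a_k\psi_k$ (the $\psi_k$ fixed bump functions with disjoint supports at scale $1/n$) and a measure-theoretic estimate: for each of the $\sim\tau^{-1}$ representative parameters, the bad set of amplitude vectors $a$ has Lebesgue measure $O(\sqrt\tau)$ in a suitable sense, and one checks that the union over all boxes still leaves good amplitudes---this is exactly where one-dimensionality of $\sigma$ and the exponent $\tfrac12$ enter quantitatively. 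Your proposal identifies the right ingredients but does not assemble them into a working mechanism.
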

\begin{rmk}
  The nontrivial part of this statement is that the nondegeneracy of $\tilde{b}_c^\pm$ can be achieved for all $c\in  \Gamma_1^j\cap B_{d'}(c)$ simultaneously. The regularity acquired in Lemma~\ref{holder} is crucial to the proof. We refer to \cite{CY2} for details.
\end{rmk}
To construct the desired perturbation to the barrier function, let us state another lemma, which is a consequence of the upper semi-continuity of the Ma\~{n}e set on the Lagrangian.
\begin{lem}\label{upper-semi}
  The property that the functions $\tilde{b}^\pm_c$ are non-degenerate on the set $B_{d'}(c^f_*)\times R_d(u)$ survives under sufficiently small perturbation.
\end{lem}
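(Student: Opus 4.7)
The plan is to combine the classical upper semi-continuity of the Ma\~n\'e set with a hyperbolicity/transversality argument coming from the structure of the Aubry classes on the NHICs.

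First, I would identify the global minima of $\tilde b_c^\pm$ lying in $R_d(u)$ with the trace of (the lift of) $\tilde{\mathcal N}_{\tilde H}(c)\setminus \tilde{\mathcal A}_{\tilde H}(c)$ in $R_d(u)$; by Lemma~\ref{pert-barrier}(3) these points lie in a fixed compact subset of $R_d(u)$ uniformly separated from the Aubry set. The non-degeneracy hypothesis then says that this set is finite for every $c \in \bar B_{d'}(c^f_*)$. I would then invoke the standard upper semi-continuity of $(\tilde H,c)\mapsto \tilde{\mathcal N}_{\tilde H}(c)$ in the $C^2$-topology of Hamiltonians: for any open neighborhood $U$ of the Ma\~n\'e set at the original $(\tilde H,c_0)$, the Ma\~n\'e sets at sufficiently close parameters are contained in $U$, and by compactness of $\bar B_{d'}(c^f_*)$ this containment can be arranged uniformly in $c$.

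I would then argue by contradiction. If openness fails, we extract sequences $\tilde H_n\to \tilde H$ in $C^r$, $c_n\in \bar B_{d'}(c^f_*)$, and distinct pairs of minima $\theta_n^{(1)},\theta_n^{(2)}\in R_d(u)$ of $\tilde b_{c_n,\tilde H_n}^\pm$ with $\|\theta_n^{(1)}-\theta_n^{(2)}\|\to 0$, witnessing failure of non-degeneracy. Passing to a subsequence gives $c_n\to c_*$ and $\theta_n^{(i)}\to \theta_*$, and upper semi-continuity places $\theta_*$ in $\tilde{\mathcal N}_{\tilde H}(c_*)\setminus \tilde{\mathcal A}_{\tilde H}(c_*)$; hence $\theta_*$ must be one of the finitely many isolated original minima in $\bar R_d(u)$.

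The main obstacle is ruling out that two distinct perturbed minima can collapse to a single isolated unperturbed one. Here I would use the fact that the isolated minima of $\tilde b_{c_*,\tilde H}^\pm$ correspond to heteroclinic orbits of the Hamiltonian flow between the hyperbolic Aubry classes sitting on the normally hyperbolic cylinders $X_j$ and $X_{j+1}$. The generic perturbation built in Proposition~\ref{property-barrier} via the parametric Sard argument in the H\"older parameter $(c,\sigma)$ from Lemma~\ref{holder} can be refined at no extra cost so that the resulting heteroclinic intersections of the stable and unstable manifolds of these Aubry classes are transverse. Transversality is a $C^r$-open condition by the implicit function theorem, using the smoothness of the (un)stable manifolds of the normally hyperbolic cylinders; combined with the upper semi-continuity containment, this forces exactly one perturbed heteroclinic in a neighborhood of each isolated original one, contradicting the presumed convergence of two distinct perturbed minima and completing the proof.
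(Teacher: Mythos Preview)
The paper offers no proof of this lemma beyond the sentence preceding it: ``which is a consequence of the upper semi-continuity of the Ma\~{n}e set on the Lagrangian.'' Your first two paragraphs flesh out exactly that idea, and up to that point you are aligned with the paper's (very terse) approach: identify the minima of $\tilde b^\pm_c$ in $R_d(u)$ with $\tilde{\mathcal N}_{\tilde H}(c)\cap R_d(u)$ (recall $R_d(u)\subset O_c$ is disjoint from the Aubry set by Lemma~\ref{pert-barrier}(3)), and use upper semi-continuity in $(H,c)$, uniformly over the compact parameter range $\bar B_{d'}(c^f_*)$, to confine the perturbed minima to small balls around the finitely many original ones.

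The difficulty you correctly isolate---that upper semi-continuity alone does not rule out infinitely many perturbed minima accumulating near a single original one---is a real point, and the paper does not address it explicitly. However, your proposed resolution has a structural problem. You invoke transversality of the heteroclinic intersections, and justify it by saying the perturbation ``built in Proposition~\ref{property-barrier} \dots\ can be refined at no extra cost'' to ensure transversality. But Lemma~\ref{upper-semi} is a lemma \emph{used in the proof of} Proposition~\ref{property-barrier}: it is precisely what allows the inductive step, ensuring that adding the next localized perturbation $G_{i+1}$ does not destroy the non-degeneracy already achieved on the earlier boxes. Appealing to the output of Proposition~\ref{property-barrier} to prove one of its ingredients is circular. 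Moreover, transversality is not part of the hypothesis of the lemma as stated; you are proving a different (stronger-hypothesis) statement.

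If you want to repair this without circularity, the natural route is to work entirely at the level of the functions $\tilde b^\pm_c$ and the specific perturbations in play. By Lemma~\ref{pert-barrier}(2), on $R_d(u)$ the effect of the perturbation is simply $\tilde b^\pm_c \mapsto \tilde b^\pm_c + G_1$ with $G_1$ a fixed smooth function independent of $c$. Combined with the $\tfrac12$-H\"older dependence in $(c,\sigma)$ from Lemma~\ref{holder}, this is exactly the setting of Cheng--Yan's Lemma~7.2, and their argument (a parametric quantitative version of the fact that isolated minima of a continuous family are stable under addition of a generic small smooth function) gives openness directly, without any appeal to dynamical transversality. That is what the paper is implicitly relying on when it defers to \cite{CY2}.
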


We proceed to prove Proposition~\ref{property-barrier}. Let $B_{d'_i}(c^f_i)\times R_{d_i}(u_i)\subset \tilde{U}$, be a sequence of sets such that $\tilde{U}=\bigcup_{i}B_{d'_i}(c^f_i)\times R_{d_i}(u_i)$. We may choose a sequence of perturbations $G_i:R_{d_i}(u_i)\to \R$, and let $G'_k(x,x')=G(x,x')+\sum_{i=1}^kG_i(x')$ and $\tilde{b}^\pm_{c, k}$ be the corresponding barrier functions corresponding to the generating function $G_k'$. We can choose the sequence $G_i$ inductively such that $\tilde{b}^\pm_c$ is non-degenerate on $(c,\theta)\in \bigcup_{i=1}^k(B_{d'_i}(c^f_i)\cap \Gamma_1^j)\times R_{d_i}(u_i)$, because new perturbations can be added that does not disrupt the nondegeneracy already established in the previous steps.  By repeat this process for each interval $(\bar{a}_j, \bar{a}_{j+1})$, we have constructed a perturbation to the generating function $G$, such that the first statement of Proposition~\ref{property-barrier} holds.

For the second statement, using the same arguments for Lemma~\ref{pert-barrier}, one can show that the same type of conclusions apply to $b^\pm_c$ as well.
\begin{lem}
 For each $c=p_*(c^f)$ with $c^f=\bar{a}_j$, $j=2, \cdots, l-1$ the following hold.
  \begin{enumerate}
  \item There exists a family of open sets $O_c\subset (2\T)^n$ such that the full orbit of any $(\tilde{\theta}, \tilde{p})\in \tilde{\mN}_{H}(c)\setminus \tilde{\mA}_{H}(c)$ must intersect $O_c$ in the $\tilde{\theta}$ component.
  \item There exists $b>0$ such that if we perturb $G(x,x')$ by function $G_1(x')$ with $\supp G_1\subset B_b(u)$, where $B_b(u)$ is the ball of radius $b$ centered at $u$, then for each $c$ such that $B_b(u)\subset \pi^{-1}O_c$ the corresponding barrier function
$$ h_c'(\zeta,\theta)+h_c'(\theta, \eta)= h_c(\zeta,\theta)+h_c(\theta, \eta) + G_1(\theta)  $$
for each $\theta\in O_c$. The same conclusion holds for $h_c(\eta,\theta)+h_c(\theta, \zeta)$.
\end{enumerate}
\end{lem}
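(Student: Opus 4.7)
The plan is to adapt the Cheng–Yan argument used for the single‐peak case (in Lemma~\ref{pert-barrier}) to the double‐peak case at a bifurcation cohomology $c^f=\bar a_{j+1}$, using the structural information we already have about $\tilde{\mA}_H(c)$ and $\tilde{\mN}_H(c)$ from Theorem~\ref{property-twist} and Theorem~\ref{graph}. First I would recall that in the double peak case $\tilde{\mA}_H(c)$ has exactly two static classes, one contained in $X_j$ and one in $X_{j+1}$, and that by Theorem~\ref{localize} their projections to the $\theta^s$‐variable are contained in the $\rho_2$‐neighborhoods of $\theta^s_j(c^f)$ and $\theta^s_{j+1}(c^f)$ respectively. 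An orbit in $\tilde{\mN}_H(c)\setminus\tilde{\mA}_H(c)$ is heteroclinic between these two static classes, and such an orbit is contained in no single $X_i$, so its $\theta^s$‐projection must leave the $\rho_2$‐neighborhood of at least one of $\theta^s_j(c^f),\theta^s_{j+1}(c^f)$.

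\textbf{Construction of $O_c$ (part 1).}  Fix $\rho_3>\rho_2$ small enough that both balls $D(\theta^s_j(c^f),\rho_3)$ and $D(\theta^s_{j+1}(c^f),\rho_3)$ still lie inside the neighborhoods $U_j(c), U_{j+1}(c)$ where the modified Hamiltonians $N_{\epsilon,j}$, $N_{\epsilon,j+1}$ coincide with $N_\epsilon$. I would set
\[
O_c := \T^n \setminus \Big(\overline{D(\theta^s_j(c^f),\rho_3)}\cup\overline{D(\theta^s_{j+1}(c^f),\rho_3)}\Big)\times\T,
\]
(viewed in the cover $(2\T)^n$ if needed). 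By the previous paragraph any heteroclinic in $\tilde{\mN}_H(c)\setminus\tilde{\mA}_H(c)$ must have $\theta^s(t)$ leave the $\rho_2$‐neighborhood and therefore cross into $O_c$ in $\theta^s$, giving part 1. Continuity of the invariant cylinders in $c^f$ makes $O_c$ vary continuously in $c$, so $\bigcup_c O_c$ is open.

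\textbf{Localization of barrier perturbations (part 2).}  Pick $u\in\pi^{-1}O_c$ and let $b>0$ be smaller than the distance from $u$ to $\pi^{-1}(\{\|\theta^s-\theta^s_j(c^f)\|\le \rho_2\}\cup\{\|\theta^s-\theta^s_{j+1}(c^f)\|\le \rho_2\})$ and smaller than the injectivity radius needed so that $B_b(u)$ embeds into $\T^n$. The perturbed generating function $G'=G+G_1$ with $\supp G_1\subset B_b(u)$ affects the Lagrangian action only on time intervals where the trajectory enters $B_b(u)$. By the sup‐norm smallness of $G_1$ the Aubry sets are only slightly perturbed (upper semicontinuity), and in fact, because $B_b(u)$ is disjoint from both static classes and from the $\rho_2$‐tubes around them, calibrated curves emanating from $\zeta\in\tilde{\mA}_H(c)\cap X_j$ or approaching $\eta\in\tilde{\mA}_H(c)\cap X_{j+1}$ may be chosen so that they visit $B_b(u)$ at most once, at a single instant near the would‐be endpoint $\theta$. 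A standard calibration argument then shows, for $\theta\in O_c$ with $\theta\in B_b(u)$, that an optimal curve realizing $h'_c(\zeta,\theta)+h'_c(\theta,\eta)$ accumulates exactly one copy of $G_1(\theta)$ in its action, yielding the decomposition
\[
h'_c(\zeta,\theta)+h'_c(\theta,\eta)=h_c(\zeta,\theta)+h_c(\theta,\eta)+G_1(\theta).
\]
The argument for $h_c(\eta,\theta)+h_c(\theta,\zeta)$ is identical by time reversal.

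\textbf{Main obstacle.}  The only nontrivial point is showing that the calibrated curves through a point $\theta\in B_b(u)$ really do intersect $\supp G_1$ at a single time, rather than traversing it repeatedly; this is where the precise structure of the heteroclinic orbits and the localization of the Aubry set (Theorem~\ref{localize}, Theorem~\ref{graph}) are used. Once $b$ is chosen smaller than a uniform escape time scale for heteroclinic orbits between $X_j$ and $X_{j+1}$, which is controlled by the hyperbolicity in Theorem~\ref{nhic-mult}, double crossings can be ruled out by comparing action. The rest of the argument is a verbatim adaptation of Lemma~\ref{pert-barrier}.
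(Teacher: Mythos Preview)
The paper does not prove this lemma in detail; it simply asserts that the argument of Lemma~\ref{pert-barrier} (itself cited from \cite{CY2}, Lemma~7.1) carries over. Your sketch follows that same route and supplies more detail than the paper does, so the overall strategy is aligned with the paper. Two points in your sketch need sharpening, however.

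For part~1, the assertion that every orbit in $\tilde{\mN}_H(c)\setminus\tilde{\mA}_H(c)$ ``is contained in no single $X_i$'' is not automatic: in general $\mN\setminus\mA$ can contain homoclinic connections to a single static class, not only heteroclinics. The missing step is to observe that if a full orbit in $\mN(c)$ remained inside the $\rho_3$-tube about $\theta^s_j$, then by the maximal-invariance clause of Theorem~\ref{nhic-mult} it would lie in $X_j$, and then the twist-map structure on $X_j$ together with uniqueness of the minimal measure (Theorem~\ref{property-twist}, part~2) forces it into $\mA(c)$; hence any orbit in $\mN(c)\setminus\mA(c)$ must indeed exit both tubes and enter $O_c$. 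For part~2, the fix you propose for the ``main obstacle'' --- bounding escape times via the hyperbolicity in Theorem~\ref{nhic-mult} --- is not how the Cheng--Yan argument runs and would not by itself exclude multiple crossings of $\supp G_1$. The actual mechanism is calibration: a minimizer realizing $h_c(\zeta,\theta)$ with $\zeta\in\mA(c)$ is a semi-static curve backward-asymptotic to the Aubry set; since $\supp G_1\subset O_c$ is uniformly separated from $\mA(c)$, one arranges (for $b$ small enough) that the discrete minimizer meets $\supp G_1$ only at its terminal integer time, while the minimizer for $h_c(\theta,\eta)$ meets it only at its initial time (which contributes nothing under the perturbation $G'(x,x')=G(x,x')+G_1(x')$). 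Combined with the stability of $\mA(c)$ and $\alpha(c)$ under a perturbation supported away from the Aubry set, this yields the stated formula. That calibration argument, not a hyperbolic time estimate, is the actual content of \cite{CY2}, Lemma~7.1, to which the paper is deferring.
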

 For a fixed $c$, it is easy to see $b^\pm_c(\theta) + G_1(\theta)$ has isolated minimal set in $R_d(u)$ for an open and dense set of $G_1$. Repeat the arguments for $\tilde{b}_c^\pm$, we obtain a perturbation for which the both statements of Proposition~\ref{property-barrier} hold.
\qed

\appendix

\section{Generic conditions}
\label{sec:generic}

We prove Theorem~\ref{open-dense} in this section. Consider the following (degeneracy) conditions on the function $Z(\theta^s, p_*(p^f)):\T^{n-1}\times [a_{min}, a_{max}]\to \R$.
\begin{enumerate}
\item[{[T0]}] For  $p^f\in [a_{min}, a_{max}]$, all local maxima of $Z(\theta^s, p_*(p^f))$ is nondegenerate.
\item[{[T1]}] For each $p^f\in [a_{min}, a_{max}]$ and there are at most two distinct $\theta^s_1, \theta^s_2\in \T^{n-1}$ such that $\partial_{\theta^s}Z(\theta^s_j, p_*(p^f))=0$ for $j=1, 2$ and that $Z(\theta^s_1, p_*(p^f))=Z(\theta^s_2, p_*(p^f))$.
\item[{[T2]}] For any $p^f\in [a_{min}, a_{max}]$ and distinct $\theta_1^s, \theta^s_2\in \T^{n-1}$ such that $\partial_{\theta^s}Z(\theta^s_j, p_*(p^f))=0$ for $j=1,2$, we have that
$$\partial_{p^f}Z(\theta_1^s, p_*(p^f))\ne \partial_{p^f}Z(\theta_2^s, p_*(p^f)).$$
\end{enumerate}
Let ${\mathcal U}'$ denote the set of functions in $S_r$ that satisfies one or more of the conditions [T0]-[T2].

\begin{prop}\label{nowheredense}
${\mathcal U}'$ is open and dense.
\end{prop}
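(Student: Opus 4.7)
The plan is to establish openness and density separately, reducing both to statements about finitely many smooth scalar functions on the compact interval $[a_{min}, a_{max}]$ via the implicit function theorem.

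For openness, I would take $H_1 \in \mathcal{U}'$ and first use [T0] together with the implicit function theorem applied to $\partial_{\theta^s} Z = 0$ to track the critical points of $Z(\cdot, p_*(p^f))$ under small $C^r$ perturbations: each non-degenerate critical point deforms smoothly in $(p^f, H_1)$, the Hessian stays definite by openness of the cone of definite matrices, and by compactness of $\T^{n-1}\times [a_{min}, a_{max}]$ there are only finitely many such branches $p^f \mapsto \theta^s_\alpha(p^f)$ to keep track of simultaneously. Conditions [T1] and [T2] then translate into open conditions on the finite family of $C^1$ scalar functions
\[
f_\alpha(p^f) \;:=\; Z(\theta^s_\alpha(p^f), p_*(p^f)),
\]
namely that at each $p^f$ at most two of the $f_\alpha$ attain the common maximum value, and at any such coincidence the derivatives $f_\alpha'$ and $f_\beta'$ differ. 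Both are manifestly open conditions on a finite collection of $C^1$ functions on a compact interval.

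For density I proceed in two stages. First, I would realize [T0]: since perturbing $H_1$ by a function depending only on $\theta^s$ changes $Z(\cdot, p_*(p^f))$ by the same function independent of $p^f$, and more generally perturbations $h(\theta^s, p^f)$ perturb $Z(\theta^s, p_*(p^f))$ directly, parametric transversality (Thom's jet-transversality theorem applied to the 1-jet of $Z$ in the $\theta^s$-variable, with parameter $p^f$) shows that for a residual set of $H_1$ the map $(\theta^s, p^f)\mapsto \partial_{\theta^s}Z(\theta^s, p_*(p^f))$ is transverse to $0$; in one parameter this forces every critical point to be Morse in $\theta^s$, which is [T0]. Second, once [T0] holds, the branches $\theta^s_\alpha$ are defined on finitely many sub-intervals covering $[a_{min}, a_{max}]$ and the $f_\alpha$ are $C^1$. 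A further perturbation of $H_1$ of the form $h(\theta^s, p^f)$ shifts $f_\alpha$ by $h(\theta^s_\alpha(p^f), p^f)$; by choosing $h$ supported near disjoint tubes around the distinct branches, one can realize arbitrary small independent perturbations of the finite family $\{f_\alpha\}$. Then Sard's theorem applied to the differences $f_\alpha - f_\beta$ (triple coincidences are codimension two, hence avoidable by a generic perturbation, giving [T1]) and to the differences at their zeros (transversality $f_\alpha'\ne f_\beta'$ at a crossing is codimension one after the crossing is imposed, hence generically satisfied, giving [T2]) yields a residual, and thus dense, set of admissible perturbations.

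The main obstacle will be the second stage, specifically verifying that perturbations of $H_1 \in S_r$ do produce the required independent perturbations of the finite family $\{f_\alpha\}$ (this uses that $Z$ is obtained by averaging $H_1$ over $\theta^f,t$, so any perturbation of the form $h(\theta^s, p^f)$ passes unchanged into $Z$), and organizing the Sard step uniformly on $[a_{min}, a_{max}]$ so that no three branches ever meet and every pairwise meeting is transverse \emph{simultaneously}. Compactness of the parameter interval and of the torus, together with the finiteness of the branches guaranteed by [T0], makes this a genuine but routine transversality argument.
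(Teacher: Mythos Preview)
The paper does not actually supply a proof of this proposition: after stating it, the text passes directly to the proof of Theorem~\ref{open-dense}, which \emph{uses} the proposition but does not establish it. So there is no paper argument to compare against, and your outline is the natural transversality proof one would expect.

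Two technical points deserve attention. First, your claim that transversality of $(\theta^s,p^f)\mapsto \partial_{\theta^s}Z$ to $0$ ``forces every critical point to be Morse in $\theta^s$'' is not correct: a generic one-parameter family of functions on $\T^{n-1}$ \emph{does} have isolated parameter values at which a critical point is degenerate (a fold, where two critical points collide). What saves [T0] is that [T0] concerns only local \emph{maxima}, and in a generic one-parameter family the degenerate critical points have a one-dimensional Hessian kernel with nonvanishing cubic term along it, hence are not local extrema. You should phrase the argument this way rather than claiming all critical points are Morse.

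Second, you read [T2] as applying only at values of $p^f$ where two critical values coincide; but as written in the paper, [T2] asks that $\partial_{p^f}Z$ differ at \emph{every} pair of critical points for \emph{every} $p^f$. Taken literally this is not a generic condition (the difference of $p^f$-derivatives along two branches is a scalar function of one variable and will generically vanish at isolated points). Your reading is the one actually needed to derive [G2], and it \emph{is} generic: simultaneous coincidence of value and of $p^f$-derivative is codimension two in a one-parameter family. So your argument proves the version of [T2] that matters; you may want to note the discrepancy with the paper's wording. A parallel remark applies to [T1] and the fact that [T0] controls only local maxima rather than all critical points: your finitely-many-branches argument should be run for the branches of local maxima, which suffices for the application.
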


\begin{proof}[Proof of Theorem~\ref{open-dense}]

The set ${\mathcal U}$ is open, since if $H_1$ satisfies conditions [G0]-[G2] with some $\lambda>0$, any $H_1'$ sufficiently close to $H_1$ in $C^r$ norm  satisfies these conditions with a slightly smaller $\lambda'>0$.

We now prove that ${\mathcal U}$ is dense by showing that ${\mathcal U}\supset {\mathcal U}'$. The conditions [T0]-[T2] implies the statement that any $p^f$ is either a nondegenerate regular point or a nondegenerate bifurcation point, and that there are at most finitely many bifurcation points. To see that [T0]-[T2] also imply conditions [G0]-[G2], let $\{[a_j, a_{j+1}]\}_{j-0}^{s-1}$ be the partition of $[a_{min}, a_{max}]$ by bifurcation points. Each $p^f\in (a_j, a_{j+1})$ defines a unique global maximum $\theta^s_j(p^f)$. The function $\theta^s(p^f)$ is continuous since any converging sequence $\theta^s(p^f_k)$ also converges to a global maximum, and it must be smooth by implicit function theorem. The function extends to $[a_j, a_{j+1}]$ by continuity, and using the nondegeneracy of the maximum and implicit function theorem, we can extend $\theta^s_j$ smoothly to  the interval $[a_j-d, a_{j+1}+d]$, such that each $\theta^s_j(p^f)$ is a nondegenerate local maxima. Assume that for each $p^f\in [a_j-d, a_{j+1}+d]$ we have $-\partial^2_{\theta^s \theta^s} Z(\theta^s, p_*(p^f))\ge d' I$ as a quadratic form, hence $Z$ satisfies [G0] with  $\lambda=\min\{d, d'\}$. [G1] and [G2] are direct consequences of [T0]-[T2].
\end{proof}

\section{Normally hyperbolic manifold}
\label{sec:abstract-nhic}

Let $F:\Rm^n\lto \Rm^n$
be a $C^1$ vector field. We give sufficient conditions
for the existence of a Normally hyperbolic invariant graph of $F$.
We split the space $\Rm^n$ as $\Rm^{n_u}\times\Rm^{n_s}\times \Rm^{n_c}$,
and denote by $x=(u,s,c)$ the points of $\Rm^n$. We denote by
$(F_u,F_s,F_c)$ the components of $F$:
$$
F(x)=(F_u(x),F_s(x),F_c(x)).
$$
We study the flow of $F$ in the domain
$$\Omega=B^u\times B^s\times \Omega^c
$$
where $B^u$ and $B^s$ are  the open Euclidean balls of radius $r_u$  and $r_s$ in $\Rm^{n_u}$ and $\Rm^{n_s}$, and $\Omega^c$ is a convex open subset of
$\Rm^{n_c}$.
We denote by
$$L(x)=dF(x)=\begin{bmatrix}
L_{uu}(x)&L_{us}(x)&L_{uc}(x)\\L_{su}(x)&L_{ss}(x)&L_{sc}(x)\\L_{cu}(x) &L_{cs}(x)&
L_{cc}(x)
\end{bmatrix}
$$
the linearized vector field at point $x$. We  assume that $\|L(x)\|$ is bounded
on $\Omega$, which implies that each trajectory of $F$ is defined until it leaves $\Omega$.
We denote by $W^c$ the union of full orbits contained in $\Omega$.
In other words, this is the set of initial conditions $x\in \Omega$
such that there exists a solution $x(t):\Rm\lto \Omega$
of the equation $\dot x =F(x)$ satisfying $x(0)=0$.
We denote by $W^{sc}$ the set of points whose positive  orbit remains inside
$\Omega$.
In other words, this is the set of initial conditions $x\in \Omega$
such that there exists a solution $x(t):[0,\infty)\lto \Omega$
of the equation $\dot x =F(x)$ satisfying $x(0)=0$.
Finally, we denote by $W^{uc}$
the set of points whose negative  orbit remains inside
$\Omega$.
In other words, this is the set of initial conditions $x\in \Omega$
such that there exists a solution $x(t):(\infty,0]\lto \Omega$
of the equation $\dot x =F(x)$ satisfying $x(0)=0$.
These sets have specific features under the following assumptions:

\begin{hyp}[Isolating block]\label{block}
We have:
\begin{itemize}
\item
$F_c=0$ on $B^u\times B^s\times \partial\Omega^c$.
\item $F_u(u,s,c)\cdot u> 0$ on  $\partial B^u \times \bar B^s \times \bar \Omega^c$.
\item $F_s(u,s,c)\cdot s< 0$ on  $ \bar B^u \times \partial B^s \times \bar \Omega^c$.
\end{itemize}
\end{hyp}
\begin{hyp}\label{cone}
There exist  positive constants $\alpha$, $m$ and $M$  such that:
\begin{itemize}
\item
$L_{uu}(x)\geq \alpha I , \quad L_{ss}(x)\leq -\alpha I
$ for each $x\in \Omega$
in the sense of quadratic forms.
\item$
\|L_{us}(x)\|+\|L_{uc}(x)\|+\|L_{su}(x)\|+\|L_{sc}(x)\|+
\|L_{cu}(x)\|+\|L_{cs}(x)\|+\|L_{cc}(x)\|\leq m
$ for each $x\in \Omega$.
\end{itemize}
\end{hyp}

\begin{thm}\label{abstractNHI}
Assume that  Hypotheses \ref{block} and \ref{cone} hold, and that
$$
K:= \frac{m}{\alpha-2m}\leq \frac{1}{\sqrt{2}}.
$$
Then the set $W^{sc}$ is  the graph of a $C^1$ function
$$w^{sc}:B^s \times \Omega^c \lto B^u,
$$
 the set $W^{uc}$ is  the graph of a $C^1$ function
$$w^{uc}:B^u \times \Omega^c \lto B^s,
$$
and  the set $W^c$ is the graph of a $C^1$ function
$$w^c=(w^c_u,w^c_s):\Omega^c\lto B^u\times B^s.
$$
Moreover, we have the estimates
$$\|dw^{sc}\|\leq K,\quad \|dw^{uc}\|\leq K,
\quad \|dw^c\|\leq 2K.
$$
\end{thm}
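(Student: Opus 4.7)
The plan is the classical cone-field approach to normally hyperbolic invariant manifolds in an isolating block. The hypothesis $K = m/(\alpha - 2m) \leq 1/\sqrt{2}$ is exactly what makes the cones invariant with the correct aperture. I would first define the stable-center and unstable-center cones
$$ C^{sc}(x) = \{v : \|v_u\| \leq K \|(v_s,v_c)\|\}, \qquad C^{uc}(x) = \{v : \|v_s\| \leq K\|(v_u,v_c)\|\},$$
and show that the complement of $C^{sc}$ is forward-invariant, and the complement of $C^{uc}$ is backward-invariant, under the variational equation $\dot v = L(x(t))v$. The computation is to differentiate $\Phi(v) = \|v_u\|^2 - K^2\|(v_s,v_c)\|^2$ along the variational flow, using $L_{uu} \geq \alpha I$ and $L_{ss} \leq -\alpha I$ together with the $m$-bound on all off-diagonal blocks and on $L_{cc}$ from Hypothesis~\ref{cone}. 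At points where $\Phi(v) = 0$ the choice of $K$ makes $\dot\Phi \geq 2(\alpha - 2m)\|v_u\|^2 \geq 0$, proving invariance. As a byproduct one obtains the expansion estimate $\|v_u(t)\| \geq e^{(\alpha - 2m)t}\|v_u(0)\|$ whenever $v$ stays in the complement of $C^{sc}$.

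Next I would prove that $W^{sc}$ is a $K$-Lipschitz graph over $B^s \times \Omega^c$. For the uniqueness half: if two points $x_1, x_2 \in W^{sc}$ violated the Lipschitz bound, i.e.\ $\|u_2 - u_1\| > K\|(s_2 - s_1, c_2 - c_1)\|$, I would connect them by the straight segment in $\Omega$ (which is convex in each factor) and flow it forward by $\phi^t$. The tangent vector of the segment lies outside $C^{sc}$; by cone invariance so does the tangent of its image for every $t \geq 0$, and by the expansion estimate $\|\phi^t(x_2)_u - \phi^t(x_1)_u\|$ grows exponentially, contradicting the boundedness of $B^u$. For existence over each $(s_0, c_0)$ I would run a Wazewski-type argument: by Hypothesis~\ref{block}, $\partial B^u$ is an immediate-exit face, $\partial B^s$ is an immediate-entry face (so not an exit), and $F_c = 0$ on $\partial\Omega^c$ prevents lateral exit through the $c$-face. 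Hence the set of $u \in \bar B^u$ whose forward orbit from $(u,s_0,c_0)$ eventually leaves $\Omega$ is relatively open and leaves through $\partial B^u$ only; were this set all of $\bar B^u$, the exit-time map would give a continuous retraction $\bar B^u \to \partial B^u$, which is topologically impossible, so some $u_0$ stays in $\Omega$ forever. Reversing time gives $W^{uc}$ as a $K$-Lipschitz graph of $w^{uc}: B^u \times \Omega^c \to B^s$.

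Finally, $W^c = W^{sc} \cap W^{uc}$, and combining the two Lipschitz estimates yields the $2K$-Lipschitz bound: any secant of $W^c$ lies in $C^{sc} \cap C^{uc}$, and summing the two defining inequalities produces $(1-K^2)(\|v_u\|^2 + \|v_s\|^2) \leq 2K^2 \|v_c\|^2$, which with $K^2 \leq 1/2$ gives $\|(v_u,v_s)\| \leq 2K\|v_c\|$. That $W^c$ projects onto all of $\Omega^c$ follows by solving the coupled system $u = w^{sc}(s,c)$, $s = w^{uc}(u,c)$: the composition has Lipschitz constant at most $K^2 < 1$, so the Banach fixed-point theorem gives a unique solution $(u(c), s(c))$ for every $c$. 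The $C^1$ regularity of all three graphs is obtained from the standard argument that the cone fields select a unique candidate tangent plane at each point, as the forward image of the $(s,c)$-plane inside the complement of $C^{sc}$ contracts to a continuous plane field. The main obstacle I anticipate is the existence half of the Wazewski argument: $\Omega^c$ is only assumed convex and open, not compact, and one must carefully use $F_c = 0$ on $\partial\Omega^c$ to rule out orbits escaping laterally, so that the only possible exit is through $\partial B^u$; once this is set up the retraction argument is routine.
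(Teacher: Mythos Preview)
Your proposal is correct and follows the same architecture as the paper: a Wa\.zewski-type retraction argument for existence of points in $W^{sc}$ (respectively $W^{uc}$) over each fibre, a Lipschitz-graph property for these sets, the Banach fixed-point argument for $W^c = W^{sc}\cap W^{uc}$, and the combination of the two $K$-Lipschitz inequalities to get the $2K$ bound on $dw^c$. The retraction argument and the last two steps are carried out verbatim in the paper.

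The one genuine difference is that where you develop the cone-field machinery by hand (invariance of the complement of $C^{sc}$, exponential expansion of $\|v_u\|$, and the tangent-plane limit for $C^1$ regularity), the paper instead cites Theorem~1.1 of Yang~[Ya] as a black box. To match Yang's hypotheses the paper rescales $a=u/K$ and verifies the inequality $m + m/K + Km < 2m + m/K = \alpha$, which is exactly the relation $K=m/(\alpha-2m)$ with $K<1$. Your direct approach is more self-contained; the paper's is shorter but depends on an external reference. Your $C^1$ sketch (``the cone fields select a unique candidate tangent plane'') is the standard graph-transform argument, but note that the $(s,c)$-plane lies \emph{inside} $C^{sc}$, not its complement, and the contraction is of $K$-Lipschitz $(s,c)$-graphs under the forward flow---this is precisely the content of Yang's theorem, and if you write it out in full it will take some care. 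The exact constant in your cone-boundary inequality $\dot\Phi\geq 2(\alpha-2m)\|v_u\|^2$ should also be checked carefully against the hypothesis that it is the \emph{sum} of the seven block norms, not each individually, that is bounded by $m$.
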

\proof
This results could be reduced to several already existing ones,
see \cite{Fe, HPS,McG,Ch}
or proved directly by well-known methods. We shall use Theorem 1.1 in \cite{Ya}
which is the closest to our needs because it is expressed  in terms
of vector fields.
We first derive some conclusions from the isolating block conditions.
We denote by $\pi^{sc}$ the projection $(u,s,c)\lmto (s,c)$, and so on.

\begin{lem}\label{h3}
If Hypothesis \ref{block} holds, then
$$
\pi^{uc}(W^{uc})=B^u\times \Omega^c \quad
\text{and} \quad
\pi^{sc}(W^{sc})=B^s\times \Omega^c.
$$
Moreover, the closures of $W^{sc}$ and $W^{uc}$
satisfy
$$
\bar W^{sc}\subset B^s\times \bar B^c\times \bar \Omega^c,
\quad
\bar W^{uc}\subset \bar B^s\times  B^c\times \bar \Omega^c.
$$
\end{lem}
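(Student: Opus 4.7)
The plan is to establish the two surjectivity statements by a topological no-retract argument à la Ważewski, and the two closure inclusions by exploiting the strictness of the isolating-block inequalities together with continuous dependence on initial conditions.

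For the surjectivity $\pi^{sc}(W^{sc})=B^s\times \Omega^c$, I would fix $(s_0,c_0)\in B^s\times \Omega^c$ and look at the slice $\bar B^u\times\{(s_0,c_0)\}$. The first step is to observe that any forward orbit starting in $\Omega$ that eventually leaves $\Omega$ must cross $\partial B^u\times \bar B^s\times \bar \Omega^c$: indeed, the condition $F_s\cdot s<0$ on $\partial B^s$ prevents forward exit through $\partial B^s$, and the condition $F_c=0$ on $\partial \Omega^c$ means the vector field is tangent to the $c$-boundary, so no forward orbit first leaves through the face $B^u\times B^s\times\partial \Omega^c$. Hence, because the exit across $\partial B^u$ is strictly transverse, the exit-point map sending an initial condition to its first intersection with $\partial B^u$ is well defined and continuous on the forward-exit set; moreover it is the identity on $\partial B^u$ (where $F_u\cdot u>0$ forces instantaneous exit). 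If no $u_0\in B^u$ gave a trapped forward orbit, composing this exit map with the obvious projection would yield a continuous retraction $\bar B^u\to\partial B^u$, contradicting Brouwer's no-retract theorem. So some $u_0\in B^u$ must satisfy $(u_0,s_0,c_0)\in W^{sc}$. The statement for $W^{uc}$ follows by the same argument applied to $-F$, using $F_u\cdot u>0$ on $\partial B^u$ as the now-inward condition and $F_s\cdot s<0$ on $\partial B^s$ as the transverse exit face for the reversed flow.

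For the closure statement, the idea is that strictness in the block inequalities yields uniform exit times. By compactness of $\partial B^u\times\bar B^s\times K$ for any compact $K\subset \bar\Omega^c$, continuity of $F$ gives some $\eta>0$ and a neighborhood $U_\eta$ of this boundary face in $\bar\Omega$ on which $F_u(x)\cdot u\geq \eta\|u\|$. A Grönwall-type estimate then shows that any forward orbit entering $U_\eta$ exits $B^u$ in time bounded by $O(\eta^{-1}|\log(r_u/\mathrm{dist}(u,\partial B^u))|)$. Consequently no point of $W^{sc}$ lies in $U_\eta$, so over compacts in the center direction the set $W^{sc}$ is bounded away from $\partial B^u$; taking compact exhaustions of $\Omega^c$ and then closures gives the desired inclusion of $\bar W^{sc}$ strictly inside $B^u$ in the $u$-factor. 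The analogous argument for $W^{uc}$ uses the strict inequality $F_s\cdot s<0$ with time reversed.

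The main obstacle I anticipate is not the topological step itself but rather the bookkeeping around $\partial\Omega^c$: because $F_c$ vanishes (rather than pointing strictly inward), orbits can asymptotically approach the center-boundary or even slide along it, and one must confirm that such behaviour does not manufacture spurious ``exits'' which would spoil either the continuity of the exit map in the first part or the uniform lower bound on exit time in the second part. The hypothesis $F_c=0$ on $B^u\times B^s\times\partial\Omega^c$ is exactly what is needed to rule this out—orbits reaching $\partial\Omega^c$ stay in $\bar \Omega^c$ in the $c$-component, so the ``first exit'' always occurs through the $u$-face forward in time (resp. the $s$-face backward in time). Verifying this cleanly, and checking that the exit map extends continuously to the boundary of the slice so that Brouwer applies on $\bar B^u$ rather than only on a proper subset, is the delicate point in the write-up.
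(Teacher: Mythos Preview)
Your proposal is correct and follows essentially the same route as the paper: a Wa\.{z}ewski/no-retract argument for the surjectivity of the projections, and the strictness of the isolating-block inequalities to keep the closures away from the transverse exit faces. The only difference is cosmetic: for the closure inclusion the paper argues pointwise (each point of $\partial B^u\times\bar B^s\times\bar\Omega^c$ has a neighborhood all of whose points leave $\Omega$ in forward time, hence this neighborhood misses $W^{sc}$), whereas you quantify via a uniform lower bound and a Gr\"onwall-type exit-time estimate. Your version works, but the pointwise argument is shorter and avoids the compact-exhaustion bookkeeping; in particular you do not actually need a quantitative exit-time bound, only the qualitative fact that points sufficiently near $\partial B^u$ exit in finite forward time.
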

\proof
Let us define $T^+(x)\in [0, \infty]$ as the first positive time
where the orbit of $x$ hits the boundary $\partial \Omega$.
Let us denote by $\varphi(t,x)$ the flow of $F$.
If $T(x)<\infty$,  we have
$\varphi(T(x),x)\in \partial B^u\times B^s\times \Omega$,
as follows from Hypothesis \ref{block}.
Then, it is easy to check that the function $T$ is continuous, and even $C^1$,
at $x$.

We  prove the first equality of the Lemma  by contradiction,
and assume that there exists a point $(u,c)\in B^u\times \Omega^c$
such that $W^{uc}$ does not intersect the disc $\{u\}\times B^s \times \{c\}$.
Then, the first exit map
$$B^s\ni s\lmto \varphi(T(x),x)\in \partial B^s
$$
extends by continuity to a continuous retraction
from $\bar B^s$ to its boundary $\partial B^s$.
 Such a retraction does not exist.
The proof of the other equality is similar.

Finally, we have
$$
\bar W^{uc} \subset \bar B^u\times \bar B^s \times \bar \Omega ^c
=\big( B^u\times \bar B^s \times \bar \Omega ^c\big)\bigcup
\big( \partial B^u\times \bar B^s \times \bar \Omega ^c\big).
$$
Hypothesis \ref{block} implies that each point of
$\partial B^u\times \bar B^s \times \bar \Omega ^c$ has a neighborhood
formed of points which leave $\Omega$ after a small time.
As a consequence, the set $\partial B^u\times \bar B^s \times \bar \Omega ^c$
can't intersect $\bar W^{uc}$, and we have proved that
$
\bar W^{uc} \subset B^u \times \bar B^s \times \bar \Omega ^c.
$
The other inclusion can be proved in a similar way.
\qed

In order to prove the statement of the Theorem  concerning $W^{sc}$, we apply Theorem 1.1 of \cite{Ya}.
More precisely, using the notation of that paper, we set
$$
a=u/K, \quad
z=(s,c),\quad f(a,z)=F_u(Ka,z)/K,\quad g(a,z)=(F_s(Ka,z),F_c(Ka,z)).
$$
We have the estimates
$$
\partial_af=L_{uu}\geq \alpha, \quad
\partial_z g=
\begin{bmatrix}
L_{ss}&L_{sc}\\L_{cs}&L_{cc}
\end{bmatrix}\leq m
$$
in the sense of quadratic forms.
Moreover, we have the estimates
$$
\|\partial_z f\|\leq \frac{m}{K},\quad \|\partial_ag\|\leq Km.
$$
Since
$$
m+m/K+Km<2m+m/K=\alpha
$$
we conclude that Hypothesis 2 of \cite{Ya} is satisfied.
Hypothesis 1 of \cite{Ya} is verified by the domain $\Omega$,
and Hypothesis 3 is precisely the conclusion of Lemma \ref{h3}.
As a consequence, we can apply Theorem 1.1 of \cite{Ya},
and conclude that the set $W^{sc}$ is the graph of a $C^1$ and $1$-Lipschitz
map above $B^s\times \Omega^c$
in $(a,z)$ coordinates, and therefore the graph of a $K$-Lipschitz
$C^1$ map $w^{sc}:B^s\times \Omega^c\lto B^u$ in  $(u,s,c)$ coordinates.

In order to prove the statement concerning $W^{uc}$,
we apply Theorem 1.1 of \cite{Ya} with
$$
a=s/K, \quad
z=(u,c),\qquad \qquad $$
$$
f(a,z)=-F_s(Ka,z)/K,\quad g(a,z)=-(F_u(Ka,z),F_c(Ka,z)).
$$
It is easy to check as above that all hypotheses are satisfied.

Let us now study the set $W^c=W^{sc}\cap W^{uc}$.
First, let us prove that $W^c$ is a $C^1$ graph above $\Omega^c$.
We know that $W^{sc}$ is the graph of a $K$-Lipshitz $C^1$ function
$w^{sc}(s,c)$ and that  $W^{uc}$
is the graph of a $K$-Lipshitz $C^1$ function
$w^{uc}(u,c)$. The point $(u,s,c)$ belongs to $W^c$
if and only if
$$
u=w^{sc}(s,c) \quad \text{and}\quad s=w^{uc}(u,c),
$$
or in other words if and only if $(u,s)$ is a fixed point
of the $K$-Lipschitz $C^1$ map
$$
(u,s)\lmto (w^{sc}(s,c),w^{uc}(u,c)).
$$
For each $c$, this contracting  map has a unique  fixed point in
$\bar B^u\times \bar B^s$, which corresponds to a point
of $\bar W^{sc}\cap \bar W^{uc}$. It follows from  Lemma \ref{h3}
that this point is contained in $B^u\times B^s$.
Then,  it depends in a $C^1$ way of the
parameter $c$. We have proved that $W^c$ is the graph
of a $C^1$ function $w^c$.
In order to estimate the Lipschitz constant of this graph,
we consider two points $(u_i,s_i,c_i), i=0,1$ in $\Gamma$.
We have
$$
\|u_1-u_0\|^2\leq K^2 (\|s_1-s_0\|^2+\|c_1-c_0\|^2)
$$
and
$$
\|s_1-s_0\|^2\leq K^2 (\|u_1-u_0\|^2+\|c_1-c_0\|^2).
$$
Taking the sum gives
$$
(1-K^2)(\|u_1-u_0\|^2+\|s_1-s_0\|^2)\leq 2 K^2\|c_1-c_0\|^2
$$
and
$$
\|(u_1,s_1)-(u_0,s_0)\|\leq \sqrt{\frac{2K^2}{1-K^2}}\|c_1-c_0\|
\leq 2K\|c_1-c_0\|,
$$
since $K\leq 1/\sqrt{2}$.
We conclude that $w^c$ is $2K$-Lipschitz.
\qed

We need an addendum for applications:
\begin{prop}\label{translation}
Assume  in addition that there exists a translation  $g$
of $\Rm^{n_c}$
such that
$$
g(\Omega^c)=\Omega^c\quad \text{and} \quad
F\circ (id\otimes id\otimes g)=F.
$$
 Then we have
$$
w^{sc}\circ(id\otimes g)=w^{sc},\quad
w^{uc}\circ(id\otimes g)=w^{uc}, \quad
w^c\circ g=w^c.
$$
\end{prop}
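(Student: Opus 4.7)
The plan is to exploit the uniqueness of the invariant graphs furnished by Theorem~\ref{abstractNHI}. Since $W^{sc}$, $W^{uc}$, and $W^c$ are defined intrinsically in dynamical terms (as sets of initial conditions whose forward, backward, or two-sided orbits remain in $\Omega$), any diffeomorphism that preserves both the vector field and the domain automatically preserves them; reading this off using the graph structure gives the desired functional equations.

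First, I would set $h := \mathrm{id}\otimes \mathrm{id}\otimes g$ and check that $h$ is a diffeomorphism of $\Omega$ onto itself. Since $B^u$ and $B^s$ are fixed pointwise and $g(\Omega^c)=\Omega^c$ by hypothesis, $h(\Omega)=\Omega$. Because $g$ is a translation we have $dh\equiv \mathrm{id}$, so the pointwise condition $F\circ h = F$ is equivalent to $h_*F = F$. Consequently, wherever both sides are defined, the flow $\varphi_t$ of $F$ satisfies $\varphi_t\circ h = h\circ \varphi_t$.

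Second, I would deduce that each of the sets $W^{sc}$, $W^{uc}$, $W^c$ is $h$-invariant. If $x\in W^{sc}$ so that $\varphi_t(x)\in \Omega$ for all $t\geq 0$, then $\varphi_t(h(x))=h(\varphi_t(x))\in h(\Omega)=\Omega$, whence $h(x)\in W^{sc}$. Applying the same argument to $g^{-1}$, which also preserves $\Omega^c$ and $F$, yields $h^{-1}(W^{sc})\subset W^{sc}$, so that $h(W^{sc})=W^{sc}$. The arguments for $W^{uc}$ and $W^c$ are identical.

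Third, since each of these sets is a graph by Theorem~\ref{abstractNHI}, I would read off the functional equations directly. For $W^{sc}$ the identity $h(W^{sc})=W^{sc}$ reads
$$\bigl\{(w^{sc}(s,c),\,s,\,g(c)):(s,c)\in B^s\times \Omega^c\bigr\} = \bigl\{(w^{sc}(s,c'),\,s,\,c'):(s,c')\in B^s\times \Omega^c\bigr\},$$
and matching coordinates gives $w^{sc}(s,c)=w^{sc}(s,g(c))$ for every $(s,c)$, i.e.\ $w^{sc}\circ(\mathrm{id}\otimes g)=w^{sc}$. The same manipulation applied to the graphs of $w^{uc}$ and $w^c$ yields the other two identities. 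There is no substantive obstacle; the entire argument is a ``no free symmetry'' observation resting on the uniqueness of the invariant graphs, and the only point to be careful about is checking that the commutation $\varphi_t\circ h = h\circ \varphi_t$ extends globally on $\Omega$, which follows immediately from pointwise invariance of $F$ together with $h$ being a diffeomorphism of $\Omega$.
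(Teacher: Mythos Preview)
Your proof is correct and follows the same approach as the paper: the sets $W^{sc}$, $W^{uc}$, $W^c$ are $h$-invariant by their dynamical definition, and reading this off on the graphs gives the identities. The paper's own proof is a one-liner stating exactly this; you have simply spelled out the details.
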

\proof
It follows immediately from the definition of the sets
$W^{sc}$, $W^{uc}$ and $W^c$ that $g(W^{sc})=W^{sc}$,
$g(W^{uc})=W^{uc}$ and $g(W^c)=W^c$.
\qed

In applications the first condition of Hypothesis \ref{block}
is usually not satisfied, except in the case where
$\Omega^c=\Rm^{n_c}$.
It is thus useful to state a more "applicable" variant of the result.
In view of the applications we have in mind,
it is useful to split the central variables into two groups
and consider the case
$$
\Omega^c=\Rm^{n^1_c}\times \Omega^{c_2},
$$
where $\Omega^{c_2}$ is a convex open set in $\Rm^{n^2_c}$,
$n^1_c+n^2_c=n_c$.
Given a positive parameter ${\sigma}$, let $\Omega^{c_2}_{\sigma}$ be the set of points
$c_2\in \Rm^{n^2_c}$ such that $d(c,\Omega^{c_2})<{\sigma}$.
This is a convex open subset of $\Rm^{n^2_c}$ containing $\Omega^{c_2}$.
We denote by $\Omega^c_{\sigma}$ the product $\Rm^{n^1_c}\times \Omega^{c_2}_{\sigma}$
and by
 $\Omega_{\sigma}$ the product $B^u\times B^s\times \Omega^c_{\sigma}$.
With the notation $F_c=(F_{c_1},F_{c_2})$, we have:

\begin{prop}\label{realNHI}
Assume that there exists $\lambda,m,\sigma>0$ such that
\begin{itemize}
\item $F_u(u,s,c)\cdot u> 0$ on  $\partial B^u \times \bar B^s \times \bar \Omega^c_{\sigma}$.
\item $F_s(u,s,c)\cdot s< 0$ on  $ \bar B^u \times \partial B^s \times \bar \Omega^c_{\sigma}$.
\item
$L_{uu}(x)\geq \alpha I , \quad L_{ss}(x)\leq -\alpha I$ for each $x\in \Omega_{\sigma}$
in the sense of quadratic forms.
\item
$
\|L_{us}(x)\|+\|L_{uc}(x)\|+\|L_{ss}(x)\|+\|L_{sc}(x)\|
\|L_{cu}(x)\|+\|L_{cs}(x)\|+\|L_{cc}(x)\|+
2\|F_{c_2}(x)\|/{\sigma}\leq m
$ for each $x\in \Omega_{\sigma}$.
\end{itemize}
Assume furthermore that
$$
K:= \frac{m}{\alpha-2m}\leq \frac{1}{\sqrt{2}}.
$$
Then there exists a $C^1$ function $\rho:\Omega^c_{\sigma}\lto[0,1]$
which is equal to $1$ on $\Omega^c$ and
such that the vector field
$$
\tilde F(u,s,c):=(F_u(u,s,c_1,c_2),F_s(u,s,c_1,c_2),
F_{c_1}(u,s,c_1,c_2),
\rho(c_2)F_{c_2}(u,s,c_1,c_2))
$$
satisfies all the hypotheses of Theorem \ref{abstractNHI} on $\Omega_{\sigma}$.
Note that $\tilde F=F$ on $\Omega$.
\end{prop}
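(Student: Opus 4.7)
The plan is to construct a cutoff function $\rho$ with $\|d\rho\|\le 2/\sigma$ and then verify that the modified field $\tilde F$ satisfies the hypotheses of Theorem~\ref{abstractNHI} on $\Omega_\sigma$.

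First I construct $\rho$ as a smoothing of a composition of a scalar cutoff with the distance to $\Omega^{c_2}$. Since $\Omega^{c_2}$ is convex, $d(c_2):=\mathrm{dist}(c_2,\Omega^{c_2})$ is convex and $1$-Lipschitz. Regularizing it (for instance by infimal convolution with a small quadratic) gives a $C^\infty$ function $\tilde d$ with $\|\nabla \tilde d\|_\infty \le 1+\eta$ and $\|\tilde d-d\|_\infty\le \eta$ for $\eta$ as small as we wish. Picking a smooth $\chi:\R\to[0,1]$ with $\chi\equiv 1$ on $(-\infty,0]$, $\chi\equiv 0$ on $[\sigma,\infty)$, and $|\chi'|\le (1+\eta)/\sigma$, and setting $\rho(c_2):=\chi(\tilde d(c_2))$ yields a $C^\infty$ function that is identically $1$ on $\Omega^{c_2}$, vanishes in a neighborhood of $\partial \Omega^{c_2}_\sigma$, and satisfies $\|\nabla\rho\|_\infty\le 2/\sigma$.

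Next I check Hypotheses~\ref{block} and~\ref{cone} for $\tilde F$. The $u$- and $s$-parts of the isolating block hypothesis are immediate, since $\tilde F_u=F_u$ and $\tilde F_s=F_s$ and the corresponding outward/inward conditions are assumed in the proposition. On the central boundary $\partial\Omega^c_\sigma=\R^{n^1_c}\times\partial\Omega^{c_2}_\sigma$, the $c_2$-component $\tilde F_{c_2}=\rho F_{c_2}$ vanishes because $\rho$ does; the $c_1$-component $\tilde F_{c_1}=F_{c_1}$ is not constrained, but the $c_1$-direction runs over all of $\R^{n_c^1}$, which is boundaryless, so no orbit of $\tilde F$ can exit $\Omega_\sigma$ through $c_1$. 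A short revisit of the proof of Lemma~\ref{h3} shows that its conclusion (and hence the proof of Theorem~\ref{abstractNHI}) uses only that orbits cannot leave through the central boundary, so it goes through under this weakening of Hypothesis~\ref{block}. For the cone hypothesis, the $u$- and $s$-diagonal blocks of the Jacobian are unchanged, so $L_{uu}\ge\alpha I$ and $L_{ss}\le-\alpha I$ are inherited from $F$. The only modified row of the Jacobian is the $c_2$-row, where
$$\partial(\rho F_{c_2})=\rho\,\partial F_{c_2}+F_{c_2}\otimes \partial_{c_2}\rho.$$
The new term is bounded in norm by $\|F_{c_2}\|\cdot 2/\sigma$, while $0\le\rho\le 1$ ensures that the other cross-entries of $\tilde L$ remain bounded by those of $L$. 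Summing, all cross-norms of $\tilde L$ are bounded by $m$ by the fourth hypothesis of the proposition, and $K=m/(\alpha-2m)\le 1/\sqrt{2}$ is assumed.

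Theorem~\ref{abstractNHI} then applies on $\Omega_\sigma$ and yields the desired invariant graphs; since $\rho\equiv 1$ on $\Omega^{c_2}$ one has $\tilde F=F$ on $\Omega$. The main technical subtlety is precisely the non-vanishing of $\tilde F_{c_1}$ on $\partial\Omega^c_\sigma$: it forces one to observe that Lemma~\ref{h3} survives the removal of this part of Hypothesis~\ref{block} because the $c_1$-factor has no boundary. Constructing $\rho$ with the tight gradient bound $2/\sigma$ is otherwise routine once the distance function is mollified to preserve its $1$-Lipschitz constant.
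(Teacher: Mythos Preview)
Your proof is correct and follows essentially the same approach as the paper: choose a cutoff $\rho$ with $\|d\rho\|\le 2/\sigma$, then verify that the modified Jacobian still satisfies the cone estimate because the only new term is $F_{c_2}\otimes d\rho$, bounded by $2\|F_{c_2}\|/\sigma$. In fact you are more careful than the paper on one point: you notice that $\tilde F_{c_1}=F_{c_1}$ does not vanish on $\partial\Omega^c_\sigma$, so Hypothesis~\ref{block} is not literally satisfied, and you correctly argue that this is harmless since the $c_1$-factor is all of $\Rm^{n_c^1}$ and Lemma~\ref{h3} only uses that orbits cannot exit through the central boundary. The paper's proof glosses over this and simply asserts the hypotheses hold.
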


\proof
We take a function $\rho:\Omega^{c_2}_{\sigma}\lto [0,1]$ such that :
\begin{itemize}
\item $\rho=0$ near the boundary of $\Omega^{c_2}_{\sigma}$,
\item $\rho=1$ on $\Omega^{c_{\sigma}}$,
\item $\|d\rho\|\leq 2/{\sigma}$ uniformly.
\end{itemize}
Denoting by $\tilde L_{**}$ the variational matrix associated to $\tilde F$,
we see that
$$
\tilde L_{cu}(u,s,c)=\rho(c_2) L_{cu}(u,s,c),\quad
 \tilde L_{cs}(u,s,c)=\rho(c_2) L_{cs}(u,s,c),
$$
$$
\tilde L_{c_1c_1}(u,s,c)=\rho(c_2) L_{c_1c_1}(u,s,c),\quad
 \tilde L_{c_1c_2}(u,s,c)=\rho(c_2) L_{c_1c_2}(u,s,c),
$$
and
$$
\tilde L_{c_2c_2}(u,s,c)=\rho(c_2) L_{c_2c_2}(u,s,c)+d\rho(c_2)\otimes F_{c_2}(u,s,c).
$$
As a consequence, we have
\begin{align*}
&\|\tilde L_{us}(x)\|+\|\tilde L_{uc}(x)\|+\|\tilde L_{ss}(x)\|+\|\tilde L_{sc}(x)\|+
\|\tilde L_{cu}(x)\|+\|\tilde L_{cs}(x)\|+\|\tilde L_{cc}(x)\|
\\
=&\rho(c_2) \big(\|L_{us}(x)\|+\|L_{uc}(x)\|+\|L_{ss}(x)\|+\|L_{sc}(x)\|
\|L_{cu}(x)\|+\|L_{cs}(x)\|+\|L_{cc}(x)\|\big)\\
+&\|F_{c_2}(x)\|\|d\rho(c_2)\|
\leq m.
\end{align*}
\qed

Under the hypotheses of Proposition \ref{realNHI},
the sets  $\tilde W^{sc}, \tilde W^{uc}, \tilde W^c$
associated to $\tilde F$ are graphs of $C^1$ functions
$$\tilde w^{sc}:B^s \times \Omega^c_r \lto B^u,
\quad \tilde w^{uc}:B^u \times \Omega^c_r \lto B^s,
\quad
\tilde w^c:\Omega^c_r\lto B^u\times B^s
$$
which satisfying the estimates
$$\|d\tilde w^{sc}\|\leq K,\quad \|d\tilde w^{uc}\|\leq K,
\quad \|d\tilde w^c\|\leq 2K.
$$
The restrictions to $\Omega$
$$
\mW^{sc}=\tilde W^{sc}\cap\Omega,\quad
\mW^{uc}=\tilde W^{uc}\cap \Omega,\quad
 \mW^c=\mW^{sc} \cap \mW^{uc}=\tilde W^c\cap \Omega,
$$
are weakly invariant by $F$ in the sense that this vector field is tangent
to them.
They satisfy various interesting properties.
For example, each $F$-invariant set contained in $\Omega$ is
contained in $\mW^c$.


\end{document}